\newtheorem{theorem}{Theorem}[section]
\newtheorem{lemma}[theorem]{Lemma}
\newtheorem{proposition}[theorem]{Proposition}
\newtheorem{corollary}[theorem]{Corollary}
\newtheorem{definition}[theorem]{Definition}
\theoremstyle{definition}
\newtheorem{example}[theorem]{Example}
\newtheorem{remark}[theorem]{Remark}
\newtheorem*{acknowledgements}{Acknowledgements}
\newcommand{\g}{\mathfrak{g}}
\newcommand{\id}{\mathrm{id}}
\newcommand{\U}{\mathcal{U}}
\newcommand{\pre}{\vartriangleright}
\newcommand{\sym}{\mathrm{sym}}
\newcommand{\Coder}{\mathrm{Coder}}
\newcommand{\End}{\mathrm{End}}
\newcommand{\Lie}{\mathrm{L}}
\newcommand{\K}{\mathbb{K}}
\newcommand{\bul}{\bullet}
\newcommand{\Aut}{\operatorname{Aut}}
\newcommand{\T}{\mathcal{T}}
\newcommand{\Adm}{\mathscr{D}}
\newcommand{\pbT}{\mathcal{T}_{\mathrm{pb}}}
\newcommand{\pblT}{\mathcal{T}_{\mathrm{pbl}}}
\renewcommand{\Lie}{\mathcal{L}}
\newcommand{\basis}{\mathcal{B}}
\newcommand{\specular}{\Sigma}
\newcommand{\pT}{\mathcal{T}_{\mathrm{p}}}
\newcommand{\pol}{\mathrm{alt}}
\newcommand{\slab}[1]{\text{\footnotesize{$#1$}}}
\newcommand{\altbin}[2]{
	P\left(#2\right)(#1)  }
\def\aaltbin#1#2{\ensuremath{\left(\kern-.35em\left(\genfrac{}{}{0pt}{}{#1}{#2}\right)\kern-.35em\right)} }
\begin{document}
	
	\title[Eulerian idempotent, pre-Lie logarithm and combinatorics of trees]{Eulerian idempotent, pre-Lie logarithm and combinatorics of trees}

	\author{Ruggero Bandiera}
	\address{Universit\`a degli studi di Roma La Sapienza,
		Dipartimento di Matematica ``Guido Castelnuovo",
		P.le Aldo Moro 5, I-00185 Roma, Italy.}
	\email{bandiera@mat.uniroma1.it}

	\author{Florian Sch\"atz}
	\address{University of Luxembourg, Mathematics Research Unit, 
		Maison du Nombre,
6, avenue de la Fonte,
L-4364 Esch-sur-Alzette, Luxembourg.
	}
	\email{florian.schaetz@gmail.com}

	\maketitle

	\begin{abstract} The aim of this paper is to bring together the three objects in the title. Recall that, given a Lie algebra $\mathfrak{g}$, the Eulerian idempotent is a canonical projection from the enveloping algebra $\mathcal{U}(\mathfrak{g})$ to $\mathfrak{g}$. The Baker-Campbell-Hausdorff product and the Magnus expansion can both be expressed in terms of the Eulerian idempotent, which makes it interesting to establish explicit formulas for the latter. We show how to reduce the computation of the Eulerian idempotent to the computation of a logarithm in a certain pre-Lie algebra of planar, binary, rooted trees. The problem of finding formulas for the pre-Lie logarithm, which is interesting in its own right -- being related to operad theory, numerical analysis and renormalization -- is addressed using techniques inspired by umbral calculus. As a consequence of our analysis, we find formulas both for the Eulerian idempotent and the pre-Lie logarithm in terms of the combinatorics of trees.
	\end{abstract}

	\setcounter{tocdepth}{2} %hides subsubsections in TOC
	
	\tableofcontents
	
	\newpage

	\section{{\bf Introduction}}\label{section: Eulerian}
	
	\subsection{The problem...}\label{subsection: problem}
	
	Given a Lie algebra $\g$ over a filed $\K$ of characteristic zero, the Poincar\'e-Birkhoff-Witt Theorem states that the symmetrization map
	$$ \sym\colon S(\g) \to \U(\g)\colon \quad x_1\odot \cdots \odot x_n \mapsto \frac{1}{n!}\sum_{\sigma \in S_n}x_{\sigma(1)}\cdots x_{\sigma(n)}, $$
	going from the symmetric coalgebra $S(\g)$ over $\g$ to its universal enveloping algebra $\U(\g)$, is a natural isomorphism of filtered coalgebras. One can therefore consider the map
	$$ E\colon \U(\g) \xrightarrow{\sym^{-1}} S(\g) \xrightarrow{p} \g \hookrightarrow \U(\g), $$
	where the second map $p$ is the natural projection and the last map $\g \hookrightarrow \U(\g)$ is the natural inclusion. The following explicit formula for $E$ was found by Solomon \cite{Solomon}
	\begin{equation}\label{equation: Eulerian}
	E(x_{1}\cdots x_{n}) = \frac{1}{n} \sum_{\sigma \in S_n}\frac{(-1)^{d_\sigma}}{{n-1 \choose d_\sigma}} x_{\sigma(1)}\cdots x_{\sigma(n)},\end{equation}
	where $d_\sigma$ is the {\em descent number} of the permutation $\sigma$, i.e.
	$ d_\sigma := | \{ 1 \le i \le n-1 \, | \, \sigma(i) > \sigma(i+1)\}|$.
	
	A bit improperly, we shall call the projection $E$ the \emph{Eulerian idempotent} on $\U(\g)$\footnote{More properly, the name usually refers to a corresponding idempotent in the group algebra $\K[S_n]$ of the symmetric group, see \cite{Loday}.}. We point out that neither the fact that $E$ is an idempotent, that is, $E\circ E = E$, nor the fact that $E$ takes values in $\g\subset\U(\g)$, is apparent from the previous formula. To solve the second problem, we may compose $E$ with another well-known Lie idempotent, namely, the \emph{Dynkin idempotent} $D\colon \U(\g)\to\g, \, x_1\cdots x_n\mapsto \frac{1}{n}[x_1,\ldots[x_{n-1},x_n]\ldots]$. Since both $E$ and $D$ are projectors onto $\g\subset\U(\g)$, we have $E=D\circ E$, hence
	\begin{equation}\label{equation: Eulerian2}
	E(x_{1}\cdots x_{n}) = \frac{1}{n^2} \sum_{\sigma \in S_n}\frac{(-1)^{d_\sigma}}{{n-1 \choose d_\sigma}}[ x_{\sigma(1)},\cdots[x_{\sigma(n-1)}, x_{\sigma(n)}]\cdots].\end{equation}
	Now the problem is that the iterated brackets appearing on the right hand side of \eqref{equation: Eulerian2} are not linearly independent among each other, due to the antisimmetry of the Lie bracket and the Jacobi identity. 
	For instance, for $n=3$ formula \eqref{equation: Eulerian2} becomes
	\begin{eqnarray*}  E(x_1 x_2 x_3) &=& \frac{1}{9}\Big([ x_1,[x_2, x_3]] - \frac{1}{2}[ x_1,[x_3, x_2]] - \frac{1}{2}[ x_2,[x_1, x_3]]+\\ \quad && \quad \quad- \frac{1}{2}[ x_2,[x_3, x_1]]- \frac{1}{2}[ x_3,[x_1, x_2]] + [ x_3,[x_2, x_1]]\Big), \end{eqnarray*}
	but, after some manipulation, this can be simplified to 
	\[ E(x_1x_2x_3) = \frac{1}{3}[ x_1,[x_2, x_3]]-\frac{1}{6}[x_2, [x_1,x_3]]=\frac{1}{6}[ x_1,[x_2, x_3]]+\frac{1}{6}[ [x_1,x_2], x_3]]. \] 
	
	From here on we shall focus on the universal case, that is, in the previous discussion we set $\g=L_n:= L(x_1,\ldots,x_n)$, the free Lie algebra over $x_1,\ldots,x_n$, and $\U(\g)=A_n:=A(x_1,\ldots,x_n)$, the free associative algebra over $x_1,\ldots,x_n$.
	
	\begin{definition}\label{definition: Lie_n}
		We denote by $\Lie_n \subset L_n$ the vector subspace spanned by those Lie words
		in which each generator appears {\em exactly} once. 
	\end{definition}		
	
	We remark that the vector space $\Lie_n$ is finite dimensional, of dimension $\operatorname{dim}(\Lie_n)=(n-1)!$. It is apparent from formula \eqref{equation: Eulerian2} that $E(x_1\cdots x_n)\in\Lie_n$. We shall study the following
	\begin{center}
		\begin{framed}
			{\em \underline{Problem A}: find the expansion of $E(x_1\cdots x_n)$ with respect to a basis of $\Lie_n$.}
		\end{framed}
	\end{center}
	
	Before we go further, let us provide some motivation why one might be interested in the above problem.
	
	\begin{remark}\label{remark: meaning of E}
		The Eulerian idempotent $E$ is related to the following topics (among others):
		\begin{itemize}
			\item {\em Baker-Campbell-Hausdorff product}: given a pro-nilpotent Lie algebra $\g$, an element $x\in\g$ and the corresponding group-like element $e^x\in\widehat{\U(\g)}$, it follows from the definitions that $E(e^x)=x$. In particular, given $x_1,\ldots,x_n\in\g$, we get the following formula for their Baker-Campbell-Hausdorff product $x_1\bullet\cdots\bullet x_n$ (cf.~\cite{Loday}):
			\[ x_1\bullet\cdots\bullet x_n = E(e^{x_1\bullet\cdots\bullet x_n}) = E(e^{x_1}\cdots e^{x_n})=\sum_{i_1,\ldots,i_n\geq0}\frac{1}{i_1!\cdots i_n!} E(x_1^{i_1}\cdots x_n^{i_n}).  \]
			\item {\em Magnus expansion}: given a matrix Lie group $G\subset GL(k)$ with Lie algebra $\g\subset \mathfrak{gl}(k)$, together with a Lipschitz continuous function $a(-)\colon \mathbb{R}^+\to \g$, consider the following ordinary differential equation in the space of $(k\times k)$ matrices (where $\cdot$ is matrix multiplication):
			\begin{equation}\label{eq:Magnus differential equation}
			\left\{\begin{array}{l} X'(t) = a(t) \cdot X(t) \\
			X(0) = \operatorname{Id_k}
			\end{array}  \right.
			\end{equation}
			It is well-known that the solution $X(t)$ to \eqref{eq:Magnus differential equation} satisfies $X(t)\in G$ at all times. In particular, using the exponential map $\exp\colon\g\to G$, we can write it (at least near $t=0$) in the form $X(t)=\exp(\omega(t))$, where $\omega(t)$ is a function with values in $\g$. When $G=GL(1)$ (or, more in general, when $G$ is abelian) $\omega(t)=\int_0^t a(\tau)d\tau$, which recovers the well-known formula $X(t)=e^{\int_0^t a(\tau)d\tau}$. The general case was studied by Magnus \cite{Magnus}, who found a series expansion for $\omega(t)$ which has since then been called the {\em Magnus expansion}, see \cite{Iserles-Norsett} for further details. In the paper \cite{Mielnik-Plebanski},  Mielnik and Pleb\'anski discovered the following formula
			for $\omega(t)$ in terms of the Eulerian idempotent
			$$\omega(t) = \sum_{n\ge 1} \int\limits_{\Delta_n(t)} E(a(t_1)\cdots a(t_n))dt_1\cdots dt_n,$$
			where the domains of integration $\Delta_n(t)$ are the $n$-simplices of size $t$,
			i.e.,
			$$ \Delta_n(t) := \{(t_1,\dots,t_n)\in \mathbb{R}^n \, | \, 0 \le t_n \le \cdots \le t_1\le t\}.$$
			Let us mention that the Magnus expansion is related to a problem concerning
			the rational homotopy theory of one-dimensional CW-complexes, investigated by us in \cite{Bandiera-Schaetz}. This provided our original motivation to study the previously stated Problem A.
			\item {\em Hochschild homology}: Given a commutative algebra $A$,
			the complex of Hochschild chains $HC_\bullet(A)$ can be split into
			several subcomplexes in terms of the Eulerian idempotent, see \cite{Barr,Gerstenhaber-Schack}. One of these pieces is the image of $E$, and the cohomology of this subcomplex
			identifies with the Harrison homology of $A$.
		\end{itemize}
	\end{remark}

	\subsection{...and our solution}\label{subsection: solution}
	
	We shall provide an answer to our problem for two particular bases of $\Lie_n$. Our main focus throughout the paper will be the study of the expansion of $E(x_1\cdots x_n)$ with respect to a certain basis $\mathcal{B}_n$ of $\Lie_n$, which we introduce next:

	\begin{definition}\label{definition: basis} We denote by $\Lie_{\le n}\subset L_n$ the subspace spanned by those Lie words in which each generator appears {\em at most} once (in particular, we can regard $\Lie_n$ as the subspace of $\Lie_{\leq n}$ spanned by Lie words of lenght $n$). Given a Lie word $w$ in $\Lie_{\le n}$,
		we refer to
		$$ \max\{i \, \vert \, x_i \textrm{ appears in } w\}$$
		as the maximum of $w$, and to
		$$\min\{i \, \vert \, x_i \textrm{ appears in }w\}$$
		as the minimum of $w$.
		
		The \emph{PBW basis} $\basis_{n}$ of $\Lie_{n}$ consists of those Lie words $w$ in $\Lie_n$ such that, for any Lie sub-word $v\subset w$ inside $w$, either $v$ is one of the generators $x_1,\ldots,x_n$, or $v=[v',v'']$, where the minimum of $v'$ is smaller than the minimum of $v''$, and the maximum of $v'$ is smaller than the maximum of $v''$ (cf.~\cite[\S 13.2.5.2]{Loday-Vallette}, and references therein).
	\end{definition}
	
	\begin{example}
		For $n\leq4$, the PBW basis $\basis_n$ is as follows:
		\begin{eqnarray*}
			&\basis_1 = \{x_1\}, \qquad \basis_2 = \{[x_1,x_2]\},&\\ 
			&\basis_3 = \{[x_1,[x_2,x_3]],[[x_1,x_2],x_3]\},& \\
			&\basis_4 = \{ [x_1,[x_2,[x_3,x_4]]], [x_1,[[x_2,x_3],x_4]], [[x_1,x_2],[x_3,x_4]],& \\ & \quad \qquad [[x_1,x_3],[x_2,x_4]], [[x_1, [x_2,x_3]],x_4], [[[x_1,x_2],x_3],x_4] \}&
		\end{eqnarray*}
	\end{example}
	
	\begin{remark} The space $\Lie_{n}$, with its obvious structure of an $S_n$-module, is the $n$-ary component of the operad $\mathcal{L}ie$ encoding Lie algebras. The basis $\basis_{n}$ arises naturally in the study of this operad:
		in particular, the name PBW basis is borrowed from \cite{Loday-Vallette}.
	\end{remark}
	
	\begin{definition}\label{definition: Eulerian coefficient Lie}
		For every element $b\in \basis_n$, we shall denote by $E_b$ the coefficient of $b$ in the expansion of $E(x_1\cdots x_n)$ with respect to the PBW basis, and we shall call it the {\em Eulerian coefficient} of $b$. Thus
		$$ E(x_1\cdots x_n) = \sum_{b\in \basis_n} E_b  \, b.$$
	\end{definition}
	In this context, we may reformulate Problem A from the previous subsection more explicitly as follows:
	\begin{center}
		\begin{framed}
			{\em \underline{Problem B}: given an element $b\in \basis_n$, how can we compute its Eulerian coefficient $E_b$?}
		\end{framed}
	\end{center}
	
The second basis of $\Lie_n$, with respect to which we will address Problem A,
is Dynkin's basis.
	
	\begin{definition} The \emph{Dynkin's basis} $\mathcal{D}_n$ of $\Lie_n$ is
		\[  \mathcal{D}_n := \big\{ [x_{\sigma(1)},\cdots[x_{\sigma(n-1)},x_n]\cdots ] \big\}_{\sigma\in S_{n-1}}.  \]
	\end{definition}
	At the very end of this paper (see Subsection \ref{subsection: eulerian in dynkin}) we shall prove the following as a byproduct of our previous analysis:
	\begin{theorem}\label{th:eulerian in Dynkin basis} The expansion of $E(x_1\cdots x_n)$ with respect to the Dynkin's basis $\mathcal{D}_n$ is
		\[ E(x_1\cdots x_n) = \frac{1}{n}\sum_{\sigma\in S_{n-1}} \frac{(-1)^{d_\sigma}}{\binom{n-1}{d_\sigma}} [x_{\sigma(1)},\cdots[x_{\sigma(n-1)},x_n]\cdots ]. \]
	\end{theorem}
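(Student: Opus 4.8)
The plan is to read off the coefficients in Dynkin's basis directly from Solomon's word-level formula \eqref{equation: Eulerian}, by exploiting a triangularity (``leading word'') property of $\mathcal{D}_n$; this bypasses the pre-Lie and tree machinery entirely. Throughout I would regard $\Lie_n$ as a subspace of the free associative algebra $A_n$, and for $\tau \in S_{n-1}$ abbreviate the corresponding Dynkin basis element by $w_\tau := [x_{\tau(1)},\cdots[x_{\tau(n-1)},x_n]\cdots]$. The point is that each $w_\tau$, when expanded into words, carries a distinguished word that no other basis element produces.

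First I would establish the extraction lemma: for each $\tau\in S_{n-1}$, when $w_\tau$ is written as a linear combination of words in $A_n$, the unique word ending in the letter $x_n$ is $x_{\tau(1)}\cdots x_{\tau(n-1)}x_n$, occurring with coefficient $1$. I expect to prove this by induction on the number of letters, writing $w_\tau = x_{\tau(1)}\cdot w' - w'\cdot x_{\tau(1)}$ with $w' := [x_{\tau(2)},\cdots[x_{\tau(n-1)},x_n]\cdots]$: every word in $w'\cdot x_{\tau(1)}$ ends in $x_{\tau(1)}\ne x_n$, while by the inductive hypothesis $w'$ contributes the single $x_n$-terminal word $x_{\tau(2)}\cdots x_{\tau(n-1)}x_n$ with coefficient $1$, and left multiplication by $x_{\tau(1)}$ preserves this. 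As an immediate consequence, the linear functional on $\Lie_n$ extracting the coefficient of $x_{\sigma(1)}\cdots x_{\sigma(n-1)}x_n$ is the coordinate functional of $w_\sigma$: if $v = \sum_{\tau\in S_{n-1}} a_\tau\, w_\tau \in \Lie_n$, then $a_\sigma$ equals the coefficient of $x_{\sigma(1)}\cdots x_{\sigma(n-1)}x_n$ in $v$. (In passing this reproves that $\mathcal{D}_n$ is indeed a basis.)

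It then remains to apply this to $v = E(x_1\cdots x_n)$, which lies in $\Lie_n$ as noted after \eqref{equation: Eulerian2}. By \eqref{equation: Eulerian}, the coefficient of the word $x_{\sigma(1)}\cdots x_{\sigma(n-1)}x_n$ in $E(x_1\cdots x_n)$ equals $\tfrac1n(-1)^{d_\rho}/\binom{n-1}{d_\rho}$, where $\rho\in S_n$ is the permutation extending $\sigma$ by $\rho(n)=n$. Since $\rho(n)=n$ is maximal there is no descent at position $n-1$, and the remaining descents of $\rho$ coincide with those of $\sigma$, so $d_\rho = d_\sigma$. Combining this with the extraction lemma yields $a_\sigma = \tfrac1n(-1)^{d_\sigma}/\binom{n-1}{d_\sigma}$, which is exactly the claimed expansion.

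The only genuine content lies in the extraction lemma, and the step demanding care is its triangularity claim — that $w_\tau$ has a single $x_n$-terminal word with coefficient $1$; the rest, including the verification $d_\rho = d_\sigma$ aligning the descent statistics on $S_n$ and $S_{n-1}$, is bookkeeping. I do not anticipate a serious obstacle here: the argument is self-contained and uses only Solomon's formula, which is presumably why this result can be recorded as a short byproduct rather than requiring the tree-combinatorial analysis developed in the body of the paper.
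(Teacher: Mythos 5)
Your argument is correct, and it is a genuinely different -- and far more economical -- route than the one taken in the paper. The paper reaches Theorem \ref{th:eulerian in Dynkin basis} only at the very end (Subsection \ref{subsection: eulerian in dynkin}), by combining the change-of-basis rule from Dynkin's basis to the PBW basis (Proposition \ref{prop: from dynkin to pbw}), the tree Eulerian numbers $E(T,d)$ and their independence of the labeling via Stanley's shuffling theorem, the generalized Worpitzki identity \eqref{worpitzki-intro}, and the resulting formula $E_T=\frac{(-1)^{r_T-1}}{|T|}\sum_d \frac{(-1)^d}{\binom{|T|-1}{d}}E(T,d)$. You instead observe that each $w_\tau=[x_{\tau(1)},\cdots[x_{\tau(n-1)},x_n]\cdots]$ has, among its monomials in $A_n$, a unique word ending in $x_n$, namely $x_{\tau(1)}\cdots x_{\tau(n-1)}x_n$ with coefficient $1$ (a clean induction: the right-multiplication term $w'\cdot x_{\tau(1)}$ never ends in $x_n$), so the coefficient of $w_\sigma$ in any element of $\Lie_n$ is read off as the coefficient of the word $x_{\sigma(1)}\cdots x_{\sigma(n-1)}x_n$; applying this to Solomon's formula \eqref{equation: Eulerian} and checking $d_\rho=d_\sigma$ for the extension $\rho$ of $\sigma$ fixing $n$ finishes the proof. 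All steps check out, including the descent bookkeeping (the binomial $\binom{n-1}{d}$ in the statement is indeed the one inherited from $S_n$, not $\binom{n-2}{d}$). The paper itself concedes in Remark \ref{rem: direct proof} that a more direct proof "should be possible" and sketches one via shuffles and binomial identities; your leading-word triangularity is simpler still and entirely self-contained. What the paper's longer route buys, of course, is the intermediate results of independent interest -- the invariance of $E_T$ under change of root and planar structure, the tree Worpitzki identity, and the combinatorial formula for $E_T$ -- none of which your argument recovers, but none of which are needed for this particular theorem.
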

	
	In the remainder of this introduction we shall sketch our answer to Problem B, how Theorem \ref{th:eulerian in Dynkin basis} follows from it, and the most significant results we establish along the way. \\

	\begin{center}
		{\em Step 1) From Lie words to planar, binary rooted trees}
	\end{center}
	
	A basic and familiar observation is that we can depict iterated brackets in $L_n$ as planar, binary, rooted trees with leaves labeled in the set $\{1,\ldots, n\}$, as illustrated in the following picture.
	\[ [[x_1,x_3],[[x_2,x_4],x_5]] \qquad \leftrightarrow \qquad 
	\xy {\ar@{-}(0,-5);(0,-2)};{\ar@{-}(0,-2);(-10,8)}; {(-10,10)*{\slab{1}}};
	{\ar@{-}(-7,5);(-4,8)};{(-4,10)*{\slab{3}}};
	{\ar@{-}(0,-2);(10,8) }; {(10,10)*{\slab{5}}};
	{\ar@{-}(4,2);(-2,8)}; {(-1.5,10)*{\slab{2}}};
	{\ar@{-}(1,5);(4,8)}; {(4,10)*{\slab{4}}};
	\endxy \]

	The Definition \ref{definition: basis} of the PBW basis $\basis_n$ has a clear graphical analog, cf.~\cite{MReut} or \cite[\S 13.2.5.2]{Loday-Vallette}. Given a leaf $l$ and an inner vertex $v$ of a given planar, binary, rooted tree $T$, we say that $l$ is a descendant of $v$ if $v$ lies in the unique directed path from $l$ to the root.
	
 We say that a labeling $\ell\colon\{\mbox{leaves of $T$}\}\to\{1,\ldots,n\}$ is \emph{admissible} if:
	\begin{itemize} \item the labeling, seen as a function from the set of leaves of $T$ to the set $\{1,\ldots,n\}$, is injective (in particular, $T$ has at most $n$ leaves); and
		\item for every inner vertex $v$ of $T$, the smallest (largest) label among all the leaves which are descendants of $v$ is located at the left-most (right-most) position.  
	\end{itemize}
	It follows directly from the definitions that there is a bijective correspondence between the set $\basis_n$ and the set $\pblT(n)$ of planar, binary, rooted trees with $n$ leaves and an admissible labeling $\ell\colon\{\mbox{leaves of $T$}\}\to\{1,\ldots,n\}$. In particular, given $(T,\ell)\in\pblT(n)$, we can define an associated Eulerian coefficient $E_{(T,\ell)}$. 
	
	An important observation -- see Corollary \ref{corollary: Eulerian} of Subsection \ref{subsection: from pbT to pT} -- is that after the passage 
	to trees, the Eulerian coefficient $E_{(T,\ell)}$ turns out to be independent of the labeling $\ell$. In order to prove this fact, we shall consider the vector space $\pbT$ spanned by
	all (un-labeled) planar, binary, rooted trees. In Subsection \ref{subsection: from pbT to pT}, we equip this space with a bilinear operation $\rhd\colon \pbT\otimes\pbT\to\pbT$, defined in terms of graftings of one tree onto another, and show that $(\pbT, \rhd)$ is a left pre-Lie algebra.
	
	Inside the pre-Lie algebra $(\pbT, \rhd)$, we can consider the pre-Lie logarithm $X:=-\log_\rhd\left(1-|\,\right)$, where $|$ is the (planar, binary, rooted) tree with only one leaf (and $1$ is a fictitious unit). By definition, the element $X$ is the unique solution to the equation
	\[ 1- e^{-X}_{\rhd}:= X - \frac{1}{2}X\rhd X + \frac{1}{6}X\rhd(X \rhd X) + \cdots+\frac{(-1)^{n+1}}{n!} \overbrace{X\rhd(\cdots(X\rhd X}^n)\cdots)+\cdots \,\stackrel{!}{=}\,|\,. \]
	We can expand $X$ with respect to the canonical basis of $\pbT$ given by planar, binary, rooted trees: we shall denote by $E_T$ the coefficient of the tree $T$ in the expansion of $X$, i.e., $X=\sum E_T\,T$. This is consistent with the previous notation: given a (planar, binary, rooted) tree $T$ with $|T|$ leaves and an admissible labeling $\ell\colon\{\mbox{leaves of $T$}\}\to \{1,\ldots, |T|\}$, we show that the identity $E_{(T,\ell)}=E_T$ holds.
	
	More precisely, we prove the following fact: We consider the map $\operatorname{lab}_n:\pbT\to \Lie_{\leq n}$ sending a tree $T$ to the sum $\sum_{\ell\, \mbox{\tiny is amissible}}(T,\ell)$ (and to zero if $T$ has more than $n$ leaves), where the sum runs over the admissible labelings (as defined above) of the leaves of $T$ by the set $\{1,\ldots,n\}$, and we regard the labeled tree $(T,\ell)$ as an iterated bracket inside $\Lie_{\leq n}$ in the usual way. We also denote by $\pi:\Lie_{\leq n}\to\Lie_n$ the obvious projection. The crucial Proposition \ref{proposition: Eulerian = log} from Subsection \ref{subsection: from pbT to pT} asserts that $\operatorname{lab}_n$ is a mophism of Lie algebras, and that $\pi\circ\operatorname{lab}_n(X)=E(x_1\cdots x_n)\in\Lie_n$. The independence of the Eulerian coefficient on the labeling follows a posteriori. In fact, we will prove stronger invariance properties for the Eulerian coefficients, which shall be explained at Step 3) below.
	\\
	
	\begin{center}
		{\em Step 2) Computing pre-Lie logarithms via umbral calculus.}
	\end{center}
	In the previous step we explained how to reduce the computation of $E(x_1\cdots x_n)$ with respect to the PBW basis, for all $n\ge 1$, to the computation of a pre-Lie logarithm in a certain pre-Lie algebra. In Section \ref{section: pre-Lie} (in particular, in Subsection \ref{subsection: log in sT}) we study the problem of how to compute pre-Lie logarithms in general. This problem is interesting in its own right, with connections to operad theory \cite{chapoton,pLDF}, numerical analysis \cite{Hairer,alg B-series} and renormalization \cite{Connes-Kreimer,Brouder}. Our approach is inspired by umbral calculus, in its modern formulation by G.-C. Rota and S. Roman \cite{Roman}.
	
	Given a pre-Lie algebra $(L,\rhd)$ and an element $y\in L$, the usual way to compute $x=\log_\rhd(1+y)$ is to observe that the equation
	\[ e_\rhd^x-1 :=  \sum_{n\geq1} \frac{1}{n!} \overbrace{x\rhd(\cdots(x\rhd x}^n)\cdots) = y\]
	is equivalent to
	\begin{equation}\label{rec} x = \sum_{n\geq0} \frac{B_n}{n!} \overbrace{x\rhd(\cdots(x}^n\rhd y)\cdots), \end{equation}
	where $B_n$ is the $n$-th Bernoulli number, and the latter can be solved recursively in $x$. On the other hand, this recursion becomes rapidly unwieldy. 
	
	We denote by $(L[t],\rhd)$ the pre-Lie algebra of polynomials with coefficient in $L$, with the pre-Lie product $\rhd$ induced by scalar extension. We also denote by $\left<\left. \frac{D}{e^D-1}\right| \# \right>\colon \K[t]\to\K$ the linear operator on polynomials defined by $\left<\left. \frac{D}{e^D-1}\right| t^n \right>=B_n$, as well as its extension to a linear operator $\left<\left. \frac{D}{e^D-1}\right| \# \right>\colon L[t]\to L$.
	
	Our idea is to consider the differential equation
	\begin{equation}\label{diffeq} \left\{ \begin{array}{l} P'(t) = \left<\left.\frac{D}{e^D-1}  \right| P(t)\right>\triangleright P(t),\\ P(0)= y,
	\end{array} \right. \end{equation}
	in the pre-Lie algebra $L[t]$, where $P'(t)$ denotes the ordinary derivative with respect to $t$. It is not hard to prove -- see Proposition \ref{prop:recvsdiffeq} -- that if $P(t)$ solves the above differential equation, then $x:=\left<\left. \frac{D}{e^D-1}\right| P(t) \right>\in L$ solves the recursion \eqref{rec}, hence $x=\log_\rhd(1+y)$. 
	
	\newcommand{\rT}{\mathcal{T}}
	
	We study the differential equation \eqref{diffeq} in two particular cases. In Section \ref{section: Eulerian coefficients} we shall consider the case where $L$ is the aforementioned pre-Lie algebra $(\pbT,\rhd)$ of planar, binary, rooted trees, and $y$ is the tree $|$ with only one leaf (in order to compute $-\log_\rhd(1-|)$, as in the previous step, in this case we shall replace the operator $\left<\left. \frac{D}{e^D-1}\right| \# \right>$ by the one $\left<\left. \frac{D}{1-e^{-D}}\right| \# \right>\colon t^n\mapsto(-1)^nB_n$). The second case is the universal one. As is well-known, the vector space $\rT$ spanned by (non-planar, not necessarily binary) rooted trees, together with a certain pre-Lie product $\curvearrowright\colon\rT\otimes\rT\to\rT$ defined in terms of graftings of one tree onto another, is the free pre-Lie algebra generated by $\bul$, the tree with just the root, cf.~\cite{Chapoton-Livernet}. In Section \ref{section: pre-Lie}, we study the differential equation \eqref{diffeq} when $L$ is the free pre-Lie algebra $(\rT,\curvearrowright)$ and $y$ is the generator $\bul$.
	
	In both cases we show how to solve \eqref{diffeq} recursively, see Theorem \ref{th: product rule} and Theorems \ref{th: product rule planar} and \ref{th:recursion planar}. The recursion turns out to be much more amenable to computations than the one implicit in equation \eqref{rec}. Moreover, by studying this recursion we are able to determine formulas for the coefficient of a given tree in the expansion of the pre-Lie logarithm (more precisely, in the expansion of $-\log_\rhd(1-|)$ in one case, and of $\log_\curvearrowright(1+\bul)$ in the other) in terms of purely combinatiorial data associated to the tree: see Theorem \ref{th: combinatorial}, Remark \ref{rem: solution for Eulerian} and Proposition \ref{prop:combinatorial eulerian}. \\

	\begin{center}
		{\em Step 3) From planar, binary, rooted trees to trees.}
	\end{center}
	There is a standard bijective correspondence between the set of planar, binary, rooted
	trees with $n$ leaves and the set of planar, (not necessarily binary) rooted trees with $n$ vertices, that goes under the name
	of \emph{Knuth's rotation correspondence}, see \cite{Knuth,Ebrahimi-Fard-Manchon}. We denote the vector space spanned by planar, (not necessarily binary) rooted trees by $\pT$. We consider a slight variation $\Phi\colon\pbT\to\pT$ of the rotation correspondence, which also involves the specular involution $\specular\colon\pbT\to\pbT$ sending a planar, binary, rooted tree to its mirror image (see Definition \ref{def: specular on trees} and the following remark). This allows us to observe some surprising invariance properties for the Eulerian coefficients. First of all, it follows directly from the explicit formula \eqref{equation: Eulerian} that:
	\begin{itemize} \item the Eulerian coefficients are invariant under the specular involution, that is, for every planar, binary, rooted tree $T$ we have $E_T=E_{\specular(T)}$; cf.~Lemma \ref{corollary: specularity in pbT}.
	\end{itemize}
	
	On the other hand, as explained in the previous step, we can compute the Eulerian coefficients by associating recursively a certain polynomial, which we denote by $\altbin{t}{T}\in\K[t]$, to every $T\in\pbT$, and then applying the linear operator $\left<\left. \frac{D}{1-e^{-D}}\right| \# \right>\colon t^n\mapsto(-1)^nB_n$ to this polynomial. Using the correspondence $\Phi$, we can equivalently consider both the polynomial and the coefficient as being associated to the corresponding planar rooted tree $\Phi(T)$.
	When we do this, the recursion for the polynomial becomes more transparent, and in particular it implies that
	
	\begin{itemize}
		\item The polynomial -- hence, also the Eulerian coefficient -- associated to a planar roooted tree is independent of the planar structure, that is, it only depends on the underlying (non-planar) rooted tree; cf.~Lemma \ref{lemma: Euler planar structure}.
	\end{itemize}
	
	Finally, putting these two facts together, we obtain the following surprising result.
	\begin{itemize}
		\item The Eulerian coefficient associated to a planar rooted tree is furthermore independent of the location of the root, that is, it only depends on the underlying tree (in the sense of graph theory, i.e., a connected graph with no cycles); cf.~Corollary \ref{cor: independence on root}.
	\end{itemize}
	
	We stress that the number of trees (in the sense of graph theory)
	with precisely $n$ vertices (sequence A000055 in the OEIS\footnote{Available at the following link: ~\url{https://oeis.org/A000055}.}) is much smaller than the cardinality of the PBW basis $\basis_n$, which is $(n-1)!$. A table of Eulerian coefficients for trees with $n\leq 8$ vertices can be found in Appendix \ref{appendix: table}. We also remark that the polynomial \emph{is not} independent of the choice of root: in particular, the last invariance result also implies a plethora of identities involving Bernoulli numbers, the simplest example being the classical Euler's identity, cf.~Remark \ref{rem:bernoulli identities} for more details.
	\\
	
	\begin{center}
		{\em Step 4) From the PBW basis to Dynkin's basis}
	\end{center}
	We conclude this long introduction by sketching how Theorem \ref{th:eulerian in Dynkin basis} follows from the previous results. 
	
	To an  element $b\in\basis_n$ in the PBW basis, or equivalently, to the corresponding planar binary rooted tree $T$ with $n$ leaves and an admissible labeling $\ell\colon\{\mbox{leaves of $T$}\}\to\{1,\ldots,n\}$, we associate a subset of the symmetric group $S(b)=S(T,\ell)\subset S_{n-1}$. This may be defined as the set of permutations $\sigma$ such that $b$ appears, with a non-zero coefficient, in the expansion of $[x_{\sigma(1)},\cdots[x_{\sigma(n-1)},x_n]\cdots]\in\mathcal{D}_n$ in the PBW basis $\mathcal{B}_n$. More precisely, given $[x_{\sigma(1)},\cdots[x_{\sigma(n-1)},x_n]\cdots]\in\mathcal{D}_n$, and writing its expansion in the PBW basis \[ [x_{\sigma(1)},\cdots[x_{\sigma(n-1)},x_n]\cdots]=\sum_{b\in\basis_n}c_{\sigma,b}\,b,\] we have (cf.~Proposition \ref{prop: from dynkin to pbw}): 
	\begin{itemize} \item $c_{\sigma,b}=0$ if $\sigma\not\in S(b)$;
		\item $c_{\sigma,b}=(-1)^{r_T-1}$ if $\sigma\in S(b)=S(T,\ell)$, where $(T,\ell)$ is the labeled tree corresponding to $b$, and $r_T$ is the number of right pointing leaves of $T$.\end{itemize}
	On the other hand, we will see that the set $S(T,\ell)$ can be described in terms of the combinatorics of the labeled tree $(T,\ell)$ alone, see Remark \ref{rem:removable}. We use the above observation to switch between the PBW basis and the Dynkin's basis.
	
	We define the \emph{Eulerian numbers associated to $(T,\ell)$}, and we denote them by $E(T,\ell,d)$, where $0\leq d\leq n-2$, as the number of permutations in $S(T,\ell)$ having descent number $d$, that is, 
	\[ E(T,\ell,d) =\left|\left\{ \sigma\in S(T,\ell)\,\vert\, d_\sigma=d   \right\}\right|.  \]
	We recover the usual Eulerian numbers (cf.~\cite{Petersen}) when $T$ is a left pointing comb (equivalently, when $\Phi(T)$ is a corolla). Using Stanley's shuffling Theorem \cite{Stanley, Goulden}, we prove in Proposition \ref{prop: eulerian independet of labeling} that these numbers are independent of the labeling $\ell$, which accordingly will be dropped from the notations. 
	
	In Theorem \ref{th:worpitzki} we prove a generalization of the classical Worpitzki's identity (cf.~\cite[\S 1.5]{Petersen}), relating the numbers $E(T,d)$ and the polynomial $\altbin{t}{T}$ from the previous step. More precisely, we will prove the following identity\footnote{It might appear that this identity already implies the independence of the numbers $E(T,\ell,d)$ from the labeling, but in fact we use this result in the proof of \eqref{worpitzki-intro}. On the other hand, once we have proved both facts, \eqref{worpitzki-intro} implies the stronger result that the numbers $E(T,d)$ only depend on $\Phi(T)$ as a (non-planar) rooted tree.}, where  $|T|$ is the number of leaves of $T$,
	\begin{equation}\label{worpitzki-intro} (-1)^{r_T-1}\altbin{k+1}{T} = \sum_{d=0}^{|T|-2}\binom{|T|-1+k-d}{|T|-1}E(T,d).   \end{equation}
	Once again, this recovers the usual Worpitzki's identity when $T$ is a left pointing comb. In fact, our argument follows a bijective proof of the latter, that we found in Knuth's book \cite{Knuth}.
	
We apply the identity \eqref{worpitzki-intro} to deduce the following formula, relating the Eulerian numbers $E(T,d)$ and the Eulerian coefficient $E_T$ associated to $T$:
\[ E_T=\frac{(-1)^{r_T-1}}{|T|}\sum_{d=0}^{|T|-2}\frac{(-1)^d}{\binom{|T|-1}{d}}E(T,d). \]

Finally, after these preparations, the proof of Theorem \ref{th:eulerian in Dynkin basis} becomes straightforward: it follows by combining the above formula for $E_T$ and the aforementioned rule to switch between the bases $\mathcal{D}_n$ and $\basis_n$, see Subsection \ref{subsection: eulerian in dynkin}. Of course, a more direct proof than the one given here should be possible: nonetheless, we believe that the various results we establish along the way, and in particular the previous identity \eqref{worpitzki-intro}, are of independent interest.

	\begin{acknowledgements}
		We are grateful to the Erwin Schr\"odinger Institute in Vienna (Austria) for the excellent working conditions during our stay there in July and August 2016, as well as for the financial support that we received through its ``Research in Teams" program. 
		Moreover, we thank the authors of the Wikipedia page on
		Bernoulli numbers \url{https://en.wikipedia.org/wiki/Bernoulli_number} -- it was of great help to us at certain stages of this project.
	\end{acknowledgements}

	\section{{\bf Computing pre-Lie logarithms via umbral calculus}}\label{section: pre-Lie}
	
	In this section we develop tools to compute logarithms in a pre-Lie algebra: these will be used in Section \ref{section: Eulerian coefficients} to solve our main problem outlined in the Introduction, that is, find formulas for the Eulerian idempotent. On the other hand, the computation of  pre-Lie logarithms is an interesting problem in its own right, with connection to operad theory \cite{chapoton,pLDF}, numerical analysis \cite{Hairer,alg B-series} and renormalization \cite{Connes-Kreimer,Brouder}. Our approach is inspired by umbral calculus, in its modern formulation by G.-C. Rota and S. Roman \cite{Roman}. 
	
	As always, we work over a field $\K$ of characteristic zero.

	\subsection{Basics on pre-Lie algebras}\label{subsection: pre-Lie}

	\begin{definition}\label{definition: pre-Lie}
		A (left) pre-Lie algebra is a vector space $L$, equipped with a bilinear operation
		$$ \pre: L\otimes L \to L,$$
		such that its associator $A_\pre(x,y,z):= x \pre (y \pre z) - (x \pre y) \pre z$
		is symmetric in the first two arguments. When this happens, the corresponding commutator $[x,y]:= x\rhd y - y\rhd x$ is a Lie bracket on $L$ (hence the name pre-Lie algebras).
	\end{definition}
	
	\begin{example}
		Any associative algebra $(A,\cdot)$ is in particular a pre-Lie algebra, as in this case the corresponding associator is identically zero.
		In particular, both the polynomial ring $\K[t]$ and the ring of formal power
		series $\K[[t]]$ come equipped with the structure of a pre-Lie algebra.
	\end{example}

	\begin{definition}\label{def: sym braces} Given a left pre-Lie algebra $(L,\pre)$, the associated symmetric brace operations $\{\#,\ldots,\#|\# \}:L^{\odot n}\otimes L\to L$ are defined recursively by (see \cite{oudom-guin,pLDF})
		\[ \{ x \} = x, \]
		\[ \{ y|x \} = y\pre x, \]
		\[  \{y_1,\ldots,y_k|x\} = y_1\pre\{y_2,\ldots,y_{k} | x \}-\sum_{j=2}^{k}\{y_2,\ldots,y_1\pre y_j,\ldots y_{k}|x\} . \]\end{definition}
	
	For instance, $\{ y_1,y_2| x\} = y_1\pre (y_2\pre x)- (y_1\pre y_2) \pre x$ is the usual associator. Starting from this observation, it is not hard to prove inductively that for $k\ge 2$ the brace $\{ y_1,\ldots,y_k| x  \}$ is symmetric in the arguments $y_1,\ldots,y_k$. It is well-known that the operations $\{\#,\ldots,\#|\# \}$ make $L$ into a symmetric brace algebra. In fact, this construction establishes an isomorphism between the categories of (right) symmetric brace  algebras and (left) pre-Lie algebras  (cf.~\cite{oudom-guin} and \cite[\S 13.4.9]{Loday-Vallette} for further references on symmetric brace algebras).
	
	\begin{definition}
		A complete  pre-Lie algebra is a pre-Lie algebra $(L,\rhd)$ equipped with a filtration 
		$$ \cdots \subset F^pL \subset F^{p-1}L \subset \cdots F^2L \subset F^1L = L$$
		such that 
		
		\begin{itemize} \item the filtration is complete, that is, the natural morphism of vector spaces $L\to\underleftarrow{\lim}\, L/F^pL$ is an isomorphism; and \item the filtration is compatible with the pre-Lie product, that is, $F^kL \rhd F^l L\subset F^{k+l}L$ for all $k,l\ge 1$.
		\end{itemize}
	\end{definition}

	We next turn to a special example of a complete pre-Lie algebra, defined in terms of trees.
	
	\begin{remark}\label{remark: tree terminilogy}
		Let us fix some basic terminology concerning trees.
		A rooted tree is a tree with a distinguished vertex, the root.
		Given any vertex $v$ of a rooted tree $T$,
		there is a unique shortest path from $v$ to the root. The distance of $v$ from the root is the length of this path, i.e., the number of edges in it.
		The set $V(T)$ of vertices of $T$ inherits a partial order
		by declaring $v\ge v'$ whenever $v$ lies on the shortest path from $v'$ to the root.
		In this situation, we also call $v'$ a descendant of $v$.
		The height $h(T)$ of a tree is the maximal distance
		of a vertex from the root.
		The order of $T$, denoted by $|T|$, is the number of vertices (including the root). 
	\end{remark}
	
	\begin{definition}
		Let $\T(n)$ be the vector space spanned by all rooted trees with $n$ vertices (including the root).
		We denote by $\T$ the direct product $\T := \prod_{k\ge 1}\T(k)$.
		\begin{itemize}
			
			\item We define a complete filtration on $\T$ by setting $F^p\T := \prod_{k\ge p}\T(k)$.
			
			\item 	We define a bilinear operation $\curvearrowright$ on $\T$ by 
			\[ T \curvearrowright T' := \sum_{v\in V(T')} T \searrow_v T',\]
			where $T \searrow_v T'$ is the rooted tree obtained by taking the disjoint union of $T$ and $T'$, drawing an edge from the root of $T$ to $v$, and finally taking the root of $T'$ as the new root.
		\end{itemize}
	\end{definition}
	
	\begin{example}
		An example of $\curvearrowright$ is shown below:
		\begin{eqnarray*}
			\xy
			{(0,0)*{\bullet}};
			\endxy \, \, \curvearrowright \, \,
			\xy
			{\ar@{-}(4,-4)*{\bullet};(4,0)*{\circ}};
			{\ar@{-}(4,0)*{\circ};(0,4)*{\circ}};
			{\ar@{-}(4,0)*{\circ};(8,4)*{\circ}};
			\endxy  \,\, 
			&=&
			\,\,\, 	\xy
			{\ar@{-}(0,-4)*{\bullet};(-4,0)*{\circ}};
			{\ar@{-}(0,-4)*{\bullet};(4,0)*{\circ}};
			{\ar@{-}(4,0)*{\circ};(8,4)*{\circ}};
			{\ar@{-}(4,0)*{\circ};(0,4)*{\circ}};
			\endxy
			\,\, +  \,\,
			\xy 
			{\ar@{-}(4,-4)*{\bullet};(4,0)*{\circ}};
			{\ar@{-}(4,0)*{\circ};(0,4)*{\circ}};
			{\ar@{-}(4,0)*{\circ};(8,4)*{\circ}};
			{\ar@{-}(4,0)*{\circ};(4,5)*{\circ}};
			\endxy \,\, + \,\, 2 \,\, \xy
			{\ar@{-}(4,-4)*{\bullet};(4,0)*{\circ}};
			{\ar@{-}(4,0)*{\circ};(0,4)*{\circ}};
			{\ar@{-}(4,0)*{\circ};(8,4)*{\circ}};
			{\ar@{-}(0,4)*{\circ};(0,8)*{\circ}};
			\endxy
		\end{eqnarray*}
	\end{example}

	\begin{remark}\label{rem: sym braces in free pl}
		It is well-known that $(\T,\curvearrowright)$ is a complete pre-Lie algebra. In fact, as shown by Chapoton and Livernet \cite{Chapoton-Livernet},  it is the free complete pre-Lie algebra
		generated by $\bullet$, the tree with only the root.
		It therefore satisfies the following universal property: given a complete pre-Lie algebra $(L,\pre)$ and an element $x\in L$, there is a unique morphism of pre-Lie algebras $\Psi_x:\T\to L$ such that $\Psi_x(\bul)=x$. This morphism is more easily described in terms of the symmetric brace operations. First of all, it is easy to show by induction that the symmetric brace operations $\{\#,\ldots,\#|\bul \}:\T^{\odot n}\to\T$ are as follows: given trees $T_1,\ldots,T_k \in \T$, the tree $\{T_1,\ldots,T_k|\bul \}$ is obtained by taking the disjoint union $\bullet,T_1,\ldots,T_k$, drawing an edge from $\bullet$ to the root of each one of the trees $T_1,\ldots,T_k$, and finally making $\bullet$ into the root of this new tree. For instance, $\{\bullet|\bullet\}= \xy 
		{\ar@{-}(0,-2)*{\bullet};(0,2)*{\circ}}\endxy$ is the tree with only the root and one leaf, and, more in general, the corolla with $n$ leaves can be written as follows:
		$$ \overbrace{\xy 
			{\ar@{-}(0,-4)*{\bul};(-10,4)*{\circ}};
			{\ar@{-}(0,-4)*{\bul};(-6,4)*{\circ}};
			{\ar@{}(0,-4)*{\bul};(2,4)*{\cdots\cdots}};
			{\ar@{-}(0,-4)*{\bul};(10,4)*{\circ}}; 
			\endxy}^n
		\quad = \quad
		\{\overbrace{\bullet,\ldots,\bullet}^n | \bullet\}.$$
		Another example is depicted below:
		
		\begin{eqnarray*}
			\{ \{ \bul,\bul,\bul|\bul  \}, \{ \{\bul,\bul|\bul \} |\bul  \} |\bul  \}\: =\:	\left\{ \left.
			\xy
			{\ar@{-}(0,-4)*{\bul};(0,4)*{\circ}};
			{\ar@{-}(0,-4)*{\bul};(-6,4)*{\circ}};
			{\ar@{-}(0,-4)*{\bul};(6,4)*{\circ}};
			\endxy , 
			\xy
			{\ar@{-}(0,-4)*{\bul};(0,0)*{\circ}};
			{\ar@{-}(0,0)*{\circ};(4,4)*{\circ}};
			{\ar@{-}(0,0)*{\circ};(-4,4)*{\circ}};
			\endxy
			\right| \bul \right\} \: =\:
			\xy
			{\ar@{-}(0,-6)*{\bul};(-6,-2)*{\circ}};
			{\ar@{-}(0,-6)*{\bul};(6,-2)*{\circ}};
			{\ar@{-}(-6,-2)*{\circ};(-6,4)*{\circ}};
			{\ar@{-}(-6,-2)*{\circ};(-10,4)*{\circ}};
			{\ar@{-}(-6,-2)*{\circ};(-2,4)*{\circ}};
			{\ar@{-}(6,-2)*{\circ};(6,3)*{\circ}};
			{\ar@{-}(6,3)*{\circ};(3,8)*{\circ}};
			{\ar@{-}(6,3)*{\circ};(9,8)*{\circ}};
			\endxy
		\end{eqnarray*}
		In particular, it is clear how to generate any tree from $\bul$ via nested symmetric braces. Since any morphism of pre-Lie algebras is automatically compatible with the associated braces, this describes $\Psi_x$ completely. For instance, for the tree $T$ depicted above we get \[ \Psi_x(T)=\{ \{ x,x,x|x  \}, \{ \{x,x|x\} |x \} |x  \}.\]
	\end{remark}

	\subsection{Umbral calculus in pre-Lie algebras}\label{subsection: Umbral calculus}

	We call a formal power series \begin{equation}\label{eq:delta series} \K[[t]]\ni f(t) = \sum_{k\geq0} \frac{c_k}{k!} t^k \end{equation} 
	a {\em $\delta$-series} if $c_0=0$ and $c_1\neq0$. Given a $\delta$-series $f(t)$, the formal power series $\frac{f(t)}{t}$ has a multliplicative inverse, which we shall denote by 
	\[ g(t):= \frac{t}{f(t)} = \sum_{k\geq0} \frac{a_k}{k!} t^k. \]
	Finally, we shall denote by $a_0(t), a_1(t), \ldots, a_k(t),\ldots$ the {\em Appell sequence} of polynomials associated to the sequence of scalars $a_0,a_1,\ldots,a_k,\ldots \in \K$: these are defined recursively by
	\[ a_0(t)=a_0 \quad \textrm{and} \quad \int_0^t a_k(\tau) d\tau = \frac{a_{k+1}(t)-a_{k+1}}{n+1}, \]
	and explicitly by
	\[ a_k(t) = \sum_{j=0}^k\binom{k}{j}a_{k-j}t^j.  \]
	For instance, the first few polynomials are
	\[ a_0(t) = a_0, \]
	\[ a_1(t) = a_0t+a_1, \]
	\[ a_2(t) = a_0 t^2 +2a_1 t + a_2, \]
	\[ a_3(t) = a_0 t^3 + 3a_1t^2 + 3a_2 t +a_3,  \]
	\[ \cdots \]

	Let $(L,\pre)$ be a complete, left pre-Lie algebra. Given $x\in L$, we shall denote by 
	\[ x^{\pre k} := \overbrace{x\pre (\cdots( x\pre x}^k)\cdots).  \]
	Given a $\delta$-series $f(t)\in \K[[t]]$ as in \eqref{eq:delta series}, we can consider the function
	\[ f_{\pre}(-): L \to L : x\to f_{\pre}(x):= \sum_{k\geq1} \frac{c_k}{k!}\,x^{\pre k}.  \]
	The above infinite summation (and the following ones) makes sense since $L$ is complete. This function admits a compositional inverse $f_\rhd^{-1}(-):L\to L$. In fact, given $x,y\in L$, and denoting by $y \pre \#: L \to L : z\to y\pre z$ the operator of left multiplication by $y$, the equation 
	\[ x = f_\pre (y) = \sum_{k\ge1}\frac{c_k}{k!} y^{\pre k} = \sum_{k\geq0} \frac{c_{k+1}}{(k+1)!}(y\pre \#)^k(y)  \]
	is equivalent to 
	\begin{equation}\label{eq:recursion} 
	y =  \sum_{k\geq0} \frac{a_k}{k!}(y\pre \#)^k(x),
	\end{equation}
	and the latter can be solved recursively in $y$.
	
	\begin{example}
		One can compute $y$ up to order three as follows, where $o(n)$ denotes a remainder term $o(n)\in F^n L$. 
		\begin{eqnarray*}
			y &=& a_0\, x + o(2)= \\
			&=& a_0\, x + a_1\, y\pre x + o(3) = a_0\,x + a_1( a_0\,x + o(2))\pre x + o(3) = a_0\,x +a_0a_1\,x\pre x + o(3) =\\
			&=& a_0\, x + a_1\, y\pre x + \frac{a_2}{2} y\pre(y\pre x) +o(4)= \\&=& a_0\,x + a_1\,( a_0\,x + a_0a_1\,x\pre x)\pre x + \frac{a_2}{2} a_0\, x\pre ( a_0\,x\pre x) + o(4) = \\ &=& a_0\,x + a_0a_1\, x\pre x + \frac{a_0^2 a_2}{2}x\pre (x\pre x) + a_0 a_1^2\,(x\pre x)\pre x + o(4).
		\end{eqnarray*}
		It is clear that the above process can be iterated up to any order, and since $L$ is complete, the resulting infinite series converges to a well defined $y=: f_\pre^{-1}(x)\in L$.
	\end{example}
	
	The previous recursion \eqref{eq:recursion} for $f_\pre^{-1}(-):L\to L$ becomes rapidly unwieldy, and the aim of this section is to develop techniques to compute the function $f_\pre^{-1}(-)$ more efficiently.
	In the remainder of this section, we will focus on the universal case, i.e. we shall assume that $(L,\pre)$ is the free complete pre-Lie algebra $(\mathcal{T},\curvearrowright)$ of rooted trees described in the previous section.

	We denote by $(\T[t],\curvearrowright)$ the complete pre-Lie algebra $\T[t]:=\K[t]\otimes \T$ of polynomials with coefficients in $\T$, with the obvious pre-Lie product $\curvearrowright$ induced via scalar extension. We shall denote an element $P\in\T[t]$ by 
	\[ P = \sum_{T} P(T)(t) \otimes T, \] 
	where the sum runs over the set of all rooted trees, and by $P', P'',\ldots, P^{(n)},\ldots$ the ordinary derivatives $P^{(n)} = \sum_{T}\frac{d^nP(T)}{dt^n}(t) \otimes T $ with respect to the variable $t$. Furthermore, given the formal power series $g(t) = \sum_{k\ge 0}\frac{a_k}{k!} t^k \in \K[[t]]$ as before, we shall denote by $\langle g(D)|-\rangle$ the linear functional $\K[t]\to\K$ sending $t^n$ to $a_n$, and by the same symbol its extension to a functional $\langle g(D)|-\rangle:\T[t]\to\T$. This notation is inspired by umbral calculus, cf.~Roman's book \cite{Roman}. 
	
	In order to compute $f^{-1}_\curvearrowright(\bul)$, our main idea is to replace the recursion  \eqref{eq:recursion} by the following differential equation
	\begin{equation}\label{eq:differential equation}
	\left\{\begin{array}{l} P ' = \langle g(D)|P\rangle\curvearrowright P \\
	P(0) = \bullet
	\end{array}  \right.
	\end{equation}
	in the pre-Lie algebra $\T[t]$. 
	
	\begin{remark}\label{rem: notation}
		For the remainder of this subsection, we shall denote by \[ P=\sum_T P(T)(t)\otimes T \]  
		the solution of $\eqref{eq:differential equation}$. To get nicer formulas later on, it will be convenient to introduce the normalized polynomials
		\[  p(T)(t) := |\operatorname{Aut}(T)|\cdot P(T)(t),\qquad P=\sum_T \frac{p(T)(t)}{|\Aut(T)|}\otimes T \]
		where $|\operatorname{Aut}(T)|$ is the number (also called the \emph{symmetry factor} of $T$) of automorphisms of $T$ as a rooted tree. 
		
		We shall also denote by $a:=\left< g(D)|P \right>\in\T$, and respectively by $a_T, \widetilde{a}_T\in\K$ the normalized and unnormalized coefficient of a tree $T$ in the expansion of $a$, that is,  
		\begin{eqnarray*} &\widetilde{a}_T := \left< g(D)|P(T)(t)  \right>,\quad a_T :=\left< g(D) | p(T)(t) \right> =|\Aut(T)|\cdot\widetilde{a}_T,&\\
			&a:= \langle g(D)|P\rangle = \sum_T\widetilde{a}_T\,T =\sum_{T} \frac{a_T}{|\operatorname{Aut}(T)|}\, T.& \end{eqnarray*}
	\end{remark}
	The link between \eqref{eq:recursion} and \eqref{eq:differential equation} is explained by the following proposition.
	\begin{proposition}\label{prop:recvsdiffeq}
		If $P\in \mathcal{T}[t]$ is the solution to the equation \eqref{eq:differential equation}, then $a:=\langle g(D) | P\rangle$ is the solution of the recursion \eqref{eq:recursion}, hence $ a = f^{-1}_{\curvearrowright}(\bul)$.
	\end{proposition}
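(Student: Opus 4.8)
The plan is to exploit the fact that the functional $\langle g(D)|-\rangle$ annihilates the variable $t$, so that $a=\langle g(D)|P\rangle\in\T$ is a \emph{constant} element, independent of $t$ (it is obtained by evaluating each polynomial coefficient of $P$ to a scalar). Granting this, the differential equation \eqref{eq:differential equation} ceases to be genuinely coupled: once $a$ is regarded as a fixed element of $\T$, the solution $P$ satisfies the \emph{linear} ordinary differential equation $P'=a\curvearrowright P$ with initial condition $P(0)=\bul$, whose coefficient is the left multiplication operator $M:=a\curvearrowright\#$. This linearization is the conceptual heart of the argument; everything else is bookkeeping controlled by the filtration.

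First I would record the relevant filtration estimate. Since $\T=F^1\T$ we have $a\in F^1\T$, and because $(\T,\curvearrowright)$ is complete its pre-Lie product satisfies $F^k\T\curvearrowright F^l\T\subseteq F^{k+l}\T$; hence $M$ raises filtration degree by one, i.e.\ $M(F^p\T)\subseteq F^{p+1}\T$. (The leading term of $a$ is $a_0\bul$ with $a_0=g(0)=1/c_1\neq 0$, which is consistent with the $k=0$ term of the recursion.) Consequently the unique solution of the linear equation is the exponential
\[ P(t)=e^{tM}\bul=\sum_{k\geq 0}\frac{t^k}{k!}\,M^k\bul,\qquad M^k\bul\in F^{k+1}\T. \]
I would then verify that this power series really lies in $\T[t]$, i.e.\ has polynomial coefficients: for a fixed tree $T$ with $n$ vertices only the summands with $k\leq n-1$ contribute to its coefficient, so each $P(T)(t)$ has degree $<n$, as required.

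Finally I would apply $\langle g(D)|-\rangle$ to both sides of the displayed identity. Because only finitely many powers of $t$ occur in the coefficient of each tree, the functional may be applied term by term; using $\langle g(D)|t^k\rangle=a_k$ and $M^k\bul=(a\curvearrowright\#)^k(\bul)$ this gives
\[ a=\langle g(D)|P\rangle=\sum_{k\geq 0}\frac{a_k}{k!}\,(a\curvearrowright\#)^k(\bul), \]
which is exactly the recursion \eqref{eq:recursion} with $x=\bul$ and $y=a$. By the discussion preceding \eqref{eq:recursion}, its unique solution is $f^{-1}_\curvearrowright(\bul)$, whence $a=f^{-1}_\curvearrowright(\bul)$. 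The only genuine subtlety, and hence the step I would treat most carefully, is the interchange of the functional with the infinite sum together with the verification that $P$ has polynomial (rather than merely power-series) coefficients; both reduce to the estimate $M(F^p\T)\subseteq F^{p+1}\T$ established at the outset.
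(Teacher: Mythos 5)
Your proof is correct and follows essentially the same route as the paper: the paper likewise observes that $a=\langle g(D)|P\rangle$ is a constant element of $\T$, deduces by induction that $P^{(k)}(0)=(a\curvearrowright\#)^k(\bul)$ so that $P=\sum_{k\geq0}\frac{t^k}{k!}(a\curvearrowright\#)^k(\bul)$ (your $e^{tM}\bul$), and then applies the functional term by term. Your explicit filtration estimate $M(F^p\T)\subseteq F^{p+1}\T$, justifying that the coefficients are genuine polynomials and that the functional can be interchanged with the sum, is a welcome elaboration of a point the paper leaves implicit, but it is not a different argument.
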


	\begin{proof} Equation \eqref{eq:differential equation} implies by induction that
		\[ P^{(k)}(0)= \overbrace{a\curvearrowright (\cdots( a}^k\curvearrowright \bul)\cdots),\]
		and therefore the Taylor series of $P$ reads
		\[ P = \sum_{k\geq0} \frac{t^k}{k!} \overbrace{a\curvearrowright (\cdots( a}^k\curvearrowright \bul)\cdots) . \]
		Since by definition $a=\langle g(D)| P\rangle $, and $\langle g(D)| \#\rangle$ is the functional sending $t^k$ to $a_k$, applying $\langle g(D)| \#\rangle$ to both sides yields
		\[ a = \sum_{k\geq0} \frac{a_k}{k!} \overbrace{a\curvearrowright (\cdots( a}^k\curvearrowright \bul)\cdots),  \]
		which is precisely saying that $a$ solves the recursion \eqref{eq:recursion}
		$$ a = \sum_{k\ge 0}\frac{a_k}{k!}(a \curvearrowright \#)^k(\bullet).$$ 
	\end{proof}

	\begin{remark}\label{remark:recucomp}	We notice that equation \eqref{eq:differential equation} could be solved recursively, in a way similar to what we did for \eqref{eq:recursion}. For instance, we can determine $P$ up to order three as follows: first of all, $P' = o(2)$, together with the initial condition $P(0)=\bullet$, implies that $p(\bullet)=1$, $a_\bullet =\left<g(D)|p(\bullet)\right>= a_0$. Next, we have
		\[ P' = a\curvearrowright P = (a_0\bullet + o(2))\curvearrowright (\bullet + \,o(2) ) = a_0 \xy
		{\ar@{-}(0,-2)*{\bul};(0,2)*{\circ}};
		\endxy +o(3), \]
		thus $p(\xy
		{\ar@{-}(0,-2)*{\bul};(0,2)*{\circ}};
		\endxy) =a_0t$ and $a_{\xy
			{\ar@{-}(0,-2)*{\bul};(0,2)*{\circ}};
			\endxy}= a_0a_1$. Continuing like this, we find
		
		\begin{eqnarray*}
			P' &=& (a_0\bullet + a_0a_1\xy
			{\ar@{-}(0,-2)*{\bul};(0,2)*{\circ}};
			\endxy +o(3))\curvearrowright(\bullet + a_0 t\xy
			{\ar@{-}(0,-2)*{\bul};(0,2)*{\circ}};
			\endxy +o(3)) = \\ &=& a_0 \xy
			{\ar@{-}(0,-2)*{\bul};(0,2)*{\circ}};
			\endxy + a_0^2 t \xy
			{\ar@{-}(0,-1)*{\bul};(-2,2)*{\circ}};
			{\ar@{-}(0,-1)*{\bul};(2,2)*{\circ}};
			\endxy + a_0(a_0t +a_1)\xy
			{\ar@{-}(-3,0)*{\bul};(0,0)*{\circ}};
			{\ar@{-}(0,0)*{\circ};(3,0)*{\circ}};
			\endxy + o(4),   
		\end{eqnarray*} 
		thus \[ p(\xy
		{\ar@{-}(0,-1)*{\bul};(-2,2)*{\circ}};
		{\ar@{-}(0,-1)*{\bul};(2,2)*{\circ}};
		\endxy) = 2P(\xy
		{\ar@{-}(0,-1)*{\bul};(-2,2)*{\circ}};
		{\ar@{-}(0,-1)*{\bul};(2,2)*{\circ}};
		\endxy) = a_0^2 t^2,\qquad a_{\xy
			{\ar@{-}(0,-1)*{\bul};(-2,2)*{\circ}};
			{\ar@{-}(0,-1)*{\bul};(2,2)*{\circ}};
			\endxy }= a_0^2a_2,\] \[p(\xy
		{\ar@{-}(-3,0)*{\bul};(0,0)*{\circ}};
		{\ar@{-}(0,0)*{\circ};(3,0)*{\circ}};
		\endxy)= P(\xy
		{\ar@{-}(-3,0)*{\bul};(0,0)*{\circ}};
		{\ar@{-}(0,0)*{\circ};(3,0)*{\circ}};
		\endxy) =\frac{a_0^2}{2}t^2 + a_0a_1 t,\qquad a_{\xy
			{\ar@{-}(-3,0)*{\bul};(0,0)*{\circ}};
			{\ar@{-}(0,0)*{\circ};(3,0)*{\circ}};
			\endxy}= \frac{1}{2}a_0^2a_2 + a_0a_1^2.\]
	\end{remark}
	
	On the other hand, there is a simpler recursive scheme to compute the polynomials $p(T)(t)$, which is why we prefer to work with equation \eqref{eq:differential equation}. To state the result, we need an additional piece of notation: given the formal power series $g(t)=\sum_{k\geq0}\frac{a_k}{k!} t^k$ as before, we shall denote by $g(D):\K[t]\to\K[t]$ the associated operator on polynomials, obtained by formally replacing $t$ by the derivative operator $D=\frac{d}{dt}$. Equivalently, this is the linear operator sending $t^n$ to the $n$-th polynomial $a_n(t)$ in the Appell sequence associated to $a_0,\ldots,a_k,\ldots$ (cf.~the beginning of the subsection, as well as Roman's book \cite[\S 2.2]{Roman})
	$$g(D):= \sum_{k\geq0}\frac{a_k}{k!} D^k:\K[t]\to\K[t]:p \mapsto g(D)p,\quad g(D)t^n= a_n(t).$$

	\begin{theorem}\label{th: product rule}\hspace{0cm} The polynomials $p(T)(t)\in\mathbb{K}[t]$, $T\in\T$, are determined by $p(\bullet)(t)=1$ and the following recursion. 
		\begin{itemize}
			\item[(I)] Given a tree $T=\{T_1,\ldots,T_k|\bul \}$, $k\geq 1$ (see Remark \ref{rem: sym braces in free pl} for the notation), we have
			\begin{equation}\label{eq: product rule} p(T) = p(\{T_1,\ldots,T_k|\bul \})=p(\{T_1|\bullet\})\cdots p(\{T_k|\bullet \}). \end{equation}
			\item[(II)] For every tree $T \in \mathcal{T}$, we have
			\begin{equation}\label{eq: umbral substitution} p(\{T|\bul\})(t) = \int_0^t g(D)p(T)(\tau) d\tau.  \end{equation}
		\end{itemize}
	\end{theorem}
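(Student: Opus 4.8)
The plan is to reduce both identities to a single coefficientwise reformulation of the differential equation \eqref{eq:differential equation}, and then to run two short inductions on the number of vertices.

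First I would record the structural simplification that $a:=\langle g(D)|P\rangle\in\T$ is \emph{independent of} $t$: the functional $\langle g(D)|\cdot\rangle$ evaluates away the polynomial variable, so \eqref{eq:differential equation} is a linear equation $P'=a\curvearrowright P$ with constant coefficient $a=\sum_T a_T\,T/|\Aut(T)|$ (notation of Remark \ref{rem: notation}). Writing $P=\sum_S P(S)(t)\,S$ and extracting the coefficient of a fixed tree $S$, the right-hand side $a\curvearrowright P=\sum_{U,T'}\widetilde a_U\,P(T')(t)\,(U\curvearrowright T')$ is governed by the grafting numbers $\langle U\curvearrowright T',S\rangle$. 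The key step — and, I expect, the main technical obstacle — is the passage to the \emph{normalized} polynomials $p(S)=|\Aut(S)|\,P(S)$. Using the standard symmetry-factor identity for rooted trees,
\[ \langle U\curvearrowright T',S\rangle \;=\; \frac{|\Aut(U)|\,|\Aut(T')|}{|\Aut(S)|}\,N(U,T';S), \]
where $N(U,T';S)$ counts the non-root vertices $w$ of $S$ whose branch (the subtree rooted at $w$) is isomorphic to $U$ and whose trunk (the rest of $S$) is isomorphic to $T'$, all the automorphism factors cancel and one is left with the clean recursion
\[ p(S)'(t) \;=\; \sum_{w} a_{U_w}\,p(T'_w)(t), \]
the sum running over all non-root vertices $w$ of $S$, with $U_w$ the branch cut off at $w$ and $T'_w$ the remaining trunk; here $a_{U_w}=\langle g(D)|p(U_w)\rangle$ by Remark \ref{rem: notation}.

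With this recursion in hand I would prove (II) first. Set $q(T):=g(D)\,p(T)$, so that $q(T)(0)=\langle g(D)|p(T)\rangle=a_T$, and let $R(T):=p(\{T|\bullet\})'$. Applying the recursion to $S=\{T|\bullet\}$, whose non-root vertices are exactly the vertices of $T$, splits off the term from cutting at the root of $T$ (branch $T$, trunk $\bullet$) and gives $R(T)=a_T+\sum_w a_{U_w}\,p(\{\tau_w|\bullet\})$, where $\tau_w$ is $T$ with the branch at $w$ deleted; in particular $R(T)(0)=a_T=q(T)(0)$. Differentiating once more yields $R(T)'=\sum_w a_{U_w}R(\tau_w)$, while $q(T)'=g(D)\,p(T)'=\sum_w a_{U_w}\,q(\tau_w)$, because $g(D)$ commutes with $\tfrac{d}{dt}$ and $p(T)'=\sum_w a_{U_w}p(\tau_w)$. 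Since each $\tau_w$ has fewer vertices than $T$, induction gives $R(\tau_w)=q(\tau_w)$, so $R(T)$ and $q(T)$ are polynomials with equal derivative and equal value at $t=0$, whence $R(T)=q(T)$. As $p(\{T|\bullet\})(0)=0$, integrating $p(\{T|\bullet\})'=g(D)p(T)$ from $0$ to $t$ gives exactly \eqref{eq: umbral substitution}.

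Finally I would deduce (I) by induction on $|S|$ for $S=\{T_1,\dots,T_k|\bullet\}$, $k\ge1$. Sorting the non-root vertices of $S$ according to which branch $T_i$ they lie in, and using the inductive hypothesis to factor each smaller trunk $\{T_1,\dots,\tau_w^{(i)},\dots,T_k|\bullet\}$ as $p(\{\tau_w^{(i)}|\bullet\})\prod_{j\neq i}p(\{T_j|\bullet\})$ and $\{T_1,\dots,\widehat{T_i},\dots,T_k|\bullet\}$ as $\prod_{j\neq i}p(\{T_j|\bullet\})$, the recursion becomes $p(S)'=\sum_{i=1}^k\big(a_{T_i}+\sum_{w\in T_i,\,w\neq\mathrm{root}}a_{U_w}\,p(\{\tau_w^{(i)}|\bullet\})\big)\prod_{j\neq i}p(\{T_j|\bullet\})$. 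By the formula for $R(T_i)=p(\{T_i|\bullet\})'$ obtained in the previous paragraph, the bracketed factor is precisely $p(\{T_i|\bullet\})'$, so the right-hand side equals $\big(\prod_i p(\{T_i|\bullet\})\big)'$. Both $p(S)$ and $\prod_i p(\{T_i|\bullet\})$ vanish at $t=0$ (each factor does, since $k\ge1$), so they coincide, which is \eqref{eq: product rule}. The only genuinely nonroutine ingredient is the symmetry-factor identity of the first step; everything after it is bookkeeping together with the ``equal derivative and equal initial value'' principle for polynomials.
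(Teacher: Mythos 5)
Your argument is correct and follows the same skeleton as the paper's proof: extract a coefficientwise grafting recursion from the differential equation \eqref{eq:differential equation} (this is exactly Lemma \ref{prop:coproduct recursion}, in normalized form), then prove (I) and (II) by induction on the number of vertices by matching derivatives and initial values. The one substantive difference is that you pass to the normalized polynomials $p(T)$ at the outset via the symmetry-factor identity, which turns the coproduct recursion into a clean sum over non-root vertices $p(S)'=\sum_w a_{U_w}\,p(T'_w)$; this buys you a proof of (I) free of the multiplicity/factorial bookkeeping in the paper's formula \eqref{eq:copr}, and a proof of (II) that replaces the explicit Appell-sequence computation with the commutation of $g(D)$ and $D$ plus the ``equal derivative, equal initial value'' principle.
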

	\begin{remark}\label{remark: diff operator} Given the initial $\delta$-series $f(t)=\sum_{k\geq1} \frac{c_k}{k!} t^k$, we denote by $f(D)$ the associated operator $f(D):=\sum_{k\geq1} \frac{c_k}{k!} D^k:\K[t]\to \K[t]$. 
		Recall that $g(t):= \frac{t}{f(t)}$.
		Then item (II) in the previous theorem can be stated equivalently as follows:
		\begin{align*} p(\{T|\bul\})(t) = \int_0^t g(D)p(T)(\tau) d\tau & \Longleftrightarrow\quad  Dp(\{T|\bul \})=g(D)p(T),\quad p(\{T|\bul \})(0)=0\\
		&\Longleftrightarrow\quad \frac{D}{g(D)} p(\{T|\bul \}) = p(T),\quad p(\{T|\bul \})(0)=0 \\
		& \Longleftrightarrow\quad f(D) p(\{T|\bul \}) = p(T),\quad p(\{T|\bul \})(0) =0.  
		\end{align*} 
		In particular, the latter equivalence and \cite[Theorem 2.4.5]{Roman}, together with a straightforward induction, imply that the polynomial associated to the tall tree		
		\[ T_{n+1} =  
		\left.  \xy
		{\ar@{-}(0,-8)*{\bul};(0,-3)*{\circ}};
		{\ar@{.}(0,-3)*{\circ};(0,2)*{\circ}};
		{\ar@{-}(0,2)*{\circ};(0,7)*{\circ}};
		\endxy \right\}\mbox{\scriptsize $n+1$}%\:=\{T_n|\bul \}
		\qquad\mbox{is}\qquad P(T_{n+1})=\frac{p_n(t)}{n!}, \] 
		where $p_n(t)$ is the $n$-th polynomial in the sequence of binomial type associated to the $\delta$-series $f(t)$, cf.~\cite[\S 2.4]{Roman}.
		
	\end{remark}

	Our proof of Theorem \ref{th: product rule} relies on the following recursion \eqref{eq:coproduct recursion} for the (unnormalized, cf.~Remark \ref{rem: notation}) polynomials $P(T)$. In the statement of the following lemma, we consider the coproduct $\Delta:\mathcal{T}\to \mathcal{T}\otimes\mathcal{T}$ which is the transpose of $\curvearrowright$ with respect to the canonical basis given by trees, for instance 
	
	\begin{eqnarray*}
		\Delta\left(
		\xy
		{\ar@{-}(0,-4)*{\bul};(-4,1)*{\circ}};
		{\ar@{-}(0,-4)*{\bul};(4,1)*{\circ}};
		{\ar@{-}(4,1)*{\circ};(4,6)*{\circ}};
		\endxy \right)
		\,\, = \,\,
		2 \,\, 
		\xy
		{\ar@{-}(0,0)*{\bul};(0,0)*{\bul}};
		\endxy
		\, \otimes \,
		\xy 
		{\ar@{-}(0,-2)*{\bul};(-4,3)*{\circ}};
		{\ar@{-}(0,-2)*{\bul};(4,3)*{\circ}};
		\endxy
		\,\, + \,\,
		\xy
		{\ar@{-}(0,-2)*{\bul};(0,3)*{\circ}}; \endxy \, \otimes \,
		\xy
		{\ar@{-}(0,-2)*{\bul};(0,3)*{\circ}};
		\endxy
		\,\,+ \,\,
		\xy
		{\ar@{-}(0,0)*{\bul};(0,0)*{\bul}};
		\endxy
		\, \otimes \,
		\xy
		{\ar@{-}(0,-4)*{\bul};(0,1)*{\circ}};
		{\ar@{-}(0,1)*{\circ};(0,6)*{\circ}};
		\endxy 
	\end{eqnarray*}
	We shall write in Sweedler's notation
	\[\Delta(T)= \sum T^{(1)}\otimes T^{(2)}.  \]
	\begin{remark} As the above example shows, there might be some integer coefficients in the expansion of $\Delta(T)$, which we hide inside the term $T^{(1)}$ when using Sweedler's notation. In particular, when we write $\widetilde{a}_{T^{(1)}}$ as in the following formula \eqref{eq:coproduct recursion}, we are also taking into account these coefficients. \end{remark}
	\begin{lemma}\label{prop:coproduct recursion} Given a tree $T\neq \bul$, and writing $\Delta(T)$ in Sweedler's notation as above, we have (cf.~Remark \ref{rem: notation} for the definition of $\widetilde{a}_{T^{(1)}}$)                                                          
		\begin{equation}\label{eq:coproduct recursion} P(T)(t) = \sum \widetilde{a}_{T^{(1)}}\int_0^t P(T^{(2)})(\tau)d\tau.  \end{equation}
		\begin{proof} Straightforward consequence of the differential equation \eqref{eq:differential equation}.
		\end{proof} \end{lemma}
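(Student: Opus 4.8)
The plan is to read the recursion \eqref{eq:coproduct recursion} straight off the differential equation \eqref{eq:differential equation} by extracting, for each fixed tree $T$, the coefficient of $T$ on both sides and then integrating in $t$. The whole content is the compatibility between the pre-Lie product $\curvearrowright$ and its transpose coproduct $\Delta$.

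First I would rewrite \eqref{eq:differential equation} as $P' = a \curvearrowright P$, where $a := \langle g(D)|P\rangle = \sum_S \widetilde{a}_S\, S \in \T$ is constant in $t$ (the functional $\langle g(D)|-\rangle$ lands in $\T$), and $P = \sum_{S'} P(S')(t)\, S'$. Expanding the product bilinearly gives $a \curvearrowright P = \sum_{S,S'} \widetilde{a}_S\, P(S')(t)\,(S \curvearrowright S')$. To isolate the coefficient of a fixed tree $T$, I would invoke the defining property of $\Delta$ as the transpose of $\curvearrowright$ with respect to the tree basis: writing $\langle -,-\rangle$ for the pairing in which trees form an orthonormal basis, the coefficient of $T$ in $S \curvearrowright S'$ equals $\langle \Delta(T), S\otimes S'\rangle$. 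Consequently the coefficient of $T$ in $a \curvearrowright P$ is obtained by pairing $\Delta(T)$ against $a \otimes P$, which in Sweedler's notation $\Delta(T) = \sum T^{(1)}\otimes T^{(2)}$ reads $\sum \widetilde{a}_{T^{(1)}}\, P(T^{(2)})(t)$ — with the integer multiplicities occurring in $\Delta(T)$ absorbed into the $T^{(1)}$ factor, exactly as flagged in the preceding remark.

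Comparing coefficients of $T$ on the two sides of $P' = a \curvearrowright P$ then yields the scalar ODE $\tfrac{d}{dt}P(T)(t) = \sum \widetilde{a}_{T^{(1)}}\, P(T^{(2)})(t)$. Integrating from $0$ to $t$ and using that the initial condition $P(0) = \bullet$ forces $P(T)(0) = 0$ for every $T \neq \bullet$, I obtain $P(T)(t) = \int_0^t \sum \widetilde{a}_{T^{(1)}}\, P(T^{(2)})(\tau)\, d\tau = \sum \widetilde{a}_{T^{(1)}} \int_0^t P(T^{(2)})(\tau)\, d\tau$, which is precisely \eqref{eq:coproduct recursion} (pulling the constants $\widetilde{a}_{T^{(1)}}$ out of the integral).

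The only point demanding care — and the closest thing to an obstacle — is the bookkeeping of the duality between $\curvearrowright$ and $\Delta$ together with the integer coefficients hidden in Sweedler's notation; once that convention is pinned down the comparison of coefficients and the integration are immediate, which is why the statement is merely a straightforward consequence of \eqref{eq:differential equation}.
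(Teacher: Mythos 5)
Your proposal is correct and is exactly the argument the paper has in mind: the paper's proof consists of the single sentence ``Straightforward consequence of the differential equation \eqref{eq:differential equation}'', and your expansion — extracting the coefficient of a fixed tree $T$ from $P'=a\curvearrowright P$ via the transposed coproduct $\Delta$, absorbing the integer multiplicities into $T^{(1)}$ as in Remark \ref{rem: notation}, and integrating with the initial condition $P(T)(0)=0$ for $T\neq\bullet$ — is precisely the omitted detail. No discrepancy to report.
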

		
		\begin{proof}[Proof of Theorem \ref{th: product rule}]
			
			Part (I) of the theorem can be restated, in terms of the unnormalized polynomials $\{P(T)\}_{T\in\T}$, as follows. For any set of distinct trees $T_1,\ldots, T_k$ and positive integers $i_1,\ldots,i_k$, we consider the tree   
			\[ T:=\{\overbrace{T_1,\ldots,T_1}^{i_1},\ldots,\overbrace{T_k,\ldots,T_k}^{i_k}| \bullet  \}.\]
			Then we have to show that the following relation holds
			\begin{equation}\label{eq: unnorm prod rule} P(T) = \frac{P(\{T_1|\bul\})^{i_1}\cdots P(\{T_k|\bul \})^{i_k}}{i_1!\cdots i_k!}.
			\end{equation}
			We work inductively on $|T|$, the number of vertices of $T$ (including its root): when $ k =i_1=1$ (in particular, for $T=\bullet,\xy
			{\ar@{-}(0,-2)*{\bul};(0,2)*{\circ}};
			\endxy$) there is nothing to prove, which gives the basis of our induction. It is easy to see that
			\begin{multline}\label{eq:copr} \Delta(T) = \sum_{j=1}^k \left( T_j\otimes \{\overbrace{T_1,\ldots,T_1}^{i_1},\ldots,\overbrace{T_j,\ldots,T_j}^{i_j-1},\ldots,\overbrace{T_k,\ldots,T_k}^{i_k}|\bul  \}+ \right.\\ \left. +\sum k_j^{(2)}T_j^{(1)}\otimes \{T_j^{(2)},\overbrace{T_1,\ldots,T_1}^{i_1},\ldots,\overbrace{T_j,\ldots,T_j}^{i_j-1},\ldots,\overbrace{T_k,\ldots,T_k}^{i_k} |\bul \}  \right),
			\end{multline}
			where $k_j^{(2)}=i_h +1$ if $T_j^{(2)}= T_h$ for some $h\neq j$, and $k_j^{(2)}=1$ otherwise. Thus, by the previous lemma and the inductive hypothesis
			\[ P'(T)=\sum_{j=1}^k\left( \widetilde{a}_{T_j} +\sum \widetilde{a}_{T_j^{(1)}} P(\{T_j^{(2)}|\bul \}) \right)\frac{P(\{T_1|\bul \})^{i_1}\cdots P(\{ T_j|\bul \})^{i_j-1}\cdots P(\{T_k|\bul \})^{i_k}}{i_1!\cdots (i_j-1)!\cdots i_k!}. \]
			On the other hand, another application of Lemma \ref{prop:coproduct recursion} (together with the previous formula \eqref{eq:copr} for $k=i_1=1$) shows that
			\[ \widetilde{a}_{T_j} +\sum \widetilde{a}_{T_j^{(1)}} P(\{T_j^{(2)}|\bul\}) = P'(\{T_j|\bul \}),\]
			hence the right-hand side of the above identity is precisely the derivative of the right-hand side of \eqref{eq: unnorm prod rule}. Since both sides of \eqref{eq: unnorm prod rule} evaluate to zero for $t=0$, this concludes the proof.
			
			Let us now turn to part (II) of Theorem \ref{th: product rule}. We already checked the claim for $T=\bullet$ (in the computation from Remark \ref{remark:recucomp}), hence we can work inductively on the number of vertices of $T$. Since $|\Aut(\{T|\bul \})|=|\Aut(T)|$, the claim is equivalent to 
			\[ P(\{T|\bul \})(t) = \int_0^t g(D)P(T)(\tau) d\tau. \]
			In the course of the proof, we shall denote the coefficients of the polynomial $P(T)(t)$ by $c(T,k)$, that is,
			\[ P(T)(t) =\sum_{k=0}^{|T|-1} c(T,k) t^k.   \]
			It follows directly from the definitions that 
			\[ \widetilde{a}_T = \sum_{k=0}^{|T|-1} c(T,k) a_k,\qquad g(D)P(T)(t) = \sum_{k=0}^{|T|-1} c(T,k) a_k(t),  \]	
			and it follows immediately from Lemma \ref{prop:coproduct recursion} that if $T\neq\bullet$
			\[ c(T,0)=0,\qquad c(T,k) = \frac{1}{k}\sum \widetilde{a}_{T^{(1)}}c(T^{(2)},k-1)\quad\mbox{if $k\geq1$}.  \]
			By Lemma $\ref{prop:coproduct recursion}$ and the inductive hypothesis, together with the recursive relation $\int_0^t a_k(\tau) d\tau= \frac{1}{k+1}(a_{k+1}(t)-a_{k+1})$ for the Appell sequence $a_0(t),\ldots, a_k(t),\ldots$, we can now compute
			\begin{eqnarray*}
				P'(\{T|\bul \})(t) &=& \widetilde{a}_T +\sum \widetilde{a}_{T^{(1)}} P(\{T^{(2)}|\bul \})(t) = \widetilde{a}_T +\sum \widetilde{a}_{T^{(1)}} \int_0^t g(D)P(T^{(2)})(\tau) d\tau= \\ &=&\sum_{k=1}^{|T|-1}\left( c(T,k) a_k + \sum \widetilde{a}_{T^{(1)}}\int_0^t c(T^{(2)}, k-1) a_{k-1}(\tau)d\tau \right) =\\&=& \sum_{k=1}^{|T|-1}\left( c(T,k) a_k + \frac{1}{k}\sum \widetilde{a}_{T^{(1)}}c(T^{(2)} ,k-1) (a_{k}(t)- a_{k}) \right)=\\&=& \sum_{k=1}^{|T|-1}\left( c(T,k) a_k + c(T,k) (a_k(t)-a_k) \right) = \\
				&=& g(D)P(T)(t).
			\end{eqnarray*}
			This shows that the identity \eqref{eq: umbral substitution} holds after differentiation, and since both sides vanish when $t=0$, this concludes the proof.
		\end{proof}
		
		\begin{definition} Given a rooted tree $T$ and a vertex $v\in V(T)$, the \emph{hook lenght} $hl(v)$ of $v$ is the number of descendants of $v$ (including $v$ itself). The \emph{tree factorial} of $T$ is the number $T!:=\prod_{v\in V(T)}hl(v)$. For instance, 
		\end{definition}
		\[ \xy
		{(0,-8)};
		{(-8,-2)};
		{(-12,4)};
		{(-6,7)};
		{(0,4)};
		{(8,3)};
		{(8,-2)};
		{(3,11)};
		{(9,11)};		
		{\ar@{-}(0,-6)*{\bul};(-6,-2)*{\circ}};
		{\ar@{-}(0,-6)*{\bul};(6,-2)*{\circ}};
		{\ar@{-}(-6,-2)*{\circ};(-6,4)*{\circ}};
		{\ar@{-}(-6,-2)*{\circ};(-10,4)*{\circ}};
		{\ar@{-}(-6,-2)*{\circ};(-2,4)*{\circ}};
		{\ar@{-}(6,-2)*{\circ};(6,3)*{\circ}};
		{\ar@{-}(6,3)*{\circ};(3,8)*{\circ}};
		{\ar@{-}(6,3)*{\circ};(9,8)*{\circ}};
		\endxy \mbox{\Huge !} = 9\cdot 4\cdot 4\cdot1\cdot1\cdot1\cdot3\cdot1\cdot 1= 432.
		\]
		
		\begin{example} We illustrate Theorem \ref{th: product rule} in the simplest possible case, namely, when the initial $\delta$-series is $f(t)=t$. In this case, we also have $g(t)=1$ and $g(D)=\id:\K[t]\to\K[t]$. We show by induction that $p(T)(t) = \frac{|T|}{T!}t^{|T|-1}$. In fact, the formula is evidently true for $p(\bullet)=1$. Given a tree $T=\{T_1,\ldots,T_k|\bullet\}$, $k\geq1$, using Theorem \ref{th: product rule} and the inductive hypothesis, we see that
			\[ p(T)(t) =\prod_{j=1}^k \int_0^t \frac{|T_j|}{T_j!}\tau^{|T_j|-1}d\tau = \prod_{j=1}^k\frac{1}{T_j!} t^{|T_j|} = \frac{|T|}{|T|\cdot T_1!\cdots T_k!} t^{|T_1|+\cdots+|T_j|}=\frac{|T|}{T!}t^{|T|-1},   \]
			as desired.
			
			As an application, we recover the well-known formula
			\begin{equation}\label{eq:appl}  \bullet^{\curvearrowright n} = \sum_{T\in\T(n)} \frac{n!}{|\Aut(T)|\cdot T!} \,T, \end{equation}
			where the sum runs over the set of rooted trees with $n$ vertices. In fact, the same argument as in the proof of Proposition \ref{prop:recvsdiffeq} shows that, when $f(t)=t$, the Taylor series of the solution $P\in\T[t] $ to \eqref{eq:differential equation} reads $P= \sum_{k\geq0}\frac{t^k}{k!}\,\bullet^{\curvearrowright k+1}$ (notice that $f(t)=t$ implies $a=f_\curvearrowright^{-1}(\bullet)=\bullet$). Comparing this expansion for $P$ with the one $P=\sum_T\frac{p(T)(t)}{|\Aut(T)|} T$, together with the previous computation of the polynomials $p(T)(t)$, yelds the desired formula \eqref{eq:appl}. 
		\end{example}	
		\begin{remark}\label{rem:leadcoeff} The same inductive argument as in the previous example shows, more generally, that $p(T)(t)=\frac{|T|}{T!}(a_0 t)^{|T|-1} + \{\mbox{lower degree terms}\}$, for any initial $\delta$-series $f(t)\in\K[[t]]$ and any rooted tree $T$.
		\end{remark}

		\subsection{The pre-Lie logarithm in $\T$}\label{subsection: log in sT}

		We shall apply the results from the previous subsection in the particular case $f(t)= e^t-1$, $g(t)=\frac{t}{e^t-1}=\sum_{k\ge0}\frac{B_k}{k!}t^k$, where \[B_0=1,\quad B_1=-\frac{1}{2},\quad B_2=\frac{1}{6},\quad B_3=0,\quad B_4=-\frac{1}{30},%\quad B_5=0,\quad B_6=\frac{1}{42},
		\quad \ldots\] 
		are the Bernoulli numbers. The Appell sequence associated to $B_0,B_1,\ldots,B_k,\ldots$ is the usual sequence of Bernoulli polynomials
		\begin{eqnarray*} & B_0(t) = 1,\qquad B_1(t)= t-\frac{1}{2},\qquad B_2(t)= t^2-t+\frac{1}{6},&\\  & B_3(t)= t^3-\frac{3}{2}t^2+\frac{1}{2}t,\qquad 
			B_4(t) = t^4-2t^3+t^2-\frac{1}{30},& \quad \ldots 
		\end{eqnarray*}
		
		\begin{remark} Let us introduce some notation. Having fixed the above choice of $f(t)$, we shall use a special notation for the solution of \eqref{eq:differential equation} in this context, namely, we shall denote the (normalized) polynomial $p(T)(t)$ associated to a tree $T$, as in the previous subsection, by
			\[ p(T)(t) = \binom{t}{T}.  \] 
			Moreover, we denote the corresponding coefficient by
			$$B_T = \left< \frac{D}{e^D-1} \left| {t \choose T}\right. \right> \, \in \K,$$
			and we call it the \emph{Bernoulli coefficent} of $T$.
			
			To justify the notation for the coefficients (cf.~also Remark \ref{rem: Bern poly}), we notice that for the corolla $C_{n+1}$  with $n$ leaves we get (by Theorem \ref{th: product rule}, Part I)
			\[ C_{n+1}\::=\:\overbrace{\xy 
				{\ar@{-}(0,-4)*{\bul};(-10,4)*{\circ}};
				{\ar@{-}(0,-4)*{\bul};(-6,4)*{\circ}};
				{\ar@{}(0,-4)*{\bul};(2,4)*{\cdots\cdots}};
				{\ar@{-}(0,-4)*{\bul};(10,4)*{\circ}}; 
				\endxy}^n,
			\qquad { t\choose C_{n+1} } = t^n,\quad B_{C_{n+1}}=B_n.
			\]

			To justify the notation for the polynomials, we notice that the polynomial sequence $p_n(t)$ of binomial type associated to the $\delta$-series $e^t-1$ is the usual sequence of falling factorials $p_0(t)=(t)_0:=1$, $p_n(t) = (t)_n := t(t-1)\cdots(t-n+1)$ if $n\geq1$, cf.~\cite[\S 4.1.2]{Roman}. We apply Remark \ref{remark: diff operator} to deduce
			
			\begin{proposition}\label{corollary: Bernoulli coefficients} For the tall tree $T_{n+1}$ with $n+1$ vertices, we have \[ \binom{t}{T_{n+1}}=\binom{t}{n},\qquad B_{T_{n+1}}=\frac{(-1)^n}{n+1},\]
				where ${t\choose n}:=\frac{(t)_n}{n!}$ is the generalized binomial coefficient.
			\end{proposition}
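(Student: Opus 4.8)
The plan is to treat the two assertions separately. For the polynomial $\binom{t}{T_{n+1}}$ I would simply invoke Remark \ref{remark: diff operator}, specialized to the $\delta$-series $f(t)=e^t-1$: it identifies the unnormalized polynomial of the tall tree as $P(T_{n+1})=p_n(t)/n!$, where $p_n(t)$ is the binomial-type sequence attached to $f$. As recalled just above the proposition, this sequence is the falling factorial $p_n(t)=(t)_n$. Since the tall tree is a path, all of its vertices sit at distinct heights, so $|\Aut(T_{n+1})|=1$ and the normalized and unnormalized polynomials coincide (cf.~Remark \ref{rem: notation}); hence $\binom{t}{T_{n+1}}=P(T_{n+1})=(t)_n/n!=\binom{t}{n}$, as claimed.

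For the Bernoulli coefficient I would compute the functional $B_{T_{n+1}}=\langle \tfrac{D}{e^D-1}\,|\,\binom{t}{n}\rangle$ by an umbral generating-function argument, so as to avoid expanding $\binom{t}{n}$ in the monomial basis (which would produce an unwieldy sum of Stirling numbers against Bernoulli numbers). The key observation is that $\langle g(D)\,|\,-\rangle$ is the functional represented by the series $g(t)=t/(e^t-1)=\sum_k \tfrac{B_k}{k!}t^k$, so that $\langle g(D)\,|\,e^{\alpha t}\rangle=\sum_k \tfrac{\alpha^k}{k!}B_k=g(\alpha)$ for a formal parameter $\alpha$. Using the generating identity $\sum_{n\geq 0}\binom{t}{n}z^n=(1+z)^t=e^{t\log(1+z)}$ and applying the functional in the variable $t$ (treating $z$ as a formal parameter), I would obtain
\[ \sum_{n\geq 0} B_{T_{n+1}}\,z^n = \Big\langle g(D)\,\Big|\,(1+z)^t\Big\rangle = g\big(\log(1+z)\big) = \frac{\log(1+z)}{e^{\log(1+z)}-1} = \frac{\log(1+z)}{z}. \]
Expanding $\log(1+z)=\sum_{m\geq 1}\tfrac{(-1)^{m-1}}{m}z^m$ gives $\tfrac{\log(1+z)}{z}=\sum_{n\geq 0}\tfrac{(-1)^n}{n+1}z^n$, and reading off the coefficient of $z^n$ yields $B_{T_{n+1}}=\tfrac{(-1)^n}{n+1}$.

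The only genuinely delicate point is the interchange of the (linear) functional with the infinite sum $\sum_n \binom{t}{n}z^n$, i.e.~justifying $\langle g(D)\,|\,(1+z)^t\rangle=g(\log(1+z))$. I would handle this purely formally, working in $\K[t][[z]]$: for each fixed power of $z$ the coefficient is a polynomial in $t$, so the functional is applied term by term, and the exponential identity $\langle g(D)\,|\,e^{\alpha t}\rangle=g(\alpha)$ holds as an identity of power series in $\alpha=\log(1+z)$, with no analytic convergence required. Everything else is direct substitution, so I expect this compatibility check to be the main (and only mild) obstacle; as a sanity check one can verify the first values $B_{T_1}=1$, $B_{T_2}=-\tfrac12$, $B_{T_3}=\tfrac13$ directly against $B_0,B_1,B_2$.
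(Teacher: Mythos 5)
Your proof is correct. The first half (the identification $\binom{t}{T_{n+1}}=\binom{t}{n}$ via Remark \ref{remark: diff operator} and the triviality of $\Aut(T_{n+1})$) is exactly the paper's argument. For the coefficient $B_{T_{n+1}}$ you take a genuinely different route: the paper either expands $\binom{t}{n}$ in the monomial basis and quotes the known Stirling-number identity $\frac{1}{n!}\sum_k s(n,k)B_k=\frac{(-1)^n}{n+1}$ from Roman, or else pushes the whole pre-Lie logarithm through the morphism $\Psi_t\colon\T\to\K[[t]]$ (which kills all non-tall trees) and compares coefficients in $\Psi_t(\log_\curvearrowright(1+\bullet))=\log(1+t)$. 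Your generating-function computation $\sum_n\langle g(D)\,|\,\binom{t}{n}\rangle z^n=\langle g(D)\,|\,(1+z)^t\rangle=g(\log(1+z))=\log(1+z)/z$ is a clean, self-contained umbral derivation that avoids both the Stirling identity and the pre-Lie machinery; your justification of the term-by-term application of the functional in $\K[t][[z]]$ (using that $\log(1+z)$ has positive order, so each coefficient of $z^n$ receives only finitely many contributions) is exactly the right formal argument, and it is essentially the evaluation identity $\langle g(D)\,|\,e^{\alpha t}\rangle=g(\alpha)$ already available in Roman's framework. The only cosmetic quibble is the phrasing of your sanity check: $B_{T_3}=\frac{1}{3}$ is computed \emph{from} $B_1$ and $B_2$ (namely $\frac{B_2-B_1}{2}$), it does not equal $B_2$; as written this could mislead a reader, but it does not affect the proof.
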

			\begin{proof}
				The identity on the left is clear by Remark \ref{remark: diff operator}.
				The coefficient $B_{T_{n+1}}$ is thus given by
				$$ \left< \frac{D}{e^D-1}\left| {t \choose n} \right.\right>= \left< \frac{D}{e^D-1}\left| \frac{1}{n!}\sum_{k=0}^n s(n,k)t^k \right.\right>  = \frac{1}{n!}\sum_{k=0}^n s(n,k) B_k,$$
				where $s(n,k)$ are the Stirling numbers of the first kind.
				This last sum is known to equal $\frac{(-1)^n}{n+1}$, cf.~for instance \cite[\S 4.2.2]{Roman}.
				
				Alternatively, we may consider the unique morphism of pre-Lie algebras $\Psi_t:\T\to\K[[t]]$ sending $\bul$ to $t$. This morphism $\Psi_t$ is easily described according to Remark \ref{rem: sym braces in free pl}: since $\K[[t]]$ is associative, the associated braces $\{\#,\ldots,\#|\# \}\colon\K[[t]]^{\odot n}\otimes\K[[t]]\to\K[[t]]$ vanish for $n\geq2$, hence $\Psi_t(T)=0$ whenever $T$ is not a tall tree, while for the tall tree $T_{n+1}$ we find $\Psi_t(T_{n+1})=t^{n+1}$. The claim about $B_{T_{n+1}}$ follows by comparing the coefficients of $t^{n+1}$ in the identity \[ \Psi_t\left(\sum_{T}\frac{B_T}{|\Aut(T)|} \,T\right) =\Psi_t\left(\log_\curvearrowright(1+\bullet)\right) 
				= \log(1+t) = \sum_{k\geq1} \frac{(-1)^{k-1}}{k} t^k. \]
			\end{proof}

		\end{remark}
		
		We can reformulate the main result from the previous subsection in the present context. 
		
		First, we notice that the operator 
		\[ \K[\tau]\to\K[t]\:\:\colon\:\: p(\tau) \mapsto \int_0^t \frac{D}{e^D-1} p(\tau) d\tau\:\:\colon\:\: \tau^k\mapsto\frac{B_{k+1}(t)-B_{k+1}}{k+1} \]
		appearing in the right-hand side of Equation \eqref{eq: umbral substitution} coincides with the usual indefinite sum operator
		\[ \sum_{\tau=0}^{t-1}\colon \K[\tau]\to\K[t]\colon\:\: p(\tau)\mapsto \sum_{\tau=0}^{t-1} p(\tau)\:\:\colon\:\: \tau^k\mapsto \sum_{\tau=0}^{t-1}\tau^k\: =\:\frac{B_{k+1}(t)-B_{k+1}}{k+1}, \]
		where the identity on the left-hand side is the classical Faulhaber's formula. Equivalently, we may apply Remark \ref{remark: diff operator} to reach the same conclusion, and notice that the operator $e^D-1 = \sum_{k\geq1}\frac{1}{k!} D^k$ associated to $e^t-1$ coincides with the usual forward difference operator
		\[ \vv{\Delta}\colon \K[t]\to\K[t]\colon p\mapsto \vv{\Delta}p, \qquad \vv{\Delta}p(t) := p(t+1)- p(t), \] 
		and that $\sum_{\tau=0}^{t-1}$ is (by definition) the formal inverse to $\vv{\Delta}$.
		
		We can now restate Theorem \ref{th: product rule} in the following form.
		\begin{theorem}\label{th: difference rule} Given a tree $T=\{T_1,\ldots,T_k|\bul\}$, the identity
			\[ \binom{t}{T} = \sum_{\tau_1,\ldots,\tau_k=0}^{t-1} \binom{\tau_1}{T_1}\cdots\binom{\tau_k}{T_k} \]
			holds.
		\end{theorem}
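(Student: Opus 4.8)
The plan is to deduce this directly from Theorem \ref{th: product rule}, since the present statement is precisely that result specialized to the $\delta$-series $f(t)=e^t-1$. First I would invoke part (I) of Theorem \ref{th: product rule}, which for $T=\{T_1,\ldots,T_k|\bul\}$ gives the factorization
\[
\binom{t}{T} = p(\{T_1|\bullet\})(t)\cdots p(\{T_k|\bullet\})(t),
\]
thereby reducing the claim to an analysis of each factor $p(\{T_j|\bullet\})(t)$ separately.

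Next I would rewrite each factor using part (II) of Theorem \ref{th: product rule}, which expresses $p(\{T_j|\bullet\})(t)=\int_0^t g(D)p(T_j)(\tau)\,d\tau$. The key observation, already recorded in the discussion immediately preceding the statement, is that for the present choice $g(t)=\frac{t}{e^t-1}$ the integral operator $p(\tau)\mapsto\int_0^t g(D)p(\tau)\,d\tau$ coincides with the indefinite sum operator $\sum_{\tau=0}^{t-1}$; this is Faulhaber's formula, or equivalently the fact that $e^D-1$ is the forward difference operator $\vv{\Delta}$ whose formal inverse is $\sum_{\tau=0}^{t-1}$. Applying this, and recalling $p(T_j)(\tau)=\binom{\tau}{T_j}$, yields
\[
p(\{T_j|\bullet\})(t) = \sum_{\tau_j=0}^{t-1}\binom{\tau_j}{T_j}.
\]

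Finally I would substitute these expressions back into the factorization from the first step and distribute the product of finite sums over the $k$ independent summation variables $\tau_1,\ldots,\tau_k$, obtaining
\[
\binom{t}{T} = \prod_{j=1}^k\left(\sum_{\tau_j=0}^{t-1}\binom{\tau_j}{T_j}\right) = \sum_{\tau_1,\ldots,\tau_k=0}^{t-1}\binom{\tau_1}{T_1}\cdots\binom{\tau_k}{T_k},
\]
which is exactly the asserted identity. Since every step is a direct appeal to an already-established result, there is no genuine obstacle here; the only point requiring care is the identification of the integral operator with the indefinite sum operator, but that has already been justified in the text preceding the theorem, so the argument amounts to a straightforward combination of parts (I) and (II) of Theorem \ref{th: product rule}.
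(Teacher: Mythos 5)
Your argument is correct and is essentially identical to the paper's own proof: the paper likewise uses part (I) of Theorem \ref{th: product rule} to reduce to the case $k=1$ and then combines part (II) with the preceding identification of $\int_0^t \frac{D}{e^D-1}(\cdot)\,d\tau$ with the indefinite sum operator. You have merely written out the routine distribution of the product over the sums, which the paper leaves implicit.
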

		
		\begin{proof} Using the first part of Theorem \ref{th: product rule} we are reduced to the case $k=1$, which follows from the second part of the same theorem and the above discussion.
		\end{proof}
		
		Our next goal is to apply the previous theorem to get a description of the polynomial ${ t\choose T}$ and the coefficient $B_T$ in terms of purely combinatorial data associated to the tree $T$.		
		Recall that given a tree $T$, we put a partial order $\leq$ on the set $V(T)$ of vertices (including the root) of $T$ by saying that $v'\leq v$ if $v$ lies in the unique path from $v'$ to the root. If this is the case, we call $v'$ a descendant of $v$.
		
		\begin{definition}\label{def:decreasing decoration}
			A {\em decreasing decoration} of $T$
			is a strictly monotone correspondence $d:(V(T),\leq)\to(\mathbb{N},\leq)$, or in other words, it is the association of a natural number $d(v)$ to each vertex $v$ of $T$, such that $d(v)\gneqq d(v')$ whenever $v'\neq v$ is a descendant of $v$.
			We say that a decreasing decoration is \emph{complete} if it maps $V(T)$ surjectively onto a segment $[0,k]=\{0,1,\ldots,k\}\subset\mathbb{N}$, or in other words, if all the numbers from zero to a certain $k\in\mathbb{N}$ -- which is necessarily associated to the root -- appear as the label of some vertex of $T$. 
			
			We shall denote by $\Adm(T,i)$ (resp.: $\Adm^c(T,i)$) the set of those (resp.: complete) decreasing decorations of $T$ which associate the number $i$ to the root of $T$.
		\end{definition}
		
		\begin{example}
			Two examples of decreasing decorations are
			$$
			\xy     {(-6,1)*{_0}};
			{(0,-7)*{_3}};
			{(6,1)*{_1}};
			{(6,6)*{_0}};
			{\ar@{-}(0,-4)*{\bul};(-4,1)*{\circ}};
			{\ar@{-}(0,-4)*{\circ};(4,1)*{\circ}};
			{\ar@{-}(4,1)*{\circ};(4,6)*{\circ}};
			\endxy \qquad \qquad \qquad
			\xy
			{(0,-8)*{_5}};
			{(-8,-2)*{_4}};
			{(-12,4)*{_3}};
			{(-6,7)*{_3}};
			{(0,4)*{_1}};
			{(8,3)*{_2}};
			{(8,-2)*{_3}};
			{(3,11)*{_1}};
			{(9,11)*{_0}};		
			{\ar@{-}(0,-6)*{\bul};(-6,-2)*{\circ}};
			{\ar@{-}(0,-6)*{\bul};(6,-2)*{\circ}};
			{\ar@{-}(-6,-2)*{\circ};(-6,4)*{\circ}};
			{\ar@{-}(-6,-2)*{\circ};(-10,4)*{\circ}};
			{\ar@{-}(-6,-2)*{\circ};(-2,4)*{\circ}};
			{\ar@{-}(6,-2)*{\circ};(6,3)*{\circ}};
			{\ar@{-}(6,3)*{\circ};(3,8)*{\circ}};
			{\ar@{-}(6,3)*{\circ};(9,8)*{\circ}};
			\endxy
			$$
			The one on the right-hand side is complete, while the one on the left-hand side is not. The following labelings are {\em not} decreasing
			$$
			\xy     {(-6,1)*{_3}};
			{(0,-7)*{_3}};
			{(6,1)*{_1}};
			{(6,6)*{_0}};
			{\ar@{-}(0,-4)*{\bul};(-4,1)*{\circ}};
			{\ar@{-}(0,-4)*{\circ};(4,1)*{\circ}};
			{\ar@{-}(4,1)*{\circ};(4,6)*{\circ}};
			\endxy \qquad \qquad \qquad
			\xy
			{(0,-8)*{_5}};
			{(-8,-2)*{_4}};
			{(-12,4)*{_3}};
			{(-6,7)*{_3}};
			{(0,4)*{_1}};
			{(8,3)*{_2}};
			{(8,-2)*{_2}};
			{(3,11)*{_1}};
			{(9,11)*{_0}};		
			{\ar@{-}(0,-6)*{\bul};(-6,-2)*{\circ}};
			{\ar@{-}(0,-6)*{\bul};(6,-2)*{\circ}};
			{\ar@{-}(-6,-2)*{\circ};(-6,4)*{\circ}};
			{\ar@{-}(-6,-2)*{\circ};(-10,4)*{\circ}};
			{\ar@{-}(-6,-2)*{\circ};(-2,4)*{\circ}};
			{\ar@{-}(6,-2)*{\circ};(6,3)*{\circ}};
			{\ar@{-}(6,3)*{\circ};(3,8)*{\circ}};
			{\ar@{-}(6,3)*{\circ};(9,8)*{\circ}};
			\endxy
			$$
		\end{example}
		
		We need the following lemma.
		
		\begin{lemma}\label{lemma: complete admissible trees}
			Given a tree $T \in \T$, the identity
			$$ |\Adm^c(T,i) | = \sum_{j=0}^i (-1)^{i-j}{i \choose j}|\Adm(T,j)|$$
			holds.
		\end{lemma}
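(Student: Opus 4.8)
The plan is to establish the ``inverse'' relation
$$ |\Adm(T,i)| = \sum_{j=0}^{i} \binom{i}{j}\, |\Adm^c(T,j)| $$
and then to deduce the stated identity by binomial inversion. The key observation is that every decreasing decoration can be classified according to its image, and that collapsing the image to a segment turns it into a \emph{complete} decoration.

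First I would note that, since the root is the maximum of $(V(T),\leq)$ (cf.~Remark \ref{remark: tree terminilogy}), any decreasing decoration $d\in\Adm(T,i)$ takes the value $i$ at the root and all of its remaining values in $\{0,1,\ldots,i\}$; in particular its image is a subset $A\subseteq\{0,\ldots,i\}$ with $i\in A$. Writing $|A|=j+1$, let $\iota_A\colon A\xrightarrow{\sim}\{0,\ldots,j\}$ be the unique order-preserving bijection. Then $\iota_A\circ d$ is again strictly monotone, surjects onto $\{0,\ldots,j\}$, and sends the root to $j$; that is, $\iota_A\circ d\in\Adm^c(T,j)$.

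Conversely, I would check that the assignment $d\mapsto (A,\,\iota_A\circ d)$ sets up a bijection between $\Adm(T,i)$ and the set of pairs $(A,d^c)$, where $A\subseteq\{0,\ldots,i\}$ contains $i$ and $d^c\in\Adm^c(T,|A|-1)$: the inverse sends such a pair to $\iota_A^{-1}\circ d^c$, which is strictly monotone (being the composite of a strictly monotone map with an order embedding) and has image exactly $A$, hence lies in $\Adm(T,i)$. For fixed $j$, the subsets $A$ with $i\in A$ and $|A|=j+1$ correspond to the $j$-element subsets of $\{0,\ldots,i-1\}$, of which there are $\binom{i}{j}$. Grouping the pairs according to $j=|A|-1$ then yields the displayed relation above.

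Finally, the claimed identity follows from the standard binomial inversion formula: from $a_i=\sum_{j=0}^i\binom{i}{j}b_j$ one obtains $b_i=\sum_{j=0}^i(-1)^{i-j}\binom{i}{j}a_j$, applied with $a_j=|\Adm(T,j)|$ and $b_j=|\Adm^c(T,j)|$. The only point requiring care is verifying that compression and expansion preserve strict monotonicity and are mutually inverse; this is immediate, since composing a strictly monotone map with an order isomorphism (or an order embedding) is again strictly monotone, so I do not anticipate any genuine obstacle.
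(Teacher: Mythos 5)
Your proof is correct, but it packages the argument differently from the paper. The paper proves the identity directly by inclusion--exclusion: it introduces the sets $\Adm^{(k)}(T,i)$ of decreasing decorations with root value $i$ in which the label $k-1$ does not occur, observes that an intersection of $r$ such sets has cardinality $|\Adm(T,i-r)|$, and expands $\Adm^c(T,i)=\Adm(T,i)\setminus\bigcup_k \Adm^{(k)}(T,i)$. You instead partition $\Adm(T,i)$ according to the image $A\ni i$ of the decoration, compress $A$ order-isomorphically onto an initial segment to land in $\Adm^c(T,|A|-1)$, obtain the ``easy direction'' $|\Adm(T,i)|=\sum_{j}\binom{i}{j}|\Adm^c(T,j)|$, and then invoke binomial inversion. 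The two arguments rest on the same underlying bijection --- order-compression of the image, which is exactly what the paper uses to identify $\bigl|\bigcap_s\Adm^{(k_s)}(T,i)\bigr|$ with $|\Adm(T,i-r)|$ --- so they are logically equivalent; but your version makes the inverse summation relation explicit and delegates the alternating signs to a standard inversion formula, which is arguably cleaner, whereas the paper's version is self-contained and does not require quoting binomial inversion. All the points you flag as needing care (preservation of strict monotonicity under composition with order isomorphisms, the count $\binom{i}{j}$ of admissible images of size $j+1$, and the fact that non-root values lie in $\{0,\ldots,i-1\}$) do check out, so there is no gap.
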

		\begin{proof}
			Let us denote by $\Adm^{(k)}(T,i)\subset\Adm(T,i)$, $1\le k \le i$, the set of decreasing
			decorations of $T$ such that $k-1$ does {\em not} appear as a label.
			The set of complete decorations can be written as
			$$ \Adm^c(T,i) = \Adm(T,i) \setminus \bigcup_{k=1}^i \Adm^{(k)}(T,i).$$
			Moreover, for a collection of distinct numbers $1\leq k_1,\dots,k_r \le i$, we have
			$$ \left| \bigcap_{s=1}^r \Adm^{(k_s)}(T,i) \right| = |\Adm(T,i-r)|.$$
			The inclusion-exclusion principle now yields
			$$
			|\Adm^c(T,i)| = \sum_{j=0}^i(-1)^j{i \choose j} |\Adm(T,i-j)| =
			\sum_{j=0}^i (-1)^{i-j}{i\choose j} |\Adm(T,j)|.
			$$
		\end{proof}

		As a consequence of Theorem \ref{th: difference rule}, we obtain an interpretation of the polynomial $\binom{t}{T}$ and the coefficient $B_T$
		in terms of the decreasing decorations of $T$.
		
		\begin{theorem}\label{th: combinatorial} The polynomial $\binom{t}{T}$ is the unique polynomial that, when evaluated at a natural number $i\in \mathbb{N}$, yields the number of decreasing decorations of $T$ associating $i$ to the root, i.e.,
			\[ \binom{i}{T} = |\Adm(T,i)|,\qquad \forall i\in \mathbb{N}.  \]
			In particular, it has the following expansion in the basis $\{\binom{t}{k}\}_{k\in\mathbb{N}}$ of $\K[t]$ given by the generalized binomial coefficients:
			\[ \binom{t}{T} = \sum_{k= h(T)}^{|T|-1}|\Adm^c(T,i)|\binom{t}{k},\]
			where $h(T)$ is the \emph{height} of $T$. Finally, we get the following combinatorial formula for the Bernoulli coefficient $B_T$
			\[ B_T = \sum_{k= h(T)}^{|T|-1}\frac{(-1)^{k} }{k+1}|\Adm^c(T,i)|.  \]
		\end{theorem}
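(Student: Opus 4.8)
The plan is to establish the three assertions in sequence, with the value formula $\binom{i}{T}=|\Adm(T,i)|$ as the cornerstone from which the other two follow by finite-difference calculus. I would prove this first assertion by induction on the number of vertices of $T$, using the difference rule of Theorem \ref{th: difference rule}. For the base case $T=\bullet$ one has $\binom{t}{\bullet}=p(\bullet)(t)=1$, and there is exactly one decreasing decoration of the single-vertex tree sending the root to $i$, so $|\Adm(\bullet,i)|=1$ for every $i$. For the inductive step, write $T=\{T_1,\ldots,T_k|\bullet\}$, so the subtrees hanging from the root are $T_1,\ldots,T_k$; then Theorem \ref{th: difference rule} gives $\binom{i}{T}=\sum_{\tau_1,\ldots,\tau_k=0}^{i-1}\binom{\tau_1}{T_1}\cdots\binom{\tau_k}{T_k}=\prod_{j=1}^k\bigl(\sum_{\tau_j=0}^{i-1}\binom{\tau_j}{T_j}\bigr)$. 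On the combinatorial side, a decreasing decoration of $T$ with root value $i$ amounts to a choice, for each $j$, of a decreasing decoration of $T_j$ whose root value $\tau_j$ is strictly smaller than $i$ (since the root of $T_j$ is a descendant of, and distinct from, the root of $T$), and these choices on distinct subtrees are independent because vertices in different subtrees are incomparable in the partial order on $V(T)$. Hence $|\Adm(T,i)|=\prod_{j=1}^k\bigl(\sum_{\tau_j=0}^{i-1}|\Adm(T_j,\tau_j)|\bigr)$, which matches the previous display once the inductive hypothesis $\binom{\tau_j}{T_j}=|\Adm(T_j,\tau_j)|$ is inserted. Since $\binom{t}{T}$ is a polynomial agreeing with $i\mapsto|\Adm(T,i)|$ at every natural number, it is the unique polynomial with this property.

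For the expansion in the binomial basis I would invoke the Newton forward difference formula, which expresses any polynomial $p$ as $p(t)=\sum_{k\ge 0}(\vv{\Delta}^k p)(0)\binom{t}{k}$, together with the standard identity $(\vv{\Delta}^k p)(0)=\sum_{j=0}^k(-1)^{k-j}\binom{k}{j}p(j)$. Applying this to $p=\binom{\cdot}{T}$ and using the first part, the coefficient of $\binom{t}{k}$ becomes $\sum_{j=0}^k(-1)^{k-j}\binom{k}{j}|\Adm(T,j)|$, which is exactly $|\Adm^c(T,k)|$ by Lemma \ref{lemma: complete admissible trees}. This yields $\binom{t}{T}=\sum_{k\ge 0}|\Adm^c(T,k)|\binom{t}{k}$. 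The range is then cut down to $h(T)\le k\le |T|-1$: a complete decoration with root value $k$ must realize the $k+1$ distinct values $\{0,\ldots,k\}$, which is impossible if $k+1>|T|$; and along a root-to-leaf path of maximal length $h(T)$ the values strictly decrease, forcing the root value to be at least $h(T)$, so that $|\Adm^c(T,k)|=0$ outside the stated interval.

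The formula for $B_T$ then follows by applying the linear functional $\left<\frac{D}{e^D-1}\,\middle|\,-\right>$ termwise to the expansion just obtained, since by definition $B_T=\left<\frac{D}{e^D-1}\,\middle|\,\binom{t}{T}\right>$. By Proposition \ref{corollary: Bernoulli coefficients} one has $\left<\frac{D}{e^D-1}\,\middle|\,\binom{t}{k}\right>=B_{T_{k+1}}=\frac{(-1)^k}{k+1}$, whence $B_T=\sum_{k=h(T)}^{|T|-1}\frac{(-1)^k}{k+1}|\Adm^c(T,k)|$, as claimed.

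I expect the main obstacle to be the inductive step of the first part: one must argue cleanly that decreasing decorations of $T$ factor as independent decorations of the subtrees $T_1,\ldots,T_k$, subject only to the single constraint that each subtree root receives a value in $\{0,\ldots,i-1\}$. This requires reading the definition of a decreasing decoration carefully against the partial order on $V(T)$ -- in particular the incomparability of vertices lying in distinct subtrees -- so that the product-of-sums on the combinatorial side lines up exactly with the nested indefinite sum produced by Theorem \ref{th: difference rule}. Once this bookkeeping is in place, the remaining steps are routine applications of the Newton forward difference expansion, Lemma \ref{lemma: complete admissible trees}, and the already established value of $B_{T_{k+1}}$.
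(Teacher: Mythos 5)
Your proposal is correct and follows essentially the same route as the paper: the first identity is established by showing that $i\mapsto|\Adm(T,i)|$ satisfies the same recursion as $\binom{i}{T}$ coming from Theorems \ref{th: product rule}/\ref{th: difference rule}, the binomial-basis expansion follows from the Newton forward-difference formula together with Lemma \ref{lemma: complete admissible trees}, and the formula for $B_T$ follows by applying the functional $\left<\frac{D}{e^D-1}\,\middle|\,-\right>$ termwise using Proposition \ref{corollary: Bernoulli coefficients}. The only difference is presentational (you phrase the recursion as an explicit induction and spell out the range restriction $h(T)\le k\le|T|-1$, which the paper treats more briefly).
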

		\begin{proof}
			The first formula follows from the fact that the numbers $|\Adm(T,i)|$
			obey the same recursions as ${i \choose T}$, that is, we have:
			\begin{itemize}
				\item $|\Adm(\bul,i)|=1$,
				\item $|\Adm(\{T_1,\dots,T_k | \bul\},i)|= |\Adm(\{T_1 | \bul\},i)|\cdots |\Adm(\{T_k | \bul\},i)|$,
				\item $|\Adm(\{T | \bul\} ,i)| =\sum_{j=0}^{i-1} |\Adm(T ,j)|.$
			\end{itemize}
			To verify the last relation, observe that if a decreasing decoration associates $i$ to the root of $\{T | \bul\}$, it associates a number strictly smaller than $i$ to the root of $T$.

			The second formula is a consequence of the first one, Lemma \ref{lemma: complete admissible trees} and the following general fact: given a polynomial $q(t)$ of degree $d$, the expansion of $q(t)$ in the basis $\{{t \choose n}\}_{n\ge 0}$ of generalized binomial coefficient reads
			$$ q(t) = \sum_{j=0}^d \lambda_j{t \choose j}, \quad \textrm{where} \quad \lambda_j = \sum_{i=0}^j (-1)^{i-j}{j \choose i} q(i).$$
			For a proof of this fact, cf.~\cite[pg. 59]{Roman}. Moreover, notice that $|\Adm^c(T,k)|=|\Adm(T,k)|=0$ whenever $k$ is strictly smaller than $h(T)$, the height of $T$.
			
			Finally, the claim about the Bernoulli coefficients $B_T$ is a consequence of the established formula for ${t \choose T}$ and the fact that, as observed in Proposition \ref{corollary: Bernoulli coefficients}, 
			$$  \left< \frac{D}{e^D-1}
			\left| {t \choose k}\right.\right> = \frac{(-1)^k}{k+1}.$$
			
		\end{proof}
		
		\begin{remark}\label{rem:} Comparing the previous theorem and Remark \ref{rem:leadcoeff}, we recover the following well-known formula 
			\[ |\Adm^c(T,|T|-1)| = \frac{|T|!}{T!}. \]
		\end{remark}		
		\begin{remark}\label{rem: Bern poly} We may associate to a tree $T$ the polynomial $B_T(t)$ defined by
			\[ B_T(t) := \frac{D}{e^D-1} {t\choose T}. \]
			This is the only polynomial satisfying the identity 
			\[ \sum_{i=m}^{n-1} { i\choose T} = \int_m^n B_T(t)dx,   \]
			where the previous theorem provides a combinatorial interpretation of the left-hand side. It makes some sense to consider $B_T(t)$ as a generalized Bernoulli polynomial associated to the tree $T$, and to consider the previous formula as a generalization of Faulhaber's formula. In particular, we recover the ordinary Bernoulli polynomial $B_n(t)$ as the polynomial associated to the corolla $C_{n+1}$ with $n$ leaves. This justifies the terminology \emph{Bernoulli coefficient} for the number $B_T=B_T(0)$.
		\end{remark}
		
		We conclude this subsection by presenting a simple umbral proof of \cite[Proposition 4.3]{alg B-series}. To state the result, we recall the following notation from \cite{alg B-series}: Given two trees $T=\{T_1,\ldots,T_j|\bul \}$ and $T' = \{  T'_1,\ldots T'_k|\bul \}$, their Butcher products $T\circ T'$, $T'\circ T$, and their merging product $T\ast T'$ are defined respectively by (cf.~\cite[fig. 3.1 at pg. 75]{Hairer})  
		\[ T\circ T' = \{  T_1,\ldots,T_j,T' |\bul \},\quad T'\circ T = \{ T'_1,\ldots,T'_k,T|\bul  \},\quad T\ast T' =\{ T_1,\ldots,T_j,T'_1,\ldots, T'_k |\bul\}.  \]
		\begin{proposition} For every pair of trees $T,T'$, we have 
			\[ B_{T\circ T'}+B_{T'\circ T}+B_{T\ast T'} =0. \]\end{proposition}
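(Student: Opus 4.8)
The plan is to reduce the identity to a single statement about ordinary polynomials in $\K[t]$ and then read off the result by evaluating at $t=0$. Throughout I write $T=\{T_1,\ldots,T_j|\bul\}$ and $T'=\{T'_1,\ldots,T'_k|\bul\}$, and I abbreviate
\[ F(t):=\binom{t}{T},\qquad G(t):=\binom{t}{T'},\qquad U(t):=\binom{t}{\{T|\bul\}},\qquad V(t):=\binom{t}{\{T'|\bul\}}. \]

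First I would rewrite the three polynomials attached to the products. Directly from the definitions of $\circ$ and $\ast$, Part (I) of Theorem \ref{th: product rule} gives
\begin{align*}
\binom{t}{T\circ T'} &= F(t)\,V(t), & \binom{t}{T'\circ T} &= G(t)\,U(t), & \binom{t}{T\ast T'} &= F(t)\,G(t),
\end{align*}
the key point being that in $T\circ T'=\{T_1,\ldots,T_j,T'|\bul\}$ the whole tree $T'$ enters as a single branch $\{T'|\bul\}$, contributing the factor $V$. Next, Part (II) of Theorem \ref{th: product rule} (equivalently Theorem \ref{th: difference rule} with $k=1$) identifies $U$ and $V$ with indefinite sums, $U(t)=\sum_{\tau=0}^{t-1}F(\tau)$ and $V(t)=\sum_{\tau=0}^{t-1}G(\tau)$. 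Hence $\vv{\Delta}U=F$, $\vv{\Delta}V=G$, and --- the boundary fact on which the whole statement hinges --- $U(0)=V(0)=0$, both being empty sums.

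The heart of the argument is the Leibniz rule for the forward difference,
\[ \vv{\Delta}(UV)=(\vv{\Delta}U)\,V+U\,(\vv{\Delta}V)+(\vv{\Delta}U)(\vv{\Delta}V)=FV+GU+FG, \]
combined with the operator identity $\frac{D}{e^D-1}\,\vv{\Delta}=\frac{D}{e^D-1}(e^D-1)=D$ (recall from the discussion preceding Theorem \ref{th: difference rule} that $\vv{\Delta}=e^D-1$ on $\K[t]$). Applying $\frac{D}{e^D-1}$ to the previous display yields
\[ \tfrac{D}{e^D-1}\big(FV+GU+FG\big)=\tfrac{D}{e^D-1}\,\vv{\Delta}(UV)=D(UV)=(UV)'. \]
Finally, since $B_S=\big\langle\frac{D}{e^D-1}\big|\binom{t}{S}\big\rangle=\big(\frac{D}{e^D-1}\binom{t}{S}\big)(0)$, evaluating the last identity at $t=0$ gives
\[ B_{T\circ T'}+B_{T'\circ T}+B_{T\ast T'}=(UV)'(0)=U'(0)V(0)+U(0)V'(0)=0. \]

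Because the reformulation collapses everything to a one-line computation, there is no analytic difficulty; the work is entirely in setting things up correctly. The step I expect to require the most care is the bookkeeping in the first paragraph: applying Part (I) of Theorem \ref{th: product rule} with the right grouping of branches, so that $T\circ T'$ produces the factor $V=\binom{t}{\{T'|\bul\}}$ rather than $G=\binom{t}{T'}$. The one genuinely nontrivial ingredient is the Leibniz identity $\vv{\Delta}(UV)=FV+GU+FG$, whose extra ``mixed'' term $(\vv{\Delta}U)(\vv{\Delta}V)=FG$ is precisely what accounts for the merging product $T\ast T'$; this asymmetry between the difference operator $\vv{\Delta}$ and the derivative $D$ is the conceptual reason the three coefficients sum to zero.
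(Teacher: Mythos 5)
Your proof is correct and follows essentially the same route as the paper: both identify $\binom{t}{T\circ T'}+\binom{t}{T'\circ T}+\binom{t}{T\ast T'}$ with $\vv{\Delta}\bigl(\binom{t}{\{T|\bul\}}\binom{t}{\{T'|\bul\}}\bigr)$ via the Leibniz rule for the forward difference, then use $\frac{D}{e^D-1}(e^D-1)=D$ and the vanishing of $\binom{t}{\{T|\bul\}}$ and $\binom{t}{\{T'|\bul\}}$ at $t=0$ to conclude. The only cosmetic difference is that the paper packages the product $UV$ as $\binom{t}{\{T,T'|\bul\}}$ from the start.
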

		\begin{proof} By the product rule for the difference operator
			\[ \vv{\Delta}( pq ) = \vv{\Delta}(p)q+p\vv{\Delta}(q)+\vv{\Delta}(p)\vv{\Delta}(q),\]
			together with Theorems \ref{th: product rule} and \ref{th: difference rule}, we see that
			\[ \binom{t}{T\circ T'} + \binom{t}{T'\circ T} + \binom{t}{T\ast T'} =\vv{\Delta}\binom{t}{\{ T,T'|\bul \}}= (e^D-1)\binom{t}{\{ T,T'|\bul \}}. \]
			Using \cite[Theorem 2.2.5]{Roman}, we get
			\[ B_{T\circ T'}+B_{T'\circ T}+B_{T\ast T'} = \left< \frac{D}{e^D-1} \left| (e^D-1) \binom{t}{\{ T,T'|\bul \}} \right.\right> =\left< D\left| \binom{t}{\{ T,T'|\bul \}}\right.\right>  =0,   \] 
			where the last identity follows from the fact that $\langle D|q(t)\rangle=q'(0)$ for all $q(t)\in\K[t]$, together with $\binom{t}{\{ T,T'|\bul \}}=\binom{t}{\{ T|\bul \}}\binom{t}{\{ T'|\bul \}}$ and $\binom{0}{\{ T|\bul \}}=\binom{0}{\{ T'|\bul \}}=0$. 
			
		\end{proof}

		\section{{\bf The Eulerian idempotent in the PBW basis}}\label{section: Eulerian coefficients}
		
		In this section, we bring together the problem outlined in the Introduction and the results from  the previous section. More precisely,
		we show that expressing $E(x_1\cdots x_n)$ in the PBW basis
		$\basis_n$, for all $n\ge 1$, see Subsection \ref{subsection: problem}, 
		is equivalent to computing a logarithm in a certain pre-Lie algebra $(\pbT,\rhd)$
		of planar binary rooted trees. We then address the latter problem with the methods developed in Section \ref{section: pre-Lie}.
		
		\subsection{A recursion for the Eulerian idempotent}\label{subsection: recursion for Eulerian}
		
		Let $\g$ be a pro-nilpotent Lie algebra over a field $\K$ of characteristic zero.
		We denote its universal enveloping algebra by $\U(\g)$. This is a biaugmented cocommutative bialgebra. We denote the product in $\U(\g)$ by $\cdot$ and the coproduct by $\Delta_{\U(\g)}$.
		Let $S(\g)$ be the symmetric coalgebra over $\g$. We denote the image of $x_1\otimes \cdots \otimes x_n \in T(\g)$ under the canonical projection
		$T(\g) \to S(\g)$ by $x_1\odot \cdots \odot x_n$.
		
		The Poincar\'e-Birkhoff-Witt Theorem asserts that the symmetrization map $\sym: S(\g) \to \U(\g)$, defined by
		\begin{eqnarray*}
			\sym(1)=1 \qquad \textrm{and} \qquad \sym(x_1\odot \cdots \odot x_n)= \frac{1}{n!}\sum_{\sigma \in S_n} x_{\sigma(1)}\cdots x_{\sigma(n)},
		\end{eqnarray*}
		is an isomorphism of coaugmented coalgebras.
		
		The following lemma is an easy consequence of \cite[Theorem 1.2]{Bandiera_nonabelian}, see also \cite[Equation (3.12)]{Bandiera_Kapranov} and the discussion therein. 
		
		We denote by $\Coder(S(\g))$ the Lie algebra of coderivations of the symmetric coalgebra $S(\g)$. Recall that, since $S(\g)$ is cofree, every coderivation $Q\in\Coder(S(\g))$ is completely described by the family of its Taylor coefficients $Q_n:\g^{\odot n}\to \g$, which are defined as the composition $Q_n:\g^{\odot n}\hookrightarrow S(\g)\xrightarrow{Q}S(\g)\xrightarrow{p}\g$, where the last map $p$ is the natural projection.
		\begin{lemma}\label{lemmaHDB} The linear map $\Phi^\perp: \g \to \Coder(S(\g))$, given in Taylor coefficients by
			\[ \Phi^\perp(x)_0(1)=x, \qquad \Phi^\perp(x)_k(y_1\odot \cdots \odot y_k)=(-1)^k\frac{B_k}{k!}\sum_{\sigma \in S_k}[y_{\sigma(1)},\cdots [y_{\sigma(k)},x]\cdots],\]
			is an anti-morphism of Lie algebras, i.e., we have $\Phi^\perp([x,y])=-[\Phi^\perp(x),\Phi^\perp(y)]$ for all $x, y \in \g$.
		\end{lemma}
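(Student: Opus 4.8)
The plan is to realise $\Phi^\perp$ as the transport, through the symmetrization isomorphism $\sym$, of \emph{right} multiplication in the enveloping algebra, and then to read off the anti-morphism property from the associativity of the product in $\U(\g)$.

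First I would introduce, for each $x\in\g$, the right-multiplication operator $R_x\colon\U(\g)\to\U(\g)$, $R_x(u):=u\cdot x$. Since $x\in\g\subset\U(\g)$ is primitive, a one-line check gives $\Delta_{\U(\g)}\circ R_x=(R_x\otimes\id+\id\otimes R_x)\circ\Delta_{\U(\g)}$, so $R_x$ is a coderivation of $\U(\g)$. As $\sym$ is an isomorphism of coalgebras, conjugation produces a coderivation $\widetilde R_x:=\sym^{-1}\circ R_x\circ\sym$ of $S(\g)$, and $x\mapsto\widetilde R_x$ is linear. The key claim --- and the main obstacle --- is the identification $\Phi^\perp(x)=\widetilde R_x$, i.e.\ that the Taylor coefficients of $\widetilde R_x$ are exactly the Bernoulli-weighted iterated brackets in the statement. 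The cases $k=0,1$ are immediate ($\widetilde R_x(1)=x$, and the $\g$-component of $\sym^{-1}(y_1 x)$ is $\tfrac12[y_1,x]$), and the general case is the classical appearance of Bernoulli numbers in the PBW dictionary, which is the content of the cited \cite[Theorem 1.2]{Bandiera_nonabelian} (see also \cite[Eq.~(3.12)]{Bandiera_Kapranov}).

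To verify the identification directly I would feed group-like elements into $\widetilde R_x$. For $z\in\widehat\g$ one has $\sym(\exp_\odot z)=e^z$ (both are the unique group-like with primitive part $z$), and a coderivation sends a group-like element to a $g$-primitive; in $S(\g)$ the $(\exp_\odot z)$-primitives are exactly $\g\odot\exp_\odot z$, so $\widetilde R_x(\exp_\odot z)=v(z)\odot\exp_\odot z$ for a unique $v(z)\in\widehat\g$. Applying $\sym$ and using the classical derivative-of-exponential formula $\sym(v\odot\exp_\odot z)=\tfrac{e^{\mathrm{ad}_z}-1}{\mathrm{ad}_z}(v)\cdot e^z$, the defining equation $\sym(v(z)\odot\exp_\odot z)=e^z\cdot x$ becomes $\tfrac{e^{\mathrm{ad}_z}-1}{\mathrm{ad}_z}(v(z))=e^{\mathrm{ad}_z}(x)$, whence
\[ v(z)=\frac{\mathrm{ad}_z}{1-e^{-\mathrm{ad}_z}}(x)=\sum_{k\ge0}\frac{(-1)^kB_k}{k!}\,\mathrm{ad}_z^{\,k}(x). \]
Since $v(z)=\sum_{k\ge0}\tfrac{1}{k!}\,\Phi^\perp(x)_k(z^{\odot k})$ and, on the diagonal, the proposed formula gives $\Phi^\perp(x)_k(z^{\odot k})=(-1)^kB_k\,\mathrm{ad}_z^{\,k}(x)$, the two series agree term by term; polarising in $z$ then recovers the multilinear formula for arbitrary $y_1,\dots,y_k$.

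Granting the identification, the conclusion is immediate and computation-free. Associativity gives $R_xR_y=R_{yx}$, hence $[R_x,R_y]=R_{yx}-R_{xy}=R_{[y,x]}=-R_{[x,y]}$ as operators on $\U(\g)$, and $[x,y]\in\g$ stays primitive so that $R_{[x,y]}$ is again a coderivation. Conjugation by $\sym$ is a Lie-algebra isomorphism $\Coder(\U(\g))\xrightarrow{\sim}\Coder(S(\g))$, so
\[ [\Phi^\perp(x),\Phi^\perp(y)]=\sym^{-1}\circ[R_x,R_y]\circ\sym=-\,\sym^{-1}\circ R_{[x,y]}\circ\sym=-\Phi^\perp([x,y]), \]
which is the asserted anti-morphism property. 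Thus the only genuinely technical point is the Bernoulli-number computation of the Taylor coefficients; a more pedestrian alternative would extract the coefficients of $[\Phi^\perp(x),\Phi^\perp(y)]$ from the brace formula for the commutator of coderivations and match them using standard Bernoulli identities, but this route is considerably less transparent than the associativity argument above.
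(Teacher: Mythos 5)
Your argument is correct, and it is genuinely different from what the paper does: the paper offers no proof at all for this lemma, deferring entirely to \cite[Theorem 1.2]{Bandiera_nonabelian}, whereas you give a self-contained derivation. Your identification $\Phi^\perp(x)=\sym^{-1}\circ R_x\circ\sym$ is exactly the right one — it is consistent with (and in fact re-proves) the paper's later lemma that $\eta=\sym^{-1}$, since $\eta(x_1\cdots x_n)=\Phi^\perp(x_n)\circ\cdots\circ\Phi^\perp(x_1)(1)$ becomes $\sym^{-1}\circ R_{x_n}\circ\cdots\circ R_{x_1}(1)=\sym^{-1}(x_1\cdots x_n)$, so there is no circularity with the rest of Section 3.1. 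The group-like computation checks out: from $\sym(v\odot\exp_\odot z)=\tfrac{e^{\ad_z}-1}{\ad_z}(v)\cdot e^z$ and $e^z\cdot x=e^{\ad_z}(x)\cdot e^z$ one gets $v(z)=\tfrac{\ad_z}{1-e^{-\ad_z}}(x)=\sum_k\tfrac{(-1)^kB_k}{k!}\ad_z^k(x)$, which matches $\tfrac{1}{k!}\Phi^\perp(x)_k(z^{\odot k})=\tfrac{(-1)^kB_k}{k!}\ad_z^k(x)$ term by term, and polarization recovers the multilinear formula. The only point worth flagging is that the group-like argument takes place in the completions $\widehat{S(\g)}$, $\widehat{\U(\g)}$; since the asserted identity of Taylor coefficients is a universal polynomial identity, it suffices to verify it for the free pro-nilpotent Lie algebra, which your formal computation does. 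Once the identification is granted, the anti-morphism property really is free from $R_xR_y=R_{yx}$, and this is considerably more transparent than matching Bernoulli identities coefficient by coefficient. What your route buys is a conceptual, citation-free proof; what the paper's route buys is brevity and reuse of a result already established elsewhere in greater generality ($L_\infty$ coderivations).
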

		
		As a consequence, the map $\Phi^\perp: \g \to \Coder(S(\g)) \subset \End(S(\g))$ extends 
		uniquely to an anti-morphism of algebras
		$$ \Phi^\perp: \U(\g) \to \End(S(\g)).$$
		
		\begin{definition}
			We define a map $\eta: \U(\g) \to S(\g)$
			by setting
			$$\eta(1)=1 \quad \textrm{and} \quad \eta(x_1\cdots x_n):=\Phi^\perp(x_1 \cdots x_n)(1) = \Phi^\perp(x_n)\circ\cdots\circ \Phi^\perp(x_1)(1)$$
			for all $x_1,\ldots,x_n\in\g$ (where we denote by $1$ both the unit in $S(\g)$ and the one in $\U(\g)$).
		\end{definition}
		
		\begin{lemma}
			The map $\eta: \U(\g) \to S(\g)$ is the inverse to $\sym: S(\g) \to \U(\g)$.
		\end{lemma}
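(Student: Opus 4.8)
The plan is to identify $\Phi^\perp$ with the transport, through $\sym$, of right multiplication in $\U(\g)$, and then to read off the statement by evaluating at $1$. Concretely, for $u\in\U(\g)$ let $R_u\colon\U(\g)\to\U(\g)$, $a\mapsto a\cdot u$, denote right multiplication, and set $\rho(u):=\sym^{-1}\circ R_u\circ\sym\in\End(S(\g))$. Since $R_uR_v=R_{vu}$, the assignment $u\mapsto\rho(u)$ is a unital algebra anti-morphism $\U(\g)\to\End(S(\g))$, exactly like $\Phi^\perp$. Granting for the moment that $\Phi^\perp=\rho$, the lemma is immediate: using $\sym(1)=1$ and $R_u(1)=u$ we get
\[ \eta(u)=\Phi^\perp(u)(1)=\rho(u)(1)=\sym^{-1}(R_u(\sym(1)))=\sym^{-1}(u), \]
so $\eta=\sym^{-1}$, and in particular $\eta$ is a two-sided inverse of $\sym$ because the latter is a bijection.

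It thus remains to prove $\Phi^\perp=\rho$. As both maps are unital algebra anti-morphisms and $\U(\g)$ is generated as a unital algebra by $\g$, it suffices to check $\Phi^\perp(x)=\rho(x)$ for every $x\in\g$. Here I would exploit that both sides are coderivations of $S(\g)$: the left-hand side by Lemma \ref{lemmaHDB}, and the right-hand side because $x$ is primitive in the bialgebra $\U(\g)$, so $R_x$ is a coderivation of $\U(\g)$, and $\sym$ is an isomorphism of coalgebras, whence conjugation carries coderivations to coderivations. Since a coderivation of $S(\g)$ is completely determined by its Taylor coefficients $Q_k\colon\g^{\odot k}\to\g$ (as recalled in the excerpt, $S(\g)$ being cofree), the problem reduces to comparing the Taylor coefficients of $\Phi^\perp(x)$ and of $\rho(x)$.

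The main obstacle is therefore the computation of the Taylor coefficients of $\rho(x)$, i.e.\ of the $\g$-component of $\sym^{-1}(\sym(y_1\odot\cdots\odot y_k)\cdot x)$, and the verification that it equals $(-1)^k\frac{B_k}{k!}\sum_{\sigma\in S_k}[y_{\sigma(1)},\cdots[y_{\sigma(k)},x]\cdots]$. This is precisely the statement that right multiplication by $x$, written in PBW coordinates, is governed by the generating series $\frac{t}{e^t-1}=\sum_k\frac{B_k}{k!}t^k$ of Bernoulli numbers, and it is the content of the cited \cite[Theorem 1.2]{Bandiera_nonabelian} (see also \cite[Equation (3.12)]{Bandiera_Kapranov}); alternatively one can argue by induction on $k$. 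For the induction I would first settle $k=0$ and $k=1$ by hand — the degree-raising coefficient $1\mapsto x$ comes from the leading symmetric term of $\sym(y)\cdot x$, while the $k=1$ coefficient is $\tfrac12[\,\cdot\,,x]$, matching $-B_1=\tfrac12$ — and then relate the $k$-th coefficient to the lower ones via the coderivation property together with the defining recursion for the Bernoulli numbers.

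Once $\Phi^\perp(x)=\rho(x)$ is established on generators, the algebra anti-morphism property upgrades it to $\Phi^\perp=\rho$ on all of $\U(\g)$, and the displayed evaluation at $1$ concludes the proof. I expect the only genuinely non-formal ingredient to be this Bernoulli-number identity for the Taylor coefficients; everything else is a formal consequence of $\sym$ being an isomorphism of coalgebras and of $\Phi^\perp$ being an algebra anti-morphism.
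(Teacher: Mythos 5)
Your argument is correct, but it takes a genuinely different --- and strictly stronger --- route than the paper's. You prove the identification $\Phi^\perp(u)=\sym^{-1}\circ R_u\circ\sym$ for all $u\in\U(\g)$ and then read off the lemma by evaluating at $1$; the entire difficulty is thereby concentrated in the one non-formal step you isolate, namely verifying that the Taylor coefficients of $\sym^{-1}\circ R_x\circ\sym$ are given by the Bernoulli series $t/(e^t-1)$. The paper avoids that computation altogether: it first records (citing \cite{Bandiera_Kapranov}) that $\eta$ is a morphism of coaugmented coalgebras, so that $\eta\circ\sym$ is a coalgebra automorphism of the cofree coalgebra $S(\g)$ and is determined by its corestriction $p\circ\eta\circ\sym$ on the spanning set $\{x^{\odot n}\}$; there the check is trivial, since inductively $p\circ\eta\circ\sym(x^{\odot (n+1)})=\Phi^\perp(x)_n(x^{\odot n})$, which vanishes for $n\ge 1$ simply because every iterated bracket $[x,\ldots[x,x]\ldots]$ is zero. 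Note that the actual values $(-1)^kB_k/k!$ never enter the paper's induction --- they are hidden in the coalgebra-morphism property of $\eta$ --- whereas your proof needs them exactly. What your route buys is the conceptual statement that $\Phi^\perp$ \emph{is} right multiplication transported through $\sym$ (which in particular re-proves that $\eta=\sym^{-1}$ is a coalgebra morphism, rather than citing it); what it costs is the full Bernoulli-number identity.

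One caveat about how you propose to close that remaining step. The appeal to \cite[Theorem 1.2]{Bandiera_nonabelian} is reasonable (the paper leans on the same result for Lemma \ref{lemmaHDB}, though it only extracts the anti-morphism property from it, while you need the full identification). Your fallback induction, however, is under-specified as written: since coderivations of the cofree coalgebra $S(\g)$ have freely prescribable Taylor coefficients, the coderivation property of $\rho(x)$ carries no information from the coefficients $Q_0,\ldots,Q_{k-1}$ to $Q_k$, so ``the coderivation property together with the Bernoulli recursion'' cannot by itself produce the $k$-th coefficient. To run the induction you would have to actually compute $\sym(y_1\odot\cdots\odot y_k)\cdot x$ modulo lower symmetric degree (equivalently, prove a generating-function identity of $e^{-\operatorname{ad}_y}$-type). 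If the citation is your primary justification the proof is complete; if the induction is, that step still needs to be written out.
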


		\begin{proof}
			Following the proof of \cite[Theorem 3.3]{Bandiera_Kapranov}, one shows that $\eta$ is an isomorphism of coaugmented coalgebras.
			We claim that the composition $$S(\g) \stackrel{\sym}{\longrightarrow} \U(\g) \stackrel{\eta}{\longrightarrow} S(\g),$$
			which is an automorphism of the coaugmented coalgebra $S(\g)$, is the identity.
			Since $S(\g)$ is spanned by elements of the form
			$x^{\odot n}$, $x\in \g$, it suffices to verify that $\eta\circ \sym$ is the identity on such elements. By the compatibility with the comultiplication, and the cofreeness of $S(\g)$, it therefore suffices to verify that
			
			$$ p\circ \eta \circ \sym(x^{\odot n}) = \begin{cases} x & \textrm{ if } n=1 \\
			0 & \textrm{ otherwise}\end{cases},$$
			where $p$ denote the canonical projection $S(\g) \to \g$.
			This is straightforward for $n=0,1$. Assuming by induction that $\eta \circ \sym(x^{\odot n})=\eta(x^n)=x^{\odot n}$, we compute
			\begin{eqnarray*}
				p\circ \eta \circ \sym(x^{\odot (n+1)}) &=& p\circ \eta(x^{n+1}) = p\circ\Phi^\perp(x^{n+1})(1) = p\circ\Phi^\perp(x)\circ\Phi^\perp(x^{n})(1)\\
				&=&p\circ\Phi^\perp(x)\circ\eta(x^{n}) =
				p\circ\Phi^\perp(x)(x^{\odot n})=\Phi^\perp_n(x)(x^{\odot n}) = 0 .
			\end{eqnarray*}
		\end{proof}
		
		\begin{definition} The Eulerian idempotent $E$ on $\U(\g)$ is the composition
			$$ E: \U(\g) \xrightarrow{\eta} S(\g) \stackrel{p}{\longrightarrow} \g\hookrightarrow\U(\g).$$
		\end{definition}
		\begin{remark} $E$ may be equivalently defined as the logarithm $E=\log_\star(\id)$ of the identity in $\End(\U(\g))$ with respect to the convolution product $\star$, cf.~\cite{Loday}.
		\end{remark}
		Since $\eta$ is a morphism of coalgebras, and since $S(\g)$
		is cofree, we can write $\eta$ in terms of its corestriction $E:\U(\g)\to\g$ as follows:
		\begin{equation*}\label{eqn: sym-1}
		\eta(x_1\cdots x_n) =\sum_{k=1}^n \frac{1}{k!}\sum_{i_1+\cdots+i_k=n}\sum_{\sigma \in S(i_1,\dots,i_k)}E(x_{\sigma(1)}\cdots x_{\sigma(i_1)})\odot \cdots \odot E(x_{\sigma(n-i_k+1)}\cdots x_{\sigma(n)}).
		\end{equation*}
		
		As a consequence of these considerations, we obtain the following recursion for the Eulerian idempotent:
		
		\begin{proposition}\label{corollary: Eulerian}
			The map $E$ is determined by the following recursion:
			\begin{itemize}
				\item $E(x) = x$ for all $x\in\g$.
				\item For all $n\ge 1$ and $x_1,\ldots,x_{n+1}\in\g$,
				\begin{multline*}
				E(x_1\cdots x_{n+1}) = \\
				=\sum_{k=1}^n  (-1)^k\frac{B_k}{k!}\sum_{i_1+\cdots +i_k=n} \sum_{\sigma \in S(i_1,\dots,i_k)}[E(x_{\sigma(1)}\cdots x_{\sigma(i_1)}),\cdots[E(x_{\sigma(n-i_k+1)}\cdots x_{\sigma(n)}),x_{n+1}]\cdots]
				\end{multline*}
			\end{itemize}
		\end{proposition}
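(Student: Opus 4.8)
The plan is to isolate the final factor $x_{n+1}$ by exploiting the multiplicativity of $\Phi^\perp$, and then to expand $\eta(x_1\cdots x_n)$ coalgebraically. Since $\Phi^\perp$ is an anti-morphism of algebras and $\eta(x_1\cdots x_n)=\Phi^\perp(x_1\cdots x_n)(1)$, we have $\eta(x_1\cdots x_{n+1})=\Phi^\perp(x_{n+1})\bigl(\eta(x_1\cdots x_n)\bigr)$. Applying the projection $p\colon S(\g)\to\g$ and recalling $E=p\circ\eta$, this gives
\[ E(x_1\cdots x_{n+1})=p\circ\Phi^\perp(x_{n+1})\bigl(\eta(x_1\cdots x_n)\bigr). \]
First I would substitute into the right-hand side the coalgebra expansion of $\eta(x_1\cdots x_n)$ displayed just before the statement, whose homogeneous component of degree $k$ is $\tfrac{1}{k!}\sum_{i_1+\cdots+i_k=n}\sum_{\sigma\in S(i_1,\ldots,i_k)}E(x_{\sigma(1)}\cdots x_{\sigma(i_1)})\odot\cdots\odot E(x_{\sigma(n-i_k+1)}\cdots x_{\sigma(n)})$.

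Next I would use that $\Phi^\perp(x_{n+1})$ is a coderivation, so that its composition with $p$, restricted to the symmetric power $\g^{\odot k}$, is precisely the $k$-th Taylor coefficient $\Phi^\perp(x_{n+1})_k$ supplied by Lemma \ref{lemmaHDB}, namely $\Phi^\perp(x_{n+1})_k(y_1\odot\cdots\odot y_k)=(-1)^k\tfrac{B_k}{k!}\sum_{\tau\in S_k}[y_{\tau(1)},\cdots[y_{\tau(k)},x_{n+1}]\cdots]$. Feeding the Eulerian blocks $y_j=E(x_{\sigma(\cdots)}\cdots)$ of the degree-$k$ component into this formula produces, for each $k$, a prefactor $\tfrac{1}{k!}\cdot(-1)^k\tfrac{B_k}{k!}$ together with a triple sum: over compositions $(i_1,\ldots,i_k)$ of $n$, over shuffles $\sigma\in S(i_1,\ldots,i_k)$, and over permutations $\tau\in S_k$.

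The heart of the argument---and the step I expect to be the main obstacle---is the combinatorial simplification that eliminates both the spurious factor $\tfrac{1}{k!}$ and the extra summation over $\tau$. The key observation is that the datum of a composition $(i_1,\ldots,i_k)$ of $n$ together with a shuffle $\sigma\in S(i_1,\ldots,i_k)$ is equivalent to that of an ordered partition of $\{1,\ldots,n\}$ into $k$ non-empty blocks, each listed internally in increasing order. The symmetric group $S_k$ acts on the set of such ordered partitions by permuting the blocks, and since the blocks are pairwise disjoint and non-empty this action is free. Consequently, summing the iterated bracket over all $\tau\in S_k$ while simultaneously ranging over all compositions and shuffles visits each ordered partition exactly $k!$ times, and this factor cancels the prefactor $\tfrac{1}{k!}$. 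After the cancellation the triple sum collapses to the single sum over compositions and shuffles carrying the coefficient $(-1)^k\tfrac{B_k}{k!}$, which is exactly the asserted recursion; the base case $E(x)=x$ is immediate from the definition of $E$.
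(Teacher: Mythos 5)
Your proof is correct and follows exactly the paper's route: write $E(x_1\cdots x_{n+1})=p\circ\Phi^\perp(x_{n+1})(\eta(x_1\cdots x_n))$, insert the coalgebraic expansion of $\eta(x_1\cdots x_n)$, and apply the Taylor coefficients of $\Phi^\perp(x_{n+1})$ from Lemma \ref{lemmaHDB}. The paper leaves the final bookkeeping implicit, whereas you correctly spell out why the free $S_k$-action on ordered partitions cancels the extra $\tfrac{1}{k!}$; this is a welcome detail, not a deviation.
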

		
		\begin{proof}
			We have
			\begin{equation*}
			E(x_1\cdots x_{n+1})= p\circ\eta(x_1\cdots x_{n+1})=
			p\circ\Phi^\perp(x_{n+1})(\eta(x_1\cdots x_n)).
			\end{equation*}
			Using the above formula for $\eta(x_1\cdots x_n)$, together with the formulas for the Taylor coefficients of $\Phi^\perp(x_{n+1})$ in Lemma \ref{lemmaHDB}, we arrive at the claimed formula for $E(x_1\cdots x_{n+1})$.
		\end{proof}

		\subsection{The Eulerian idempotent as a pre-Lie logarithm}
		\label{subsection: pbT}
		
		In this subsection we introduce a (left) pre-Lie algebra structure on the vector space $\pbT$ spanned by binary planar rooted trees. Moreover, we show how the computation of $E(x_1\cdots x_n)\in \Lie_n$,  $n\ge 1$, in the PBW basis $\basis_n$ of $\Lie_n$ (cf.~the Introduction) can be reduced to the computation of a pre-Lie logarithm in the pre-Lie algebra $\pbT$.

		\begin{definition}\label{def:pbT}
			Let $\pbT(n)$ be the vector space spanned by all planar, binary, rooted trees with $n$ leaves. By convention, the root of such a tree is always univalent.
			
			We denote by $\pbT$ the direct product $\pbT=\prod_{n\geq1}\pbT(n)$. 
		\end{definition}	
		
		\begin{remark} As we will need to switch between binary and non-binary rooted trees, we shall use different pictures to avoid confusion. Namely, we shall depict binary, planar, rooted trees as in the following example 
			\[  \xy {\ar@{-}(0,-5);(0,-2)};
			{\ar@{-}(0,-2);(-10,8)};
			{\ar@{-}(-7,5);(-4,8)};
			{\ar@{-}(0,-2);(10,8) };
			{\ar@{-}(4,2);(-2,8)}; 	
			{\ar@{-}(1,5);(4,8)};
			\endxy \]	
			while we shall depict non-binary (or rather, not necessarily binary) rooted trees, both in the planar and the non-planar cases, by the same kind of pictures we used in the previous Section \ref{section: pre-Lie}, as in the following example
			\[  \xy
			{(0,-8)};
			{(-8,-2)};
			{(-12,4)};
			{(-6,7)};
			{(0,4)};
			{(8,3)};
			{(8,-2)};
			{(3,11)};
			{(9,11)};		
			{\ar@{-}(0,-6)*{\bul};(-6,-2)*{\circ}};
			{\ar@{-}(0,-6)*{\bul};(6,-2)*{\circ}};
			{\ar@{-}(-6,-2)*{\circ};(-6,4)*{\circ}};
			{\ar@{-}(-6,-2)*{\circ};(-10,4)*{\circ}};
			{\ar@{-}(-6,-2)*{\circ};(-2,4)*{\circ}};
			{\ar@{-}(6,-2)*{\circ};(6,3)*{\circ}};
			{\ar@{-}(6,3)*{\circ};(3,8)*{\circ}};
			{\ar@{-}(6,3)*{\circ};(9,8)*{\circ}};
			\endxy \]	
		\end{remark}
		
		\begin{definition}\label{def:rhd} Given an edge $e$ of a planar, binary, rooted tree $T$, we say that $e$ is \emph{right pointing} if it is oriented along the south-west/north-east diagonal, and we say that $e$ is \emph{left pointing} if it is oriented along the south-east/north-west diagonal. By convention, although we do not show this in the pictures, we think of the edge connected to the root as a right-pointing edge. We denote by $E(T)$ (resp.: $E_r(T)$, $E_l(T)$)  the set of (resp.: right pointing, left pointing) edges of $T$.
			
			Given trees $T,T'\in\pbT$, and an edge $e$ of $T'$, we denote by $T\searrow_e T'$ the tree obtained by grafting the root of $T$ onto the edge $e$. 
			
			Finally, we define a bilinear product $\rhd\colon\pbT\otimes\pbT\to\pbT$ on $\pbT$ by the formula \begin{equation}\label{eq:rhd} T \rhd T'=\sum_{e\in E_r(T')} T\searrow_e T' - \sum_{e\in E_l(T')} T\searrow_e T'.
			\end{equation}
		\end{definition}
		
		\begin{example}	
			
			Let us provide a few examples:
			\[ 	\xy
			{\ar@{-}(0,-2);(0,2)};
			\endxy\,\,\rhd\,\,	\xy
			{\ar@{-}(0,-2);(0,2)};
			\endxy \quad = \quad\xy
			{\ar@{-}(0,-2);(0,0)};
			{\ar@{-}(0,0);(-2,2)};
			{\ar@{-}(0,0);(2,2)};
			\endxy,\qquad
			\xy
			{\ar@{-}(0,-2);(0,0)};
			{\ar@{-}(0,0);(-2,2)};
			{\ar@{-}(0,0);(2,2)};
			\endxy
			\, \, \rhd \, \,
			\xy
			{\ar@{-}(0,-2);(0,2)};
			\endxy \quad = \quad
			\xy
			{\ar@{-}(0,-2);(0,0)};
			{\ar@{-}(0,0);(4,4)};
			{\ar@{-}(0,0);(-4,4)};
			{\ar@{-}(-2,2);(0,4)};
			\endxy,\qquad\xy
			{\ar@{-}(0,-2);(0,2)};
			\endxy	\, \, \rhd \, \,
			\xy
			{\ar@{-}(0,-2);(0,0)};
			{\ar@{-}(0,0);(-2,2)};
			{\ar@{-}(0,0);(2,2)};
			\endxy
			\quad = \quad 2\,\,\xy
			{\ar@{-}(0,-2);(0,0)};
			{\ar@{-}(0,0);(4,4)};
			{\ar@{-}(0,0);(-4,4)};
			{\ar@{-}(2,2);(0,4)};
			\endxy
			\,\, -\,\,
			\xy
			{\ar@{-}(0,-2);(0,0)};
			{\ar@{-}(0,0);(4,4)};
			{\ar@{-}(0,0);(-4,4)};
			{\ar@{-}(-2,2);(0,4)};
			\endxy,
			\] 
			\[  	\xy
			{\ar@{-}(0,-2);(0,0)};
			{\ar@{-}(0,0);(-2,2)};
			{\ar@{-}(0,0);(2,2)};
			\endxy
			\, \, \rhd \, \,
			\xy
			{\ar@{-}(0,-2);(0,0)};
			{\ar@{-}(0,0);(-2,2)};
			{\ar@{-}(0,0);(2,2)};
			\endxy\quad = \quad  \xy {\ar@{-}(0,-4);(0,-2)};
			{\ar@{-}(0,-2);(-6,4)};
			{\ar@{-}(-4,2);(-2,4)};
			{\ar@{-}(0,-2);(6,4) };
			{\ar@{-}(4,2);(2,4)}; 	
			\endxy\,\,+\,\,\xy {\ar@{-}(0,-4);(0,-2)};
			{\ar@{-}(0,-2);(-6,4)};
			{\ar@{-}(0,2);(2,4)};
			{\ar@{-}(0,-2);(6,4) };
			{\ar@{-}(2,0);(-2,4)}; 	
			\endxy\,\,-\,\, \xy {\ar@{-}(0,-4);(0,-2)};
			{\ar@{-}(0,-2);(-6,4)};
			{\ar@{-}(0,2);(-2,4)};
			{\ar@{-}(0,-2);(6,4) };
			{\ar@{-}(-2,0);(2,4)}; 	
			\endxy.  \]
		\end{example}
		
		\begin{proposition}
			The operation $\rhd$ equips $\pbT$ with the structure of a (complete) left pre-Lie algebra.
		\end{proposition}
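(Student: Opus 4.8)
The plan is to verify the two defining properties of a complete pre-Lie algebra separately: the symmetry in the first two arguments of the associator $A_\rhd(T_1,T_2,T_3) := T_1\rhd(T_2\rhd T_3) - (T_1\rhd T_2)\rhd T_3$, and the compatibility of a complete filtration. The completeness is immediate: grading $\pbT$ by the number of leaves and setting $F^p\pbT := \prod_{n\ge p}\pbT(n)$, one checks that grafting a tree with $k$ leaves onto an edge of a tree with $l$ leaves yields only trees with $k+l$ leaves, so that $F^k\pbT \rhd F^l\pbT \subseteq F^{k+l}\pbT$; the filtration is complete by the very definition of $\pbT$ as a direct product.

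For the pre-Lie identity I would adapt the grafting bookkeeping that proves the analogous statement for the non-planar operation $\curvearrowright$ (Chapoton--Livernet), while keeping careful track of signs and of the planar positions of the newly created edges. First expand $T_2\rhd T_3 = \sum_{e} \epsilon(e)\, T_2\searrow_e T_3$, where $\epsilon(e)=+1$ for a right-pointing and $\epsilon(e)=-1$ for a left-pointing edge, and then expand $T_1 \rhd (T_2\searrow_e T_3)$ by summing over the edges of $T_2\searrow_e T_3$. These edges split into four families: (a) the edges of $T_3$ other than $e$; (b) the two halves $e_-, e_+$ into which $e$ is subdivided by the grafting vertex; (c) the edge $g$ joining that vertex to the root of $T_2$; and (d) the remaining interior edges of $T_2$. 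The orientations are the crux of the computation: $e_-$ and $e_+$ inherit the orientation of $e$, the interior edges of $T_2$ keep their orientations, while the connecting edge satisfies $\epsilon(g)=-\epsilon(e)$.

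The key step is then to match these against $(T_1\rhd T_2)\rhd T_3 = \sum_{f\in E(T_2)}\sum_e \epsilon(f)\epsilon(e)\,(T_1\searrow_f T_2)\searrow_e T_3$, splitting the outer sum according to whether $f$ is the root edge of $T_2$ or an interior edge. The terms in which $T_1$ is grafted into the interior of $T_2$, namely family (d), coincide term by term, signs included, with the interior-edge part of $(T_1\rhd T_2)\rhd T_3$, so they cancel in the associator. What survives is families (a), (b), (c) together with the root-edge contribution of $(T_1\rhd T_2)\rhd T_3$, and I would show that each surviving piece is manifestly symmetric under $T_1\leftrightarrow T_2$: family (a) records $T_1$ and $T_2$ grafted on two distinct edges of $T_3$, with the symmetric sign given by the product of the two edge signs; family (b) records them stacked at distinct points along $e$; and family (c), combined with the root-edge term, records them as the two children of a single new vertex sitting on $e$, summed over both left/right orders.

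The step I expect to be the main obstacle is exactly this last combination. Because $\epsilon(g)=-\epsilon(e)$ whereas the standalone root edge of $T_2$ is right-pointing, the contributions of (c) and of the root-edge term must be compared orientation by orientation: over a right-pointing $e$ they produce the two mirror configurations (the two orders of $T_1, T_2$ beneath the common vertex), each with coefficient $-1$, while over a left-pointing $e$ they produce the same tree with opposite signs and cancel outright. In both cases the outcome is symmetric in $T_1$ and $T_2$, but establishing this demands a careful, case-by-case analysis of how the left/right placement rule interacts with the sign $\epsilon$. Once this symmetry is verified, collecting families (a), (b) and (c) exhibits $A_\rhd(T_1,T_2,T_3)$ as a symmetric expression in its first two arguments, which together with the filtration statement completes the proof.
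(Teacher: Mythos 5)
Your proposal is correct and follows essentially the same route as the paper: the paper's proof merely records the outcome of the direct computation you carry out, namely that the associator equals the signed sum of graftings of $T_1$ and $T_2$ onto two (possibly coinciding) edges of $T_3$, minus the unsigned sum of graftings, onto right-pointing edges of $T_3$, of the element having $T_1$ and $T_2$ as the two children of a single new vertex in either order. Your sign bookkeeping (the two halves of $e$ inheriting $\varepsilon(e)$, the connecting edge carrying $-\varepsilon(e)$, cancellation over left-pointing edges and the two mirror configurations each with coefficient $-1$ over right-pointing ones) reproduces exactly the symmetric expression stated in the paper.
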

		\begin{proof} Given three planar, binary, rooted trees $T_1, T_2, T_3$, a direct computation shows that the associator
			$$A_\rhd(T_1,T_2,T_3) = T_1 \rhd (T_2 \rhd T_3) - (T_1 \rhd T_2) \rhd T_3$$
			equals the (signed, with signs as in Definition \ref{def:rhd}) sum of all possible graftings of $T_1$ and $T_2$ onto two (possibly coinciding) edges of $T_3$, minus the (unsigned) sum of all graftings of the element
			$$ 	\xy
			{\ar@{-}(0,-4);(0,-2)};
			{\ar@{-}(0,-2);(-4,2)};{(-4,4)*{\slab{T_1}}};
			{\ar@{-}(0,-2);(4,2)};{(4,4)*{\slab{T_2}}};
			\endxy 
			\quad  + \quad
			\xy
			{\ar@{-}(0,-4);(0,-2)};
			{\ar@{-}(0,-2);(-4,2)};{(-4,4)*{\slab{T_2}}};
			{\ar@{-}(0,-2);(4,2)};{(4,4)*{\slab{T_1}}};
			\endxy 
			$$	
			onto a right pointing edge of $T_3$. This element of $\pbT$ is clearly symmetric with respect to
			$T_1$ and $T_2$. The pre-Lie algebra is complete with respect to the filtration
			$$F^p\pbT := \prod_{k\ge p}\pbT(k).$$
		\end{proof}
		\begin{example} For instance,
			\[ A_\rhd\left(\,\xy
			{\ar@{-}(0,-2);(0,0)};
			{\ar@{-}(0,0);(-2,2)};
			{\ar@{-}(0,0);(2,2)};
			\endxy
			\,  , \, 
			\xy
			{\ar@{-}(0,-2);(0,2)};
			\endxy\, , \, \xy
			{\ar@{-}(0,-2);(0,2)};
			\endxy \,\right) = A_\rhd\left(\,
			\xy
			{\ar@{-}(0,-2);(0,2)};
			\endxy\, , \xy
			{\ar@{-}(0,-2);(0,0)};
			{\ar@{-}(0,0);(-2,2)};
			{\ar@{-}(0,0);(2,2)};
			\endxy
			\,  , \, \xy
			{\ar@{-}(0,-2);(0,2)};
			\endxy \,\right) = \,\,\xy {\ar@{-}(0,-4);(0,-2)};
			{\ar@{-}(0,-2);(-6,4)};
			{\ar@{-}(-4,2);(-2,4)};
			{\ar@{-}(0,-2);(6,4) };
			{\ar@{-}(4,2);(2,4)}; 	
			\endxy\,\,+\,\,\xy {\ar@{-}(0,-4);(0,-2)};
			{\ar@{-}(0,-2);(-6,4)};
			{\ar@{-}(0,2);(2,4)};
			{\ar@{-}(0,-2);(6,4) };
			{\ar@{-}(2,0);(-2,4)}; 	
			\endxy\,\,-\,\, \xy {\ar@{-}(0,-4);(0,-2)};
			{\ar@{-}(0,-2);(-6,4)};
			{\ar@{-}(0,2);(-2,4)};
			{\ar@{-}(0,-2);(6,4) };
			{\ar@{-}(-2,0);(2,4)}; 	
			\endxy\,\,-\,\, \xy {\ar@{-}(0,-4);(0,-2)};
			{\ar@{-}(0,-2);(-6,4)};
			{\ar@{-}(-4,2);(-2,4)};
			{\ar@{-}(0,-2);(6,4) };
			{\ar@{-}(-2,0);(2,4)}; 	
			\endxy. \]
		\end{example} 	
		\begin{definition}
			We refer to $(\pbT, \rhd)$ as the pre-Lie algebra of planar, binary, rooted trees.
		\end{definition}	
		
		\begin{remark}\label{rem: pre-lie copr}
			It will be convenient to describe explicitly the pre-Lie coproduct on $\pbT$ dual to $\rhd$, which we denote by $\Delta:\pbT\to \pbT\otimes\pbT$. Given a planar, binary, rooted tree $T$, we denote by $V(T)$ the set of inner vertices of $T$. Given $v\in V(T)$, we put $\varepsilon(v)=1$ if $v$ is the vertex connected to the root or if $v$ is a right child, and we put $\varepsilon(v)=-1$ if $v$ is a left child. Moreover, given $v\in V(T)$ such that $\varepsilon(v)=1$ (resp.: $\varepsilon(v)=-1$), we denote by $T_v$ the (planar, binary, rooted at $v$) subtree of $T$ to the left (resp.: right) of $v$, and by $\widehat{T_v}$ the (planar, binary, rooted) tree obtained by removing $T_v$ from $T$. With these notations, the coproduct $\Delta$ is given by the formula
			\begin{equation}\label{pre-lie copr} \Delta(T) = \sum_{v\in V(T)}\varepsilon(v)\,\, T_v\otimes\widehat{T_v}. \end{equation}
			For instance,
			\[ \Delta\left(\xy {\ar@{-}(0,-4);(0,-2)};
			{\ar@{-}(0,-2);(-8,6)};
			{\ar@{-}(-4,2);(0,6)};
			{\ar@{-}(0,-2);(8,6) };
			{\ar@{-}(-2,0);(4,6)}; 
			{\ar@{-}(-2,4);(-4,6)};	
			\endxy\right) = 	\xy {\ar@{-}(0,-4);(0,-2)};
			{\ar@{-}(0,-2);(-6,4)};
			{\ar@{-}(0,2);(-2,4)};
			{\ar@{-}(0,-2);(6,4) };
			{\ar@{-}(-2,0);(2,4)}; 	
			\endxy
			\,  \otimes \,\,
			\xy
			{\ar@{-}(0,-2);(0,2)};
			\endxy\,\,-\,\, \xy
			{\ar@{-}(0,-2);(0,2)};
			\endxy\,\,\otimes\,\xy {\ar@{-}(0,-4);(0,-2)};
			{\ar@{-}(0,-2);(-6,4)};
			{\ar@{-}(0,2);(-2,4)};
			{\ar@{-}(0,-2);(6,4) };
			{\ar@{-}(-2,0);(2,4)}; 	
			\endxy\,\,-\,\,\xy
			{\ar@{-}(0,-2);(0,0)};
			{\ar@{-}(0,0);(-2,2)};
			{\ar@{-}(0,0);(2,2)};
			\endxy\,\,\otimes\,\xy
			{\ar@{-}(0,-2);(0,0)};
			{\ar@{-}(0,0);(4,4)};
			{\ar@{-}(0,0);(-4,4)};
			{\ar@{-}(-2,2);(0,4)};
			\endxy\,\,+\,\,\xy
			{\ar@{-}(0,-2);(0,2)};
			\endxy\,\,\otimes\, \xy {\ar@{-}(0,-4);(0,-2)};
			{\ar@{-}(0,-2);(-6,4)};
			{\ar@{-}(-4,2);(-2,4)};
			{\ar@{-}(0,-2);(6,4) };
			{\ar@{-}(-2,0);(2,4)}; 	
			\endxy. \]
		\end{remark}

		\begin{definition}\label{def:admissible} Let $(X,\le)$ be a totally ordered set.
			A labeling of a planar, binary, rooted tree $T$ by $(X,\le)$ is a map
			$$ \ell : \{\mbox{\emph{leaves of} $T$}\} \to X.$$
			
			We say that a labeling $\ell$ of $T$ is {\em admissible} if 
			
			\begin{itemize} \item the labeling is injective as a function $ \ell : \{\mbox{\emph{leaves of} $T$}\} \to X$; and
				\item for every inner vertex $v$ of $T$, denoting by $\operatorname{Des}(v)$ the set of leaves of $T$ which are descendants of $v$, the rightmost leaf in $\operatorname{Des}(v)$ is labeled by \[\max\{i\in X \, \vert \,  \mbox{$i$ is the label of a leaf in $\operatorname{Des}(v)$ }\},\] and the leftmost leaf in $\operatorname{Des}(v)$ is labeled by \[ \min\{i\in X \, \vert \,  \mbox{$i$ is the label of a leaf in $\operatorname{Des}(v)$ }\}.\] 
			\end{itemize}
			
			We denote by $\pblT(X)$ be the vector spaces spanned by pairs $(T,\ell)$,
			where $T$ is a planar, binary, rooted tree and $\ell$ is an admissible labeling of $T$ by $(X,\le)$.
		\end{definition}
		
		\begin{definition}
			We introduce a bilinear operation $\rhd$ on $\pblT(X)$ as follows:
			\begin{itemize}
				\item$(T_1,\ell_1) \rhd (T_2,\ell_2)=0$ unless the labelings $\ell_1$ and $\ell_2$ have disjoint image.
				\item If $\ell_1$ and $\ell_2$ have disjoint image, then $(T_1,\ell_1) \rhd (T_2,\ell_2)$ is obtained by the element $T_1\rhd T_2\in\pbT$ (as defined in Equation \eqref{eq:rhd}), equipped with the labelings inherited from $\ell_1$ and $\ell_2$, by removing all terms for which the inherited labeling is not admissible.
			\end{itemize}
		\end{definition}
		\begin{example} Here are a few examples of the product $\rhd$ on $\pblT(X)$ (when $(X,\leq)=(\mathbb{N},\leq)$):
			\[ 	\xy
			{\ar@{-}(0,-2);(0,2)};{(0,4)*{\slab{1}}};
			\endxy \,\,\rhd\,\,	
			\xy	{\ar@{-}(0,-2);(0,2)};{(0,4)*{\slab{2}}};
			\endxy \quad = \quad\xy
			{\ar@{-}(0,-2);(0,0)};
			{\ar@{-}(0,0);(-2,2)};{(-2,4)*{\slab{1}}};
			{\ar@{-}(0,0);(2,2)};{(2,4)*{\slab{2}}};
			\endxy,\qquad
			\xy
			{\ar@{-}(0,-2);(0,0)};{(-2,4)*{\slab{1}}};
			{\ar@{-}(0,0);(-2,2)};
			{\ar@{-}(0,0);(2,2)};{(2,4)*{\slab{3}}};
			\endxy
			\, \, \rhd \, \,
			\xy
			{\ar@{-}(0,-2);(0,2)};{(0,4)*{\slab{2}}};
			\endxy \quad = \quad 0,\qquad\xy
			{\ar@{-}(0,-2);(0,2)};{(0,4)*{\slab{2}}};
			\endxy	\, \, \rhd \, \,
			\xy
			{\ar@{-}(0,-2);(0,0)};
			{\ar@{-}(0,0);(-2,2)};{(-2,4)*{\slab{1}}};
			{\ar@{-}(0,0);(2,2)};{(2,4)*{\slab{3}}};
			\endxy
			\quad = \quad \,\,\xy
			{\ar@{-}(0,-2);(0,0)};
			{\ar@{-}(0,0);(4,4)};{(4,6)*{\slab{3}}};
			{\ar@{-}(0,0);(-4,4)};{(-4,6)*{\slab{1}}};
			{\ar@{-}(2,2);(0,4)};{(0,6)*{\slab{2}}};
			\endxy
			\,\, -\,\,
			\xy
			{\ar@{-}(0,-2);(0,0)};
			{\ar@{-}(0,0);(4,4)};{(4,6)*{\slab{3}}};
			{\ar@{-}(0,0);(-4,4)};{(-4,6)*{\slab{1}}};
			{\ar@{-}(-2,2);(0,4)};{(0,6)*{\slab{2}}};
			\endxy,
			\] 
			
			\[  	\xy
			{\ar@{-}(0,-2);(0,0)};
			{\ar@{-}(0,0);(-2,2)};{(-2,4)*{\slab{1}}};
			{\ar@{-}(0,0);(2,2)};{(2,4)*{\slab{3}}};
			\endxy
			\, \, \rhd \, \,
			\xy
			{\ar@{-}(0,-2);(0,0)};
			{\ar@{-}(0,0);(-2,2)};{(-2,4)*{\slab{2}}};
			{\ar@{-}(0,0);(2,2)};{(2,4)*{\slab{4}}};
			\endxy\quad = \quad  \xy {\ar@{-}(0,-4);(0,-2)};
			{\ar@{-}(0,-2);(-6,4)};{(-6,6)*{\slab{1}}};
			{\ar@{-}(-4,2);(-2,4)};{(-2,6)*{\slab{3}}};
			{\ar@{-}(0,-2);(6,4) };{(6,6)*{\slab{4}}};
			{\ar@{-}(4,2);(2,4)}; 	{(2,6)*{\slab{2}}};
			\endxy.  \]
		\end{example}
		
		\begin{remark}\label{remboh} In particular, given $(T_1,\ell_1), (T_2,\ell_2)\in\pblT(X)$ such that $\ell_1,\ell_2$ have disjoint images, we have $(T_1,\ell_1)\rhd(T_2,\ell_2)=\left(\xy
			{\ar@{-}(0,-4);(0,-2)};
			{\ar@{-}(0,-2);(-4,2)};{(-4,4)*{\slab{T_1}}};
			{\ar@{-}(0,-2);(4,2)};{(4,4)*{\slab{T_2}}};
			\endxy  ,\ell\right)$ (where $\ell$ is induced from $\ell_1$ and $\ell_2$) whenever the minimum label of $\ell_1$ is smaller than the minimum label of $\ell_2$, and we have $(T_1,\ell_1)\rhd(T_2,\ell_2)=0$ whenever the maximum label of $\ell_1$ is greater than the maximum label of $\ell_2$.
		\end{remark}

		\begin{lemma}\label{lem:labeling} The product $\rhd$ on $\pblT(X)$ is a pre-Lie product. Furthermore, when the set $X$ is finite,
			the ``labeling" map
			$$ \operatorname{lab}_X: \pbT \to \pblT(X), \quad T \mapsto \sum_{\ell}(T,\ell),$$
			where the sum runs over all admissible labelings of $T$ in $(X,\le)$,
			is a morphism of pre-Lie algebras.
		\end{lemma}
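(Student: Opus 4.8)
The plan is to isolate a single combinatorial fact and reduce both assertions to it. Call it the \emph{restriction property}: if $S=T\searrow_e T'$ and $L$ is an admissible labeling of $S$ by a totally ordered set, then the restrictions $L|_{T}$ and $L|_{T'}$ of $L$ to the leaves of the two factors are again admissible. It is convenient to first record the recursive reformulation of admissibility (a straightforward induction shows it is equivalent to Definition \ref{def:admissible}, and it is the tree version of Definition \ref{definition: basis}): a labeling is admissible if and only if at every inner vertex $v$, with left subtree $A$ and right subtree $B$, one has $\min(A)<\min(B)$ and $\max(A)<\max(B)$, where $\min$ and $\max$ refer to the labels occurring on the leaves of the respective subtree.

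To prove the restriction property I would check these inequalities at each inner vertex. For a vertex $v$ lying inside the grafted copy of $T$, its two subtrees are unchanged by the grafting, so admissibility of $L$ gives admissibility of $L|_T$ there; and a vertex of $T'$ that is not an ancestor of the grafting point likewise has unchanged subtrees. The only vertices requiring an argument are the inner vertices $v$ of $T'$ lying above the grafting point; say $e$ lies in the left subtree $A$ of $v$ (the other case being symmetric). In $S$ this left subtree becomes $A$ with a copy of $T$ inserted, so its label set is enlarged by the labels of $T$; its maximum can only increase, and the inequality $\max(A)<\max(B)$ for $S$ forces the same inequality for $T'$ for free. The subtle point is the minimum. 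Here I would use the crucial feature of the grafting convention in Definition \ref{def:rhd}: at the grafting vertex the inserted copy of $T$ sits on the side \emph{away} from the leftmost (respectively rightmost) branch, so the all-left path of any subtree of $S$ that contains the grafting vertex descends into the old $T'$-part rather than into $T$. Consequently the leftmost leaf of the enlarged left subtree is still a leaf of $T'$; its label realizes the minimum of the enlarged subtree, whence $\min(A)$ is unchanged and $\min(A)<\min(B)$ transfers to $T'$. The same argument with ``left/min'' replaced by ``right/max'' settles the case in which $e$ lies in the right subtree. I expect this to be the main obstacle: it depends genuinely on the placement/sign convention, since with the opposite placement the grafted copy of $T$ could become an extremal leaf and the property would fail.

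Granting the restriction property, the morphism statement follows by matching terms. Using linearity and $\operatorname{lab}_X(T)\rhd\operatorname{lab}_X(T')=\sum_{\ell,\ell'}(T,\ell)\rhd(T',\ell')$, both $\operatorname{lab}_X(T\rhd T')$ and $\operatorname{lab}_X(T)\rhd\operatorname{lab}_X(T')$ decompose, edge by edge over $e\in E(T')$ and with identical signs, as sums over labeled trees supported on $S=T\searrow_e T'$. On the left one sums $(S,L)$ over all admissible $L$; on the right one sums $(S,\ell\cup\ell')$ over pairs of admissible labelings $\ell$ of $T$ and $\ell'$ of $T'$ with disjoint images whose union is admissible. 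The assignment $L\mapsto(L|_T,L|_{T'})$ and its inverse $(\ell,\ell')\mapsto\ell\cup\ell'$ are mutually inverse bijections between the two indexing sets precisely because of the restriction property (its converse being trivial), so the two sums agree. The degenerate bookkeeping when $S$ has more than $|X|$ leaves is automatic, since then both sides vanish.

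Finally, I would deduce the pre-Lie property of $\rhd$ on $\pblT(X)$ from that of $\pbT$. The restriction property implies that a non-admissible labeled tree can never be completed to an admissible one by a further grafting: if $T_1\searrow_e z$ were admissible, its restriction to the leaves of $z$ would be admissible, contradicting non-admissibility of $z$. Hence computing a twofold product in $\pblT(X)$---discarding non-admissible terms after each grafting---gives the same result as performing both graftings in $\pbT$ (keeping everything) and discarding non-admissible terms only at the very end. Writing $\Pi$ for the projection onto the span of admissibly labeled trees and $\bar\rhd$ for the (evidently pre-Lie) product on all injectively labeled trees that grafts and keeps every term, this says $u\rhd(v\rhd w)=\Pi(u\,\bar\rhd\,(v\,\bar\rhd\,w))$ and $(u\rhd v)\rhd w=\Pi((u\,\bar\rhd\,v)\,\bar\rhd\,w)$ for admissible $u,v,w$. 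Therefore the associator of $\rhd$ is the $\Pi$-image of the associator of $\bar\rhd$, and the latter is symmetric in its first two arguments by the very computation establishing that $(\pbT,\rhd)$ is pre-Lie, carried out leafwise with labels as passive decorations. This symmetry passes to the $\Pi$-image, so $\rhd$ is a pre-Lie product on $\pblT(X)$.
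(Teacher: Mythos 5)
Your argument is correct, and it takes a more explicit (and in one place more careful) route than the paper's. The combinatorial heart is the same in both proofs: an admissible labeling of $T\searrow_e T'$ restricts to admissible labelings of $T$ and of $T'$. The paper gets the morphism statement by dualizing — the forgetful map $\pblT(X)\to\pbT$ is a morphism of pre-Lie coalgebras because the coproduct on $\pblT(X)$ is given by the same cut-at-a-vertex formula \eqref{pre-lie copr} with inherited labelings ``which will be automatically admissible'' — and that parenthetical is exactly your restriction property, which you actually prove, correctly identifying that the placement convention of Definition \ref{def:rhd} is what keeps the extremal leaves of every subtree rooted at a vertex of $T'$ inside $T'$. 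One sentence of yours is imprecise: when $e$ is right-pointing the grafted copy of $T$ \emph{is} the left child of the new vertex, so it is not ``on the side away from the leftmost branch''; what saves the minimum there is that the new vertex is the \emph{right} child of its parent, so no all-left path issued from a vertex of $T'$ ever reaches it (whereas in the left-pointing case the all-left path may reach the new vertex but then continues into the $T'$-part, as you say). This is a phrasing slip, not a gap, since the conclusion you draw is correct in both cases. Your treatment of the pre-Lie property is also more careful than the paper's one-line argument: the identity $u\rhd(v\rhd w)=\Pi\bigl(u\,\bar{\rhd}\,(v\,\bar{\rhd}\,w)\bigr)$ — i.e.\ that discarding non-admissible terms at intermediate stages is harmless — is tacitly assumed in the paper and genuinely requires the restriction property, as you note. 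What the dualization buys the paper is brevity and reuse of \eqref{pre-lie copr}; what your direct term-matching buys is a self-contained verification of the single combinatorial fact on which both proofs rest.
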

		\begin{proof} The first statement follow from the definition of the product $\rhd$ on $\pblT(X)$. Given $(T_j,\ell_j)\in \pblT(X)$, $j=1,2,3$, by definition both the associators $A_\rhd((T_1,\ell_1),(T_2,\ell_2),(T_3,\ell_3))$ and $A_\rhd((T_2,\ell_2),(T_1,\ell_1),(T_3,\ell_3))$ are obtained by the same element $A_\rhd(T_1,T_2,T_3)=A_\rhd(T_2,T_1,T_3)$ in $\pbT$, equipped with the labelings inherited from $\ell_1,\ell_2,\ell_3$, by removing those terms for which the inherited labeling is not admissible. 
			
			To see that $\operatorname{lab}_X$ commutes with $\rhd$, it is more convenient to consider the dual statement, namely, that the map $\pblT(X)\to\pbT\colon(T,\ell)\to T$ forgetting the labeling is a morphism of pre-Lie coalgebras. In order to check this fact, we observe that the pre-Lie coproduct $\Delta:\pblT(X)\to \pblT(X)\otimes\pblT(X)$ dual to $\rhd$ is given precisely by formula \eqref{pre-lie copr}, where both $T_v$ and $\widehat{T_v}$ are equipped with the inherited labeling (which will be automatically admissible).
			
		\end{proof}
		
		\begin{definition} We denote by $L(X)$ the free Lie algebra over $X$. Given $i\in X$, we shall denote the corresponding generator of $L(X)$ by $x_i$. The Lie algebra $\Lie(X)$ is the quotient of $L(X)$ by the ideal $I$ spanned by Lie words such that one of the generators appears more than once.
			
			Given a Lie word $w$ in $\Lie(X)$, we shall refer to
			\[ \min\{ i\in X\,|\, \mbox{$x_i$ \emph{appears in} $w$}  \}  \]
			as the \emph{minimum} of $w$, and denote it by $\min(w)$, and we shall refer to
			\[ \max\{ i\in X\,|\, \mbox{$x_i$ \emph{appears in} $w$}  \}  \]
			as the \emph{maximum} of $w$, and denote it by $\max(w)$.
			
			The \emph{PBW basis} $\basis(X)$ of $\Lie(X)$ is defined recursively as follows (cf.~\cite{MReut}, \cite[\S 13.2.5.2]{Loday-Vallette}, and references therein):
			\begin{itemize} \item Every generator $x_i$ is in $\basis(X)$.
				\item Given a Lie word $w$ in $\Lie(X)$, which is the bracket $w=[v,v']$ of Lie words $v,v'$, then $w$ is in $\basis(X)$ if both $v$ and $v'$ are elements of $\basis(X)$, and furthermore the relations $\min(v)<\min(v')$ and $\max(v)<\max(v')$ are satisfied. 
			\end{itemize}
		\end{definition}
		
		\begin{remark} It is well-known that $\mathcal{B}(X)$ is indeed a basis of $\mathcal{L}(X)$. The fact that $\mathcal{B}(X)$ spans $\mathcal{L}(X)$ can be seen by induction on the length of words, were the inductive step depends on the Jacobi identity. In order to check linear independence, it is not restrictive to assume $(X,\leq)=(\underline{n},\leq):=\{1\leq\cdots\leq n\}$. In this case $\mathcal{L}(\underline{n})$ is a finite dimensional vector space, whose dimension is known to be $\sum_{k=1}^n\binom{n}{k}(k-1)!$, and an easy count (cf.~\cite[Proposition 3]{operad lie is free}) shows that this is also the cardinality of $\mathcal{B}(\underline{n})$.
			
		\end{remark}
		
		\begin{lemma}\label{lem:realization} The ``realization" map
			$ r: \pblT(X) \to \Lie(X)$, defined recursively by
			$$ r\left(\,\xy {\ar@{-}(0,-3);(0,1)}; {(0,3)*{_i}};\endxy \,\right)=x_i \qquad \textrm{and} \qquad r\left(\xy
			{\ar@{-}(0,-4);(0,-2)};
			{\ar@{-}(0,-2);(-4,2)};{(-4,4)*{\slab{T_1}}};
			{\ar@{-}(0,-2);(4,2)};{(4,4)*{\slab{T_2}}};
			\endxy \right) = [r(T_1),r(T_2)],$$
			is an isomorphism of Lie algebras, where the left-hand side is equipped with the commutator bracket with respect to $\rhd$.\end{lemma}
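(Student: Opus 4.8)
\emph{Step 1: $r$ is a linear isomorphism.} The canonical basis of $\pblT(X)$ is the set of admissibly labeled trees $(T,\ell)$, and the plan is to show that $r$ carries this basis bijectively onto the PBW basis $\basis(X)$ of $\Lie(X)$. I would argue by induction on the number of leaves, matching the recursive definition of admissibility against the recursive definition of $\basis(X)$. If $(T,\ell)$ is admissible and $T=(T_1,T_2)$ at the root, then the leftmost leaf of $T$ lies in $T_1$ and, by admissibility at the root, carries the global minimum $\min(\ell)$, while the rightmost leaf lies in $T_2$ and carries $\max(\ell)$; since the labels are distinct this forces $\min(r(T_1))<\min(r(T_2))$ and $\max(r(T_1))<\max(r(T_2))$, which is exactly the condition for $[r(T_1),r(T_2)]$ to be a PBW word once, inductively, $r(T_1)$ and $r(T_2)$ are. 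Reading the same equivalences backwards shows that every element of $\basis(X)$ arises from a unique admissible tree. Using the already-quoted fact that $\basis(X)$ is a basis of $\Lie(X)$, this makes $r$ a linear isomorphism.

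\emph{Step 2: reducing the morphism property to a single generator.} Write $[\cdot,\cdot]_\rhd$ for the commutator of $\rhd$ on $\pblT(X)$; since $\rhd$ is pre-Lie this is a Lie bracket. Transporting it along $r$ yields a second Lie bracket $B(u,v):=r\big([r^{-1}u,r^{-1}v]_\rhd\big)$ on $\Lie(X)$, and the assertion is precisely $B=[\cdot,\cdot]$. Because $\Lie(X)$ is spanned by left-normed brackets $[x_{i_1},[x_{i_2},\ldots]]$, it suffices to prove $B(w_1,w_2)=[w_1,w_2]$ by induction on the length of a left-normed $w_1$. The inductive step collapses to the base case via Jacobi: from the identity $B(B(x_i,b),c)=B(x_i,B(b,c))-B(b,B(x_i,c))$, the inductive hypothesis on $b$, and the Jacobi identity for $[\cdot,\cdot]$, one recovers $B(w_1,c)=[w_1,c]$ \emph{provided} one already knows $B(x_i,v)=[x_i,v]$ for every generator $x_i$ and every $v$. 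Thus the entire morphism property reduces to the single-generator identity
\[ r\big((|,i)\rhd(T,\ell)\big)-r\big((T,\ell)\rhd(|,i)\big)=[x_i,\,r(T,\ell)], \]
for all admissible $(T,\ell)$ with $i\notin\operatorname{im}(\ell)$.

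\emph{Step 3: the single-generator identity.} This I would establish by induction on $|T|$, distinguishing the cases $i<\min(\ell)$, $i>\max(\ell)$, and $\min(\ell)<i<\max(\ell)$. In all cases $(T,\ell)\rhd(|,i)$ contributes only the single grafting $(T,(|,i))$, which survives precisely when $i>\max(\ell)$, whereas $(|,i)\rhd(T,\ell)$ is the signed sum of the admissible ways of inserting the leaf $i$ into an edge of $T$ (a leaf grafted onto a right-pointing edge sits to the left with a plus sign, onto a left-pointing edge to the right with a minus sign). Writing $T=(T_1,T_2)$, these insertions split into grafting on the root edge, grafting inside $T_1$, and grafting inside $T_2$; the delicate point is that the root edge of $T_1$, right-pointing on its own, becomes the left-pointing edge joining $T_1$ to the root once $T_1$ sits as the left child, so both its placement and its sign flip. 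I would organize the surviving terms along this decomposition and verify that the admissibility constraints discard exactly the terms obstructing a clean recursion, the survivors reassembling, through the derivation property $[x_i,[v_1,v_2]]=[[x_i,v_1],v_2]+[v_1,[x_i,v_2]]$, into $[x_i,[r(T_1),r(T_2)]]$.

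\emph{Main obstacle.} The only real work lies in Step 3. Unlike the unlabeled grafting, which obeys a transparent recursion, the labeled product discards all non-admissible terms, and this discarding does \emph{not} commute termwise with realization: already for $(|,2)\rhd([x_1,x_3])$ the naive realization of all graftings overcounts, and it is only after the Jacobi relations inside $\Lie(X)$ are invoked that the signed admissible insertions of a single letter reproduce $\operatorname{ad}_{x_i}$. This is the tree-theoretic shadow of the classical rule for bracketing a letter into a PBW basis word (cf.~\cite{MReut}), and the sign conventions built into the definition of $\rhd$ are exactly what make it hold.
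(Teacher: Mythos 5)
Your Steps 1 and 2 are sound: the bijection between admissibly labeled trees and the PBW basis $\basis(X)$ gives the linear isomorphism exactly as in the paper, and the reduction of the morphism property to the single-generator identity (by bilinearity, the spanning of $\Lie(X)$ by left-normed words, and the Jacobi identity for the transported bracket $B$) is a valid standard argument. The problem is Step 3, which you correctly identify as carrying all the weight and which you do not actually establish. The identity $r\big((|,i)\rhd(T,\ell)\big)-r\big((T,\ell)\rhd(|,i)\big)=[x_i,r(T,\ell)]$ is true, but proving it requires determining, for each position of $i$ relative to $\operatorname{im}(\ell)$, precisely which signed graftings survive the admissibility filter, and then showing that their realizations -- which land on several \emph{distinct} PBW basis elements -- recombine via Jacobi into $\operatorname{ad}_{x_i}$ of a single basis element. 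Your own example already needs a nontrivial Jacobi rewriting, and the general inductive step (with the orientation flip of the root edge of $T_1$ and the attendant sign change, and the interaction of the discarded non-admissible terms with that rewriting) is asserted rather than verified. As written, this is a plausible programme, not a proof.

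It is worth noting that the paper's argument is engineered precisely to avoid this computation. Instead of verifying that $r$ preserves brackets, it builds a Lie morphism in the opposite direction: the universal property of the free Lie algebra $L(X)$ yields $\widetilde{\phi}\colon L(X)\to\pblT(X)$ sending $x_i$ to the labeled leaf; one checks that Lie words with a repeated generator are killed, so $\widetilde{\phi}$ descends to $\phi\colon\Lie(X)\to\pblT(X)$; and one then verifies $\phi\circ r=\id$ on basis elements by induction. That verification only ever invokes the two easy cases of the labeled product recorded in Remark \ref{remboh} -- a single root grafting when the minimum label of the left factor is smaller, and vanishing when its maximum label is larger -- both immediate from admissibility, with no general grafting formula and no Jacobi bookkeeping. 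Since $r$ is a linear bijection and $\phi$ is a Lie morphism inverting it, $r$ is automatically a Lie isomorphism. If you wish to keep your route you must supply the full verification of Step 3; otherwise the inverse-morphism device is the cheaper path.
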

		\begin{proof} The fact that $r$ is an isomorphism of vector spaces is clear, as it sends the canonical basis of $\pblT(X)$ bijectively onto the PBW basis $\basis(X)$ of $\Lie(X)$. Let $\widetilde{\phi}:L(X)\to\pblT(X)$ be the morphism of Lie algebras defined by $\widetilde{\phi}(x_i)=\xy {\ar@{-}(0,-2);(0,2)}; {(0,4)*{_i}};\endxy$ and the universal property of the free Lie algebra $L(X)$. A straightforward induction shows that $\widetilde{\phi}(w)=0$ whenever some generator appears more than once inside the Lie word $w$. Hence, $\widetilde{\phi}$ descends to a morphism of Lie algebras $\phi:\Lie(X)\to \pblT(X)$. The thesis is proven once we show $\phi\circ r =\id$. This is obvious for trees of the form $\xy {\ar@{-}(0,-2);(0,2)}; {(0,4)*{_i}}\endxy$. Finally, given $(T,\ell)\in\pblT(X)$, with $T\, = \, \xy
			{\ar@{-}(0,-4);(0,-2)};
			{\ar@{-}(0,-2);(-4,2)};{(-4,4)*{\slab{T_1}}};
			{\ar@{-}(0,-2);(4,2)};{(4,4)*{\slab{T_2}}};
			\endxy $, we denote by $\ell_j$, $j=1,2$, the labeling on $T_j$ inherited from $\ell$. According to Remark \ref{remboh}, we have $(T,\ell)=(T_1,\ell_1)\rhd(T_2,\ell_2)=[(T_1,\ell_1),(T_2,\ell_2)]$. This implies \begin{eqnarray*}\phi\circ r(T,\ell) &=&  \phi([ r(T_1,\ell_1),r(T_2,\ell_2)]) \qquad \mbox{(by definition of $r$)} \\ &=&  [(T_1,\ell_1),(T_2,\ell_2)] =(T,\ell),
			\end{eqnarray*} 
			as $\phi$ is a morphism of Lie algebras and by induction on the number of leaves.
		\end{proof}
		
		We now turn our attention to the pre-Lie logarithm
		$$ -\log_{\rhd}\left( 1- \xy {\ar@{-}(0,-2);(0,2)}; \endxy\,\right) \in \pbT.$$
		Recall from Subsection \ref{subsection: Umbral calculus} that this can be characterized as the unique solution to the recursion
		$$ 			y = \sum_{k\geq0} (-1)^k\frac{B_k}{k!} (y\rhd\#)^k \left(\,\xy {\ar@{-}(0,-2);(0,2)}; \endxy\,\right).  $$
		\begin{definition}\label{definition: Eulerian coefficient}
			The Eulerian coefficient $E_T \in \K$ of a planar, binary, rooted tree $T\in \pbT$ is defined by the expansion
			$$-\log_{\rhd}\left( 1- \xy {\ar@{-}(0,-2);(0,2)}; \endxy\,\right) = \sum_{T} E_T\, T.$$
		\end{definition}
		
		\begin{remark}
			By the universal property of $(\T,\curvearrowright)$, there is a unique morphism 
			$$ \Psi: \T \to \pbT, \quad \bul \mapsto \xy {\ar@{-}(0,-2);(0,2)}; \endxy$$
			from the pre-Lie algebra of rooted trees, see Subsection \ref{subsection: pre-Lie}. In principle, this reduces the computation of $-\log_{\rhd}\left( 1- \xy {\ar@{-}(0,-2);(0,2)}; \endxy\,\right)$ to the computation of $-\log_{\curvearrowright}\left( 1- \bul \right)$, which was essentially (up to signs) performed in Subsection \ref{subsection: log in sT}. On the other hand, it is not easy to describe explicitly the morphism $\Psi$, or equivalently, the symmetric brace operations on $(\pbT,\rhd)$ (cf.~Remark \ref{rem: sym braces in free pl}). It will  be more convenient to apply the techniques developed in  Section \ref{section: pre-Lie} directly to this situation: this will be done in the following Subsection \ref{subsection:umbral in planar}.
		\end{remark}
		
		We consider the totally ordered set $\underline{n+1}:=\{1\le \cdots \le n+1\}$.
		As in Subsection \ref{subsection: problem}, in this case we shall denote the Lie algebra $\Lie(\underline{n+1})$ by $\Lie_{\leq n+1}$, and by $\Lie_{n+1}\subset\Lie_{\leq n+1}$ the subspace spanned by Lie words of length $(n+1)$.  We also denote by $\pi: \Lie_{\le n+1}\to \Lie_{n+1}$ the canonical projection.  In the following proposition, we consider the composition $$\pbT\xrightarrow{\operatorname{lab}_{\underline{n+1}}} \pblT(\underline{n+1})\xrightarrow{r} \Lie_{\leq n+1}\xrightarrow{\pi} \Lie_{n+1},$$
		where the first map was introduced in Lemma \ref{lem:labeling}, and the second one in Lemma \ref{lem:realization}.

		\begin{proposition}\label{proposition: Eulerian = log}
			Given $n\ge 0$, the element
			$$ (\pi \circ r\circ\operatorname{lab}_{\underline{n+1}})\left( -\log_{\rhd}\left( 1- \xy {\ar@{-}(0,-2);(0,2)}; \endxy\,\right) \right) \in \Lie_{n+1}$$
			coincides with $E(x_1\cdots x_{n+1})$.
		\end{proposition}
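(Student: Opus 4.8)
The plan is to prove the slightly more general statement that, for every finite totally ordered set $S$, one has $F_S:=(\pi\circ r\circ\operatorname{lab}_S)(X)=E_S$, where $X:=-\log_\rhd(1-|)$, where $E_S:=E(x_{s_1}\cdots x_{s_m})$ denotes the Eulerian idempotent of the generators indexed by $S=\{s_1<\cdots<s_m\}$ in increasing order, and where $\pi$ is the projection onto the top (multilinear) component of $\Lie(S)$. The Proposition is the case $S=\underline{n+1}$, and by naturality of the Eulerian idempotent it suffices to treat $S=\{1,\dots,n+1\}$. I would argue by induction on $|S|$, the case $|S|=1$ being immediate since $X_1=|$ and the single leaf admits the unique labeling, giving $F_{\{s\}}=x_s=E(x_s)$. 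The first simplification is that only the top-degree part of $X$ survives: $\operatorname{lab}_S$ annihilates every tree with more than $|S|$ leaves, sends a tree with $m<|S|$ leaves to Lie words of length $m$ (killed by $\pi$), and labels each $|S|$-leaf tree bijectively by $S$ (on which $\pi$ is the identity). Hence $F_S=r\circ\operatorname{lab}_S(X_{n+1})$, where $X_{n+1}$ is the component of $X$ spanned by trees with $n+1=|S|$ leaves.

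Next I would extract the degree-$(n+1)$ part of the defining recursion $X=\sum_{k\ge0}(-1)^k\tfrac{B_k}{k!}(X\rhd\#)^k(|)$, which reads
\begin{equation*}X_{n+1}=\sum_{k\ge1}(-1)^k\frac{B_k}{k!}\sum_{i_1+\cdots+i_k=n}X_{i_1}\rhd\bigl(\cdots(X_{i_k}\rhd|)\cdots\bigr),\end{equation*}
$X_i$ being the degree-$i$ component of $X$. Since $\operatorname{lab}_S$ is a morphism of pre-Lie algebras (Lemma \ref{lem:labeling}), it commutes with $\rhd$, and it remains to realize each nested product. The coefficients $(-1)^k\tfrac{B_k}{k!}$ and the sum over compositions $i_1+\cdots+i_k=n$ already coincide with those in the recursion of Proposition \ref{corollary: Eulerian}, so the whole point is to identify the realized nested products with the corresponding bracketings.

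The key mechanism is the vanishing clause of Remark \ref{remboh}: $(T,\ell)\rhd(T',\ell')=0$ whenever $\max\ell>\max\ell'$. Expanding a summand, a nonzero full-support term consists of a seed leaf labeled by some $s\in S$ together with admissibly labeled trees $(T_j,\ell_j)$ whose label sets $B_j:=\operatorname{im}\ell_j$ form an ordered partition of $S\setminus\{s\}$ with $|B_j|=i_j$. Grafting outward from the seed, at each step the inner tree carries $s$ together with the labels already attached, so the vanishing clause forces every factor's maximal label to lie below the running maximum; running this from the innermost factor shows the running maximum is always $s$, so the term survives only when $s=\max S=:s_{\max}$ and all factors carry strictly smaller labels. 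Writing $P_0:=|_{s_{\max}}$ for the one-leaf tree labeled $s_{\max}$, and $P_m:=(T_{k-m+1},\ell_{k-m+1})\rhd P_{m-1}$, each inner tree $P_{m-1}$ contains $s_{\max}$ while the grafted factor does not, so the same clause gives $P_{m-1}\rhd(T_{k-m+1},\ell_{k-m+1})=0$ and therefore $P_m=[(T_{k-m+1},\ell_{k-m+1}),P_{m-1}]$ in the commutator Lie algebra of $(\pblT(S),\rhd)$. Applying the Lie-algebra isomorphism $r$ (Lemma \ref{lem:realization}) and summing over labelings with prescribed blocks yields
\begin{equation*}r\circ\operatorname{lab}_S\Bigl(X_{i_1}\rhd\bigl(\cdots(X_{i_k}\rhd|)\bigr)\Bigr)=\sum_{\substack{B_1\sqcup\cdots\sqcup B_k=S\setminus\{s_{\max}\}\\|B_j|=i_j}}\bigl[F_{B_1},[\cdots,[F_{B_k},x_{s_{\max}}]\cdots]\bigr],\end{equation*}
where $F_{B_j}=\pi\circ r\circ\operatorname{lab}_{B_j}(X)$.

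Finally I would assemble everything. By the inductive hypothesis $F_{B_j}=E_{B_j}$ (as $|B_j|=i_j\le n$), and an ordered set partition $(B_1,\dots,B_k)$ of $S\setminus\{s_{\max}\}$ with block sizes $(i_1,\dots,i_k)$ is precisely a shuffle $\sigma\in S(i_1,\dots,i_k)$, under which $E_{B_j}$ becomes the Eulerian idempotent of the corresponding increasing block. Substituting into the formula for $X_{n+1}$ therefore reproduces verbatim the recursion of Proposition \ref{corollary: Eulerian} for $E(x_1\cdots x_{n+1})$, closing the induction. The step I expect to require the most care is the admissibility bookkeeping of the previous paragraph---pinning the seed to the maximal label and deducing the vanishing of the reversed graftings; once this is secured, the fact that $r$ is a morphism of Lie algebras collapses every pre-Lie product into an iterated bracket, and the matching of Bernoulli coefficients and shuffle sums is then automatic.
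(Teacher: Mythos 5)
Your proposal is correct and follows essentially the same route as the paper's proof: induction on the number of generators, the pre-Lie morphism property of $r\circ\operatorname{lab}$, the Bernoulli-number recursion for the pre-Lie logarithm, the vanishing clause of Remark \ref{remboh} to pin the innermost leaf to the maximal label and collapse each $\rhd$ into a commutator, and finally matching with the recursion of Proposition \ref{corollary: Eulerian}. Your reformulation over arbitrary finite totally ordered sets $S$ and the ordered-set-partition bookkeeping are just more explicit versions of the paper's appeal to naturality and its shuffle sum $\sum_{\sigma\in S(i_1,\ldots,i_k)}$.
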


		\begin{proof} The thesis is clear for $n=0$, thus we can proceed by induction. We equip $\Lie_{\leq n+1}$ with the pre-Lie product $\rhd$ induced via the  isomorphism $r:\pblT(\underline{n+1})\to \Lie_{\leq n+1}$. Since $r\circ \operatorname{lab}_{\underline{n+1}}: \pbT \to \Lie_{\le n+1}$
			commutes with $\rhd$, the element
			$$X := (r\circ \operatorname{lab}_{\underline{n+1}})\left(-\log_{\rhd}\left( 1- \xy {\ar@{-}(0,-2);(0,2)}; \endxy\,\right)\right) \in \Lie_{\le n+1}$$
			satisfies the recursion
			$$ X = \sum_{k\ge 0}(-1)^k\frac{B_k}{k!}(X\rhd\#)^k(x_1+\cdots + x_{n+1}).$$
			We decompose $\Lie_{\le n+1}=\oplus_{k=1}^{n+1} \Lie_{k}$, where $\Lie_k\subset\Lie_{\leq n+1}$ is the vector subspace spanned by Lie words of length $k$. Accordingly, $X$ decomposes as $X = \sum_{k= 1}^{n+1} X_{(k)}$. The inductive hypothesis (and naturality of the Eulerian idempotent) implies that, for $k\le n$, we have
			$$ X_{(k)}= \sum_{j:(\underline{k},\le)\hookrightarrow (\underline{n+1},\le)}E(x_{j(1)}\cdots x_{j(k)}).$$
			Putting together this fact, the above recursion for $X$, and the observation that any product of Lie words $v \rhd w$ in $\Lie_{\le n+1}$ vanishes if the generator $x_{n+1}$ appears in $v$ (Remark \ref{remboh}), we finally find
			\begin{multline*}
			X_{(n+1)} =\\= \sum_{k\ge 0}(-1)^k\frac{B_k}{k!} \sum_{i_1+\cdots +i_k=n} \sum_{\sigma\in S(i_1\cdots,i_k)} E(x_{\sigma(1)}\cdots x_{\sigma(i_1)})\rhd (\cdots (E(x_{\sigma(n-i_k+1)}\cdots x_{\sigma(n)})\rhd x_{n+1})\cdots) =\\=
			\sum_{k\ge 0}(-1)^k\frac{B_k}{k!} \sum_{i_1+\cdots +i_k=n} \sum_{\sigma\in S(i_1\cdots,i_k)} [E(x_{\sigma(1)}\cdots x_{\sigma(i_1)}),\cdots [E(x_{\sigma(n-i_k+1)}\cdots x_{\sigma(n)}), x_{n+1}]\cdots].
			\end{multline*}
			Therefore, by Corollary \ref{corollary: Eulerian}, Subsection \ref{subsection: recursion for Eulerian}, $X_{(n+1)}$  coincides with $E_{n+1}(x_1\cdots x_{n+1})$.
		\end{proof}
		
		\begin{corollary}\label{corollary:independence on labeling}
			For $n\ge 1$, the expansion of $E(x_1\cdots x_n)\in \Lie_n$
			with respect to the PBW basis $\basis_n$ reads
			$$ E(x_1\cdots x_n) = \sum_{b\in\basis_n} E_b\,b=\sum_{(T,\ell)}E_T\,r(T,\ell), $$
			where the sum in the right hand side runs over the set of planar, binary, rooted trees with $n$ leaves and an admissible labeling in $(\underline{n},\leq)$.
		\end{corollary}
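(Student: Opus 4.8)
The plan is to obtain the claimed expansion by unwinding the composition $\pi\circ r\circ\operatorname{lab}_{\underline n}$ applied to the pre-Lie logarithm $-\log_\rhd(1-|)$, expanded in the canonical tree basis; the substantive work has already been carried out in Proposition \ref{proposition: Eulerian = log}. First I would write, using Definition \ref{definition: Eulerian coefficient}, the expansion $-\log_\rhd(1-|)=\sum_T E_T\,T$, where $T$ ranges over all planar, binary, rooted trees. Since each of the three maps $\operatorname{lab}_{\underline n}$, $r$ and $\pi$ is linear, I can apply the composition term by term.

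Next I would track the effect of each map on a single basis tree $T$. By Lemma \ref{lem:labeling}, $\operatorname{lab}_{\underline n}(T)=\sum_\ell(T,\ell)$, the sum running over all admissible labelings of $T$ by $\underline n$; in particular this vanishes as soon as $T$ has more than $n$ leaves, since then no injective labeling into an $n$-element set exists. The realization map $r$ of Lemma \ref{lem:realization} sends each $(T,\ell)$ to the associated iterated Lie bracket, which is exactly the PBW basis element of $\basis(\underline n)$ attached to $(T,\ell)$; this is a Lie word whose length equals the number of leaves of $T$. Finally $\pi$ projects onto $\Lie_n$ and hence annihilates every Lie word of length different from $n$. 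Combining these observations, only the trees with exactly $n$ leaves survive, and
\[ (\pi\circ r\circ\operatorname{lab}_{\underline n})\!\left(\sum_T E_T\,T\right)=\sum_{(T,\ell)}E_T\,r(T,\ell), \]
where $(T,\ell)$ now ranges over the admissibly labeled planar, binary, rooted trees with exactly $n$ leaves.

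By Proposition \ref{proposition: Eulerian = log}, the left-hand side equals $E(x_1\cdots x_n)$, which already yields the second displayed equality in the statement. To recover the first equality, I would recall that $r$ carries the set of admissibly labeled $n$-leaf trees bijectively onto the PBW basis $\basis_n$ (Lemma \ref{lem:realization}, together with the correspondence $\pblT(n)\leftrightarrow\basis_n$ recorded in the Introduction). Thus $\sum_{(T,\ell)}E_T\,r(T,\ell)$ is merely a reindexing of $\sum_{b\in\basis_n}E_T\,b$, where $T$ denotes the tree underlying $b$. Comparing this with the defining expansion $E(x_1\cdots x_n)=\sum_{b\in\basis_n}E_b\,b$ of Definition \ref{definition: Eulerian coefficient Lie}, and using the linear independence of $\basis_n$, gives $E_b=E_T$. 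In particular, this establishes \emph{a posteriori} that the Eulerian coefficient $E_b$ depends only on the unlabeled tree $T$, and not on the labeling $\ell$.

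There is no genuine obstacle here: the statement is essentially bookkeeping once Proposition \ref{proposition: Eulerian = log} is in hand. The only points requiring a little care are verifying that $\operatorname{lab}_{\underline n}$ annihilates trees with more than $n$ leaves, and that $\pi$ discards precisely the contributions of trees with fewer than $n$ leaves, so that the surviving terms are indexed exactly by $\basis_n$.
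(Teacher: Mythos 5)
Your proposal is correct and follows exactly the route the paper intends: the corollary is stated there without a separate proof precisely because it is the termwise application of $\pi\circ r\circ\operatorname{lab}_{\underline n}$ to the expansion $-\log_\rhd(1-|)=\sum_T E_T\,T$, combined with Proposition \ref{proposition: Eulerian = log} and the bijection between admissibly labeled $n$-leaf trees and $\basis_n$. Your bookkeeping of which trees survive (killed by $\operatorname{lab}_{\underline n}$ if more than $n$ leaves, killed by $\pi$ if fewer) and the concluding identification $E_b=E_T$ via linear independence are exactly right.
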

		
		\begin{remark} In particular, the two definitions of Eulerian coefficients, the one in \ref{definition: Eulerian coefficient} and the one in \ref{definition: Eulerian coefficient Lie}, are consistent with each other. Given a basis element $b\in\basis_n$, corresponding to $(T,\ell)\in\pblT(\underline{n})$ under the realization isomorphism $r$, the Eulerian coefficient $E_b$ coincides with the Eulerian coefficient $E_T$ of the underlying planar, binary, rooted tree.
		\end{remark}
		
		\subsection{Invariance under the specular involution}\label{subsec: specular}
		
		\begin{definition}
			We consider the anti-automorphism of the free Lie algebra $L_n$ over $x_1,\ldots,x_n$, defined on generators by $x_i\mapsto x_{n-i+1}$. This obviously descends to an anti-automorphism of the Lie algebra $\Lie_{\leq n}$, which we call the \emph{specular involution} and denote by $\Sigma: \Lie_{\leq n}\to \Lie_{\leq n}$.
		\end{definition}	
		\begin{example} For instance, $\Sigma([ x_1, [[x_2,x_3],x_4 ] ])=[[x_1,[x_2,x_3]],x_4]$.
		\end{example} 
		\begin{definition}\label{def: specular on permutations} We continue to denote by $\Sigma:S_n\to S_n$ the involution on the set of permutations defined by $\Sigma\sigma(i) = n-\sigma(i)+1$.
		\end{definition}
		The proofs of the following lemmas are easy and left to the reader.
		\begin{lemma} The specular involution $\Sigma: \Lie_{\leq n}\to \Lie_{\leq n}$ sends the PBW basis $\basis_n$ onto itself. 
		\end{lemma}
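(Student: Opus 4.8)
The plan is to argue by induction on the length of Lie words, exploiting the recursive description of $\basis_n$ together with the fact that $\Sigma$ is an anti-automorphism, so that $\Sigma([v',v'']) = [\Sigma(v''),\Sigma(v')]$ for all $v',v''$. Since $\Sigma$ permutes the generators via $x_i\mapsto x_{n+1-i}$, and since $i\mapsto n+1-i$ is order-reversing on $\{1,\ldots,n\}$, the first thing I would record is the behaviour of $\Sigma$ on the maximum and minimum: for every Lie word $w$ in $\Lie_{\leq n}$ one has $\min(\Sigma(w)) = n+1-\max(w)$ and $\max(\Sigma(w)) = n+1-\min(w)$. In particular $\Sigma$ preserves word length, so it restricts to an involution of $\Lie_n$ and it makes sense to compare $\Sigma(\basis_n)$ with $\basis_n$.

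For the induction, the base case is immediate: each generator $x_i$ lies in $\basis_n$ and $\Sigma(x_i)=x_{n+1-i}$ is again a generator. For the inductive step I would take a basis element $w=[v',v'']\in\basis_n$, so that $v',v''$ are themselves basis elements with $\min(v')<\min(v'')$ and $\max(v')<\max(v'')$. Applying the anti-automorphism gives $\Sigma(w)=[\Sigma(v''),\Sigma(v')]$, where $\Sigma(v')$ and $\Sigma(v'')$ are basis elements by the inductive hypothesis. It then remains to verify that this bracketing respects the admissibility conditions, i.e.\ that the left factor $\Sigma(v'')$ has both smaller minimum and smaller maximum than the right factor $\Sigma(v')$. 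This is exactly where the min/max reversal pays off: $\min(\Sigma(v''))=n+1-\max(v'')<n+1-\max(v')=\min(\Sigma(v'))$ using $\max(v')<\max(v'')$, and dually $\max(\Sigma(v''))=n+1-\min(v'')<n+1-\min(v')=\max(\Sigma(v'))$ using $\min(v')<\min(v'')$. Hence $\Sigma(w)\in\basis_n$.

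The only point that requires care --- and the one I would flag as the main obstacle --- is the interplay between the reversal of the bracket produced by the anti-automorphism and the reversal of the min/max order produced by the relabeling $x_i\mapsto x_{n+1-i}$: a priori one might fear that rewriting $[\Sigma(v''),\Sigma(v')]$ in admissible order would introduce a sign, which would spoil the claim that $\basis_n$ is sent to $\basis_n$ (rather than to $\pm\basis_n$). The computation above shows that the two reversals cancel precisely, so that $[\Sigma(v''),\Sigma(v')]$ is \emph{already} written in admissible order and no sign arises; pictorially, this is just the statement that mirroring a planar, binary, rooted tree and simultaneously reversing the leaf labels carries an admissible labeling to an admissible labeling. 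Finally, since $i\mapsto n+1-i$ is an involution and $\Sigma$ is an anti-automorphism, $\Sigma^2=\id$, so the injective map $\Sigma$ restricts to a bijection of the finite set $\basis_n$ onto itself, as claimed.
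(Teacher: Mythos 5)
Your proof is correct: the key computation $\min(\Sigma(v''))=n+1-\max(v'')$ and $\max(\Sigma(v''))=n+1-\min(v')$ shows exactly that the bracket reversal coming from the anti-automorphism and the order reversal of $i\mapsto n+1-i$ compensate, so $[\Sigma(v''),\Sigma(v')]$ is already in admissible form and the induction closes without any sign. The paper simply declares this lemma ``easy and left to the reader,'' and your argument is the intended one.
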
 
		\begin{lemma}\label{lem: specular on permutations} Given a permutation $\sigma\in S_n$, we have $d_{\Sigma\sigma}=n-1-d_\sigma$ (recall that we denote by $d_\sigma,d_{\Sigma\sigma}$ the descent numbers of the permutations $\sigma,\Sigma\sigma$, respectively).
		\end{lemma}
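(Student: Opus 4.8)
The plan is to reduce everything to the elementary observation that, since $\Sigma$ acts on values through the order-reversing bijection $v \mapsto n - v + 1$ of $\{1,\dots,n\}$, it interchanges descents and ascents. First I would unwind the definition: an index $i \in \{1,\dots,n-1\}$ is a descent of $\Sigma\sigma$ exactly when $\Sigma\sigma(i) > \Sigma\sigma(i+1)$, that is, when $n - \sigma(i) + 1 > n - \sigma(i+1) + 1$, which simplifies at once to $\sigma(i) < \sigma(i+1)$. Hence the descent positions of $\Sigma\sigma$ are precisely the \emph{ascent} positions of $\sigma$, i.e.\ those $i$ with $\sigma(i) < \sigma(i+1)$.

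The second step is a trivial counting argument. Because $\sigma$ is injective, for every $i \in \{1,\dots,n-1\}$ exactly one of $\sigma(i) < \sigma(i+1)$ or $\sigma(i) > \sigma(i+1)$ holds, so the number of ascents of $\sigma$ and the number of descents of $\sigma$ sum to $n-1$. Combining this with the first step yields
\[ d_{\Sigma\sigma} = \left|\{\, 1 \le i \le n-1 \ \vert \ \sigma(i) < \sigma(i+1)\,\}\right| = (n-1) - d_\sigma, \]
which is exactly the claimed identity.

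There is no genuine obstacle here; the only point requiring a modicum of care is the bookkeeping of the order reversal — checking that $v \mapsto n - v + 1$ is indeed an order-reversing bijection of $\{1,\dots,n\}$, so that $\Sigma\sigma$ is again a bona fide permutation and the ascent/descent dichotomy applies with no degenerate equalities. Everything else is immediate from the definitions.
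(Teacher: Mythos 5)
Your argument is correct and is exactly the elementary computation the paper intends (the paper explicitly leaves this proof to the reader): descents of $\Sigma\sigma$ are ascents of $\sigma$ because $v\mapsto n-v+1$ reverses order, and ascents and descents of $\sigma$ partition $\{1,\dots,n-1\}$. Nothing further is needed.
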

		
		\begin{proposition}\label{lemma: specularity Lie}
			For any $b \in \basis_n$, the Eulerian coefficients $E_b$ and $E_{\specular{(b)}}$ are equal.
		\end{proposition}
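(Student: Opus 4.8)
The plan is to deduce the statement from the stronger fact that the entire Eulerian element is fixed by the specular involution, namely $\Sigma\bigl(E(x_1\cdots x_n)\bigr)=E(x_1\cdots x_n)$, and then to read off the equality of coefficients. First I would record this reduction. By Definition \ref{definition: Eulerian coefficient Lie} we have $E(x_1\cdots x_n)=\sum_{b\in\basis_n}E_b\,b$, and by the preceding lemma $\Sigma$ restricts to an involution of the PBW basis $\basis_n$. Granting the invariance of $E(x_1\cdots x_n)$ for the moment, applying $\Sigma$ to this expansion and reindexing the sum along $b\mapsto\Sigma(b)$ gives
\[ \sum_{b\in\basis_n}E_b\,b \;=\; \Sigma\bigl(E(x_1\cdots x_n)\bigr)\;=\;\sum_{b\in\basis_n}E_b\,\Sigma(b)\;=\;\sum_{b\in\basis_n}E_{\Sigma(b)}\,b, \]
and since $\basis_n$ is a basis, comparing coefficients yields $E_b=E_{\Sigma(b)}$, as desired.

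To prove the invariance, I would work directly with Solomon's explicit formula \eqref{equation: Eulerian}. The Lie anti-automorphism $\Sigma$ is the restriction to $\Lie_{\le n}$ of the anti-automorphism of the free associative algebra determined by $x_i\mapsto x_{n-i+1}$, and since $E(x_1\cdots x_n)\in\Lie_n$, I may apply $\Sigma$ term by term to the associative expression \eqref{equation: Eulerian}. Acting on a monomial, $x_{\sigma(1)}\cdots x_{\sigma(n)}$ is carried to $x_{n-\sigma(n)+1}\cdots x_{n-\sigma(1)+1}=x_{\tau(1)}\cdots x_{\tau(n)}$, where $\tau(k)=n-\sigma(n-k+1)+1$; in the notation of Definition \ref{def: specular on permutations} this is $\tau=\Sigma\sigma\circ w_0$, with $w_0(i)=n-i+1$ the order-reversing permutation. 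The assignment $\sigma\mapsto\tau$ is a bijection of $S_n$.

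The crux is to show that this bijection preserves the descent number. I would obtain $d_\tau=d_\sigma$ by combining two sign reversals: Lemma \ref{lem: specular on permutations} gives $d_{\Sigma\sigma}=n-1-d_\sigma$, while precomposing with $w_0$ reverses the order of the positions and therefore also sends $d\mapsto n-1-d$, so that the two effects cancel. (Equivalently, one checks directly that $\tau(k)>\tau(k+1)$ holds precisely when $\sigma(n-k)>\sigma(n-k+1)$, which exhibits a bijection between the descents of $\tau$ and those of $\sigma$.) Since the coefficient $\tfrac{(-1)^{d_\sigma}}{\binom{n-1}{d_\sigma}}$ in \eqref{equation: Eulerian} depends on $\sigma$ only through $d_\sigma$, and $d_\tau=d_\sigma$, reindexing the sum by $\tau$ leaves it unchanged; this is exactly the identity $\Sigma\bigl(E(x_1\cdots x_n)\bigr)=E(x_1\cdots x_n)$.

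The main obstacle I anticipate is precisely this descent bookkeeping: the involution $\Sigma$ acts on each monomial by \emph{simultaneously} relabeling the letters ($x_i\mapsto x_{n-i+1}$) and reversing their order, and one must verify carefully that the two separate $d\mapsto n-1-d$ flips cancel rather than reinforce one another. Once that point is settled, the remainder is a purely formal reindexing of the sum followed by a coefficient comparison in the basis $\basis_n$.
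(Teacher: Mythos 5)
Your proposal is correct and follows essentially the same route as the paper: both reduce the claim to the $\Sigma$-invariance of $E(x_1\cdots x_n)$ and then compare coefficients in the basis $\basis_n$, using that $\Sigma$ permutes $\basis_n$. The only (cosmetic) difference is that you verify the invariance from Solomon's associative formula \eqref{equation: Eulerian}, tracking how the anti-automorphism simultaneously relabels and reverses monomials so that descent numbers are preserved, whereas the paper works with the bracketed formula \eqref{equation: Eulerian2} and combines Lemma \ref{lem: specular on permutations} with the sign from reversing iterated brackets and the symmetry of the binomial coefficients; both computations are sound.
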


		\begin{proof} First, we show that $E(x_1\cdots x_n)$ is a fixed point for $\Sigma$. To do so, we use the formula \eqref{equation: Eulerian2} for $E(x_1\cdots x_n)$ from the Introduction. Using this formula and Lemma \ref{lem: specular on permutations}, we see that 
			\begin{eqnarray} \nonumber \Sigma(E(x_1\cdots x_n)) &=& \Sigma\left(\frac{1}{n^2}\sum_{\sigma\in S_n}\frac{(-1)^{d_\sigma}}{\binom{n-1}{d_\sigma}}[x_{\sigma(1)},\cdots[x_{\sigma(n-1)},x_{\sigma(n)}]\cdots]\right) \\\nonumber &=&\frac{1}{n^2}\sum_{\sigma\in S_n}\frac{(-1)^{d_\sigma}}{\binom{n-1}{d_\sigma}}[\cdots [x_{\Sigma\sigma(n)},x_{\Sigma\sigma(n-1)}]\cdots ,x_{\Sigma\sigma(1)} ]\\\nonumber &=& \frac{1}{n^2}\sum_{\sigma\in S_n}\frac{(-1)^{n-1-d_\sigma}}{\binom{n-1}{d_\sigma}}[x_{\Sigma\sigma(1)},\cdots[x_{\Sigma\sigma(n-1)},x_{\Sigma\sigma(n)}]\cdots] \\ \nonumber &=& \frac{1}{n^2}\sum_{\sigma\in S_n}\frac{(-1)^{d_{\Sigma\sigma}}}{\binom{n-1}{d_{\Sigma\sigma}}}[x_{\Sigma\sigma(1)},\cdots[x_{\Sigma\sigma(n-1)},x_{\Sigma\sigma(n)}]\cdots] \\ \nonumber &=& E(x_1\cdots x_n).
			\end{eqnarray}
			
			This implies
			\[  E(x_1\cdots x_n) = \sum_{b\in\basis_n} E_b\, b = \Sigma\left(\sum_{b\in\basis_n} E_b\, b\right) = \sum_{b\in\basis_n} E_b\,\Sigma(b) = \sum_{b\in\basis_n} E_{\Sigma(b)}\, b,  \]
			and the thesis is proven.
		\end{proof}

		\begin{definition}\label{def: specular on trees}
			The specular involution 
			$\specular: \pbT \stackrel{\cong}{\longrightarrow} \pbT$ on the vector space $\pbT$ 
			is defined inductively by
			$$ \Sigma\left(\, \xy
			{\ar@{-}(0,-2);(0,2)};
			\endxy \, \right)\, =  \, \xy
			{\ar@{-}(0,-2);(0,2)};
			\endxy \qquad \qquad \mathrm{and} \qquad \qquad \Sigma\left(\xy
			{\ar@{-}(0,-4);(0,-2)};
			{\ar@{-}(0,-2);(-4,2)};{(-4,4)*{\slab{T_1}}};
			{\ar@{-}(0,-2);(4,2)};{(4,4)*{\slab{T_2}}};
			\endxy\right)  \, = \, \xy
			{\ar@{-}(0,-4);(0,-2)};
			{\ar@{-}(0,-2);(-4,2)};{(-6,4)*{\slab{\Sigma(T_2)}}};
			{\ar@{-}(0,-2);(4,2)};{(6,4)*{\slab{\Sigma(T_1)}}};
			\endxy,$$
			for any $T_1$, $T_1 \in \pbT$.
		\end{definition}
		\begin{remark} In other words, $\Sigma(T)$ is the reflection of the planar, binary, rooted tree $T$ with respect to a non-intersecting vertical axis (hence the name!), as illustrated in the following picture 
			
			\[  \xy {\ar@{-}(0,-4);(0,-2)};
			{\ar@{-}(0,-2);(-8,6)};
			{\ar@{-}(-4,2);(0,6)};
			{\ar@{-}(0,-2);(8,6) };
			{\ar@{-}(-2,0);(4,6)}; 
			{\ar@{-}(-2,4);(-4,6)};	
			{(0,10)*{T}}
			\endxy \qquad \xy {\ar@{--}(0,-6);(0,10)};\endxy \qquad \xy {\ar@{-}(0,-4);(0,-2)};
			{\ar@{-}(0,-2);(-8,6)};
			{\ar@{-}(4,2);(0,6)};
			{\ar@{-}(0,-2);(8,6) };
			{\ar@{-}(2,0);(-4,6)}; 
			{\ar@{-}(2,4);(4,6)};	{(0,10)*{\Sigma(T)}}
			\endxy\]
			
		\end{remark}
		
		It is straightforward to see that the specular involution on $\pbT$ and the one on $\Lie_{\leq n}$ are compatible under the map 
		$$ r\circ \operatorname{lab}_{\underline{n}}: \pbT \to \Lie_{\le n}.$$
		from Proposition \ref{proposition: Eulerian = log}, that is, $\specular \circ r\circ \operatorname{lab}_{\underline{n}}= r\circ\operatorname{lab}_{\underline{n}} \circ \specular: \, \pbT \to \Lie_{\le n}$. This, together with the above Proposition \ref{lemma: specularity Lie} and Corollary \ref{corollary:independence on labeling} from the previous subsection, proves the following
		\begin{corollary}\label{corollary: specularity in pbT}
			The Eulerian coefficient of a (planar, binary, rooted) tree and the one of its specular coincide, that is, $E_T=E_{\Sigma(T)}$ for all $T\in\pbT$. 
		\end{corollary}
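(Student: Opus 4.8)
The plan is to deduce the statement at the level of trees from its Lie-algebraic counterpart, Proposition \ref{lemma: specularity Lie}, by transporting everything through the realization of labeled trees as Lie words. The single fact that makes this work is the compatibility of the two specular involutions, namely the identity
\[ \Sigma\circ r\circ\operatorname{lab}_{\underline{n}} \;=\; r\circ\operatorname{lab}_{\underline{n}}\circ\Sigma \colon \pbT\longrightarrow\Lie_{\leq n}, \]
where on the domain $\Sigma$ is the reflection of planar binary trees from Definition \ref{def: specular on trees}, and on the target it is the anti-automorphism $x_i\mapsto x_{n-i+1}$. First I would verify this identity, working by induction on the number of leaves and using the recursive descriptions of both involutions: reflecting a tree swaps its two children and reflects each, while $\Sigma$ on $\Lie_{\leq n}$ sends $[a,b]$ to $[\Sigma b,\Sigma a]$, and these two operations match termwise. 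The key point is that relabeling the leaves by $i\mapsto n-i+1$ carries admissible labelings to admissible labelings---reflecting a tree exchanges ``leftmost'' with ``rightmost'' while the substitution simultaneously exchanges ``minimum'' with ``maximum,'' so the admissibility condition at each inner vertex is preserved---and that this sets up a bijection between the admissible labelings of $T$ and those of $\Sigma(T)$.

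This bijection has the following consequence at the level of PBW basis elements. For a planar binary tree $T$ with $|T|=n$ leaves, choose any admissible labeling $\ell$ and let $b=r(T,\ell)\in\basis_n$ be the corresponding basis element. The compatibility identity, read off a single summand, shows that $\Sigma(b)$ is again a single PBW basis element, namely $\Sigma(b)=r(\Sigma(T),\ell')$ for the admissible labeling $\ell'$ of $\Sigma(T)$ obtained from $\ell$ as above. In particular, the underlying planar binary tree of $\Sigma(b)$ is precisely $\Sigma(T)$.

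Now I would assemble the three inputs. By the remark following Corollary \ref{corollary:independence on labeling}, the Eulerian coefficient of a PBW basis element equals the Eulerian coefficient of its underlying tree; applied to $b$ and to $\Sigma(b)$ this gives $E_b=E_T$ and $E_{\Sigma(b)}=E_{\Sigma(T)}$. On the other hand, Proposition \ref{lemma: specularity Lie} asserts $E_b=E_{\Sigma(b)}$. Chaining these equalities yields
\[ E_T \;=\; E_b \;=\; E_{\Sigma(b)} \;=\; E_{\Sigma(T)}, \]
which is the desired identity; since every $T\in\pbT$ admits at least one admissible labeling once $n=|T|$ is fixed, this covers all trees.

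The step I expect to be the only real obstacle is the compatibility identity, and within it the verification that $i\mapsto n-i+1$ induces a realization-respecting bijection on admissible labelings---everything else is a formal chaining of results already established. It is worth noting that one could instead argue globally: writing $X=-\log_\rhd(1-|)=\sum_T E_T\,T$, the compatibility identity together with Proposition \ref{proposition: Eulerian = log} and the $\Sigma$-invariance of $E(x_1\cdots x_n)$ proved inside Proposition \ref{lemma: specularity Lie} shows that $X$ and $\Sigma(X)$ have the same image under each $\pi\circ r\circ\operatorname{lab}_{\underline{n}}$; since this map is injective on the span of trees with $n$ leaves, because distinct labeled trees realize to distinct basis elements, one concludes $\Sigma(X)=X$, that is, $E_T=E_{\Sigma(T)}$.
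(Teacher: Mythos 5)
Your proposal is correct and follows the paper's own route exactly: the paper likewise combines the intertwining identity $\Sigma\circ r\circ\operatorname{lab}_{\underline{n}}=r\circ\operatorname{lab}_{\underline{n}}\circ\Sigma$ (which it declares ``straightforward'' and you rightly identify as the only step needing verification, via the bijection $i\mapsto n-i+1$ on admissible labelings) with Proposition \ref{lemma: specularity Lie} and Corollary \ref{corollary:independence on labeling}. Your write-up simply makes explicit the chaining $E_T=E_b=E_{\Sigma(b)}=E_{\Sigma(T)}$ that the paper leaves implicit.
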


		\subsection{Computing the Eulerian coefficients via umbral calculus}\label{subsection:umbral in planar}
		
		In this subsection, we apply the umbral calculus developed in Section \ref{section: pre-Lie} to compute the pre-Lie logarithm $-\log_\rhd\left(1-\xy{\ar@{-}(0,-2);(0,2)}\endxy\,\right)$ in the pre-Lie algebra of planar, binary, rooted trees. As in Proposition \ref{prop:recvsdiffeq}, this is done by solving the differential equation 
		\begin{equation}\label{eq:differential equation planar}
		\left\{\begin{array}{l} P ' = \left<\left. \frac{D}{1-e^{-D}}\right|P\right>\rhd P \\
		P(0) = \,\xy{\ar@{-}(0,-2);(0,2)}\endxy
		\end{array}  \right.
		\end{equation}
		in the pre-Lie algebra $(\pbT[t],\rhd)$ of polynomials with coefficients in $\pbT$.  In the remainder of this paper, given a planar, binary, rooted tree $T$, we shall denote by $P(T)(t)\in\K[t]$ the coefficient of the solution $P\in\pbT[t]$ to \eqref{eq:differential equation planar} in the expansion with respect to the canonical basis of $\pbT$. We can recover the Eulerian coefficient $E_T$ from the polynomial $P(T)(t)$ via the identity
		\[ E_T = \left<\left. \frac{D}{1-e^{-D}}\right|P(T)(t)\right>  \]	
		Our main result is an analog of Theorem \ref{th: product rule} in this context.
		\begin{theorem}\label{th: product rule planar} The polynomials $P(T)(t)$ are determined by $P \left( \, \xy{\ar@{-}(0,-2);(0,2)}\endxy \,\right)(t) = 1$ and the following recursion:
			\begin{itemize}
				
				\item[(I)] Given a tree of the form $T=\xy  {\ar@{-}(0,-4);(0,-2)};
				{\ar@{-}(0,-2);(-3,1)};{(-3,3)*{\slab{T_1}}};
				{\ar@{-}(0,-2);(10,8)};
				{\ar@{-}(6,4);(3,7) };   {(3,9)*{\slab{T_k}}};
				{\ar@{.}(-1,1);(3,5)}
				\endxy  $ (cf.~Appendix \ref{appendix:magmatic} for the notation), we have
				\[ P(T)(t) =\prod_{i=1}^k P\left(  \xy  {\ar@{-}(0,-4);(0,-2)};
				{\ar@{-}(0,-2);(-3,1)};{(-3,3)*{\slab{T_i}}};
				{\ar@{-}(0,-2);(4,2)};
				\endxy  \right)(t) \]
				\item[(II)] For all trees $T\in\pbT$, we have
				\[ P\left(\xy  {\ar@{-}(0,-4);(0,-2)};
				{\ar@{-}(0,-2);(-3,1)};{(-3,3)*{\slab{T}}};
				{\ar@{-}(0,-2);(4,2)};
				\endxy\right)(t) = \sum_{\tau=0}^{t-1} P(\Sigma(T))(-\tau), \]
				where $\sum_{\tau=0}^{t-1}\colon \K[\tau]\to \K[t]$ is the indefinite sum operator, and $\Sigma:\pbT\to\pbT$ is the specular involution from the previous subsection.
			\end{itemize}
			
		\end{theorem}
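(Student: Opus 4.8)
The plan is to mirror the proof of Theorem \ref{th: product rule}, replacing the coproduct of $(\T,\curvearrowright)$ by the explicit pre-Lie coproduct $\Delta$ on $\pbT$ from Remark \ref{rem: pre-lie copr}. The starting point is the analogue of Lemma \ref{prop:coproduct recursion}. Writing $a:=\big\langle \tfrac{D}{1-e^{-D}}\,\big|\,P\big\rangle=\sum_T E_T\,T\in\pbT$, so that the coefficient of $T$ in $a$ is the Eulerian coefficient $E_T=\big\langle \tfrac{D}{1-e^{-D}}\,\big|\,P(T)(t)\big\rangle$, and reading off the coefficient of a fixed tree $T$ on both sides of \eqref{eq:differential equation planar} via the duality between $\rhd$ and $\Delta$, one gets $P'(T)(t)=\sum_{(T)}E_{T^{(1)}}P(T^{(2)})(t)$ in the Sweedler notation $\Delta(T)=\sum T^{(1)}\otimes T^{(2)}$. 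Since $P(T)(0)=0$ for $T\neq |$, this integrates to the recursion $P(T)(t)=\sum_{(T)}E_{T^{(1)}}\int_0^t P(T^{(2)})(\tau)\,d\tau$ for $T\neq |$. Both parts are then proved by induction on the number of leaves, using the explicit form \eqref{pre-lie copr} of $\Delta$.

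For part (I), I would apply \eqref{pre-lie copr} to the tree $T$ of the statement, splitting its inner vertices into the vertices $v_1,\dots,v_k$ of the right spine and the inner vertices of the subtrees $T_j$. Each spine vertex has $\varepsilon=+1$ and contributes a term $T_j\otimes(\text{comb with }T_j\text{ deleted})$, while the inner vertices of a fixed $T_j$ contribute exactly the terms occurring in $\Delta(T_j\rhd |)$, with the comb-with-$T_j$-replaced appearing in the second slot; the decisive point is that the vertex which is the root of $T_j$ is a left child in both trees, so it carries the same sign $\varepsilon=-1$ and selects the same (right) subtree in either case. Feeding this into the recursion and invoking the inductive hypothesis for the strictly smaller modified combs, every contribution coming from $T_j$ factors as the corresponding term of $P'(T_j\rhd |)$ times $\prod_{i\neq j}P(T_i\rhd |)$, whence $P'(T)=\sum_j P'(T_j\rhd |)\prod_{i\neq j}P(T_i\rhd |)=\frac{d}{dt}\prod_j P(T_j\rhd |)$. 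As both sides vanish at $t=0$, part (I) follows.

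Part (II) is the heart of the matter and the origin of the specular involution. Since both sides vanish at $t=0$, it suffices to match derivatives. Applying \eqref{pre-lie copr} to $T\rhd |$, the root vertex gives the term $T\otimes |$, contributing $E_T$ through the recursion, while the inner vertices of $T$ produce terms whose second tensor factor is again of the form $(\,\cdot\,)\rhd |$; here the root of $T$ is now a left child, hence appears with sign $-1$ and selects the right subtree $B$ of $T$ (with $A$ the left subtree). Rewriting these second factors by the inductive hypothesis as specular indefinite sums, and using that $E_{\Sigma(S)}=E_S$ (Corollary \ref{corollary: specularity in pbT}) together with the fact that reflection flips $\varepsilon$ at every non-root vertex, one computes $\Delta(\Sigma(T))$ and recognizes the inner-vertex contributions as $-\sum_{\tau=0}^{t-1}P'(\Sigma(T))(-\tau)$, so that $P'(T\rhd |)(t)=E_T-\sum_{\tau=0}^{t-1}P'(\Sigma(T))(-\tau)$. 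Setting $\phi(\tau):=P(\Sigma(T))(-\tau)$, one has $\phi'(\tau)=-P'(\Sigma(T))(-\tau)$, and the identity $\frac{d}{dt}\sum_{\tau=0}^{t-1}\phi(\tau)=\sum_{\tau=0}^{t-1}\phi'(\tau)+\big\langle \tfrac{D}{e^D-1}\,\big|\,\phi\big\rangle$ for the indefinite sum operator, whose boundary term equals $\big\langle \tfrac{D}{1-e^{-D}}\,\big|\,P(\Sigma(T))\big\rangle=E_{\Sigma(T)}=E_T$, turns the right-hand side into $\frac{d}{dt}\sum_{\tau=0}^{t-1}\phi(\tau)$, precisely absorbing the stray $E_T$. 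This yields $P(T\rhd |)(t)=\sum_{\tau=0}^{t-1}P(\Sigma(T))(-\tau)$, which is part (II).

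The hard part is the bookkeeping in part (II): tracking how the left/right asymmetry of $\rhd$, encoded both in the signs $\varepsilon(v)$ of \eqref{pre-lie copr} and in the root-edge convention, is converted into the specular involution $\Sigma$ together with the reflection $\tau\mapsto-\tau$ of the variable. The two structural inputs that make this go through are the specular invariance $E_T=E_{\Sigma(T)}$ of the Eulerian coefficients and the operator identity $\big\langle \tfrac{D}{1-e^{-D}}\,\big|\,p(t)\big\rangle=\big\langle \tfrac{D}{e^D-1}\,\big|\,p(-t)\big\rangle$, which is exactly what reduces the operator $\tfrac{D}{1-e^{-D}}$ governing \eqref{eq:differential equation planar} to the ordinary Bernoulli and indefinite-sum calculus after reflecting $t$.
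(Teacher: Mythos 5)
Your proposal is correct and follows essentially the same route as the paper: the same coproduct recursion (the paper's Lemma \ref{lem:recuplanar}), the same splitting of $\Delta(T)$ into spine and subtree contributions for part (I), and the same use of the identity $\Delta(T\rhd|)=T\otimes|-(\Sigma\otimes(Z\circ\Sigma))\circ\Delta\circ\Sigma(T)$ together with $E_S=E_{\Sigma(S)}$ for part (II). The only difference is cosmetic: where you invoke the operator identity $\tfrac{d}{dt}\circ\sum_{\tau=0}^{t-1}=\sum_{\tau=0}^{t-1}\circ D+\bigl\langle \tfrac{D}{e^D-1}\,\big|\,\cdot\bigr\rangle$ to absorb the stray $E_T$, the paper verifies the same cancellation coefficientwise via the Bernoulli polynomials.
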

		\begin{proof} We follow closely the proof of Theorem \ref{th: product rule}. In particular, our proof will rely on the following key lemma, analog to Lemma \ref{prop:coproduct recursion}, which is a straightforward consequence of the differential equation \eqref{eq:differential equation planar}. 
			\begin{lemma}\label{lem:recuplanar} Using the notations from Remark \ref{rem: pre-lie copr}, the polynomials $P(T)(t)$ are determined recursively by  $P \left( \, \xy{\ar@{-}(0,-2);(0,2)}\endxy \,\right)(t) = 1$, and for $T\neq \,\xy{\ar@{-}(0,-2);(0,2)}\endxy$ by
				\[ P(T)(t) = \sum_{v\in V(T)} \varepsilon(v) E_{T_v}\int_0^t P\left(\widehat{T_v}\right)(\tau)d\tau.\]
			\end{lemma}
			
			For notational convenience, we shall introduce a linear map \[ Z:\pbT\to\pbT:T\to Z(T):= \xy  {\ar@{-}(0,-4);(0,-2)};
			{\ar@{-}(0,-2);(-3,1)};{(-3,3)*{\slab{T}}};
			{\ar@{-}(0,-2);(4,2)};
			\endxy.\] 
			
			\begin{remark}\label{rem:planar} The main difference between the current proof and the one of Theorem \ref{th: product rule} is linked to the asymmetry between left and right in the formula \eqref{pre-lie copr}. More precisely, the problem arises when we consider the vertex $v_\ast$  connected to the root of $T$. In this case, by definition $T_{v_\ast}$ is the subtree to the left of $v_\ast$. On the other hand, in the tree $Z(T)$ the vertex $v_\ast$ has become a left child, hence, always according to the definitions, $Z(T)_{v_\ast}$ will be the subtree to the right of $v$. For any inner vertex $v$ of $T$ different from $v_\ast$, conversely, the subtrees $T_v$ and $Z(T)_v$ coincide. We handle this asymmetry by using the specular involution $\Sigma$. A direct computation shows that, for all $T\in\pbT$, the folowing relation holds
				\[ \Delta\circ Z (T) = T\otimes\,\xy{\ar@{-}(0,-2);(0,2)}\endxy - (\Sigma\otimes (Z\circ\Sigma))\circ \Delta\circ\Sigma(T). \] 
			\end{remark}
			
			We focus on the proof of Item (II) in the claim of the theorem. When $T=\, \xy{\ar@{-}(0,-2);(0,2)}\endxy$ , $Z(T)=\, \xy{\ar@{-}(0,-2);(0,0)};{\ar@{-}(0,0);(-2,2)};{\ar@{-}(0,0);(2,2)}\endxy$, the thesis, that is, $P\left(\xy{\ar@{-}(0,-2);(0,0)};{\ar@{-}(0,0);(-2,2)};{\ar@{-}(0,0);(2,2)}\endxy\right)(t)=t$, can be checked directly using Lemma \ref{lem:recuplanar}, thus we may assume $T\neq\, \xy{\ar@{-}(0,-2);(0,2)}\endxy$ . To simplify the notations, we shall write $\Sigma T$ instead of $\Sigma(T)$. By the previous remark and Lemma \ref{lem:recuplanar}, we have
			\begin{equation*}
			P'(Z(T))(t) = E_T - \sum_{v\in V(\Sigma T)}\varepsilon(v)\, E_{\Sigma(\Sigma T_v)} P\left(Z\left(\Sigma\left(\widehat{\Sigma T_v}\right)\right)\right)(t).
			\end{equation*}
			Using induction on the number of leaves and Corollary \ref{corollary: specularity in pbT}, this becomes
			\begin{equation*}
			P'(Z(T))(t)= E_{\Sigma T} - \sum_{v\in V(\Sigma T)}\varepsilon(v)\,E_{\Sigma T_v} \sum_{\tau=0}^{t-1}P\left(\widehat{\Sigma T_v}\right)(-\tau).
			\end{equation*}
			Now we are in a position to repeat the computation from \ref{th: product rule}. Denoting by $c(\Sigma T, k)$ the coefficients of the polynomial $P(\Sigma T)(t) =\sum_{k=0}^{|T|-1} c(\Sigma T,k) t^k$, we have 
			\[ E_{\Sigma T} = \sum_{k=0}^{|T|-1} (-1)^k\,c(\Sigma T, k)\,B_k, \]
			and furthermore, by another application of Lemma \ref{lem:recuplanar}, we see that 
			\[ c(\Sigma T,0)=0\quad\mbox{for $T\neq\, \xy{\ar@{-}(0,-2);(0,2)}\endxy$ },\qquad c(\Sigma T,k) = \frac{1}{k}\sum_{v\in V(\Sigma T)}\varepsilon(v)\, E_{\Sigma T_v}\,c\left(\widehat{\Sigma T}_v,k-1\right)\quad\mbox{if $k\geq1$}.  \]
			Putting together the above identities, we can finally compute 
			
			\begin{eqnarray*}
				P'(Z(T))(t) &=& E_{\Sigma T} - \sum_{v\in V(\Sigma T)}\varepsilon(v)\,E_{\Sigma T_v} \sum_{\tau=0}^{t-1}P\left(\widehat{\Sigma T_v}\right)(-\tau) \\ &=&\sum_{k=1}^{|T|-1}\left((-1)^k c(\Sigma T,k) B_k - \sum_{v\in V(\Sigma T)}\varepsilon(v) E_{\Sigma T_v}\sum_{\tau=0}^{t-1} c\left(\widehat{\Sigma T}_v,k-1\right) (-\tau)^{k-1} \right) \\&=& \sum_{k=1}^{|T|-1}(-1)^k\left( c(\Sigma T,k) B_k + \frac{1}{k}\sum_{v\in V(\Sigma T)}\varepsilon(v)\, E_{\Sigma T_v}\,c\left(\widehat{\Sigma T}_v,k-1\right) (B_{k}(t)- B_{k}) \right)\\&=& \sum_{k=1}^{|T|-1}(-1)^k\left( c(\Sigma T,k) B_k + c(\Sigma T,k) (B_k(t)-B_k) \right) = \sum_{k=1}^{|T|-1}(-1)^k c(\Sigma T,k) B_k(t)\\
				&=& \frac{d}{dt}\left( \sum_{k=0}^{|T|-1} (-1)^k c(\Sigma T,k)\frac{B_{k+1}(t)-B_{k+1}}{k+1}\right)=\frac{d}{dt}\left(\sum_{\tau=0}^{t-1}P(\Sigma T)(-\tau)\right),
			\end{eqnarray*}
			which concludes the proof of Item (II).
			
			Taking into account the subtleties illustrated in Remark \ref{rem:planar}, the proof of Item (I) is similar to the proof of the corresponding item in Theorem \ref{th: product rule}. Namely, given the tree $T=\xy  {\ar@{-}(0,-4);(0,-2)};
			{\ar@{-}(0,-2);(-3,1)};{(-3,3)*{\slab{T_1}}};
			{\ar@{-}(0,-2);(10,8)};
			{\ar@{-}(6,4);(3,7) };   {(3,9)*{\slab{T_k}}};
			{\ar@{.}(-1,1);(3,5)}
			\endxy  $, one shows  
			\[ P'(T)(t) = \sum_{i=1}^k P'(Z(T_i)) P(Z(T_1))\cdots\widehat{P(Z(T_i))}\cdots P(Z(T_k)), \]
			using induction on the number of leaves and Lemma \ref{lem:recuplanar}. Details are left to the reader.
		\end{proof}
		\subsection{The $\specular$-twisted rotation correspondence}
		\label{subsection: from pbT to pT}

		We can further improve our invariance results for the Eulerian coefficients (Corollary \ref{corollary:independence on labeling} and Corollary \ref{corollary: specularity in pbT}) with the help of an alternative presentation of $\pbT$, which relies on \emph{Knuth's rotation correspondence}, cf.~\cite{Knuth,Ebrahimi-Fard-Manchon}. This is a bijective correspondence between the set of planar, binary, rooted trees with $n$ leaves and the set of planar, non-binary (rather, not necessarily binary) rooted trees with $n$ vertices. We consider a slight variation of Knuth's rotation correspondence, which makes also use of the specular involution $\Sigma$ from Subsection \ref{subsec: specular}.

		\begin{definition}
			We denote by $\pT(n)$ be the vector space spanned by all planar, rooted trees with $n$ vertices, and by $\pT:=\prod_{n\geq1}\pT(n)$. 
			
			The \emph{$\specular$-twisted rotation correspondence} $\Phi:\pbT \stackrel{\cong}{\to} \pT$ is defined recursively by
			\[ \Phi(\, \xy
			{\ar@{-}(0,-2);(0,2)};
			\endxy \, ) = \bul \quad \qquad \textrm{and } \quad \qquad \Phi\left(\xy  {\ar@{-}(0,-7);(0,-5)};
			{\ar@{-}(0,-5);(-3,-2)};{(-3,0)*{\slab{T_1}}};
			{\ar@{-}(0,-5);(10,5)};
			{\ar@{-}(6,1);(3,4) };   {(3,6)*{\slab{T_k}}};
			{\ar@{.}(-1,-2);(3,2)}
			\endxy \right) \,  = \,  
			\xy 
			{\ar@{-}(0,-6)*{\bul};(-10,4)};{(-12,6)*{\slab{\Phi(\Sigma ({T}_1))}}};
			%	{\ar@{-}(0,-6)*{\bul};(-3,4)*{\slab{\tilde{T}_2}}};
			{\ar@{-}(0,-6)*{\bul};(10,4)};{(12,6)*{\slab{\Phi(\Sigma (T_k))} }}; 
			{\ar@{.} (-3.5,6);(3.5,6) };
			{\ar@{.} (-4,0);(4,0) };
			\endxy
			\]
		\end{definition}
		\begin{remark}\label{rem: rotation correspondence} In other words, we may describe $\Phi(T)$ in terms of the branches of the planar, binary rooted tree $T$. The rightmost branch of $T$ corresponds to the root of $\Phi(T)$. Given a branch $b$ of $T$, corresponding to a vertex $v$ in $\Phi(T)$, the branches of $T$ having their root in $b$ correspond to the children of $v$ in $\Phi(T)$, with the branch closest to the root of $b$ corresponding to the leftmost child of $v$, and the one farthest from the root of $b$ corresponding to the rightmost child of $v$.

			Below we depict some examples of how $\Phi$ works:
			\[ \Phi\left( \xy
			{\ar@{-}(0,-2);(0,0)};
			{\ar@{-}(0,0);(-2,2)};
			{\ar@{-}(0,0);(2,2)};
			\endxy \right) = \xy
			{\ar@{-}(0,-2)*{\bul};(0,2)*{\circ}};
			\endxy,\qquad \Phi\left( \xy
			{\ar@{-}(0,-3);(0,-1)};
			{\ar@{-}(0,-1);(4,3)};
			{\ar@{-}(0,-1);(-4,3)};
			{\ar@{-}(-2,1);(0,3)};
			\endxy \right)\,=\,\,\xy
			{\ar@{-}(0,-4)*{\bul};(0,0)*{\circ}};
			{\ar@{-}(0,0)*{\circ};(0,4)*{\circ}};
			\endxy,\qquad\Phi\left( \xy
			{\ar@{-}(0,-3);(0,-1)};
			{\ar@{-}(0,-1);(4,3)};
			{\ar@{-}(0,-1);(-4,3)};
			{\ar@{-}(2,1);(0,3)};
			\endxy \right)\,=\,\xy
			{\ar@{-}(0,-2)*{\bul};(-4,2)*{\circ}};
			{\ar@{-}(0,-2)*{\bul};(4,2)*{\circ}};
			\endxy,\qquad\Phi\left( \xy {\ar@{-}(0,-4);(0,-2)};
			{\ar@{-}(0,-2);(-6,4)};
			{\ar@{-}(-4,2);(-2,4)};
			{\ar@{-}(0,-2);(6,4) };
			{\ar@{-}(4,2);(2,4)}; 	
			\endxy \right)\,=\,\xy
			{\ar@{-}(0,-4)*{\bul};(-4,0)*{\circ}};
			{\ar@{-}(0,-4)*{\bul};(4,0)*{\circ}};
			{\ar@{-}(-4,0)*{\circ};(-4,4)*{\circ}};
			\endxy \]
			\[ \Phi\left( \xy {\ar@{-}(0,-4);(0,-2)};
			{\ar@{-}(0,-2);(-8,6)};
			{\ar@{-}(-4,2);(0,6)};
			{\ar@{-}(0,-2);(8,6) };
			{\ar@{-}(-2,0);(4,6)}; 
			{\ar@{-}(-2,4);(-4,6)};	
			\endxy\right)\,=\,\,\, \xy 
			{\ar@{-} (0,-5)*{\bul}; (0,-1)*{\circ}}; 
			{\ar@{-} (0,-1)*{\circ}; (-4,3)*{\circ}}; 
			{\ar@{-} (0,-1)*{\circ}; (4,3)*{\circ}}; 
			{\ar@{-} (4,3)*{\circ}; (4,7)*{\circ}};
			\endxy,\qquad\qquad\Phi\left(\xy {\ar@{-}(0,-4);(0,-2)};
			{\ar@{-}(0,-2);(-8,6)};
			{\ar@{-}(4,2);(0,6)};
			{\ar@{-}(0,-2);(8,6) };
			{\ar@{-}(2,0);(-4,6)}; 
			{\ar@{-}(2,4);(4,6)};	
			\endxy\right)\,=\, \xy 
			{\ar@{-} (0,-3)*{\bul}; (-4,1)*{\circ}}; 
			{\ar@{-} (0,-3)*{\bul}; (0,1)*{\circ}}; 
			{\ar@{-} (0,-3)*{\bul}; (4,1)*{\circ}}; 
			{\ar@{-} (4,1)*{\circ}; (4,5)*{\circ}};
			\endxy.\]

			In particular, it is clear that $\Phi$ is a bijective correspondence from the set of planar, binary, rooted trees with $n$ leaves to the set of planar rooted trees with $n$ vertices. In fact, using the above description of $\Phi:\pbT(n)\to\pT(n)$, it is straightforward to construct the inverse $\Phi^{-1}:\pT(n)\to\pbT(n)$.   	
			
		\end{remark}
		
		\begin{remark} Using the isomorphism $\Phi$, we can associate an Eulerian coefficient $E_T$ and a polynomial $P(T)(t)$ to every planar rooted tree $T\in\pT$.
		\end{remark}

		In the context of planar rooted trees, the recursion for the polynomial, given in the previous Theorem \ref{th: product rule planar}, becomes very similar to the one from Theorem \ref{th: difference rule} (but with a twist involving signs). More precisely, Theorem \ref{th: product rule planar} translates into the following result.

		\begin{theorem}\label{th:recursion planar} The polynomials $P(T)(t)$, $T\in\pT$, are defined by $P(\bul)(t)=1$  and the following recursion: for all planar rooted trees $T,T_1,\ldots,T_k\in\pT$, $k\geq1$,
			
			\[ P\left(\xy {\ar@{-}(0,-4)*{\bul};(-6,2)};{(-6,4)*{\slab{T_1}}};
			{\ar@{-}(0,-4)*{\bul};(6,2)};{(6,4)*{\slab{T_k}}};{\ar@{.}(-3,4);(3,4)};{\ar@{.}(-2.5,0);(2.5,0)}   \endxy\right)(t) = \prod_{j=1}^k P\left(\xy {\ar@{-}(0,-3)*{\bul};(0,1)};{(0,3)*{\slab{T_j}}};
			\endxy\right)(t),   \] 
			\[ P\left(\xy {\ar@{-}(0,-3)*{\bul};(0,1)};{(0,3)*{\slab{T}}};
			\endxy\right)(t) = \sum_{\tau=0}^{t-1} P(T)(-\tau).\]
		\end{theorem}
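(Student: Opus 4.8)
The plan is to read Theorem \ref{th:recursion planar} as a transcription of Theorem \ref{th: product rule planar} across the bijection $\Phi$: by the remark preceding the statement, the polynomial attached to a planar rooted tree $T\in\pT$ is \emph{by definition} $P(\Phi^{-1}(T))(t)$, so there is nothing to compute beyond tracking how $\Phi$ interacts with the two operations featuring in Theorem \ref{th: product rule planar}, namely the grafting map $Z\colon\pbT\to\pbT$ from the proof of that theorem and the right-comb trees. For a planar rooted tree $T$, denote by $T^{+}$ the planar rooted tree obtained by adjoining a new root below $T$ (the tree whose root has the single child $T$); this is the shape appearing as the factors on the right-hand side of the first recursion and on both sides of the second.

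First I would record two compatibility identities, both of which are immediate unwindings of the recursive definition of $\Phi$. The definition states that $\Phi$ sends the right-comb binary tree with left-subtrees $V_1,\ldots,V_k$ (ordered from the root outward) to the planar rooted tree whose root has children $\Phi(\Sigma V_1),\ldots,\Phi(\Sigma V_k)$; the case $k=1$ reads $\Phi(Z(V))=(\Phi(\Sigma V))^{+}$. Inverting these, a planar rooted tree $S$ whose root has children $T_1,\ldots,T_k$ satisfies $\Phi^{-1}(S)=\text{(right comb with left-subtrees }\Sigma\Phi^{-1}(T_1),\ldots,\Sigma\Phi^{-1}(T_k))$, while $\Phi^{-1}(T^{+})=Z(\Sigma\Phi^{-1}(T))$.

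With these identities in hand the two recursions follow directly. For the product rule, applying Theorem \ref{th: product rule planar}(I) to $\Phi^{-1}(S)$ gives $P(S)(t)=P(\Phi^{-1}(S))(t)=\prod_{i}P(Z(\Sigma\Phi^{-1}(T_i)))(t)$; since $\Phi(Z(\Sigma\Phi^{-1}(T_i)))=(\Phi(\Sigma\Sigma\Phi^{-1}(T_i)))^{+}=T_i^{+}$ by $\Sigma^2=\id$, each factor equals $P(T_i^{+})(t)$, which is exactly the claim (and the ordering of the factors is immaterial, the product being commutative). For the second recursion I would set $W=\Sigma\Phi^{-1}(T)$ and invoke Theorem \ref{th: product rule planar}(II), $P(Z(W))(t)=\sum_{\tau=0}^{t-1}P(\Sigma W)(-\tau)$; as $\Sigma W=\Phi^{-1}(T)$, this becomes $P(T^{+})(t)=\sum_{\tau=0}^{t-1}P(T)(-\tau)$. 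Finally, the base case $P(\bullet)(t)=1$ holds because $\Phi^{-1}(\bullet)$ is the one-leaf tree, for which $P=1$.

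The only step requiring genuine care, and the reason the statement comes out so clean, is the bookkeeping of the specular involution $\Sigma$. One copy of $\Sigma$ is hard-wired into the $\Sigma$-twisted correspondence $\Phi$, and a second copy appears on the right-hand side of Theorem \ref{th: product rule planar}(II); the heart of the proof is that these two occurrences annihilate one another via $\Sigma^2=\id$, leaving behind only the innocuous sign twist $\tau\mapsto-\tau$ in the indefinite sum. I expect this cancellation to be the main potential source of error, so I would make it explicit at each application rather than leave it tacit — indeed it is precisely in order to engineer this cancellation that one twists the ordinary rotation correspondence by $\Sigma$ in the first place.
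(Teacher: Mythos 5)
Your proposal is correct and is exactly the argument the paper has in mind: the paper states that Theorem \ref{th: product rule planar} "translates into" Theorem \ref{th:recursion planar} without writing out the translation, and your careful bookkeeping of the two occurrences of $\Sigma$ (one inside the twisted correspondence $\Phi$, one in item (II) of Theorem \ref{th: product rule planar}) cancelling via $\Sigma^2=\id$ is precisely the content of that omitted verification. The identities $\Phi(Z(V))=(\Phi(\Sigma V))^{+}$ and $\Phi^{-1}(T^{+})=Z(\Sigma\Phi^{-1}(T))$ and the resulting computations all check out.
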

		The above recursion, together with a straightforward induction, implies the following
		\begin{corollary}\label{lemma: Euler planar structure}
			Given a planar rooted tree $T$,	the polynomial $P(T)(t)$, thus also the Eulerian coefficient $E_{T}$, is independent of the planar structure on $T$ (that is, it only depends on the underlying rooted tree).
		\end{corollary}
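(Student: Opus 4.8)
The plan is to argue by strong induction on the number of vertices $|T|$, feeding off the recursion of Theorem \ref{th:recursion planar} directly. The base case is $T=\bul$, where $P(\bul)(t)=1$ is trivially independent of planar structure, there being only one vertex and hence only one planar tree involved.

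For the inductive step, I would fix a planar rooted tree $T$ whose children at the root, read in planar order, are the subtrees $T_1,\ldots,T_k$; each $T_j$ has strictly fewer vertices than $T$. Writing $B(S)$ for the planar rooted tree obtained by attaching a rooted tree $S$ as the unique child of a new root, the first identity of Theorem \ref{th:recursion planar} reads
\[ P(T)(t) = \prod_{j=1}^k P(B(T_j))(t), \]
while the second identity gives $P(B(T_j))(t) = \sum_{\tau=0}^{t-1} P(T_j)(-\tau)$. The two features driving the argument are the following. First, by the inductive hypothesis each $P(T_j)(t)$ depends only on the underlying abstract rooted tree of $T_j$; consequently so does each $P(B(T_j))(t)$, since it is obtained from $P(T_j)(t)$ by the fixed prescription $\sum_{\tau=0}^{t-1}(-\tau)$. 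Second, the outer product $\prod_{j=1}^k P(B(T_j))(t)$ is symmetric under permutations of its factors, simply because multiplication in $\K[t]$ is commutative. Thus reordering the children at the root leaves $P(T)(t)$ unchanged, and altering the planar structure inside any $T_j$ leaves it unchanged as well.

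Since any two planar structures on a fixed abstract rooted tree are related by a finite sequence of such elementary moves -- reorderings of the children at the various vertices -- invariance under each elementary move upgrades to full planar-independence, completing the induction. The corresponding statement for the Eulerian coefficient is then immediate, because $E_T = \left<\left.\frac{D}{1-e^{-D}}\right|P(T)(t)\right>$ is a linear functional of $P(T)(t)$ alone, so it inherits the dependence only on the underlying rooted tree. There is no serious obstacle here -- which is exactly why the statement is a corollary obtained by ``a straightforward induction''; if anything, the only point deserving a moment's care is the bookkeeping observation that the planar structures on a given abstract rooted tree form a single orbit under successive child-reorderings, so that the two elementary invariances indeed suffice.
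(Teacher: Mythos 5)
Your proof is correct and is exactly the ``straightforward induction'' from the recursion of Theorem \ref{th:recursion planar} that the paper invokes: commutativity of multiplication in $\K[t]$ handles reordering of children at each vertex, and the inductive hypothesis handles the subtrees. Nothing to add.
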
		
		
		When we combine the previous corollary with the one \ref{corollary: specularity in pbT}, we reach the following surprising conclusion.
		
		\begin{corollary}\label{cor: independence on root} Given a planar rooted tree $T$, the Eulerian coefficient $E_{T}$ is further independent of the location of the root of $T$, that is, it only depends on the underlying tree (in the sense of graph theory, i.e., a connected graph with no cycles).
		\end{corollary}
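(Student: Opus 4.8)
The plan is to show that the two invariance properties already established---invariance under the specular involution (Corollary~\ref{corollary: specularity in pbT}) and invariance under the planar structure (Corollary~\ref{lemma: Euler planar structure})---together generate every change of root. By Corollary~\ref{lemma: Euler planar structure} the Eulerian coefficient descends to a function on \emph{non-planar} rooted trees, so I am free to replace a tree by any planar representative sharing the same underlying non-planar rooted tree without altering $E_T$. Since the underlying unrooted tree is connected, it suffices to prove that $E_T$ is unchanged when the root is moved to one of its children: if $\rho=v_0,v_1,\dots,v_m=\rho'$ is the path joining two candidate roots, then each $v_{j+1}$ is a child of $v_j$ once the tree is rooted at $v_j$, and iterating the one-step invariance along this path identifies the rootings at $\rho$ and $\rho'$.

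The crux is the following Key Lemma: for a planar rooted tree $S\in\pT$ whose root has leftmost child $c$, the planar rooted tree $\Phi\circ\Sigma\circ\Phi^{-1}(S)$ has the same underlying unrooted tree as $S$, but with root moved to $c$. Granting this, the corollary follows quickly. Given a rooted tree and a chosen child $c_i$ of its root, I first pass to a planar representative $S$ in which the subtree hanging from $c_i$ is the \emph{leftmost} child (a legitimate planar rearrangement by Corollary~\ref{lemma: Euler planar structure}). Writing $T=\Phi^{-1}(S)$ for the associated planar binary tree, Corollary~\ref{corollary: specularity in pbT} yields $E_T=E_{\Sigma(T)}$, that is $E_S=E_{\Phi\Sigma\Phi^{-1}(S)}$; the Key Lemma then identifies the right-hand side with the Eulerian coefficient of the tree obtained from $S$ by re-rooting at $c_i$, and a final appeal to planar invariance removes the auxiliary planar data. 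Hence $E$ is invariant under moving the root to any child, and the reduction above completes the proof.

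To establish the Key Lemma I would argue by induction on the number of leaves, unwinding the recursive definition of the $\Sigma$-twisted rotation correspondence $\Phi$ in tandem with that of $\Sigma$. The guiding principle, already visible in the examples of Remark~\ref{rem: rotation correspondence}, is that the root of $\Phi(T)$ records the rightmost branch of the binary tree $T$, whereas the specular involution interchanges the rightmost and leftmost ends of $T$; unwinding the correspondence then shows that the net effect on $\Phi(T)$ is to move the root to its leftmost child, demoting the old root to a new leaf-child of the new one---an operation that manifestly preserves the underlying unrooted tree. Concretely I would verify the base case $\Phi(\,|\,)=\bul$ and carry the induction through the recursive step, which expresses $\Phi$ of a tree with trunk-branches $T_1,\dots,T_k$ as the tree with root-children $\Phi(\Sigma T_1),\dots,\Phi(\Sigma T_k)$, keeping track of how $\Sigma$ reverses and reflects the list of trunk-branches.

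The step I expect to be the main obstacle is exactly this Key Lemma. Although the examples in Remark~\ref{rem: rotation correspondence} make the re-rooting picture transparent, turning it into a clean induction requires careful bookkeeping: the branch decomposition of $\Sigma(T)$ is \emph{not} simply the mirror of that of $T$ (mirroring turns rightward paths into leftward ones, reshuffling which branch becomes which child), and one must handle with care the univalent-root convention on $\pbT$ together with the asymmetric behaviour of $\Sigma$ and $\Phi$ at the vertex adjacent to the root---exactly the subtlety flagged in Remark~\ref{rem:planar}. A useful simplification is that, for the application, I need only control the underlying \emph{rooted} tree of $\Phi\Sigma\Phi^{-1}(S)$ and not its planar structure, since the latter is washed out by Corollary~\ref{lemma: Euler planar structure}; this removes much of the planar bookkeeping from the argument.
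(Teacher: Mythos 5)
Your proposal is correct and follows essentially the same route as the paper: the paper also introduces $\widetilde{\Sigma}:=\Phi\circ\Sigma\circ\Phi^{-1}$, verifies (by direct inspection of the recursive definition of $\Phi$, with an explicit picture) precisely your Key Lemma that $\widetilde{\Sigma}$ re-roots a planar rooted tree at the leftmost child of its root, and then combines this with Corollaries \ref{lemma: Euler planar structure} and \ref{corollary: specularity in pbT} to generate all re-rootings. Your path-iteration reduction and the inductive verification of the Key Lemma are just slightly more explicit versions of what the paper leaves to inspection.
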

		\begin{proof} To prove the corollary, we have to understand how specularity behaves under the rotation correspondence $\Phi$. Let us denote by $\widetilde{\specular}$ the map $\widetilde{\specular}:=\Phi\circ\specular\circ\Phi^{-1}:\pT\to\pT$. Given a planar rooted tree $T$, we claim that $T$ and $\widetilde{\Sigma}(T)$ have the same underlying tree, but the root of $\widetilde{\Sigma}(T)$ corresponds to the leftmost child of the root of $T$.
			More precisely, if the tree $T$ has the following form, for certain planar rooted trees $T_1 = \xy {\ar@{-}(0,-4)*{\bul};(-6,2)};{(-6,4)*{\slab{T_1^1}}};
			{\ar@{-}(0,-4)*{\bul};(6,2)};{(6,4)*{\slab{T_1^j}}};{\ar@{.}(-3,4);(3,4)};{\ar@{.}(-2.5,0);(2.5,0)}   \endxy$, $T_2,\ldots, T_k$, $k\ge1$, $r\ge0$, 
			$$
			T \, \, = \, \xy
			{\ar@{-}(0,-6)*{\bul};(-12,2)*{\circ}};
			{\ar@{-}(-12,2)*{\circ};(-20,8)*{\slab{T_1^1}}};
			{\ar@{-}(-12,2)*{\circ};(-4,8)*{\slab{T_1^j}}};
			{\ar@{-}(0,-6)*{\bul};(0,4)*{\slab{T_2}}};
			{\ar@{-}(0,-6)*{\bul};(18,4)*{\slab{T_k}}}; 
			{\ar@{.}(3,5);(14,5)};
			{\ar@{.}(2,0);(8,0)};
			{\ar@{.}(-17,9);(-8,9)};
			{\ar@{.}(-14.5,5);(-9.5,5)};
			
			\endxy
			$$
			then the tree $\widetilde{\Sigma}(T)$ is
			$$
			\widetilde{\Sigma}(T) \, \, = \, \xy
			{\ar@{-}(0,-6)*{\bul};(-12,2)*{\circ}};
			{\ar@{-}(-12,2)*{\circ};(-20,8)*{\slab{T_2}}};
			{\ar@{-}(-12,2)*{\circ};(-4,8)*{\slab{T_k}}};
			{\ar@{-}(0,-6)*{\bul};(0,4)*{\slab{T_1^1}}};
			{\ar@{-}(0,-6)*{\bul};(18,4)*{\slab{T_1^j}}}; 
			{\ar@{.}(3,5);(14,5)};
			{\ar@{.}(2,0);(8,0)};
			{\ar@{.}(-17,9);(-8,9)};
			{\ar@{.}(-14.5,5);(-9.5,5)};
			\endxy
			$$
			The claim is checked by a direct inspection, using the definition of $\Phi$ (cf.~also the discussion, as well as the figures, in Remark \ref{rem:planar}).	
			
			Given two planar rooted trees having the same underlying tree, it is clear that we can turn one into the other by a sequence of moves which are: \begin{itemize} \item either a change in planar structure, leaving the underlying rooted tree unchanged, or
				\item  an application of the map $\widetilde{\specular}$ described above.
			\end{itemize}	
			By Corollary \ref{lemma: Euler planar structure} and Corollary \ref{corollary: specularity in pbT}, neither of these moves will change the associated Eulerian coefficient.
		\end{proof}
		
		\begin{remark} Let us stress that the number of (non-planar, non-rooted) trees with exactly $n$ vertices (which is sequence A000055
			in the OEIS1\footnote{Available at the following link: \url{https://oeis.org/A000055}.}), is much smaller than the cardinality $|\basis_n| = (n - 1)!$. This greatly reduces the number of necessary computations to obtain the expansion of $E(x_1\cdots x_n)$ in the PBW basis. A table of Eulerian coefficients for trees with $n\leq8$ vertices can be found in Appendix \ref{appendix: table}. \end{remark}
		
		\begin{remark}\label{rem:bernoulli identities} We remark that, conversely, the polynomial associated to a planar rooted tree (is independent on the planar structure, according to Corollary \ref{lemma: Euler planar structure}, but) \emph{depends} on the root. In particular, Corollary \ref{cor: independence on root} implies a plethora of identities involving Bernoulli numbers. The simplest instance of this assertion, happens when we consider a corolla $C_{n+1}$ with $n$ leaves, together with its `` specular '' $\widetilde{\Sigma}(C_{n+1})$ (where the map $\widetilde{\Sigma}:\pT\to\pT$ was introduced in the proof of Corollary \ref{cor: independence on root})
			\[ C_{n+1}\,=\,\overbrace{\xy 
				{\ar@{-}(0,-2.5)*{\bul};(-5,2.5)*{\circ}};
				{\ar@{-}(0,-2.5)*{\bul};(5,2.5)*{\circ}};
				{\ar@{.}(-3.5,2.5);(3.5,2.5)};
				{\ar@{.}(-2,0);(2,0)};
				\endxy}^n,\qquad\qquad \widetilde{\Sigma}(C_{n+1})\,=\, \overbrace{\xy 
				{\ar@{-}(0,-4)*{\bul};(0,0)*{\circ}  };
				{\ar@{-}(0,0)*{\circ};(-4,4)*{\circ}};
				{\ar@{-}(0,0)*{\circ};(4,4)*{\circ}};
				{\ar@{.}(-3,4);(3,4)};
				{\ar@{.}(-1.5,2);(1.5,2)};
				\endxy}^{n-1}  \]
			Using Theorem \ref{th:recursion planar}, one immediately finds that the associated polynomials are $P(C_{n+1})(t)=t^n$ and $P(\widetilde{\Sigma}(C_{n+1}))=(-1)^{n-1}\frac{B_n(t)-B_n}{n}$. Applying $\left<\left. \frac{D}{1-e^{-D}}\right|\#\right>$ to both polynomials, and equating the results, we finally recover the following well-known identity, due to Euler
			\[ - B_n = \frac{1}{n}\sum_{k=1}^n (-1)^k  \binom{n}{k}B_{n-k}B_k.  \]
			As another example, we consider the trees
			\[ T \,=\, \xy {\ar@{-}(0,-4)*{\bul};(-6,0)*{\circ}};
			{\ar@{-}(-6,0)*{\circ};(-10,4)*{\circ}};
			{\ar@{-}(-6,0)*{\circ};(-2,4)*{\circ}};
			{\ar@{-}(0,-4)*{\bul};(0,0)*{\circ}};
			{\ar@{-}(0,-4)*{\bul};(6,0)*{\circ}}; 
			{\ar@{.}(-9,4);(-3,4)};
			{\ar@{.}(-7.5,2);(-4.5,2)};
			{\ar@{.}(1,0);(5,0)};
			{\ar@{.}(0.5,-2);(2.5,-2)};
			{(-6,8)*{\overbrace{\qquad}^i}};
			{(3,4)*{\overbrace{\qquad}^j}};
			\endxy,\qquad\qquad \widetilde{\specular}(T) \,=\, \xy {\ar@{-}(0,-4)*{\bul};(-6,0)*{\circ}};
			{\ar@{-}(-6,0)*{\circ};(-10,4)*{\circ}};
			{\ar@{-}(-6,0)*{\circ};(-2,4)*{\circ}};
			{\ar@{-}(0,-4)*{\bul};(0,0)*{\circ}};
			{\ar@{-}(0,-4)*{\bul};(6,0)*{\circ}}; 
			{\ar@{.}(-9,4);(-3,4)};
			{\ar@{.}(-7.5,2);(-4.5,2)};
			{\ar@{.}(1,0);(5,0)};
			{\ar@{.}(0.5,-2);(2.5,-2)};
			{(-6,8)*{\overbrace{\qquad}^j}};
			{(3,4)*{\overbrace{\qquad}^i}};
			\endxy.  \]
			Using Theorem \ref{th:recursion planar}, the associated polynomials are $P(T)(t) = \frac{(-1)^i}{i+1} t^j(B_{i+1}(t)-B_{i+1})$ and $P(\widetilde{\Sigma}(T))(t) = \frac{(-1)^j}{j+1} t^i(B_{j+1}(t)-B_{j+1})$. Equating the corresponding Eulerian coefficients, we obtain the identity
			\[ \frac{1}{i+1} \sum_{k=1}^{i+1}(-1)^k \binom{i+1}{k} B_{i+1-k}B_{j+k} =  \frac{1}{j+1} \sum_{k=1}^{j+1}(-1)^k \binom{j+1}{k} B_{j+1-k}B_{i+k}.        \]
		\end{remark}
		As a final corollary to Theorem \ref{th:recursion planar}, we notice that 
		\begin{corollary}\label{cor:coefficients of tall trees} For the tall tree $T_{n+1} =  
			\left.  \xy
			{\ar@{-}(0,-8)*{\bul};(0,-3)*{\circ}};
			{\ar@{.}(0,-3)*{\circ};(0,2)*{\circ}};
			{\ar@{-}(0,2)*{\circ};(0,7)*{\circ}};
			\endxy \right\}\mbox{\scriptsize $n+1$} $ with $(n+1)$ vertices, the associated polynomial and Eulerian coefficient are (where $(t)^k:= t(t+1)\cdots (t+k-1)$ is the rising factorial, $(t)^0:=1$)
			\[ P(T_{n+1})(t) = \frac{(t)^{\lceil \frac{n}{2}\rceil}(1-t)^{\lfloor \frac{n}{2}\rfloor} }{n!},\qquad\qquad E_{T_{n+1}} = \frac{\lfloor \frac{n}{2}\rfloor! \lceil\frac{n}{2}\rceil!}{(n+1)!}.  \]   
		\end{corollary}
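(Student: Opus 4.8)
The plan is to deduce everything from the recursion of Theorem~\ref{th:recursion planar}. The tall tree $T_{n+1}\in\pT$ is the linear chain whose root carries the single child $T_{n}$, so the second identity of that theorem reads
\[ P(T_{n+1})(t) = \sum_{\tau=0}^{t-1} P(T_n)(-\tau), \qquad P(T_1)(t)=P(\bul)(t)=1. \]
Writing $Q_n(t):=P(T_{n+1})(t)$, $a:=\lceil n/2\rceil$ and $b:=\lfloor n/2\rfloor$, this says precisely that $Q_n$ is the unique polynomial with $Q_n(0)=0$ (for $n\ge1$) and forward difference $\vv{\Delta}Q_n(t)=Q_n(t+1)-Q_n(t)=Q_{n-1}(-t)$. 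First I would rewrite the claimed polynomial in the more convenient binomial form
\[ \frac{(t)^{\lceil n/2\rceil}(1-t)^{\lfloor n/2\rfloor}}{n!} = (-1)^{b}\binom{t+a-1}{n}. \]
This is immediate after collecting the $n$ factors of $(t)^a(1-t)^b$ into a single monic product $\prod_{r=1-a}^{b}(t-r)$ over $n$ consecutive integers, identifying it with the falling factorial $n!\binom{t+a-1}{n}$, and peeling off the sign $(-1)^b$ produced by the $b$ factors of $(1-t)^b$.

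Next I would prove $Q_n(t)=(-1)^b\binom{t+a-1}{n}$ by induction on $n$, the base case $n=0$ being $\binom{t-1}{0}=1$. The inductive step uses only two elementary identities: Pascal's rule, giving $\vv{\Delta}\binom{t+a-1}{n}=\binom{t+a-1}{n-1}$, and the reflection formula $\binom{-x}{m}=(-1)^m\binom{x+m-1}{m}$, which turns $Q_{n-1}(-t)$ into a binomial coefficient in $+t$. Equating the two sides then amounts to checking the index shift $a-1=\lfloor(n-1)/2\rfloor$ and the sign identity $(-1)^b=(-1)^{\lfloor(n-1)/2\rfloor+n-1}$, done separately for $n$ even and $n$ odd. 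The boundary condition is free, since $0\le a-1<n$ forces $\binom{a-1}{n}=0$, hence $Q_n(0)=0$. I expect this parity bookkeeping — tracking how $\lceil\cdot\rceil$, $\lfloor\cdot\rfloor$ and the sign $(-1)^b$ behave when passing from $n$ to $n-1$ — to be the only delicate point of this half.

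For the Eulerian coefficient I would start from $E_{T_{n+1}}=\left\langle \frac{D}{1-e^{-D}}\,\middle|\,P(T_{n+1})\right\rangle$ and exploit the identity $\frac{D}{1-e^{-D}}=h(-D)$, where $h(D):=\frac{D}{e^D-1}$. Replacing $D$ by $-D$ in the operator is the same as replacing $t$ by $-t$ in the argument, and by the reflection formula again $P(T_{n+1})(-t)=(-1)^{b}\binom{-t+a-1}{n}=(-1)^{b+n}\binom{t+b}{n}$, whence
\[ E_{T_{n+1}} = (-1)^{b+n}\left\langle \frac{D}{e^D-1}\,\middle|\,\binom{t+b}{n}\right\rangle. \]
Because $\frac{1}{e^D-1}$ is the indefinite-sum operator (the inverse of $\vv{\Delta}=e^{D}-1$, cf.\ the discussion preceding Theorem~\ref{th: difference rule}) and $\langle g(D)\,|\,p\rangle$ is the value at $0$ of $g(D)p$, this bracket equals the derivative at $0$ of $\sum_{\tau=0}^{t-1}\binom{\tau+b}{n}$. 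The hockey-stick identity $\sum_{\tau=0}^{t-1}\binom{\tau+b}{n}=\binom{t+b}{n+1}-\binom{b}{n+1}$ reduces the computation to $\frac{d}{dt}\big|_{0}\binom{t+b}{n+1}$; since $\binom{b}{n+1}=0$, only the single summand of the product rule in which the vanishing factor $(t+b-b)$ is differentiated survives, yielding $\frac{d}{dt}\big|_{0}\binom{t+b}{n+1}=\frac{(-1)^{a}a!\,b!}{(n+1)!}$. Combined with $(-1)^{b+n}=(-1)^{a}$ this gives $E_{T_{n+1}}=\frac{\lfloor n/2\rfloor!\,\lceil n/2\rceil!}{(n+1)!}$. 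The only subtlety in this half is the derivative-at-a-root step, where one must isolate the unique surviving term rather than apply the logarithmic-derivative formula naively.
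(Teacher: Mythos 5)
Your proposal is correct. For the polynomial half you take essentially the same route as the paper: both arguments proceed by induction on $n$, verifying that the claimed polynomial satisfies $P(T_{n+1})(0)=0$ and $\vv{\Delta}P(T_{n+1})(t)=P(T_n)(-t)$, with the parity bookkeeping done by splitting into $n$ even and $n$ odd; your repackaging of $\tfrac{(t)^{a}(1-t)^{b}}{n!}$ as $(-1)^{b}\binom{t+a-1}{n}$ just makes the ``direct computation, left to the reader'' explicit via Pascal's rule and the reflection formula, and is sound.

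For the Eulerian coefficient your route is genuinely different from the paper's. The paper first observes (via Lemma \ref{lem:recuplanar}) that the coefficient of $t$ in $P(Z(T))(t)$ equals $E_T$, reads off $E_{T_{2m+1}}=\frac{(m!)^2}{(2m+1)!}$ as the linear coefficient of $P(T_{2m+2})(t)=\frac{1}{(2m+1)!}t(1-t^2)\cdots(m^2-t^2)$, and then gets $E_{T_{2m+2}}$ from the fact that this polynomial is odd in $t$ together with the vanishing of the odd Bernoulli numbers $B_3=B_5=\cdots=0$, so that only the $B_1$-term contributes and $E_{T_{2m+2}}=\tfrac12\cdot\frac{(m!)^2}{(2m+1)!}$. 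You instead evaluate the umbral bracket directly and uniformly in $n$: the identity $\langle \frac{D}{1-e^{-D}}\,|\,p(t)\rangle=\langle\frac{D}{e^D-1}\,|\,p(-t)\rangle$, the interpretation of $\frac{D}{e^D-1}$ as differentiation after indefinite summation, the hockey-stick identity, and the derivative-at-a-root computation are all correct (the sign $(-1)^{b+n}=(-1)^{a}$ follows from $a+b=n$). Your argument buys uniformity in $n$ and avoids invoking the vanishing of odd Bernoulli numbers, at the cost of a couple of extra binomial manipulations; the paper's argument is shorter but handles the two parities by different mechanisms and leans on the structural fact that $E_T$ is the linear coefficient of $P(Z(T))(t)$.
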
 
		\begin{example} For instance, \[ P(T_1)(t) = 1,\qquad P(T_2)(t)=t,\qquad P(T_3)(t)  = \frac{1}{2} t(1-t),\qquad P(T_4)(t)=\frac{1}{6} t(1-t)(1+t),  \]
			\[  P(T_5)(t)=\frac{1}{24} t(1-t)(1+t)(2-t),\qquad  P(T_6)(t)=\frac{1}{120} t(1-t)(1+t)(2-t)(2+t),\quad \ldots  \]
			\[E_{T_1}=1,\quad E_{T_2}=\frac{1}{2},\quad E_{T_3}=\frac{1}{6},\quad E_{T_4}=\frac{1}{12},\quad E_{T_5}=\frac{1}{30},\quad E_{T_6}=\frac{1}{60},\quad\ldots \]
		\end{example}
		\begin{proof} The statement about the polynomials follows inductively from \ref{th: product rule planar}, by showing that $\overrightarrow{\Delta}P(T_{n+1})(t)=P(T_n)(-t)$, where the $P(T_{n+1})(t)$ are as in the claim of the corollary and $\overrightarrow{\Delta}:\K[t]\to\K[t]$, $\overrightarrow{\Delta}p(t):=p(t+1)-p(t)$, is the forward difference operator. This is a direct computation, left to the reader (it is convenient to consider the cases $n=2m$ and $n=2m+1$ separately).
			
			To prove the claim about the coefficients, we look at the coefficient of $t$ in the polynomial \[P(T_{2m+2})(t)=\frac{1}{(2m+1)!}t(1-t)(1+t)\cdots(m-t)(m+t)=\frac{1}{(2m+1)!}t(1-t^2)\cdots(m^2-t^2).\]
By a straightforward computation, this is $\frac{(m!)^2}{(2m+1)!}$. On the other hand, Lemma \ref{lem:recuplanar} (or rather, its translation in the context of planar rooted trees) implies that this coefficient is precisely the Eulerian coefficient of the tall tree $T_{2m+1}$\footnote{More generally, Lemma \ref{lem:recuplanar} implies that the coefficient of $t$ in the polynomial $P\left(\xy {\ar@{-}(0,-3)*{\bul};(0,1)};{(0,3)*{\slab{T}}};
				\endxy\right)(t)$ is precisely $E_T$: this follows from the fact that $P(\widehat{T_v})(t)$ is either $1$ if $\widehat{T_v}=\xy {\ar@{-}(0,-2);(0,2)}\endxy$ or it has no constant term if $\widehat{T_v}\neq \xy {\ar@{-}(0,-2);(0,2)}\endxy\,$.}, thus $E_{T_{2m+1}}=\frac{m!m!}{(2m+1)!}$, as desired. On the other hand, the above computation of $P(T_{2m+2})(t)$	shows that only odd powers of $t$ appear in it, and since the odd Bernoulli numbers with the exception of $B_1$ vanish, the Eulerian coefficient $E_{T_{2m+2}}=\left<\left. \frac{D}{1-e^{-D}}\right|P(T_{2m+2})(t)\right>$ is given by
			\[ E_{T_{2m+2}}= \frac{1}{2}\times\frac{m! m!}{(2m+1)!}=\frac{m!(m+1)!}{(2m+2)!}. \]
		\end{proof}
		
		We conclude this section by giving a combinatorial interpretation of the polynomial $P(T)(t)$ associated to a planar (although, as we know, the polynomial is independent of the planar structure) rooted tree $T\in\pT$, in the spirit of Theorem \ref{th: combinatorial} from Subsection \ref{subsection: log in sT}.
		
		\begin{definition}\label{def:alternating decoration}
			We call a decreasing decoration $l$ of $T$ (Definition \ref{def:decreasing decoration}) {\em alternating}
			if for all edges of $T$ the labels of the two endpoints have opposite parity.
			
			We denote the set of decreasing, alternating decorations of $T$, which associate the number $i$ to the root, by $\Adm_\pol(T,i)$.
		\end{definition}
		
		\begin{example}
			The following two decreasing decorations are alternating,
			$$
			\xy     {(-6,1)*{_0}};
			{(0,-7)*{_3}};
			{(6,1)*{_2}};
			{(6,6)*{_1}};
			{\ar@{-}(0,-4)*{\bul};(-4,1)*{\circ}};
			{\ar@{-}(0,-4)*{\circ};(4,1)*{\circ}};
			{\ar@{-}(4,1)*{\circ};(4,6)*{\circ}};
			\endxy \qquad \qquad \qquad
			\xy
			{(0,-8)*{_7}};
			{(-8,-2)*{_4}};
			{(-12,4)*{_3}};
			{(-6,7)*{_3}};
			{(0,4)*{_1}};
			{(8,3)*{_3}};
			{(8,-2)*{_6}};
			{(3,11)*{_2}};
			{(9,11)*{_0}};		
			{\ar@{-}(0,-6)*{\bul};(-6,-2)*{\circ}};
			{\ar@{-}(0,-6)*{\bul};(6,-2)*{\circ}};
			{\ar@{-}(-6,-2)*{\circ};(-6,4)*{\circ}};
			{\ar@{-}(-6,-2)*{\circ};(-10,4)*{\circ}};
			{\ar@{-}(-6,-2)*{\circ};(-2,4)*{\circ}};
			{\ar@{-}(6,-2)*{\circ};(6,3)*{\circ}};
			{\ar@{-}(6,3)*{\circ};(3,8)*{\circ}};
			{\ar@{-}(6,3)*{\circ};(9,8)*{\circ}};
			\endxy
			$$
			while the following ones are not
			$$
			\xy     {(-6,1)*{_0}};
			{(0,-7)*{_3}};
			{(6,1)*{_1}};
			{(6,6)*{_0}};
			{\ar@{-}(0,-4)*{\bul};(-4,1)*{\circ}};
			{\ar@{-}(0,-4)*{\circ};(4,1)*{\circ}};
			{\ar@{-}(4,1)*{\circ};(4,6)*{\circ}};
			\endxy \qquad \qquad \qquad
			\xy
			{(0,-8)*{_7}};
			{(-8,-2)*{_4}};
			{(-12,4)*{_3}};
			{(-6,7)*{_3}};
			{(0,4)*{_1}};
			{(8,3)*{_3}};
			{(8,-2)*{_5}};
			{(3,11)*{_2}};
			{(9,11)*{_0}};		
			{\ar@{-}(0,-6)*{\bul};(-6,-2)*{\circ}};
			{\ar@{-}(0,-6)*{\bul};(6,-2)*{\circ}};
			{\ar@{-}(-6,-2)*{\circ};(-6,4)*{\circ}};
			{\ar@{-}(-6,-2)*{\circ};(-10,4)*{\circ}};
			{\ar@{-}(-6,-2)*{\circ};(-2,4)*{\circ}};
			{\ar@{-}(6,-2)*{\circ};(6,3)*{\circ}};
			{\ar@{-}(6,3)*{\circ};(3,8)*{\circ}};
			{\ar@{-}(6,3)*{\circ};(9,8)*{\circ}};
			\endxy.
			$$
		\end{example}
		
		Recall that the distance of a vertex $v$ of $T$ from the root is the number of edges in the directed path connecting $v$ to the root.
		
		\begin{definition} Given a (planar) rooted tree $T\in\pT$, we denote by $r_T$ (resp.: $l_T$) the number of vertices of $T$ having even (resp.: odd) distance from the root.
			
			The notation is justified by looking at the corresponding planar, binary, rooted tree $\Phi^{-1}(T)\in\pbT$: then $r_T$ (resp.: $l_T$) is precisely the number of right (resp.: left) pointing leaves of $\Phi^{-1}(T)$. 
		\end{definition}

		\begin{proposition}\label{prop: solution for Eulerian}
			Given $T\neq \bullet$, the polynomial $\altbin{t}{T}$ is the unique polynomial that, when evaluated at $\pm k$, $k\in \mathbb{N}\setminus \{0\}$, yields
			\begin{eqnarray*}
				\altbin{k}{T}&=&(-1)^{r_T-1}|\Adm_\pol(T,2k-1)|, \\
				\altbin{-k}{T}&=& (-1)^{l_T}|\Adm_\pol(T,2k)|.
			\end{eqnarray*}
			
		\end{proposition}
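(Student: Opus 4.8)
The plan is to establish both evaluation formulas simultaneously, by induction on the number of vertices $|T|$, by playing the recursion for $P(T)(t)$ from Theorem \ref{th:recursion planar} against a parallel recursion obeyed by the counting function $A(T,i):=|\Adm_\pol(T,i)|$. First I would record the recursion for $A$. Writing a rooted tree with $m\ge 1$ subtrees hanging from its root as $T=\{S_1,\dots,S_m\,|\,\bullet\}$, a decreasing alternating decoration assigning $i$ to the root is the same as an independent choice, for each $j$, of a decreasing alternating decoration of $S_j$ whose own root receives a value that is both strictly smaller than $i$ and of parity opposite to $i$. Hence
\[ A(T,i)=\prod_{j=1}^m A(\{S_j\,|\,\bullet\},i),\qquad A(\{S\,|\,\bullet\},i)=\sum_{\substack{0\le i'<i\\ i'\not\equiv i\ (2)}}A(S,i'), \]
together with $A(\bullet,i)=1$. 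These mirror exactly the product rule and the ``cap'' rule for $P$ in Theorem \ref{th:recursion planar}. One checks directly that both formulas hold for $T=\bullet$ (where $r_\bullet=1$, $l_\bullet=0$), which serves as the base of the induction.

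Next I would dispose of the multiplicative step by a parity count. A direct inspection gives $r_{\{S|\bullet\}}=1+l_S$ and $l_{\{S|\bullet\}}=r_S$, whence for $T=\{S_1,\dots,S_m|\bullet\}$ one gets $r_T-1=\sum_j l_{S_j}$ and $l_T=\sum_j r_{S_j}$. Assuming the two formulas for each factor $\{S_j|\bullet\}$, the product rules for $P$ and for $A$ combine with these identities: the prefactors $(-1)^{r_{\{S_j|\bullet\}}-1}=(-1)^{l_{S_j}}$ multiply to $(-1)^{\sum_j l_{S_j}}=(-1)^{r_T-1}$, and symmetrically for the negative evaluation, so both formulas propagate from the factors to $T$. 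This reduces the entire induction, through the product rule, to trees of the form $\{S|\bullet\}$, for which $|S|<|\{S|\bullet\}|$.

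The heart of the argument is then the cap step for $\{S|\bullet\}$. I would use $P(\{S|\bullet\})(t)=\sum_{\tau=0}^{t-1}P(S)(-\tau)$ and evaluate the indefinite sum at integers: at $t=k>0$ it equals $\sum_{\tau=0}^{k-1}P(S)(-\tau)$, while at $t=-k$ the standard extension of $\sum_{\tau=0}^{t-1}$ to negative arguments gives $-\sum_{j=1}^k P(S)(j)$. On the combinatorial side, the cap relation with odd root value $2k-1$ forces the child value to be even and smaller than $2k-1$, so $A(\{S|\bullet\},2k-1)=\sum_{j=0}^{k-1}A(S,2j)$; substituting the inductive ``$-$'' formula $A(S,2j)=(-1)^{l_S}P(S)(-j)$ and using $(-1)^{r_{\{S|\bullet\}}-1}(-1)^{l_S}=1$ reproduces the ``$+$'' formula. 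Symmetrically, the even root value $2k$ forces the child value odd and smaller than $2k$, so $A(\{S|\bullet\},2k)=\sum_{j=1}^{k}A(S,2j-1)$; the inductive ``$+$'' formula $A(S,2j-1)=(-1)^{r_S-1}P(S)(j)$ together with $(-1)^{l_{\{S|\bullet\}}}(-1)^{r_S-1}=-1$ yields exactly $-\sum_{j=1}^k P(S)(j)$, i.e.\ the ``$-$'' formula. The asserted uniqueness is then automatic, since a polynomial is determined by its values at the infinitely many points $\pm k$, $k\ge 1$.

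I expect the main obstacle to be not a hard estimate but the coherent bookkeeping of signs and parities. One must track how $r_T$ and $l_T$ behave under capping and grafting and, crucially, notice that evaluating the indefinite-sum operator at negative integers contributes the extra sign $-1$ that is exactly what reconciles the ``$-$'' evaluation with the prefactor $(-1)^{l_{\{S|\bullet\}}}$. The one genuinely delicate boundary point is the $i'=0$ term in the odd-root count: it requires the auxiliary identity $A(S,0)=(-1)^{l_S}P(S)(0)$, which holds because $P(S)(0)=0$ for every $S\neq\bullet$ (immediate from the cap recursion, since $P(\{S'|\bullet\})(0)$ is an empty sum and every $S\neq\bullet$ is a product of such factors).
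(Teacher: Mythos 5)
Your proof is correct and follows essentially the same route as the paper's: both arguments proceed by induction on $|T|$, reducing via the product rule to capped trees $\{S\,|\,\bullet\}$ and matching the recursion of Theorem \ref{th:recursion planar} against the parity-constrained recursion for alternating decreasing decorations, with the signs governed by $r_{\{S|\bullet\}}=l_S+1$ and $l_{\{S|\bullet\}}=r_S$ and by the extra $-1$ coming from evaluating the indefinite sum at negative integers. If anything, your explicit handling of the $j=0$ boundary term via $|\Adm_\pol(S,0)|=(-1)^{l_S}\altbin{0}{S}$ (both sides vanishing for $S\neq\bullet$) is slightly more careful than the paper's statement of the corresponding recursion, which silently drops that term.
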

		
		\begin{proof}
			We prove the claim 
			by inductively establishing that the assignment
			$$ \beta: \T\times \mathbb{Z} \mapsto \mathbb{Z}, \quad (T,k)\mapsto \beta(T,k):= \begin{cases} (-1)^{r_T-1}|\Adm_\pol(T,2k-1)| & \textrm{for } k>0, \\
			(-1)^{l_T}|\Adm_\pol(T,-2k)| & \textrm{for } k\le 0
			\end{cases}$$  
			satisfies the following recursion:
			\begin{itemize}
				\item[(1)] $\beta(\bullet,k)=1$ for all $k\in\mathbb{Z}$.

				\item[(2)] for a rooted tree $T=\xy {\ar@{-}(0,-4)*{\bul};(-6,2)};{(-6,4)*{\slab{T_1}}};
				{\ar@{-}(0,-4)*{\bul};(6,2)};{(6,4)*{\slab{T_j}}};{\ar@{.}(-3,4);(3,4)};{\ar@{.}(-2.5,0);(2.5,0)}   \endxy$,  with $j\geq1$ (or in other words, $T\neq\bul$), one has
				$$\beta\left(\xy {\ar@{-}(0,-4)*{\bul};(-6,2)};{(-6,4)*{\slab{T_1}}};
				{\ar@{-}(0,-4)*{\bul};(6,2)};{(6,4)*{\slab{T_j}}};{\ar@{.}(-3,4);(3,4)};{\ar@{.}(-2.5,0);(2.5,0)}   \endxy,k\right)=\prod_{i=1}^j \beta\left(\xy {\ar@{-}(0,-3)*{\bul};(0,1)};{(0,3)*{\slab{T_i}}};
				\endxy,k\right),$$
				\item[(3)] for every rooted tree $T$ one has
				$\beta\left(\xy {\ar@{-}(0,-3)*{\bul};(0,1)};{(0,3)*{\slab{T}}};
				\endxy,0\right)=0$, and for all $k>0$
				\[ \beta\left(\xy {\ar@{-}(0,-3)*{\bul};(0,1)};{(0,3)*{\slab{T}}};
				\endxy,k+1\right) = \beta(T,-k)+\beta(T,-k+1)+\cdots+\beta(T,-1), \]
				\[ \beta\left(\xy {\ar@{-}(0,-3)*{\bul};(0,1)};{(0,3)*{\slab{T}}};
				\endxy,-k\right)= -\beta(T,k)-\beta(T,k-1)-\cdots-\beta(T,1). \] 
			\end{itemize}
			Using Theorem \ref{th: product rule planar}, it is easy to check that the assignment $\T\times \mathbb{Z} \to \mathbb{Z}: (T,k)\to\altbin{k}{T}$ obeys the same recursion, and the claimed equality follows.
			
			In order to establish the above recursion, we use induction on $|T|$, the case $T=\bullet$ being the starting point.
			The only non-trivial thing to check is Item (3). We know by induction that the signs on the right-hand sides of each of the claimed
			identities are constant, so we can separately check their validity for the absolute values, followed by a comparison of the signs.
			The identity for the absolute values is a direct consequence of the definitions and the fact that the decreasing decorations are required to be alternating. The identity of the signs is a simple check, using the fact that for $Z(T):=\xy {\ar@{-}(0,-3)*{\bul};(0,1)};{(0,3)*{\slab{T}}};
			\endxy$ (this notation is borrowed from the proof of Theorem \ref{th: product rule planar}) we have $r_{Z(T)}=l_T+1$ and $l_{Z(T)}=r_T$.

		\end{proof}
		
		\begin{remark}\label{rem: solution for Eulerian} We could use the previous proposition to deduce combinatorial formulas for the Eulerian coefficients, always in the spirit of Theorem \ref{th: combinatorial}. For instance we may use our knowledge of the integers $P(T)(k)$, $k\in\mathbb{N}$, to deduce the expansion $P(T)(t)=\sum_{k=0}^{|T|-1}\lambda_k\frac{(t)^k}{k!}$ of the polynomial $P(T)(t)$ with respect to the basis $\left\{\frac{(t)^k}{k!}\right\}$ of $\K[t]$, where we denote by $(t)^k:=t(t+1)\cdots(t+k-1)$ the rising factorials. We can compute the coefficients $\lambda_k$ in a similar way as we did in the proof of \ref{th: combinatorial}, since the rising and the falling factorials are related by the identity $(t)_k=(-1)^k(-t)^k$. It is known that $\left<\left.\frac{D}{1-e^{-D}}\right|  \frac{(t)^k}{k!}\right>=\frac{1}{k+1}$, hence we would obtain closed formulas for $E_T=\left<\left.\frac{D}{1-e^{-D}}\right| P(T)(t)\right>$. 
			
			Another approach would be to try to expand the polynomial $P(T)(t)$ with respect to the basis $\{P(T_{k+1})(t)\}_{k\geq0}$ of $\K[t]$ given by the polynomials associated to the tall trees, as in Corollary \ref{cor:coefficients of tall trees}. It is not hard to check that, given $p(t)\in\K[t]$, its expansion $p(t)=\sum_{k=0}^{|p|}\mu_k P(T_{k+1})(t)$ can be computed via
			\begin{eqnarray*}\mu_0&=&p(0)\\ \mu_1& =& p(1) - p(0),\\ \mu_2& =& p(1) -2p(0)+ p(-1),\\ \mu_3&=& p(2) - 3p(1) + 3p(0) - p(-1),\\  \mu_4&=&p(2) - 4 p(1) + 6p(0) -4p(-1)+p(-2),\\ \mu_5 &=& p(3)-5 p(2) +10p(1)-10p(0)+5 p(-1) - p(-2),\\&\ldots & 			
			\end{eqnarray*} 
			Using the previous proposition, together with Corollary \ref{cor:coefficients of tall trees}, once again we would obtain closed formulas for the coefficient $E_T$.
			
			Neither of the above approaches is entirely satisfying, though, as we don't know a combinatorial interpretation for the resulting (integral, by the previous proposition) coefficients $\lambda_k$, $\mu_k$, in the expansion of $P(T)(t)$, as we did in Theorem \ref{th: combinatorial}. A more satisfying combinatorial formula for $E_T$ will be given in the next section, Proposition \ref{prop:combinatorial eulerian}.

		\end{remark}
		
		\section{{\bf The Eulerian idempotent in Dynkin's basis}}\label{section: Eulerian in Dynkin} In this section we associate certain numbers to trees, generalizing the classical Eulerian numbers (cf.~\cite{Petersen}). We then proceed to generalize classical identities involving Eulerian numbers, such as Worpitzki's identity. In the final subsection, we apply these results to prove Theorem \ref{th:eulerian in Dynkin basis} on the expansion of $E(x_1\cdots x_n)$ in Dynkin's basis from the Introduction.
		
		\subsection{From Dynkin's basis to the PBW basis}

		We consider the pre-Lie algebra $(\pblT(\underline{n}),\rhd)$ of planar, binary, rooted trees, equipped with an admissible labeling $\ell$ by the totally ordered set $(\underline{n},\leq)=\{1\leq\cdots\leq n\}$, cf.~Definition \ref{def:admissible}. We shall denote by $\Delta:\pblT(\underline{n})\to \pblT(\underline{n})\otimes\pblT(\underline{n})$ the pre-Lie coproduct dual to $\rhd$: this is given by the formula \eqref{eq:copr} in Remark \ref{rem: pre-lie copr}, cf.~also the proof of Lemma \ref{lem:labeling}. We shall also denote by $\Delta^{k-1}:\pblT(\underline{n})\to \pblT(\underline{n})^{\otimes k}$ the iterated coproduct, defined recursively by (notice that $\Delta$ is not coassociative) $\Delta^1:=\Delta$, $\Delta^{k-1}:=(\id\otimes\Delta^{k-2})\circ\Delta$ for $k\geq3$. 
		
		\begin{definition}\label{def:associated permutations} Given a labeled tree $(T,\ell)\in\pblT(\underline{n})$, where $T$ has precisely $n$ leaves, we define a subset $S(T,\ell)\subset S_{n-1}$ of the symmetric group by the identity
			\[ \Delta^{n-1}(T,\ell) = (-1)^{r_T-1}\sum_{\sigma\in S(T,\ell)} \, \xy  {\ar@{-}(0,-2);(0,2)};(0,4)*{\text{\footnotesize{$\sigma(1)$}}}\endxy \,\otimes\cdots\otimes\, \xy  {\ar@{-}(0,-2);(0,2)};(0,4)*{\text{\footnotesize{$\sigma(n-1)$}}}\endxy \,\otimes \,\xy  {\ar@{-}(0,-2);(0,2)};(0,4)*{\text{\footnotesize{$n$}}}\endxy. \]
			where, as usual, we denote by $r_T$ the number of right pointing leaves of $T$.
			
		\end{definition}
		
		\begin{remark}\label{rem:removable} Using the discussion from Remark \ref{rem: pre-lie copr}, we may give a more explicit description of the subset $S(T,\ell)$. 
			
			First of all, for any planar, binary, rooted tree $T'$, we say that a leaf $l$ of $T'$ is \emph{removable} if (using the notations from Remark \ref{rem: pre-lie copr}):
			\begin{itemize} \item $l$ is the left child of a vertex $v$ of $T'$ such that $\varepsilon(v)=1$; or\item $l$ is the right child of a vertex $v$ of $T'$ such that $\varepsilon(v)=-1$.
			\end{itemize}
			
			With this terminology, it follows from the definition of $S(T,\ell)$ and the explicit description of the coproduct $\Delta$, that a permutation $\sigma\in S_{n-1}$ is in $S(T,\ell)$ if and only if the following condition is satisfied:
			\begin{itemize} \item[(\#)] For all $1\leq i\leq n-1 $, the leaf labeled by $\sigma(i)$ is removable in the tree $T\setminus\left\{\xy  {\ar@{-}(0,-3);(0,1)};(0,3)*{\text{\footnotesize{$\sigma(1)$}}}\endxy \,,\cdots\,, \xy  {\ar@{-}(0,-3);(0,1)};(0,3)*{\text{\footnotesize{$\sigma(i-1)$}}}\endxy\right\}$, where we have removed the leaves labeled by $\sigma(1),\ldots,\sigma(i-1)$ from $T$. 
			\end{itemize}
		\end{remark}
		
		\begin{remark} We shall represent a permutation $\sigma\in S_{n-1}$ by the corresponding string of numbers $\sigma(1)\cdots\sigma(n-1)$. For instance, the permutation $\sigma=3421\in S_4$ is the one given by $\sigma(1)=3$, $\sigma(2)=4$, $\sigma(3)=2$, $\sigma(4)=1$.
		\end{remark}
		
		\begin{example}\label{ex:removable} Given the labeled tree
			\[ (T,\ell) = \xy {\ar@{-}(0,-4);(0,-2)};
			{\ar@{-}(0,-2);(-6,4)};{(-6,6)*{\slab{1}}};
			{\ar@{-}(-4,2);(-2,4)};{(-2,6)*{\slab{3}}};
			{\ar@{-}(0,-2);(6,4) };{(6,6)*{\slab{4}}};
			{\ar@{-}(4,2);(2,4)}; 	{(2,6)*{\slab{2}}};
			\endxy,   \]
			we can use the previous Remark \ref{rem:removable} to check that $S(T,\ell)=\{231,321,312\}\subset S_3$. For instance, to see that $231\in S(T,\ell)$, we check that the leaf labeled by $2$ is removable in $T$, the leaf labeled by $3$ is removable in $	\xy
			{\ar@{-}(0,-2);(0,0)};
			{\ar@{-}(0,0);(4,4)};{(4,6)*{\slab{4}}};
			{\ar@{-}(0,0);(-4,4)};{(-4,6)*{\slab{1}}};
			{\ar@{-}(-2,2);(0,4)};{(0,6)*{\slab{3}}};
			\endxy=T\setminus\left\{\xy  {\ar@{-}(0,-3);(0,1)};(0,3)*{\text{\footnotesize{$2$}}} \endxy\right\}$, and the leaf labeled by $1$ is removable in $ \quad\xy
			{\ar@{-}(0,-2);(0,0)};
			{\ar@{-}(0,0);(-2,2)};{(-2,4)*{\slab{1}}};
			{\ar@{-}(0,0);(2,2)};{(2,4)*{\slab{4}}};
			\endxy=T\setminus\left\{\xy  {\ar@{-}(0,-3);(0,1)};(0,3)*{\text{\footnotesize{$2$}}}\endxy \,, \xy  {\ar@{-}(0,-3);(0,1)};(0,3)*{\text{\footnotesize{$3$}}}\endxy\right\}$.
			
			As another example, given the labeled tree
			\[ (T,\ell)= 
			\xy {\ar@{-}(0,-4);(0,-2)};{\ar@{-}(0,-2);(-8,6)};{(-8,8)*{\slab{1}}};
			{\ar@{-}(-6,4);(-4,6)};{(-4,8)*{\slab{3}}};
			{\ar@{-}(0,-2);(8,6) };{(8,8)*{\slab{5}}};
			{\ar@{-}(4,2);(0,6)};{(0,8)*{\slab{2}}};
			{\ar@{-}(2,4);(4,6)};{(4,8)*{\slab{4}}};
			\endxy \]
			we have $S(T,\ell)=\{3142,3412,3421,4312,4321,4231\}\subset S_4$.
			
			As a final example, if $(T,\ell)$ is a left pointing comb with $(n+1)$ leaves and its unique admissible labeling, $T=\xy {\ar@{-}(0,-4);(0,-2)};
			{\ar@{-}(0,-2);(-8,6)};{(-8,8)*{\slab{1}}};
			{\ar@{-}(0,-2);(8,6)};{(11,8.25)*{\slab{n+1}}};
			{\ar@{-}(6,4);(4,6) }; {(4,8)*{\slab{n}}};
			{\ar@{-}(2,0);(-4,6) };{(-4,8)*{\slab{2}}};
			{\ar@{.}(1.5,1.5);(4.5,4.5)}
			\endxy$, then every leaf different from the rightmost one is removable, and $S(T,\ell)=S_n$.
		\end{example}
		
		The following Proposition will allow us to pass from the PBW pasis $\basis_n$ of $\Lie_n$, considered in the previous Section \ref{section: Eulerian coefficients}, to Dynkin's basis 
		\[ \mathcal{D}_n=\{ [x_{\sigma(1)},\cdots[x_{\sigma(n-1)},x_n] \cdots] \}_{\sigma\in S_{n-1}}. \]
		\begin{proposition}\label{prop: from dynkin to pbw} Given an element $[x_{\sigma(1)},\cdots[x_{\sigma(n-1)},x_n] \cdots]$ in Dynkin's basis, its expansion in the PBW basis reads
			\[ [x_{\sigma(1)},\cdots[x_{\sigma(n-1)},x_n] \cdots] = \sum_{(T,\ell)\,\operatorname{s.t.}\,\sigma\in S(T,\ell)}(-1)^{r_T-1} r(T,\ell), \]
			where $r:\pblT(\underline{n})\to\Lie_{\leq n}$ is the realization isomorphism from Lemma \ref{lem:realization}, and the tree $T$ has precisely $n$ leaves.
		\end{proposition}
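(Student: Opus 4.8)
The plan is to exploit the duality between the iterated coproduct $\Delta^{n-1}$ and the iterated pre-Lie product, and then to show that, on single-leaf arguments, the left-nested iterated commutator defining Dynkin's basis collapses onto the right-nested $\rhd$-product that is dual to $\Delta^{n-1}$. Throughout I write $x_i$ also for the single leaf labelled $i$ (abusing notation via the isomorphism $r$), and I let $\langle\cdot,\cdot\rangle$ be the pairing on $\pblT(\underline n)$ for which the labelled trees form an orthonormal basis.

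First I would record a duality lemma. Since $\Delta$ is by definition dual to $\rhd$ (Remark \ref{rem: pre-lie copr}), we have $\langle a\rhd b, T\rangle = \langle a\otimes b, \Delta(T)\rangle$. A straightforward induction on $n$, using $\Delta^{n-1}=(\id\otimes\Delta^{n-2})\circ\Delta$, then shows that $\Delta^{n-1}$ is dual to the right-nested product:
\[ \langle a_1\rhd(a_2\rhd(\cdots\rhd a_n)\cdots), (T,\ell)\rangle = \langle a_1\otimes\cdots\otimes a_n, \Delta^{n-1}(T,\ell)\rangle. \]
Feeding in $a_j=x_{\sigma(j)}$ for $j<n$ and $a_n=x_n$, and reading off Definition \ref{def:associated permutations}, I obtain that the coefficient of $(T,\ell)$ in the right-nested product $R_\sigma := x_{\sigma(1)}\rhd(\cdots\rhd(x_{\sigma(n-1)}\rhd x_n)\cdots)$ equals $(-1)^{r_T-1}$ if $\sigma\in S(T,\ell)$ and $0$ otherwise.

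The heart of the argument is then the claim that, for these arguments, the left-nested commutator agrees with the right-nested product, where $[\![a,b]\!]:=a\rhd b-b\rhd a$:
\[ [\![x_{\sigma(1)},[\![\cdots,[\![x_{\sigma(n-1)},x_n]\!]\cdots]\!]]\!] = x_{\sigma(1)}\rhd(\cdots\rhd(x_{\sigma(n-1)}\rhd x_n)\cdots). \]
I would prove this by induction on the number of letters: writing $L$ for the left-hand side and $L'$ for the inner commutator on $x_{\sigma(2)},\dots,x_n$, we have $L = x_{\sigma(1)}\rhd L' - L'\rhd x_{\sigma(1)}$. Now $L'$ is supported on labelled trees whose leaf set is exactly $\{\sigma(2),\dots,\sigma(n-1),n\}$ — indeed $r(L')$ is an iterated Lie bracket in which each of these letters occurs once — so every tree occurring in $L'$ contains the leaf $n$ and hence has maximal label $n>\sigma(1)$. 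By Remark \ref{remboh} the grafting of such a tree onto the single leaf $x_{\sigma(1)}$ vanishes, so $L'\rhd x_{\sigma(1)}=0$; the inductive hypothesis $L'=x_{\sigma(2)}\rhd(\cdots\rhd x_n)\cdots$ then yields the claim. This vanishing is the crucial structural input, and I expect it to be the main point requiring care: one must check that the ``correction term'' $L'\rhd x_{\sigma(1)}$ is always killed by admissibility, precisely because the global maximum $n$ is forced to sit inside $L'$.

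Finally I would assemble the pieces. Since $r$ is an isomorphism of Lie algebras intertwining $[\![\cdot,\cdot]\!]$ with the Lie bracket (Lemma \ref{lem:realization}), the Dynkin basis element $[x_{\sigma(1)},\cdots[x_{\sigma(n-1)},x_n]\cdots]$ is the image under $r$ of the left-nested commutator, which by the previous step equals $r(R_\sigma)$. Expanding $R_\sigma$ in the tree basis and inserting the coefficient computed from the duality lemma, I conclude
\[ [x_{\sigma(1)},\cdots[x_{\sigma(n-1)},x_n]\cdots] = \sum_{(T,\ell)\,:\,\sigma\in S(T,\ell)} (-1)^{r_T-1}\, r(T,\ell), \]
which is exactly the asserted expansion in the PBW basis $\basis_n$.
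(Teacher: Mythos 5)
Your proposal is correct and follows essentially the same route as the paper: the paper likewise reads off the coefficient of $(T,\ell)$ in the right-nested product $x_{\sigma(1)}\rhd(\cdots(x_{\sigma(n-1)}\rhd x_n)\cdots)$ from the duality between $\Delta^{n-1}$ and the iterated $\rhd$-product in Definition \ref{def:associated permutations}, and then uses Remark \ref{remboh} (the vanishing of $(T_1,\ell_1)\rhd(T_2,\ell_2)$ when the maximum label sits in $T_1$) to identify that right-nested product with the left-nested commutator before applying the realization isomorphism $r$. Your explicit induction for the collapse of the commutator onto the $\rhd$-product is just a spelled-out version of the paper's one-line appeal to Remark \ref{remboh}.
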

		\begin{proof} It follows directly from Definition \ref{def:associated permutations} that $\sigma$ lies in $S(T,\ell)$ if and only if $(T,\ell)$ appears, with the coefficient $(-1)^{r_T-1}$, in the expansion of
			\[ \xy  {\ar@{-}(0,-3);(0,1)};(0,3)*{\slab{\sigma(1)}}\endxy \rhd\left(\cdots\left( \xy  {\ar@{-}(0,-3);(0,1)};(0,3)*{\slab{\sigma(n-1)}}\endxy \rhd \xy  {\ar@{-}(0,-3);(0,1)};(0,3)*{\slab{n}}\endxy\right)\cdots\right) \]
			inside the pre-Lie algebra $\pblT(\underline{n})$. According to Remark \ref{remboh} (which implies $(T_1,\ell_1)\rhd(T_2,\ell_2) =[(T_1,\ell_1),(T_2,\ell_2)]$ if the maximum label of $T_2$ is greater than the maximum label of $T_1$), the identity 
			\[\left[ \xy  {\ar@{-}(0,-3);(0,1)};(0,3)*{\slab{\sigma(1)}}\endxy, \cdots\left[ \xy  {\ar@{-}(0,-3);(0,1)};(0,3)*{\slab{\sigma(n-1)}}\endxy , \xy  {\ar@{-}(0,-3);(0,1)};(0,3)*{\slab{n}}\endxy\right]\cdots\right] = \sum_{(T,\ell)\,\operatorname{s.t.}\,\sigma\in S(T,\ell)}(-1)^{r_T-1} (T,\ell) \]
			holds inside $\pblT(\underline{n})$, equipped with the commutator bracket associated to $\rhd$. Since $r$ is an isomorphism of Lie algebras by Lemma \ref{lem:realization}, the thesis follows.
		\end{proof}
		\subsection{A generalization of Worpitzki's identity} The discussion in the previous subsection leads us to consider the following generalization of the classical Eulerian numbers \cite{Petersen}.
		\begin{definition} Given a labeled (planar, binary, rooted) tree $(T,\ell)\in\pblT(\underline{n})$ with $n$ leaves, we denote by $E(T,\ell, d)$, $0\leq d\leq n-2$, (only temporarily, soon we will drop the labeling $\ell$ from the notation), the cardinality of the set
			\[ E(T,\ell,d):= |\{\sigma\in S(T,\ell)\,\operatorname{s.t.}\, d_\sigma = d   \}|,  \]
			where $d_\sigma$ is the descent number of the permutation $\sigma$ (cf.~the Introduction). We call the number $E(T,\ell, d)$ the \emph{$d$-th Eulerian number associated to $(T,\ell)$}.
		\end{definition}
		\begin{remark}\label{rem:eulnumb} As we said in Example \ref{ex:removable}, if $(T,\ell)$ is a left pointing comb with $(n+1)$ leaves and its unique admissible labeling, $T=\xy {\ar@{-}(0,-4);(0,-2)};
			{\ar@{-}(0,-2);(-8,6)};{(-8,8)*{\slab{1}}};
			{\ar@{-}(0,-2);(8,6)};{(11,8.25)*{\slab{n+1}}};
			{\ar@{-}(6,4);(4,6) }; {(4,8)*{\slab{n}}};
			{\ar@{-}(2,0);(-4,6) };{(-4,8)*{\slab{2}}};
			{\ar@{.}(1.5,1.5);(4.5,4.5)}
			\endxy$, then $S(T,\ell)=S_n$, and the associated Eulerian numbers are the classical ones (see \cite{Petersen}), which we denote by $E(n,d)$.
		\end{remark} 
		Our first objective is to show that
		\begin{proposition}\label{prop: eulerian independet of labeling} The Eulerian numbers $E(T,\ell,d)$ associated to $(T,\ell)$ are independent of the (admissible) labeling $\ell$.
		\end{proposition}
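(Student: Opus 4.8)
The plan is to reinterpret $S(T,\ell)$ combinatorially and then to prove, by induction on the number of leaves of $T$, that the descent-generating polynomial $A_{(T,\ell)}(t):=\sum_{d} E(T,\ell,d)\,t^d$ is independent of the admissible labeling $\ell$; the inductive step, where subtrees are combined, is exactly where Stanley's shuffling theorem \cite{Stanley,Goulden} enters. First I would use the explicit description from Remark \ref{rem:removable}: a permutation $\sigma$ lies in $S(T,\ell)$ precisely when the leaves can be stripped off one at a time in the order $\sigma(1),\ldots,\sigma(n-1)$, each being removable at the moment it is removed. Thus $S(T,\ell)$ is the set of admissible ``dismantling orders'' of the leaves other than the distinguished maximal one (the trunk, fixed as $\sigma(n)$), and $E(T,\ell,d)$ counts those whose word $\sigma(1)\cdots\sigma(n-1)$ has exactly $d$ descents. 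In the standard $(P,\omega)$-partition language this is the set of linear extensions of a forest poset on the leaves, so the whole question is of Stanley type.

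Next I would exploit the recursive structure of $T$, passing through $\Phi$ to the planar rooted tree $\Phi(T)\in\pT$ as in Theorem \ref{th:recursion planar} (convenient, though not essential). There are two building operations. At a \emph{product node}, $\Phi(T)$ is assembled from subtrees $Z(T_1),\ldots,Z(T_k)$ hanging off the root, whose leaf sets do not interact under the removability rule; consequently the dismantling orders of $T$ are exactly the shuffles (order-preserving interleavings) of the dismantling orders of the pieces. At a \emph{grafting node} $T\mapsto Z(T)$ a single new leaf is appended as the extreme label of its branch (minimal or maximal, forced by admissibility), so its removal sits in a definite position relative to the others and produces only a controlled, label-independent shift of the descent statistic. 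The base case $T=\bul$ is trivial.

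The crux is the product node, and here I would invoke Stanley's shuffling theorem directly: given words on disjoint linearly ordered alphabets with prescribed lengths and prescribed numbers of descents, the number of their shuffles with a given number of descents depends only on those lengths and descent numbers, not on the actual values. Feeding the descent-generating polynomials of the pieces through this shuffle rule therefore yields a polynomial $A_{(T,\ell)}(t)$ that depends only on the sizes and on the polynomials $A_{(T_i,\ell_i)}(t)$ of the pieces. By the inductive hypothesis these are independent of the admissible labelings of the pieces; since every admissible labeling of $T$ restricts to admissible labelings of the subtrees (admissibility being a local condition at each inner vertex) and only the \emph{relative} order of labels within each piece matters to the shuffle, we conclude that $A_{(T,\ell)}(t)$, and hence each $E(T,\ell,d)$, is independent of $\ell$.

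The main obstacle I anticipate is the bookkeeping that turns the product node into an honest shuffle: one must verify, from the sign convention $\varepsilon(v)$ and the planarity of $T$, that the removability constraints across distinct subtrees are genuinely independent (so that \emph{arbitrary} interleavings are permitted) while within each subtree they reproduce exactly the constraints defining that subtree's own set $S$. One must also check that varying $\ell$ among admissible labelings changes only the within-piece relative order and the coarse interleaving---data to which Stanley's theorem is insensitive---and that the trunk leaf labeled by $\max$ never contributes a descent to $\sigma\in S_{n-1}$. The signs $(-1)^{r_T-1}$ in Definition \ref{def:associated permutations} are irrelevant to these cardinalities and can be ignored throughout.
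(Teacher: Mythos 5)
Your proposal follows essentially the same route as the paper's proof: induction on the number of leaves, the identification of $S(T,\ell)$ with shuffles of the dismantling orders of the subtrees hanging off the spine, and Stanley's shuffling theorem (via Goulden's Theorem 1.2 at $q=1$) to make the shuffle step labeling-independent. The one place where your description does not match what actually has to be proved is the grafting step $T\mapsto Z(T)$ (grafting $T$ as the left branch above a new root edge): the appended leaf becomes the new trunk and is \emph{never} removed, while the formerly fixed rightmost leaf of $T$ becomes removable and, more importantly, the removability constraints on all the old leaves are mirror-reversed. Consequently the dismantling orders of $T$ sitting inside the larger tree are not those of $T$ with one letter inserted at a definite position; they are the dismantling orders of the specular tree $\Sigma(T)$ with the order-reversed admissible labeling, and the descent number is complemented, $d\mapsto |T|-1-d$ (Lemma \ref{lem: specular on permutations}), rather than shifted. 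This transformation is still labeling-independent, so your induction closes once the specular involution is brought in --- which is exactly how the paper handles this step --- but without that ingredient the grafting step as you state it would not go through.
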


		\begin{proof} We proceed by induction on the number of leaves $|T|$. The thesis is empty when $|T|=1$, and straightforward when $|T|=2$.
			
			We consider a tree of the form $T=\xy  {\ar@{-}(0,-4);(0,-2)};
			{\ar@{-}(0,-2);(-3,1)};{(-3,3)*{\slab{T_1}}};
			{\ar@{-}(0,-2);(10,8)};
			{\ar@{-}(6,4);(3,7) };   {(3,9)*{\slab{T_k}}};
			{\ar@{.}(-1,1);(3,5)}
			\endxy  $, and equip $T_i$, $1\leq i\leq k$, with the labeling $\ell_i$ induced from $\ell$. For every $1\leq i\leq k$, let $\{j^i_1\leq\cdots\leq j^i_{|T_i|}\}$ be the set of labels appearing in $\ell_i$. We define a subset $S(T,T_i)\subset S_{|T_i|}$ in a similar way as in Remark \ref{rem:removable}, by saying that $\tau\in S_{|T_i|}$ is in $S(T,T_i)$ if, for all $1\leq r \leq |T_i|$, the leaf labeled by $j^i_{\tau(r)}$ is removable in the tree $T\setminus\left\{\xy  {\ar@{-}(0,-3);(0,1)};(0,4)*{\slab{j^i_{\tau(1)}}}\endxy \,,\cdots\,, \xy  {\ar@{-}(0,-3);(0,1)};(0,4)*{\slab{j^i_{\tau(r-1)}}}\endxy\right\}$, where we have removed the leaves labeled by $j^i_{\tau(1)},\ldots,j^i_{\tau(r-1)}$ from $T$. For every $\tau\in S(T,T_i)$, we denote by $w_\tau$ the corresponding word $w_\tau:=j^i_{\tau(1)}\cdots j^i_{\tau(|T_i|)}$ (we notice that $\tau(|T_i|)=1$: in fact, the leftmost leaf of $T_i$, which is labeled by $j^i_1$ since $\ell$ is admissible, becomes removable only after all the other leaves of $T_i$ have been removed). It is now obvious from Remark \ref{rem:removable} that the set $S(T,\ell)$ is in bijective correspondence with the set of choices of a permutation $\tau_i\in S(T,T_i)$ for all $1\leq i \leq k$, and of a shuffle of the corresponding words $w_{\tau_1},\ldots,w_{\tau_k}$.

			The proof of the proposition is completed by the following two claims.\\

			{\bf Claim:} the number of $\tau\in S(T,T_i)$ such that $d_{\tau}=d_i$, for a certain $0\leq d_i\leq |T_i|-1$, is independent of the labeling $\ell_i$, and only depends on the planar, binary, rooted tree $T_i$.\\
			
			{\bf Claim:} Given permutations $\tau_i\in S(T,T_i)$ having descent numbers $d_{\tau_i}=d_i$, for all $1\leq i\leq k$, the number of shuffles of $w_{\tau_1},\ldots,w_{\tau_k}$ having descent number a certain $d$, $0\leq d\leq |T|-2$, is independent of the particular choices of $\tau_i$, and only depends on the numbers $|T_1|,\ldots,|T_k|,d_1,\ldots,d_k$.\\
			
			The first claim follows by the inductive hypothesis. To see this, we consider the specular tree $\Sigma(T_i)$ (cf.~Subsection \ref{subsec: specular}), equipped with the labeling $\ell'_i$ defined as follows: given a leaf $l$ of $T_i$, labeled by the number $j^i_r$ under $\ell_i$, the corresponding leaf of $\Sigma(T_i)$ is labeled by $|T_i|-r+1$ under $\ell'_i$. It is clear that $\ell'_i$ is an admissible labeling of $\Sigma(T_i)$ by the set $\{1\leq\cdots\leq |T_i|\}$. We notice that the `` specular'' involution $\Sigma:S_{|T_i|}\to S_{|T_i|}$ from Definition \ref{def: specular on permutations} induces a bijective correspondence $\widetilde{\Sigma}:S(T,T_i)\to S(\Sigma(T_i),\ell'_i)$. In fact, we already noticed that $\tau(|T_i|)=1$ whenever $\tau\in S(T,T_i)$, thus $\Sigma\tau(|T_i|)=|T_i|$, and $\widetilde{\Sigma}\tau$ is just the restriction of $\Sigma\tau$ to $\{1,\ldots,|T_i|-1\}$: it is easy to see that $\tau$ satisfies the requirement to be in $S(T,T_i)$ if and only if $\widetilde{\Sigma}\tau$ satisfies the corresponding requirement ($\#$) from Remark \ref{rem:removable}. Finally, comparing with Lemma \ref{lem: specular on permutations}, we see that $d_{\tau}=|T_i| - d_{\Sigma\tau}-1=|T_i| - d_{\widetilde{\Sigma}\tau}-1$, and we can apply the inductive hypothesis to deduce the first claim.\\

			The second claim is a consequence of Stanley's shuffling Theorem \cite{Stanley, Goulden}. More precisely, this is really a statement about shuffling of words, and using an obvious induction, it is sufficient to consider the case of two words. In this case, the claim follows from \cite[Theorem 1.2]{Goulden}, after substituting $q=1$ in loc.~cit.. 
		\end{proof}
		
		\begin{remark}According to the previous proposition, we can drop the labeling from the notation for the Eulerian numbers associated to a tree $T\in\pbT$, which from here on will be denoted by $E(T,d)$, $0\leq d\leq |T|-2$.
		\end{remark}
		
		Our next objective is to relate these numbers to the polynomial $P(T)(t)$ from the previous Subsections \ref{subsection:umbral in planar} and \ref{subsection: from pbT to pT}.
		
		\begin{remark}\label{rem:removable planar} In order to make use of the results from Subsection \ref{subsection: from pbT to pT}, we switch to planar rooted trees using the ($\Sigma$-twisted) rotation correspondence $\Phi$. Under $\Phi$, an admissible labeling $\ell:\{\mbox{leaves of $T$} \}\to\{1,\ldots,n\}$ corresponds to a labeling $\Phi(\ell):\{\mbox{vertices of $\Phi(T)$}\}\to\{1,\ldots,n\}$ satisfying the following two conditions:
			\begin{itemize} 
				\item For every vertex $v$ of $\Phi(T)$ at an even (resp.: odd) level, i.e., having even (resp.: odd) distance from the root, the label of $v$ is a maximum (resp.: minimum) for the set of labels of the descendants of $v$.
				\item The labels of the children of an even (resp.: odd) level vertex are increasing from left to right (resp.: from right to left).
			\end{itemize}
			For instance, the following two labelings correspond to admissible labelings under the rotation correspondence,
			$$
			\xy     {(-6,1)*{_1}};
			{(0,-7)*{_4}};
			{(6,1)*{_2}};
			{(6,6)*{_3}};
			{\ar@{-}(0,-4)*{\bul};(-4,1)*{\circ}};
			{\ar@{-}(0,-4)*{\circ};(4,1)*{\circ}};
			{\ar@{-}(4,1)*{\circ};(4,6)*{\circ}};
			\endxy \qquad \qquad \qquad
			\xy
			{(0,-8)*{_9}};
			{(-8,-2)*{_1}};
			{(-12,4)*{_8}};
			{(-6,7)*{_7}};
			{(0,4)*{_6}};
			{(8,3)*{_5}};
			{(8,-2)*{_2}};
			{(3,11)*{_3}};
			{(9,11)*{_4}};		
			{\ar@{-}(0,-6)*{\bul};(-6,-2)*{\circ}};
			{\ar@{-}(0,-6)*{\bul};(6,-2)*{\circ}};
			{\ar@{-}(-6,-2)*{\circ};(-6,4)*{\circ}};
			{\ar@{-}(-6,-2)*{\circ};(-10,4)*{\circ}};
			{\ar@{-}(-6,-2)*{\circ};(-2,4)*{\circ}};
			{\ar@{-}(6,-2)*{\circ};(6,3)*{\circ}};
			{\ar@{-}(6,3)*{\circ};(3,8)*{\circ}};
			{\ar@{-}(6,3)*{\circ};(9,8)*{\circ}};
			\endxy
			$$
			while the following two do not (the left one fails to satisfy the second item above, while the right one fails to satisfy the first):
			$$
			\xy     {(-6,1)*{_2}};
			{(0,-7)*{_4}};
			{(6,1)*{_1}};
			{(6,6)*{_3}};
			{\ar@{-}(0,-4)*{\bul};(-4,1)*{\circ}};
			{\ar@{-}(0,-4)*{\circ};(4,1)*{\circ}};
			{\ar@{-}(4,1)*{\circ};(4,6)*{\circ}};
			\endxy \qquad \qquad \qquad
			\xy
			{(0,-8)*{_9}};
			{(-8,-2)*{_1}};
			{(-12,4)*{_8}};
			{(-6,7)*{_7}};
			{(0,4)*{_6}};
			{(8,3)*{_5}};
			{(8,-2)*{_3}};
			{(3,11)*{_2}};
			{(9,11)*{_4}};		
			{\ar@{-}(0,-6)*{\bul};(-6,-2)*{\circ}};
			{\ar@{-}(0,-6)*{\bul};(6,-2)*{\circ}};
			{\ar@{-}(-6,-2)*{\circ};(-6,4)*{\circ}};
			{\ar@{-}(-6,-2)*{\circ};(-10,4)*{\circ}};
			{\ar@{-}(-6,-2)*{\circ};(-2,4)*{\circ}};
			{\ar@{-}(6,-2)*{\circ};(6,3)*{\circ}};
			{\ar@{-}(6,3)*{\circ};(3,8)*{\circ}};
			{\ar@{-}(6,3)*{\circ};(9,8)*{\circ}};
			\endxy
			$$
			
			We can use the bijection $\Phi$ to associate Eulerian numbers $E(T,d)$, $0\leq d\leq|T|-2$, to every planar rooted tree $T\in\pT$. We shall give a more explicit description of these numbers, in the spirit of Remark \ref{rem:removable}. First of all, we already observed that a vertex $v$ of $T$ corresponds to a removable leaf in $\Phi^{-1}(T)$ if and only if $v$ is a leaf of $T$, different from the root. For instance, for the tree 
			\[ \xy     {(-6,1)*{_1}};
			{(0,-7)*{_4}};
			{(6,1)*{_2}};
			{(6,6)*{_3}};
			{\ar@{-}(0,-4)*{\bul};(-4,1)*{\circ}};
			{\ar@{-}(0,-4)*{\circ};(4,1)*{\circ}};
			{\ar@{-}(4,1)*{\circ};(4,6)*{\circ}};\endxy\] 
			the removable vertices are the ones labeled by $1$ and $3$. Given an admissible labeling of $T$, that is, a bijective correspondence $\ell:\{\mbox{vertices of $T$}\}\to\{1,\ldots,|T|\}$ satisfying the two conditions above, we can define a subset $S(T,\ell)\subset S_{|T|-1}$ as in Remark \ref{rem:removable}. Namely, we say that $\sigma\in S(T,\ell)$ if for all $1\leq i\leq |T|-1$, the vertex labeled by $\sigma(i)$ is removable in the tree $T\setminus\left\{ \xy {(0,-1.5)*{\bul}};{(0,1.5)*{\slab{\sigma(1)}}} \endxy,\ldots,\xy {(0,-1.5)*{\bul}};{(0,1.5)*{\slab{\sigma(i-1)}}} \endxy \right\}$, where we have removed the vertices labeled by $\sigma(1),\ldots,\sigma(i-1)$ from $T$. The Eulerian number $E(T,d)$ is the number of permutations $\sigma\in S(T,\ell)$, for any admissible labeling $\ell$ of $T$, having descent number $d_\sigma=d$. 
			
			We notice that a vertex of $T$ becomes removable only after all its descendants have already been removed. This shows that there is a bijective correspondence between the set $S(T,\ell)$ and the set $\Adm^c(T,|T|-1)$ from Definition \ref{def:decreasing decoration}: given a complete, decreasing decoration $\delta\in\Adm^c(T,|T|-1)$, we remove the vertices of $T$ in the order prescribed by $\delta$. More precisely, we shift the labels in $\delta$ by $1$, in order to get a bijective correspondence $\underline{\delta}:\{\mbox{vertices of $T$}\}\to\{1,\ldots,|T|\}$: then the desired $\Adm^c(T,|T|-1)\xrightarrow{\cong} S(T,\ell)$ sends $\delta$ to the permutation $\ell\circ\underline{\delta}^{-1}$, restricted to $\{1,\ldots,|T|-1\}$. For instance, for the above labeled tree, the complete decreasing decoration
			\[ \delta = \xy     {(-6,1)*{_2}};
			{(0,-7)*{_3}};
			{(6,1)*{_1}};
			{(6,6)*{_0}};
			{\ar@{-}(0,-4)*{\bul};(-4,1)*{\circ}};
			{\ar@{-}(0,-4)*{\circ};(4,1)*{\circ}};
			{\ar@{-}(4,1)*{\circ};(4,6)*{\circ}};\endxy\in \Adm^c(T,|T|-1)\]
			goes into the permutation $321\in S(T,\ell)$ under this bijection. By Remark \ref{rem:}, we find the following formula for the sum of the Eulerian numbers associated to $T$:
			\[ \sum_{d=0}^{|T|-2} E(T,d) = |S(T,\ell)| = |\Adm^c(T,|T|-1)| = \frac{|T|!}{T!}.  \]
			
		\end{remark}

		\begin{theorem}\label{th:worpitzki} The polynomial $P(T)(t)$ and the Eulerian numbers $E(T,d)$ associated to a planar rooted tree $T\in\pT$, are related by the identity
			\begin{equation}\label{eq:worpitzki} (-1)^{r_T-1}P(T)(k+1) = \sum_{d=0}^{|T|-2} \binom{|T|-1 + k-d}{|T|-1}E(T,d),
			\end{equation}
			for all integers $k\geq0$ (by interpreting the binomial coefficients as generalized ones, in the above formula we have $\binom{|T|-1 + k-d}{|T|-1}=0$ whenever $k<d$).
		\end{theorem}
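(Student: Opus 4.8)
\emph{Proof plan.} The plan is to give a bijective proof, generalizing the classical bijective proof of Worpitzki's identity (the case when $T$ is a corolla) found in Knuth's book. First I would reinterpret the left-hand side combinatorially. Since both sides of \eqref{eq:worpitzki} are polynomials in $k$ of degree $|T|-1$, it is enough to check the identity for all integers $k\ge 0$; we may also assume $T\neq\bullet$ (the degenerate case being excluded). For such $k$, Proposition \ref{prop: solution for Eulerian} gives $(-1)^{r_T-1}\altbin{k+1}{T}=|\Adm_\pol(T,2k+1)|$, the number of decreasing \emph{alternating} decorations of $T$ carrying the value $2k+1$ at the root (Definitions \ref{def:decreasing decoration} and \ref{def:alternating decoration}). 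Thus \eqref{eq:worpitzki} becomes the purely combinatorial statement
\[ |\Adm_\pol(T,2k+1)| \;=\; \sum_{d=0}^{|T|-2}\binom{|T|-1+k-d}{|T|-1}\,E(T,d), \]
and the goal is to prove it by a bijection that groups the alternating decorations according to a descent statistic.

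Next I would fix an admissible labeling $\ell$ of $T$, which is legitimate because $E(T,d)$ is independent of $\ell$ by Proposition \ref{prop: eulerian independet of labeling}. By Remark \ref{rem:removable planar}, $E(T,d)$ counts the permutations $\sigma\in S(T,\ell)\cong\Adm^c(T,|T|-1)$ with $d_\sigma=d$, that is, the admissible orders in which the vertices of $T$ may be removed (each vertex only after all of its descendants), weighted by descent number. Given $l\in\Adm_\pol(T,2k+1)$, I would \emph{standardize} it: list the vertices $v_1,\dots,v_{|T|}$ in weakly increasing order of the value $l(v_i)$, breaking ties according to $\ell$. Because $l$ strictly decreases along the descendant order, each vertex is listed after all its proper descendants, so this is a valid removal order and produces a well-defined $\sigma=\sigma(l)\in S(T,\ell)$ with $v_{|T|}$ equal to the root and $l(v_{|T|})=2k+1$. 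The residual datum is the weakly increasing sequence $0\le l(v_1)\le\cdots\le l(v_{|T|})=2k+1$, whose entries have parities prescribed by the distances of the $v_i$ from the root (even distance forcing an odd value and odd distance an even value, by the alternating condition).

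For a fixed $\sigma$ with $d_\sigma=d$, I would count the decorations $l$ standardizing to $\sigma$ by introducing $a_1:=l(v_1)$ and the gaps $g_i:=l(v_{i+1})-l(v_i)$, subject to $a_1+\sum_i g_i=2k+1$, to the prescribed parities, and to the strict inequalities forced either by a change of parity or by a descent of $\sigma$. Subtracting the forced minimum from each of the $|T|$ nonnegative unknowns and halving the (necessarily even) residual turns the count into the number of nonnegative solutions of $\sum_j h_j=k-d$, namely $\binom{|T|-1+k-d}{|T|-1}$; summing over all $\sigma$ grouped by $d$ then yields the right-hand side. The substantial point, which I expect to be the main obstacle, is the parity bookkeeping that makes the total forced budget equal to exactly $2d+1$: writing $d_{\mathrm{even}}(\sigma)$ for the number of descents occurring at equal-parity consecutive steps, $P_c(\sigma)$ for the number of parity changes along $v_1,\dots,v_{|T|}$, and $p_1(\sigma)$ for the parity of $l(v_1)$, one must establish
\[ d_\sigma \;=\; d_{\mathrm{even}}(\sigma)+\tfrac{1}{2}\bigl(P_c(\sigma)+p_1(\sigma)-1\bigr). \]
This identity asserts that the descents of $\sigma$ relative to the admissible labeling match precisely the strict steps imposed by the alternating structure. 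The hard part will be proving it; I would argue by induction on $|T|$ using the branch decomposition of Remark \ref{rem:removable planar}, where the removal orders of $T$ are interleavings of those of its branches, so that—exactly as in the proof of Proposition \ref{prop: eulerian independet of labeling}, via Stanley's shuffling theorem together with the specular symmetry $d_{\Sigma\tau}=|T_i|-1-d_\tau$ of Lemma \ref{lem: specular on permutations}—the descent statistic splits compatibly across branches, reducing the claim to the single-branch case and ultimately to the classical corolla, where it is the familiar ``descent $=$ forced strict ascent'' feature of the Worpitzki bijection.
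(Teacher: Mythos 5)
Your overall architecture coincides with the paper's: reduce the left-hand side to $|\Adm_\pol(T,2k+1)|$ via Proposition \ref{prop: solution for Eulerian}, standardize each alternating decoration to a removal order $\sigma\in S(T,\ell)$ (this is exactly the paper's map $\theta$, with the same tie-breaking rule), and count each fiber by a stars-and-bars argument after subtracting the forced minimal increments. You have also correctly isolated the crux: the forced budget at the root must equal $2d_\sigma+1$, which is equivalent to your identity $d_\sigma=d_{\mathrm{even}}(\sigma)+\tfrac12\left(P_c(\sigma)+p_1(\sigma)-1\right)$.

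The gap is that this identity is \emph{false} for a general admissible labeling $\ell$, so ``fix an admissible labeling, legitimate since $E(T,d)$ is independent of $\ell$'' does not suffice: independence of $E(T,d)$ makes the right-hand side of \eqref{eq:worpitzki} labeling-independent, but your fiber computation is carried out with respect to a specific $\ell$ and genuinely depends on it. Concretely, take $T$ with root $r$, children $a,b$, and a child $c$ of $a$, with the admissible labeling $\ell(r)=4$, $\ell(a)=1$, $\ell(b)=3$, $\ell(c)=2$. The removal order $c,b,a$ gives $\sigma=231$ with $d_\sigma=1$, while $v_1=c$ is at even level (so $p_1=1$), there are $P_c=2$ parity changes, and the single equal-parity step $b\to a$ is a descent (so $d_{\mathrm{even}}=1$); your right-hand side is therefore $2$. (Equivalently, the minimal alternating decoration standardizing to $231$ has root value $5$, not $3$.) The identity holds precisely when every parity-change step $v_i\to v_{i+1}$ is a descent if and only if it goes from an even level to an odd level, and this is guaranteed only if $\ell$ is chosen so that every even-level vertex carries a larger label than every odd-level vertex. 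The paper constructs exactly such a labeling (level by level, alternating the direction of enumeration) and explicitly warns that the argument fails for a general admissible $\ell$. Once that labeling is fixed, your identity follows from a short telescoping count (the number of even-to-odd parity changes is $(P_c+p_1-1)/2$, and these are exactly the parity-change descents), so the proposed induction over branches and the appeal to Stanley's shuffling theorem are unnecessary --- and, as set up for an unspecified admissible $\ell$, that induction could not succeed, since the statement it is meant to establish is labeling-dependent and false in general.
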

		\begin{remark} When $T$ is a corolla with $n$ leaves, $T=\overbrace{\xy 
				{\ar@{-}(0,-2.5)*{\bul};(-5,2.5)*{\circ}};
				{\ar@{-}(0,-2.5)*{\bul};(5,2.5)*{\circ}};
				{\ar@{.}(-3.5,2.5);(3.5,2.5)};
				{\ar@{.}(-2,0);(2,0)};
				\endxy}^n$, we have $P(T)(t)= t^n$, $E(T,d)=E(n,d)$ are the usual  Eulerian numbers (by the previous Remark \ref{rem:eulnumb}, since $T$ corresponds to a left pointing comb under $\Phi$), and the previous identity \eqref{eq:worpitzki} becomes the classical Worpitzki's identity
			\[ (k+1)^n = \sum_{d=0}^{n-1} \binom{n + k-d}{n}E(n,d), \]
			cf.~\cite[\S 1.5, Corollary 1.2]{Petersen}. 
		\end{remark}
		\begin{proof} We adapt a bijective proof of Worpitzki's identity from Knuth's book \cite{Knuth}.

			First of all, we shall equip $T$ with a particular admissible labeling $\ell$, defined as follows (we remark that the following proof does not work for a general admissible labeling). The labeling $\ell$ associates (necessarily) the number $|T|$ to the root. Next, assume there are $i_1$ vertices at level one: we label them with the numbers $1,\ldots,i_1$, in increasing order from left to right. If there are $i_2$ vertices at level two, we label them with the numbers $|T|-i_2,\ldots,|T|-1$, in increasing order from right to left. In general, if there are $i_{2j}$ (resp.: $i_{2j+1}$) vertices at the even (resp.: odd) level $2j$ (resp.: $2j+1$), we label them with the numbers $|T|+1-\sum_{h=0}^{j}i_{2h},\ldots,|T|-\sum_{h=0}^{j-1}i_{2h}$ (resp.: $1+\sum_{h=0}^{j-1}i_{2h+1},\ldots,\sum_{h=0}^{j}i_{2h+1}$), in increasing order from right to left (resp.: from left to right). For instance, the admissible labelings depicted in the previous Remark \ref{rem:removable planar} are of this type.
			
			Having fixed the above choice of an admissiblee labeling $\ell$ of $T$, it is defined the corresponding set of permutations $S(T,\ell)\subset S_{|T|-1}$. Now, according to Proposition \ref{prop: solution for Eulerian}, the left hand side in the claimed identity \eqref{eq:worpitzki} is precisely the cardinality of the set $\Adm_\pol(T,2k+1)$ of alternating decreasing decorations of $T$ (Definition \ref{def:alternating decoration}) associating the number $(2k+1)$ to the root. In order to prove the theorem, we shall consider a certain map $\theta:\Adm_\pol(T,2k+1)\to S(T,\ell)$, and count the cardinality of its fibers.
			
			The map $\theta$ is defined as follows. Given an alternating decoration $\delta\in\Adm_\pol(T,2k+1)$, the corresponding element in $\theta(\delta)\in S(T,\ell)$ is defined by removing the leaves of $(T,\ell)$ in the order prescribed by $\delta$, cf.~the discussion in the previous Remark \ref{rem:removable planar}. When there are repetitions in $\delta$, we remove the leaves according to the order prescribed by the labeling $\ell$. For instance, given the labeled tree
			\[ (T,\ell) = \xy
			{(0,-8)*{_9}};
			{(-8,-2)*{_1}};
			{(-12,4)*{_8}};
			{(-6,7)*{_7}};
			{(0,4)*{_6}};
			{(8,3)*{_5}};
			{(8,-2)*{_2}};
			{(3,11)*{_3}};
			{(9,11)*{_4}};		
			{\ar@{-}(0,-6)*{\bul};(-6,-2)*{\circ}};
			{\ar@{-}(0,-6)*{\bul};(6,-2)*{\circ}};
			{\ar@{-}(-6,-2)*{\circ};(-6,4)*{\circ}};
			{\ar@{-}(-6,-2)*{\circ};(-10,4)*{\circ}};
			{\ar@{-}(-6,-2)*{\circ};(-2,4)*{\circ}};
			{\ar@{-}(6,-2)*{\circ};(6,3)*{\circ}};
			{\ar@{-}(6,3)*{\circ};(3,8)*{\circ}};
			{\ar@{-}(6,3)*{\circ};(9,8)*{\circ}};
			\endxy
			\]
			and the alternating decreasing decoration 
			\[ \delta = 			\xy
			{(0,-8)*{_5}};
			{(-8,-2)*{_4}};
			{(-12,4)*{_3}};
			{(-6,7)*{_3}};
			{(0,4)*{_1}};
			{(8,3)*{_3}};
			{(8,-2)*{_4}};
			{(3,11)*{_2}};
			{(9,11)*{_0}};		
			{\ar@{-}(0,-6)*{\bul};(-6,-2)*{\circ}};
			{\ar@{-}(0,-6)*{\bul};(6,-2)*{\circ}};
			{\ar@{-}(-6,-2)*{\circ};(-6,4)*{\circ}};
			{\ar@{-}(-6,-2)*{\circ};(-10,4)*{\circ}};
			{\ar@{-}(-6,-2)*{\circ};(-2,4)*{\circ}};
			{\ar@{-}(6,-2)*{\circ};(6,3)*{\circ}};
			{\ar@{-}(6,3)*{\circ};(3,8)*{\circ}};
			{\ar@{-}(6,3)*{\circ};(9,8)*{\circ}};
			\endxy\in\Adm_\pol(T,5), \]
			to get the corresponding permutation $\theta(\delta)\in S(T,\ell)$, we start by removing from $(T,\ell)$ the leaf labeled by $0$ under $\delta$, thus $\theta(\delta)=4\cdots$. Next, we remove, in order, the leaves labeled by $1$ and $2$ under $\delta$, thus $\theta(\delta)=463\cdots$. Now we remove the leaves labeled by $3$ under $\delta$, according to the order prescribed by $\ell$, thus $\theta(\delta)=463578\cdots$. Finally, we remove the leaves labeled by $4$ under $\delta$, according to the order prescribed by $\ell$, and in the end we found $\theta(\delta)=46357812\in S(T,\ell)$.
			
			The theorem will follow from the claim below.\\
			
			{\bf Claim:} Given $\sigma\in S(T,\ell)$ and $k\geq0$, the cardinality of the fiber $\theta^{-1}(\sigma)\subset \Adm_\pol(T,2k+1)$ is zero if $k<d_\sigma$, and is $\binom{|T|-1 + k-d_\sigma}{|T|-1}$ otherwise.\\
			
			To prove the claim, first of all we shall try to construct, given $\sigma\in S(T,\ell)$, an alternating decreasing decoration $\delta_\sigma$ such that $\theta(\delta_\sigma)=\sigma$, and such that the odd number corresponding to the root under $\delta_\sigma$ is the smallest possible. To distinguish between the labels under $\ell$ and the ones under $\delta_\sigma$, we shall call the first ones $\ell$-labels and the second ones $\delta_\sigma$-labels.  
			
			{\bf Observation:} We notice that, by construction of $\ell$, the $\ell$-label of an even level vertex of $T$ is always greater than the $\ell$-label of an odd level vertex.  
			
			To start constructing $\delta_\sigma$, we consider the vertex $v_1$ of $T$ having $\ell$-label $\sigma(1)$. The $\delta_\sigma$-label of $v_1$ will have to be the smallest possible, which is $0$ if $v_1$ is an odd level vertex and $1$ if $v_1$ is an even level vertices. In fact, since $\delta_\sigma$ has to be alternating, and since it has to associate an odd number to the root, even level vertices will have odd $\delta_\sigma$-labels, and odd level vertices will have even $\delta_\sigma$-labels. Next, we consider the vertex $v_2$ of $T$ having $\ell$-label $\sigma(2)$. There are several cases to consider. If the levels of $v_1$ and $v_2$ have the same parity, and if $\sigma(1)<\sigma(2)$, by the definition of $\theta$ we can put $\delta_\sigma(v_2)=\delta_\sigma(v_1)$. On the other hand, if the levels of $v_1$ and $v_2$ have the same parity, and if $\sigma(1)>\sigma(2)$, we will have to put $\delta_\sigma(v_2)=\delta_\sigma(v_1)+2$. If the levels of $v_1$ and $v_2$ have opposite parity, we have to put $\delta_\sigma(v_2)=\delta_\sigma(v_1)+1$. Notice that in the last case, by the previous observation, we will have $\sigma(1)<\sigma(2)$ if the level of $v_1$ was odd and the level of $v_2$ was even, and we will have $\sigma(1)>\sigma(2)$ otherwise. In all cases, we see by the previous discussion that $\delta_\sigma(v_2)$ is $0$ or $1$ if $\sigma(1)<\sigma(2)$, and is $2$ or $3$ if $\sigma(1)>\sigma(2)$.  
			
			Proceeding like this, we consider the vertex $v_j$ of $T$ having $\ell$-label $\sigma(j)$. Assume that the $\delta_\sigma$-label of $v_j$ is either $2h$ or $2h+1$. The same case by case argument as before, shows that the $\delta_\sigma$-label of $v_{j+1}$ (the vertex having $\ell$-label $\sigma(j+1)$) will have to be either $2h,2h+1$ if $\sigma(j)<\sigma(j+1)$, or $2h+2,2h+3$ if $\sigma(j)>\sigma(j+1)$. This implies that the $\delta_\sigma$-label of the root will have to be precisely $2d_\sigma+1$. Furthermore, since at every step we had only one possible choice of smallest $\delta_\sigma$-label, this will be the only alternating decoration in $\Adm_\pol(T,2d_\sigma+1)$ such that its image under $\theta$ is $\sigma$. This proves the previous claim when $k\leq d_\sigma$.
			
			It remains to consider the case $k>d_\sigma$. In this case, we claim that the set $\theta^{-1}(\sigma)$ is in bijective correspondence with the set of ordered partitions $k-d_{\sigma}=p_1+\cdots+p_{|T|}$ of the number $(k-d_\sigma)$ by $|T|$ non-negative integers. Since there are precisely $\binom{|T|-1 + k-d_\sigma}{|T|-1}$ such partitions, this will conclude the proof of the theorem. As before, we denote by $v_j$ the vertex of $T$ having $\ell$-label $\sigma(j)$ (and by $v_{|T|}$ the root). The desired bijective correspondence is defined as follows: given the ordered partition $k-d_{\sigma}=p_1+\cdots+p_{|T|}$, the corresponding decoration $\delta\in\Adm_\pol(T,2k+1)$ is given by $\delta(v_j)=\delta_\sigma(v_j)+2\sum_{h=1}^{j}p_h$.
		\end{proof}
		
		\begin{corollary} The Eulerian numbers associated to $T\in\pT$ are independent of the planar structure on $T$, and only depend on the underlying rooted tree.
		\end{corollary}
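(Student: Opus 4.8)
The plan is to read the statement off the generalized Worpitzki identity of Theorem \ref{th:worpitzki}. That identity expresses the quantity $(-1)^{r_T-1}P(T)(k+1)$, for each integer $k\ge 0$, as a fixed linear combination of the Eulerian numbers $E(T,0),\dots,E(T,|T|-2)$ whose coefficients $\binom{|T|-1+k-d}{|T|-1}$ depend only on $|T|$, $k$ and $d$. First I would observe that the left-hand side of \eqref{eq:worpitzki} is manifestly an invariant of the underlying rooted tree: the integer $r_T$, being the number of vertices at even distance from the root, is a graph-theoretic quantity that does not see the planar embedding, while the polynomial $P(T)(t)$ was already shown to depend only on the underlying rooted tree in Corollary \ref{lemma: Euler planar structure}. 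Hence the whole left-hand side, evaluated at any $k+1$, is unchanged if we alter the planar structure of $T$.

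Next I would argue that the identity \eqref{eq:worpitzki}, read for the $|T|-1$ values $k=0,1,\dots,|T|-2$, determines the $E(T,d)$ uniquely. Setting $N:=|T|-1$, the coefficient matrix $M_{k,d}=\binom{N+k-d}{N}$, with row index $k$ and column index $d$ both ranging over $\{0,\dots,N-1\}$, is triangular: when $k<d$ one has $0\le N+k-d<N$, so $\binom{N+k-d}{N}=0$ (this is exactly the vanishing convention recorded in the statement of the theorem), while for $k=d$ the entry is $\binom{N}{N}=1$. Thus $M$ is lower triangular with unit diagonal, so $\det M=1$ and $M$ is invertible. Consequently the vector $\big(E(T,d)\big)_{d}$ is obtained by applying the fixed matrix $M^{-1}$ to the vector $\big((-1)^{r_T-1}P(T)(k+1)\big)_{k}$.

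Since the input vector is invariant under changes of planar structure, by the first step, and $M^{-1}$ depends only on $|T|$, the output $E(T,d)$ is likewise invariant; this proves the corollary. I expect the only point requiring care---rather than a genuine obstacle---to be the bookkeeping in the triangularity check and, more importantly, avoiding circularity: the independence of the $E(T,\ell,d)$ from the labeling (Proposition \ref{prop: eulerian independet of labeling}) must already be in hand, since it is used in the proof of Theorem \ref{th:worpitzki}, so that here we are legitimately free to invoke \eqref{eq:worpitzki} and upgrade \emph{labeling-independence} to the stronger \emph{planar-structure-independence} without reusing the conclusion we are trying to establish.
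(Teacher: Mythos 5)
Your argument is correct and is essentially the paper's own proof: the paper likewise deduces the corollary from Theorem \ref{th:worpitzki} combined with Corollary \ref{lemma: Euler planar structure}, the inversion of the Worpitzki identity being recorded explicitly in Corollary \ref{cor:eulerian numbers}. Your explicit triangularity check of the matrix $\binom{|T|-1+k-d}{|T|-1}$ and the remark on non-circularity simply make precise what the paper leaves implicit.
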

		\begin{proof} This follows from the previous theorem and Corollary \ref{lemma: Euler planar structure}.
		\end{proof}
		
		Other consequences of the previous theorem are the following generalizations of classical identities concerning the Eulerian numbers. 
		
		\begin{corollary}\label{cor:eulerian numbers} Given a rooted tree $\bul\neq T\in\T$, its associated Eulerian numbers $E(T,d)$ can be extracted from the polynomial $P(T)(t)$ via the identity
			\[ E(T,d) = \sum_{k=0}^{d}(-1)^{k+r_T-1} \binom{|T|}{k}P(T)(d+1-k),\qquad \forall \;0\leq d\leq |T|-2.  \]
		\end{corollary}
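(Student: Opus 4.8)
The plan is to obtain the stated formula by inverting the generalized Worpitzki identity \eqref{eq:worpitzki} of Theorem \ref{th:worpitzki}. Write $n:=|T|$ and set $f(k):=(-1)^{r_T-1}P(T)(k+1)$, so that \eqref{eq:worpitzki} reads $f(k)=\sum_{d=0}^{n-2}\binom{n-1+k-d}{n-1}E(T,d)$ for every integer $k\ge0$. The key observation is that the right-hand side is a discrete convolution in the variables $k$ and $d$: putting $a_j:=\binom{n-1+j}{n-1}$ (with the convention $a_j=0$ for $j<0$), we have $f(k)=\sum_{d}a_{k-d}\,E(T,d)$, where $E(T,d)$ is extended by zero outside the range $0\le d\le n-2$.

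First I would identify the convolution inverse of the sequence $(a_j)_{j\ge0}$. Since $\sum_{j\ge0}a_j x^j=(1-x)^{-n}$ as formal power series, the inverse generating series is $(1-x)^n=\sum_{j\ge0}(-1)^j\binom{n}{j}x^j$. Thus, setting $b_j:=(-1)^j\binom{n}{j}$, the two sequences satisfy the convolution-inverse relation $\sum_{i=0}^{m}a_i\,b_{m-i}=\delta_{m,0}$ for all $m\ge0$. Both sequences are supported on the non-negative integers with $a_0=b_0=1$, so the associated lower-triangular unipotent convolution operators are mutually inverse.

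Next I would apply the inverse operator to \eqref{eq:worpitzki}, i.e. compute $\sum_{k=0}^{d}b_{d-k}f(k)$. Substituting $f(k)=\sum_{d'}a_{k-d'}E(T,d')$ and swapping the (finite) summations gives $\sum_{d'}E(T,d')\sum_{k}b_{d-k}a_{k-d'}$; upon setting $i=k-d'$ one recognizes the inner sum as $\sum_{i=0}^{d-d'}a_i\,b_{(d-d')-i}=\delta_{d,d'}$, so every term collapses except $d'=d$ and we obtain $\sum_{k=0}^{d}b_{d-k}f(k)=E(T,d)$. Re-indexing this sum via $k\mapsto d-k$ and substituting back $b_{k}=(-1)^{k}\binom{n}{k}$ and $f(d-k)=(-1)^{r_T-1}P(T)(d+1-k)$ yields precisely the claimed identity $E(T,d)=\sum_{k=0}^{d}(-1)^{k+r_T-1}\binom{|T|}{k}P(T)(d+1-k)$.

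The argument is essentially routine once the convolution structure is recognized, so there is no serious obstacle; the only points requiring care are bookkeeping ones. I must ensure that the finite summation ranges genuinely reproduce the Kronecker delta — this uses both the triangularity $a_j=0$ for $j<0$ and the vanishing of $E(T,d)$ outside $0\le d\le n-2$, so that extending the sum over $d'$ introduces no spurious terms — and that all binomial coefficients that occur are the ordinary ones, since the relevant arguments $d-k$ lie in $[0,n-2]\subset[0,n]$. As a consistency check, specializing $T$ to a corolla $C_{n+1}$ (where $P(T)(t)=t^n$, $r_T=1$, and $E(T,d)=E(n,d)$ are the classical Eulerian numbers, cf.~Remark \ref{rem:eulnumb}) recovers the classical Worpitzki inversion $E(n,d)=\sum_{k=0}^{d}(-1)^k\binom{n+1}{k}(d+1-k)^n$, the shift in the top entry reflecting that $|T|=n+1$ here.
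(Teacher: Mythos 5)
Your proposal is correct and follows essentially the same route as the paper, which simply inverts the generalized Worpitzki identity \eqref{eq:worpitzki} by the standard binomial (convolution) inversion, deferring the details to Petersen's proof of the classical case; you have merely written out explicitly the generating-function argument that $(1-x)^{-|T|}$ and $(1-x)^{|T|}$ are mutually inverse, together with the necessary support/triangularity bookkeeping. The sanity check against the corolla case is consistent with Remark \ref{rem:eulnumb}.
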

		\begin{proof} This is obtained by inverting the previous formula \eqref{eq:worpitzki}, as can be seen by following the proof of \cite[\S 1.5, Corollary 1.3]{Petersen}.
		\end{proof}
		To state the last result of this subsection, we introduce the polynomials \[ E(T)(t):=\sum_{d=0}^{|T|-2} E(T,d) t^d,\] 
		which can be regarded as generalized Eulerian polynomials associated to the  rooted tree $T\in\T$. When $T=\bullet$, we put $E(T)(t)=1$. The following corollary generalizes the classical Carlitz identity \cite[\S 1.5, Corollary 1.1]{Petersen}.
		\begin{corollary} For any rooted tree $T\in \T$, 
			\[ \frac{E(T)(t)}{(1-t)^{|T|}} = (-1)^{r_T-1}\sum_{k\geq0} P(T)(k+1)t^k.  \]\end{corollary}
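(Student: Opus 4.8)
The plan is to derive this identity directly from the generalized Worpitzki identity of Theorem \ref{th:worpitzki}, combined with the standard generating-function expansion of a negative power of $(1-t)$. Both sides are to be read as formal power series in $t$, the left-hand side being the Taylor expansion of the rational function $E(T)(t)/(1-t)^{|T|}$ about $t=0$; since the quantities $P(T)$, $E(T,d)$ and $r_T$ are independent of the planar structure (Corollaries \ref{lemma: Euler planar structure} and \ref{cor: independence on root}, and the corollary following Theorem \ref{th:worpitzki}), we may fix any planar representative of $T\in\T$ and work in $\pT$. It then suffices to transform the right-hand side into the asserted closed form.

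First I would dispose of the base case $T=\bullet$ separately, since Worpitzki's identity is vacuous there: the range $0\le d\le|T|-2$ is empty. Here $|T|=1$, $r_T=1$, and by convention $E(\bullet)(t)=1$, while $P(\bullet)(t)=1$; hence both sides equal $\sum_{k\ge0}t^k=1/(1-t)$, as required.

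Now assume $T\neq\bullet$. Starting from the right-hand side, I would substitute the identity of Theorem \ref{th:worpitzki} for each coefficient $(-1)^{r_T-1}P(T)(k+1)$ and interchange the (finite inner, infinite outer) summations:
\[
(-1)^{r_T-1}\sum_{k\ge0}P(T)(k+1)\,t^k
=\sum_{d=0}^{|T|-2}E(T,d)\sum_{k\ge0}\binom{|T|-1+k-d}{|T|-1}t^k.
\]
Because $\binom{|T|-1+k-d}{|T|-1}$ vanishes for $k<d$, the inner sum effectively begins at $k=d$; substituting $j=k-d$ rewrites it as $t^d\sum_{j\ge0}\binom{|T|-1+j}{|T|-1}t^j$. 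The key step is then the classical negative-binomial expansion $\sum_{j\ge0}\binom{m+j}{m}t^j=(1-t)^{-(m+1)}$ with $m=|T|-1$, giving $\sum_{j\ge0}\binom{|T|-1+j}{|T|-1}t^j=(1-t)^{-|T|}$.

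Plugging this back in yields
\[
(-1)^{r_T-1}\sum_{k\ge0}P(T)(k+1)\,t^k
=\frac{1}{(1-t)^{|T|}}\sum_{d=0}^{|T|-2}E(T,d)\,t^d
=\frac{E(T)(t)}{(1-t)^{|T|}},
\]
which is exactly the claimed identity. I do not expect any genuine obstacle here: the only points requiring care are the legitimacy of interchanging the two sums — justified by working in the ring of formal power series, where the outer series is graded by the power of $t$ and only finitely many terms contribute to each coefficient — and the separate treatment of the degenerate case $T=\bullet$ noted above.
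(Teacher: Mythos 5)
Your proposal is correct and follows the same route as the paper, which simply instructs the reader to follow the proof of the classical Carlitz identity in Petersen's book using the generalized Worpitzki identity \eqref{eq:worpitzki} — i.e., exactly the substitution, interchange of sums, and negative-binomial expansion you carry out. Your explicit treatment of the degenerate case $T=\bullet$ and the reduction to a planar representative are sensible additions but not points of divergence.
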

		\begin{proof} Follow the proof of \cite[\S 1.5, Corollary 1.2]{Petersen}, using the identity \eqref{eq:worpitzki}.
		\end{proof}
		
		\subsection{The Eulerian idempotent in Dynkin's basis}\label{subsection: eulerian in dynkin} In this subsection we prove Theorem \ref{th:eulerian in Dynkin basis} from the Introduction. First of all, we shall use the results from the previous subsection to deduce a combinatorial formula for the Eulerian coefficient $E_T$ of a given rooted tree $T\in\T$ in terms of the associated Eulerian numbers $E(T,d)$.
		
		\begin{proposition}\label{prop:combinatorial eulerian} For any rooted tree $\bul\neq T\in\T$, 
			\[ E_T = \frac{(-1)^{r_T-1}}{|T|}\sum_{d=0}^{|T|-2} \frac{(-1)^{d}}{\binom{|T|-1}{d}} E(T,d).  \]
		\end{proposition}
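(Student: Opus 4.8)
The plan is to combine the defining identity $E_T=\left<\left.\frac{D}{1-e^{-D}}\right|P(T)(t)\right>$ from Subsection \ref{subsection:umbral in planar} with the generalized Worpitzki identity of Theorem \ref{th:worpitzki}. Write $n:=|T|-1$ and, as before, let $(x)^{m}:=x(x+1)\cdots(x+m-1)$ denote the rising factorial. The first step is to upgrade Worpitzki's identity \eqref{eq:worpitzki}, which is stated pointwise, to the polynomial identity
\[ (-1)^{r_T-1}P(T)(t)=\frac{1}{n!}\sum_{d=0}^{n-1}(t-d)^{n}\,E(T,d). \]
Indeed, evaluating at $t=k+1$ one has $\binom{n+k-d}{n}=\tfrac{1}{n!}(t-d)^{n}$, so both sides are polynomials of degree at most $n$ that agree at the infinitely many points $t=k+1$, $k\ge 0$ (the generalized-binomial convention $\binom{n+k-d}{n}=0$ for $k<d$ matches the vanishing of the rising factorial $(t-d)^{n}$ at those integers), hence they coincide. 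Applying the functional $\left<\left.\frac{D}{1-e^{-D}}\right|\#\right>$ to both sides then reduces the proposition to evaluating this functional on the shifted rising factorials $(t-d)^{n}$.

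The technical heart is the Lemma that, for $0\le d\le n$,
\[ \left<\left.\frac{D}{1-e^{-D}}\right|(t-d)^{n}\right>=\frac{(-1)^{d}\,d!\,(n-d)!}{n+1}. \]
To prove it I would use that the rising factorials form a sequence of binomial type, namely $(t-d)^{n}=\sum_{k=0}^{n}\binom{n}{k}(t)^{k}(-d)^{n-k}$, together with the value $\left<\left.\frac{D}{1-e^{-D}}\right|(t)^{k}\right>=\frac{k!}{k+1}$ recorded in Remark \ref{rem: solution for Eulerian}. Using the relation $(-d)^{n-k}=(-1)^{n-k}(d)_{n-k}$ between rising and falling factorials and discarding the terms with $n-k>d$ (for which $(d)_{n-k}=0$), the functional collapses to $n!\sum_{j=0}^{d}\frac{(-1)^{j}}{n+1-j}\binom{d}{j}$. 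The classical partial-fraction identity $\sum_{j=0}^{d}\frac{(-1)^{j}}{x+j}\binom{d}{j}=\frac{d!}{x(x+1)\cdots(x+d)}$, applied after the substitution $j\mapsto d-j$ with $x=n+1-d$, then evaluates this sum and yields the claimed closed form.

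Finally I would assemble the pieces: substituting the Lemma into the displayed consequence of the polynomial identity gives $(-1)^{r_T-1}E_T=\frac{1}{n!}\sum_{d=0}^{n-1}\frac{(-1)^{d}d!(n-d)!}{n+1}E(T,d)$, and the simplification $\frac{d!(n-d)!}{n!\,(n+1)}=\frac{1}{(n+1)\binom{n}{d}}$ together with $n+1=|T|$ produces exactly $E_T=\frac{(-1)^{r_T-1}}{|T|}\sum_{d=0}^{|T|-2}\frac{(-1)^{d}}{\binom{|T|-1}{d}}E(T,d)$. I expect the main obstacle to be the identification in the first step: recognizing Worpitzki's binomial coefficients as values of shifted rising factorials, so that the \emph{whole} polynomial $P(T)(t)$ (and not merely its integer values) is expressed in a basis on which the umbral functional is controlled. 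Once that polynomial identity is secured, the remainder is the self-contained evaluation in the Lemma, for which the binomial-type property of rising factorials and the partial-fraction identity are precisely the right tools.
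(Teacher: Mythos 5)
Your proof is correct, and it takes a genuinely different route from the paper's. Both arguments start from the generalized Worpitzki identity \eqref{eq:worpitzki} and finish by applying the umbral functional $\left<\left.\tfrac{D}{1-e^{-D}}\right|\#\right>$, but they diverge in the choice of polynomial basis. The paper expands $P(T)(t)$ in the binomial basis $\left\{\binom{t}{j}\right\}$ via the inversion formula $\lambda_j=\sum_{k}(-1)^{j+k+1}\binom{j}{k+1}P(T)(k+1)$, substitutes Worpitzki, uses $\left<\left.\tfrac{D}{1-e^{-D}}\right|\binom{t}{j}\right>=\tfrac{(-1)^{j-1}}{j(j+1)}$, and is then left with a triple sum $K_{|T|,d}$ that must be identified with $\tfrac{(-1)^d}{|T|\binom{|T|-1}{d}}$; this identification occupies most of the paper's proof and is handled by setting up a two-index recursion and invoking Chu--Vandermonde. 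You instead read Worpitzki's identity as already giving the expansion of $(-1)^{r_T-1}P(T)(t)$ in the shifted rising factorials $\tfrac{1}{n!}(t-d)^{n}$ (the upgrade from integer points to a polynomial identity is legitimate: two polynomials agreeing at infinitely many points coincide, and the vanishing of $(t-d)^{n}$ at $t=k+1$ for $k<d$ does match the generalized-binomial convention). The whole combinatorial burden is then concentrated in the single clean evaluation $\left<\left.\tfrac{D}{1-e^{-D}}\right|(t-d)^{n}\right>=\tfrac{(-1)^{d}d!(n-d)!}{n+1}$, which your binomial-type expansion plus the partial-fraction identity establishes correctly (I checked it reproduces $\tfrac{n!}{n+1}$ at $d=0$ and small cases like $n=2$, $d=1$). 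Your version avoids the triple sum and the recursion verification entirely and makes the appearance of the factor $\tfrac{1}{|T|\binom{|T|-1}{d}}$ transparent; the paper's version has the advantage of reusing machinery (the binomial-basis expansion and the value of the functional on $\binom{t}{j}$) already set up in Theorem \ref{th: combinatorial} and Remark \ref{rem: solution for Eulerian}. Either way the statement follows; there is no gap in your argument.
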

		\begin{proof} We denote by $\lambda_j$ the coefficients of $P(T)(t)$ in the expansion with respect to the binomial basis, that is $P(T)(t)=\sum_{j=0}^{|T|-1}\lambda_j\binom{t}{j}$. These can be computed as in the proof of Theorem \ref{th: combinatorial}, by the formula (where in the first identity we also use the fact that $P(T)(0)=0$ for $T\neq\bullet$)
			\begin{eqnarray*}  \lambda_j&=&%\sum_{k=0}^j(-1)^{j+k}\binom{j}{k}P(T)(k)=
			\sum_{k=0}^{j-1}(-1)^{j+k+1}\binom{j}{k+1}P(T)(k+1) \\ &=& (-1)^{r_T-1}\sum_{k=0}^{j-1}(-1)^{j+k+1}\binom{j}{k+1}\sum_{d=0}^k\binom{|T|-1+k-d}{|T|-1}E(T,d),
			\end{eqnarray*}
			where in the last identity we used the generalized Worpitzki's identity \eqref{eq:worpitzki}. It is known that $\left<\left. \frac{D}{1-e^{-D}}\right|\binom{t}{j}\right>= \frac{(-1)^{j-1}}{j(j+1)}$: for completeness, we give a simple umbral proof of this fact. First of all, since the operator $e^D-1$ on polynomials is the forward difference operator, $\left<\left. \frac{D}{1-e^{-D}}\right|\binom{t}{j}\right>=\left<\left. \frac{De^D}{e^{D}-1}\right|(e^D-1)\binom{t}{j+1}\right>=\left<\left. D e^D \right|\binom{t}{j+1}\right>$, where we used \cite[Theorem 2.2.5]{Roman}. According to \cite[Formula (2.1.4)]{Roman}, the functional $\left<\left. D e^D \right|\#\right>$ sends a polynomial $p(t)$ to $p'(1)$, thus, by a straightfoward computation, it sends the binomial coefficient $\binom{t}{j+1}$ to $\frac{(-1)^{j-1}}{j(j+1)}$. Going back to the proof of the proposition, we found that
			\begin{eqnarray*} E_T &=& \left<\left. \frac{D}{1-e^{-D}}\right|\sum_{j=0}^{|T|-1}\lambda_j\binom{t}{j}\right> \\ &=&(-1)^{r_T-1}\sum_{j=0}^{|T|-1} \frac{(-1)^{j-1}}{j(j+1)}\sum_{k=0}^{j-1}(-1)^{j+k+1}\binom{j}{k+1}\sum_{d=0}^k\binom{|T|-1+k-d}{|T|-1}E(T,d)\\
			&=&(-1)^{r_T-1}\sum_{d=0}^{|T|-2}\left(\sum_{k=d}^{|T|-2}\sum_{j=k+1}^{|T|-1} \frac{(-1)^{k}}{j(j+1)}\binom{j}{k+1}\binom{|T|-1+k-d}{|T|-1}\right)E(T,d).
			\end{eqnarray*}	
We denote by $K_{|T|,d}$ the sum between parentheses in the last line, and by $C_{|T|,d}$ the numbers $C_{|T|,d}:=\frac{(-1)^d}{|T|\binom{|T|-1}{d}}$. We want to show $K_{|T|,d}=C_{|T|,d}$ for all $0\leq d\leq |T|-2$: this is straightforward when $d= |T|-2$. Since the numbers $C_{|T|,d}$ obey the recursion $C_{|T|,d}=C_{|T|-1,d} + C_{|T|,d+1}$, the proof is complete if we show that for $0\leq d< |T|-2$ the numbers $K_{|T|,d}$ obey the same recursion. We compute
			\begin{eqnarray*} K_{|T|,d}-K_{|T|-1,d} &=& \sum_{k=d}^{|T|-2}\sum_{j=k+1}^{|T|-1} \frac{(-1)^{k}}{j(j+1)}\binom{j}{k+1}\binom{|T|-1+k-d}{|T|-1} \\
				&&- \sum_{k=d}^{|T|-3}\sum_{j=k+1}^{|T|-2} \frac{(-1)^{k}}{j(j+1)}\binom{j}{k+1}\binom{|T|-2+k-d}{|T|-2} \\ 
				&=& \sum_{k=d+1}^{|T|-3}\sum_{j=k+1}^{|T|-2} \frac{(-1)^{k}}{j(j+1)}\binom{j}{k+1}\binom{|T|-2+k-d}{|T|-1} \\
				&&+ \sum_{k=d}^{|T|-2} \frac{(-1)^{k}}{|T|(|T|-1)}\binom{|T|-1}{k+1}\binom{|T|-1+k-d}{|T|-1}\\
				&=& K_{|T|,d+1}- \sum_{k=d+1}^{|T|-2} \frac{(-1)^{k}}{|T|(|T|-1)}\binom{|T|-1}{k+1}\binom{|T|-2+k-d}{|T|-1}\\
				&&+ \sum_{k=d}^{|T|-2} \frac{(-1)^{k}}{|T|(|T|-1)}\binom{|T|-1}{k+1}\binom{|T|-1+k-d}{|T|-1} \\
				&=& K_{|T|,d+1} + \frac{1}{|T|(|T|-1)}\sum_{k=d}^{|T|-2} (-1)^{k}\binom{|T|-1}{k+1}\binom{|T|-2+k-d}{|T|-2}. \\
			\end{eqnarray*}
Finally, to complete the proof we have to show that for all $0\leq d< m:=|T|-2$ 
			\[ \sum_{k=d}^{m} (-1)^k\binom{m+1}{k+1}\binom{m+k-d}{m}=0. \]
To this end, we compute
\begin{eqnarray*}
\sum_{k=d}^{m} (-1)^k\binom{m+1}{k+1}\binom{m+k-d}{m} &=& \sum_{k=0}^{m-d} (-1)^{d+k}\binom{m+1}{k+d+1}\binom{m+k}{m} \\
&=& \sum_{k=0}^{m-d} (-1)^{d+k}\binom{m+1}{m-d-k}\binom{m+k}{k}\\
&=&  (-1)^d \sum_{k=0}^{m-d}\binom{m+1}{m-d-k}\binom{-m-1}{k}\\
&=& (-1)^d\binom{0}{m-d} =0,
\end{eqnarray*}
whenever $0\leq d< m$. Here we are interpreting $\binom{-m-1}{k}$ and $\binom{0}{m-d}$ as  generalized binomial coefficients, and relied on the Chu-Vandermonde identity  $\binom{s+t}{i}=\sum_{j=0}^i\binom{s}{i-j}\binom{t}{j}$ in the passage to the last line.

		\end{proof}
		After all these preparations, the proof of Theorem \ref{th:eulerian in Dynkin basis} is rather straightforward.
		
		\begin{theorem}\label{th:eulerian dynkin} With the notations from the Introduction, the element $E(x_1\cdots x_n)$ has the following expansion in Dynkin's basis $\mathcal{D}_n$ of $\Lie_n$,
			\begin{equation}\label{eq:eulerian in dynkin}
			E(x_1\cdots x_n) = \frac{1}{n} \sum_{\sigma\in S_{n-1}}\frac{(-1)^{d_\sigma}}{\binom{n-1}{d_\sigma}}[x_{\sigma(1)},\cdots[x_{\sigma(n-1)},x_n]\cdots]. \end{equation}
		\end{theorem}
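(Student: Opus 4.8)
The plan is to prove Theorem \ref{th:eulerian dynkin} by expanding its right-hand side in the PBW basis $\basis_n$ and matching coefficients against the already-known PBW expansion of $E(x_1\cdots x_n)$. Recall from Corollary \ref{corollary:independence on labeling} that $E(x_1\cdots x_n)=\sum_{(T,\ell)}E_T\,r(T,\ell)$, where the sum runs over all planar, binary, rooted trees $T$ with $n$ leaves equipped with an admissible labeling $\ell$ in $(\underline n,\le)$. Since the $r(T,\ell)$ form a basis of $\Lie_n$, it therefore suffices to show that, after rewriting each Dynkin basis element in the PBW basis, the coefficient of every $r(T,\ell)$ on the right-hand side of \eqref{eq:eulerian in dynkin} equals $E_T$.

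First I would substitute the expansion of each Dynkin basis element provided by Proposition \ref{prop: from dynkin to pbw}, namely $[x_{\sigma(1)},\cdots[x_{\sigma(n-1)},x_n]\cdots]=\sum_{(T,\ell)\,:\,\sigma\in S(T,\ell)}(-1)^{r_T-1}r(T,\ell)$, into the right-hand side of \eqref{eq:eulerian in dynkin}. Interchanging the order of the two summations yields
\[ \frac1n\sum_{\sigma\in S_{n-1}}\frac{(-1)^{d_\sigma}}{\binom{n-1}{d_\sigma}}[x_{\sigma(1)},\cdots[x_{\sigma(n-1)},x_n]\cdots]=\sum_{(T,\ell)}\left(\frac{(-1)^{r_T-1}}{n}\sum_{\sigma\in S(T,\ell)}\frac{(-1)^{d_\sigma}}{\binom{n-1}{d_\sigma}}\right)r(T,\ell). \]
Grouping the permutations $\sigma\in S(T,\ell)$ according to their descent number $d_\sigma=d$ turns the inner sum into $\sum_{d=0}^{n-2}\frac{(-1)^d}{\binom{n-1}{d}}E(T,d)$, by the very definition of the Eulerian numbers $E(T,d)=|\{\sigma\in S(T,\ell):d_\sigma=d\}|$ (well-defined independently of $\ell$ by Proposition \ref{prop: eulerian independet of labeling}). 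Since $|T|=n$ for every tree occurring here, Proposition \ref{prop:combinatorial eulerian} identifies the parenthesized coefficient precisely with $E_T$, and the two expansions coincide term by term.

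The point is that essentially all of the work has already been done: the content of the statement is distributed across Proposition \ref{prop: from dynkin to pbw} (the change-of-basis rule between $\mathcal{D}_n$ and $\basis_n$) and, above all, Proposition \ref{prop:combinatorial eulerian} (the combinatorial formula $E_T=\frac{(-1)^{r_T-1}}{|T|}\sum_{d}\frac{(-1)^d}{\binom{|T|-1}{d}}E(T,d)$), whose derivation rests in turn on the generalized Worpitzki identity of Theorem \ref{th:worpitzki}. Granting these, the final step is pure bookkeeping: a substitution, an interchange of summation, and a regrouping by descent number. The only mild subtlety is to invoke the independence of $E_T$ and of $E(T,d)$ from the labeling, so that the coefficients on both sides genuinely depend only on $T$; the degenerate case $n=1$ is immediate. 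Thus the main obstacle lies not in this proof but in the prior combinatorial analysis it assembles.
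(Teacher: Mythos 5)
Your proposal is correct and follows exactly the same route as the paper's proof: substitute the change-of-basis rule of Proposition \ref{prop: from dynkin to pbw} into the right-hand side, regroup by descent number to produce the Eulerian numbers $E(T,d)$, and then apply Proposition \ref{prop:combinatorial eulerian} together with Corollary \ref{corollary:independence on labeling} to identify the resulting coefficient of $r(T,\ell)$ with $E_T$. There is nothing to add.
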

		\begin{proof} We apply Proposition \ref{prop: from dynkin to pbw} to rewrite the right hand side of the claimed identity \eqref{eq:eulerian in dynkin} in the PBW basis $\basis_n$: \begin{multline*} \frac{1}{n} \sum_{\sigma\in S_{n-1}}\frac{(-1)^{d_\sigma}}{\binom{n-1}{d_\sigma}}[x_{\sigma(1)},\cdots[x_{\sigma(n-1)},x_n]\cdots] = \sum_{(T,\ell)} \left(\frac{(-1)^{r_T-1}}{n}\sum_{\sigma\in S(T,\ell)} \frac{(-1)^{d_\sigma}}{\binom{n-1}{d_\sigma}}\right) r(T,\ell) = \\ = \sum_{(T,\ell)} \left(\frac{(-1)^{r_T-1}}{n}\sum_{d=0}^{n-2} \frac{(-1)^{d}}{\binom{n-1}{d}} E(T,d) \right) r(T,\ell)  = \sum_{(T,\ell)} E_T\,r(T,\ell) = E(x_1\cdots x_n),
			\end{multline*}
			where in the last two identities we used the previous Proposition \ref{prop:combinatorial eulerian} and Corollary \ref{corollary:independence on labeling}.
		\end{proof}
		\begin{remark}\label{rem: direct proof} Of course, a more direct proof of the previous theorem than the one given here should be possible. For instance, we may expand the brackets in the right hand side of \eqref{eq:eulerian in dynkin}, and try to check that the resulting element in the free associative algebra $A(x_1,\ldots,x_n)$ coincides with $E(x_1\cdots x_n)$, as given in Formula \eqref{equation: Eulerian} from the Introduction. Given a word $w$ in $A(x_1,\ldots,x_n)$ such that each generator appears exactly once, let $w_1$ be the subword of $w$ to the left of $n$, and let $w_2$ be the subword to the right of $n$ in reverse order. Then $w$ appears in the expansion of $[x_{\sigma(1)},\cdots[x_{\sigma(n-1)},x_n]\cdots]$ if and only if $\sigma$ is a shuffle of $w_1$ and $w_2$. Using Stanley's shuffling Theorem \cite{Stanley,Goulden}, we can count for how many $\sigma$ having a fixed descent number $d$ the word $w$ appears in the expasion of $[x_{\sigma(1)},\cdots[x_{\sigma(n-1)},x_n]\cdots]$. Following this approach, the proof of \eqref{eq:eulerian in dynkin} would be reduced to the proof of a certain identity involving binomial coefficients.
		\end{remark}
		
	\newpage

		\appendix

		\section{Table of Eulerian coefficients}\label{appendix: table}
		
		Below we list all (non-planar, unrooted) trees with $n \le 8$ vertices, together with their Eulerian coefficient, see Corollary \ref{cor: independence on root} from  Subsection \ref{subsection: from pbT to pT}.
		
		\vspace{0.5cm}
		\begin{center}
			{\bf Table of Eulerian coefficients} \\ \vspace{2ex}
			\begin{tabular}{lcccccc}
				\vspace{2ex} \underline{n=1}: &  $\circ$  & $+1 \quad\quad$ \\ \vspace{2ex}
				\underline{n=2}: &  $\xy {\ar@{-}(0,0)*{\circ};(3,0)*{\circ}}; \endxy$ & $+\frac{1}{2} \quad\quad$\\ \vspace{2ex}
				\underline{n=3}: & $ \xy {\ar@{-}(0,0)*{\circ};(3,0)*{\circ}};{\ar@{-}(3,0)*{\circ};(6,0)*{\circ}}; \endxy $ & $+\frac{1}{6}\quad\quad$\\ \vspace{2ex}
				
				\underline{n=4}: &  $ \xy {\ar@{-}(0,0)*{\circ};(3,0)*{\circ}};{\ar@{-}(3,0)*{\circ};(6,0)*{\circ}};{\ar@{-}(6,0)*{\circ};(9,0)*{\circ}};  \endxy $ & $+\frac{1}{12}\quad\quad$&  
				$ \xy {\ar@{-}(0,0)*{\circ};(3,0)*{\circ}};{\ar@{-}(3,0)*{\circ};(6,0)*{\circ}};{\ar@{-}(3,0)*{\circ};(3,3)*{\circ}}; \endxy $& $0\quad\quad$\\ \vspace{2ex}
				
				\underline{n=5}: &$ \xy {\ar@{-}(0,0)*{\circ};(3,0)*{\circ}};{\ar@{-}(3,0)*{\circ};(6,0)*{\circ}};{\ar@{-}(6,0)*{\circ};(9,0)*{\circ}}; {\ar@{-}(9,0)*{\circ};(12,0)*{\circ}}; \endxy $&$+\frac{1}{30}\quad\quad$&$ \xy {\ar@{-}(0,0)*{\circ};(3,0)*{\circ}};{\ar@{-}(3,0)*{\circ};(6,0)*{\circ}};{\ar@{-}(6,0)*{\circ};(9,0)*{\circ}}; {\ar@{-}(6,0)*{\circ};(6,3)*{\circ}}; \endxy$&
				$+\frac{1}{60}\quad\quad$& $ \xy {\ar@{-}(0,0)*{\circ};(3,0)*{\circ}};{\ar@{-}(3,0)*{\circ};(6,0)*{\circ}};{\ar@{-}(3,0)*{\circ};(1,3)*{\circ}}; {\ar@{-}(3,0)*{\circ};(6,0)*{\circ}};{\ar@{-}(3,0)*{\circ};(5,3)*{\circ}};\endxy $& $-\frac{1}{30}\quad\quad$\\ 
				\underline{n=6}: & 
				$ \xy {\ar@{-}(0,0)*{\circ};(3,0)*{\circ}};{\ar@{-}(3,0)*{\circ};(6,0)*{\circ}};{\ar@{-}(6,0)*{\circ};(9,0)*{\circ}}; {\ar@{-}(9,0)*{\circ};(12,0)*{\circ}}; {\ar@{-}(12,0)*{\circ};(15,0)*{\circ}}; \endxy $ & $+\frac{1}{60}\quad\quad$ &
				$ \xy {\ar@{-}(0,0)*{\circ};(3,0)*{\circ}};{\ar@{-}(3,0)*{\circ};(6,0)*{\circ}};{\ar@{-}(6,0)*{\circ};(9,0)*{\circ}}; {\ar@{-}(9,0)*{\circ};(12,0)*{\circ}}; {\ar@{-}(9,0)*{\circ};(9,3)*{\circ}}; \endxy $& $0\quad\quad$ 			&  $ \xy {\ar@{-}(0,0)*{\circ};(3,0)*{\circ}};{\ar@{-}(3,0)*{\circ};(6,0)*{\circ}};{\ar@{-}(6,0)*{\circ};(9,0)*{\circ}}; {\ar@{-}(9,0)*{\circ};(12,0)*{\circ}}; {\ar@{-}(6,0)*{\circ};(6,3)*{\circ}}; \endxy $ & $+\frac{1}{60}\quad\quad $ \\ 
				
				& 		$ \xy {\ar@{-}(0,0)*{\circ};(3,0)*{\circ}};{\ar@{-}(3,0)*{\circ};(6,0)*{\circ}};{\ar@{-}(6,0)*{\circ};(9,0)*{\circ}}; {\ar@{-}(3,0)*{\circ};(3,3)*{\circ}}; {\ar@{-}(6,0)*{\circ};(6,3)*{\circ}}; \endxy $& $+\frac{1}{60}\quad\quad$  & 
				$ \xy {\ar@{-}(0,0)*{\circ};(3,0)*{\circ}};{\ar@{-}(3,0)*{\circ};(6,0)*{\circ}};{\ar@{-}(6,0)*{\circ};(9,0)*{\circ}}; {\ar@{-}(6,0)*{\circ};(4,3)*{\circ}}; {\ar@{-}(6,0)*{\circ};(8,3)*{\circ}}; \endxy $&$-\frac{1}{60}\quad\quad$ & 
				$ \xy {\ar@{-}(2,0)*{\circ};(6,0)*{\circ}};{\ar@{-}(6,0)*{\circ};(10,0)*{\circ}}; {\ar@{-}(6,0)*{\circ};(3.5,3)*{\circ}}; {\ar@{-}(6,0)*{\circ};(8.5,3)*{\circ}}; {\ar@{-}(6,0)*{\circ};(6,4)*{\circ}}; \endxy $& $0\quad\quad$ \vspace{2ex}\\
				\underline{n=7}:

				& 
				$ \xy {\ar@{-}(0,0)*{\circ};(3,0)*{\circ}};{\ar@{-}(3,0)*{\circ};(6,0)*{\circ}};{\ar@{-}(6,0)*{\circ};(9,0)*{\circ}}; {\ar@{-}(9,0)*{\circ};(12,0)*{\circ}}; {\ar@{-}(12,0)*{\circ};(15,0)*{\circ}};{\ar@{-}(15,0)*{\circ};(18,0)*{\circ}}; \endxy $ &$+\frac{1}{140}\quad\quad$ &
				
				$ \xy {\ar@{-}(0,0)*{\circ};(3,0)*{\circ}};{\ar@{-}(3,0)*{\circ};(6,0)*{\circ}};{\ar@{-}(6,0)*{\circ};(9,0)*{\circ}}; {\ar@{-}(9,0)*{\circ};(12,0)*{\circ}}; {\ar@{-}(12,0)*{\circ};(15,0)*{\circ}};{\ar@{-}(12,0)*{\circ};(12,3)*{\circ}}; \endxy $& $+\frac{1}{420}\quad\quad$&
				$ \xy {\ar@{-}(0,0)*{\circ};(3,0)*{\circ}};{\ar@{-}(3,0)*{\circ};(6,0)*{\circ}};{\ar@{-}(6,0)*{\circ};(9,0)*{\circ}}; {\ar@{-}(9,0)*{\circ};(12,0)*{\circ}}; {\ar@{-}(12,0)*{\circ};(15,0)*{\circ}};{\ar@{-}(9,0)*{\circ};(9,3)*{\circ}}; \endxy $& $+\frac{1}{210}\quad\quad$ \\
				
				&		$ \xy {\ar@{-}(0,0)*{\circ};(3,0)*{\circ}};{\ar@{-}(3,0)*{\circ};(6,0)*{\circ}};{\ar@{-}(6,0)*{\circ};(9,0)*{\circ}}; {\ar@{-}(9,0)*{\circ};(12,0)*{\circ}}; {\ar@{-}(6,0)*{\circ};(6,3)*{\circ}};{\ar@{-}(9,0)*{\circ};(9,3)*{\circ}}; \endxy $&$+\frac{1}{140}\quad\quad$ &
				
				$ \xy {\ar@{-}(0,0)*{\circ};(3,0)*{\circ}};{\ar@{-}(3,0)*{\circ};(6,0)*{\circ}};{\ar@{-}(6,0)*{\circ};(9,0)*{\circ}}; {\ar@{-}(9,0)*{\circ};(12,0)*{\circ}}; {\ar@{-}(3,0)*{\circ};(3,3)*{\circ}};{\ar@{-}(9,0)*{\circ};(9,3)*{\circ}}; \endxy $ & $-\frac{1}{210}\quad\quad$  &
				$ \xy {\ar@{-}(0,0)*{\circ};(3,0)*{\circ}};{\ar@{-}(3,0)*{\circ};(6,0)*{\circ}};{\ar@{-}(6,0)*{\circ};(9,0)*{\circ}}; {\ar@{-}(9,0)*{\circ};(12,0)*{\circ}}; {\ar@{-}(9,0)*{\circ};(7,3)*{\circ}};{\ar@{-}(9,0)*{\circ};(11,3)*{\circ}}; \endxy $ & $-\frac{1}{105}\quad\quad$ \\
				\vspace{2ex}
				&		$ \xy {\ar@{-}(0,0)*{\circ};(3,0)*{\circ}};{\ar@{-}(3,0)*{\circ};(6,0)*{\circ}};{\ar@{-}(6,0)*{\circ};(9,0)*{\circ}}; {\ar@{-}(9,0)*{\circ};(12,0)*{\circ}}; {\ar@{-}(6,0)*{\circ};(4,3)*{\circ}};{\ar@{-}(6,0)*{\circ};(8,3)*{\circ}}; \endxy $ &$-\frac{1}{420}\quad\quad$ &
				$ \xy {\ar@{-}(0,0)*{\circ};(3,0)*{\circ}};{\ar@{-}(3,0)*{\circ};(6,0)*{\circ}};{\ar@{-}(6,0)*{\circ};(9,0)*{\circ}}; {\ar@{-}(9,0)*{\circ};(12,0)*{\circ}}; {\ar@{-}(6,0)*{\circ};(6,3)*{\circ}}; {\ar@{-}(6,3)*{\circ};(6,6)*{\circ}}; \endxy $& $+\frac{1}{105} \quad\quad$ &
				$ \xy {\ar@{-}(6,0)*{\circ};(6,3)*{\circ}};{\ar@{-}(3,0)*{\circ};(6,0)*{\circ}};{\ar@{-}(6,0)*{\circ};(10,0)*{\circ}}; {\ar@{-}(10,0)*{\circ};(13,0)*{\circ}}; {\ar@{-}(10,0)*{\circ};(8,3)*{\circ}};{\ar@{-}(10,0)*{\circ};(12,3)*{\circ}}; \endxy $&  $+\frac{1}{420}\quad\quad $ \\
				& $ \xy {\ar@{-}(0,0)*{\circ};(3,0)*{\circ}};{\ar@{-}(3,0)*{\circ};(6,0)*{\circ}};{\ar@{-}(6,0)*{\circ};(9,0)*{\circ}}; {\ar@{-}(6,0)*{\circ};(3.5,3)*{\circ}}; {\ar@{-}(6,0)*{\circ};(8.5,3)*{\circ}}; {\ar@{-}(6,0)*{\circ};(6,4)*{\circ}}; \endxy $ &$-\frac{1}{84}\quad\quad$ &
				$ \xy {\ar@{-}(6,0)*{\circ};(2.5,2)*{\circ}};{\ar@{-}(2,0)*{\circ};(6,0)*{\circ}};{\ar@{-}(6,0)*{\circ};(10,0)*{\circ}}; {\ar@{-}(6,0)*{\circ};(4.5,3)*{\circ}}; {\ar@{-}(6,0)*{\circ};(7.5,3)*{\circ}}; {\ar@{-}(6,0)*{\circ};(9.5,2)*{\circ}}; \endxy $& $+\frac{1}{42}\quad\quad$ \\

				\underline{n=8}: 
				&
				$ \xy {\ar@{-}(0,0)*{\circ};(3,0)*{\circ}};{\ar@{-}(3,0)*{\circ};(6,0)*{\circ}};{\ar@{-}(6,0)*{\circ};(9,0)*{\circ}}; {\ar@{-}(9,0)*{\circ};(12,0)*{\circ}}; {\ar@{-}(12,0)*{\circ};(15,0)*{\circ}};{\ar@{-}(15,0)*{\circ};(18,0)*{\circ}};
				{\ar@{-}(18,0)*{\circ};(21,0)*{\circ}};			
				\endxy $ &$+\frac{1}{280}\quad\quad$ &
				
				$ \xy {\ar@{-}(-3,0)*{\circ};(0,0)*{\circ}};{\ar@{-}(0,0)*{\circ};(3,0)*{\circ}};{\ar@{-}(3,0)*{\circ};(6,0)*{\circ}};{\ar@{-}(6,0)*{\circ};(9,0)*{\circ}}; {\ar@{-}(9,0)*{\circ};(12,0)*{\circ}}; {\ar@{-}(12,0)*{\circ};(15,0)*{\circ}};{\ar@{-}(12,0)*{\circ};(12,3)*{\circ}}; \endxy $& $0\quad\quad$
				
				&
				$ \xy {\ar@{-}(-3,0)*{\circ};(0,0)*{\circ}};{\ar@{-}(0,0)*{\circ};(3,0)*{\circ}};{\ar@{-}(3,0)*{\circ};(6,0)*{\circ}};{\ar@{-}(6,0)*{\circ};(9,0)*{\circ}}; {\ar@{-}(9,0)*{\circ};(12,0)*{\circ}}; {\ar@{-}(12,0)*{\circ};(15,0)*{\circ}};{\ar@{-}(9,0)*{\circ};(9,3)*{\circ}}; \endxy $& $+\frac{1}{280}\quad\quad$ \\
				& 
				$ \xy {\ar@{-}(0,0)*{\circ};(3,0)*{\circ}};{\ar@{-}(3,0)*{\circ};(6,0)*{\circ}};{\ar@{-}(6,0)*{\circ};(9,0)*{\circ}}; {\ar@{-}(9,0)*{\circ};(12,0)*{\circ}}; {\ar@{-}(12,0)*{\circ};(15,0)*{\circ}};{\ar@{-}(9,0)*{\circ};(9,3)*{\circ}};{\ar@{-}(15,0)*{\circ};(18,0)*{\circ}}; \endxy $& $0\quad\quad$ &
				
				$ \xy {\ar@{-}(0,0)*{\circ};(3,0)*{\circ}};{\ar@{-}(3,0)*{\circ};(6,0)*{\circ}};{\ar@{-}(6,0)*{\circ};(9,0)*{\circ}}; {\ar@{-}(9,0)*{\circ};(12,0)*{\circ}}; {\ar@{-}(12,0)*{\circ};(15,0)*{\circ}};{\ar@{-}(9,0)*{\circ};(9,3)*{\circ}};{\ar@{-}(12,0)*{\circ};(12,3)*{\circ}}; \endxy $& $+\frac{1}{210}\quad\quad$ &
				$ \xy {\ar@{-}(0,0)*{\circ};(3,0)*{\circ}};{\ar@{-}(3,0)*{\circ};(6,0)*{\circ}};{\ar@{-}(6,0)*{\circ};(9,0)*{\circ}}; {\ar@{-}(9,0)*{\circ};(12,0)*{\circ}}; {\ar@{-}(12,0)*{\circ};(15,0)*{\circ}};{\ar@{-}(12,0)*{\circ};(12,3)*{\circ}};{\ar@{-}(6,0)*{\circ};(6,3)*{\circ}}; \endxy $& $-\frac{1}{420}\quad\quad$ \\
				
				& $ \xy {\ar@{-}(0,0)*{\circ};(3,0)*{\circ}};{\ar@{-}(3,0)*{\circ};(6,0)*{\circ}};{\ar@{-}(6,0)*{\circ};(9,0)*{\circ}}; {\ar@{-}(9,0)*{\circ};(12,0)*{\circ}}; {\ar@{-}(12,0)*{\circ};(15,0)*{\circ}};{\ar@{-}(12,0)*{\circ};(12,3)*{\circ}};{\ar@{-}(3,0)*{\circ};(3,3)*{\circ}}; \endxy $& $+\frac{1}{420}\quad\quad$
				
				& $ \xy {\ar@{-}(0,0)*{\circ};(3,0)*{\circ}};{\ar@{-}(3,0)*{\circ};(6,0)*{\circ}};{\ar@{-}(6,0)*{\circ};(9,0)*{\circ}}; {\ar@{-}(9,0)*{\circ};(12,0)*{\circ}}; {\ar@{-}(12,0)*{\circ};(15,0)*{\circ}};{\ar@{-}(6,0)*{\circ};(6,3)*{\circ}};{\ar@{-}(9,0)*{\circ};(9,3)*{\circ}}; \endxy $& $+\frac{1}{168}\quad\quad$ &
				$ \xy {\ar@{-}(-3,0)*{\circ};(0,0)*{\circ}};{\ar@{-}(0,0)*{\circ};(3,0)*{\circ}};{\ar@{-}(3,0)*{\circ};(6,0)*{\circ}};{\ar@{-}(6,0)*{\circ};(9,0)*{\circ}}; {\ar@{-}(9,0)*{\circ};(12,0)*{\circ}}; {\ar@{-}(9,0)*{\circ};(7,3)*{\circ}};{\ar@{-}(9,0)*{\circ};(11,3)*{\circ}}; \endxy $ & $-\frac{1}{210}\quad\quad$ 
				\\ &

				$ \xy {\ar@{-}(-3,0)*{\circ};(0,0)*{\circ}};{\ar@{-}(0,0)*{\circ};(3,0)*{\circ}};{\ar@{-}(3,0)*{\circ};(6,0)*{\circ}};{\ar@{-}(6,0)*{\circ};(9,0)*{\circ}}; {\ar@{-}(9,0)*{\circ};(12,0)*{\circ}}; {\ar@{-}(6,0)*{\circ};(4,3)*{\circ}};{\ar@{-}(6,0)*{\circ};(8,3)*{\circ}}; \endxy $ & $-\frac{1}{210}\quad\quad$ & $ \xy {\ar@{-}(-3,0)*{\circ};(0,0)*{\circ}};{\ar@{-}(0,0)*{\circ};(3,0)*{\circ}};{\ar@{-}(3,0)*{\circ};(6,0)*{\circ}};{\ar@{-}(6,0)*{\circ};(9,0)*{\circ}}; {\ar@{-}(9,0)*{\circ};(12,0)*{\circ}}; {\ar@{-}(6,0)*{\circ};(6,3)*{\circ}}; {\ar@{-}(6,3)*{\circ};(6,6)*{\circ}}; \endxy $& $+\frac{1}{210} \quad\quad$ &

				$ \xy {\ar@{-}(3,0)*{\circ};(6,0)*{\circ}};{\ar@{-}(6,0)*{\circ};(9,0)*{\circ}}; {\ar@{-}(9,0)*{\circ};(12,0)*{\circ}}; {\ar@{-}(12,0)*{\circ};(15,0)*{\circ}};{\ar@{-}(6,0)*{\circ};(6,3)*{\circ}};{\ar@{-}(9,0)*{\circ};(9,3)*{\circ}};{\ar@{-}(12,0)*{\circ};(12,3)*{\circ}}; \endxy $& $-\frac{1}{420}\quad\quad$
				\\ 
				
				&
				$ \xy {\ar@{-}(-5,0)*{\circ};(-2,0)*{\circ}};{\ar@{-}(-2,0)*{\circ};(1,0)*{\circ}};{\ar@{-}(1,0)*{\circ};(6,0)*{\circ}};{\ar@{-}(6,0)*{\circ};(9,0)*{\circ}}; {\ar@{-}(1,0)*{\circ};(1,3)*{\circ}}; {\ar@{-}(6,0)*{\circ};(4,3)*{\circ}};{\ar@{-}(6,0)*{\circ};(8,3)*{\circ}}; \endxy $ & $-\frac{1}{210}\quad\quad$
				&
				$ \xy {\ar@{-}(-3,0)*{\circ};(0,0)*{\circ}};{\ar@{-}(0,0)*{\circ};(3,0)*{\circ}};{\ar@{-}(3,0)*{\circ};(6,0)*{\circ}};{\ar@{-}(6,0)*{\circ};(9,0)*{\circ}}; {\ar@{-}(0,0)*{\circ};(0,3)*{\circ}}; {\ar@{-}(6,0)*{\circ};(4,3)*{\circ}};{\ar@{-}(6,0)*{\circ};(8,3)*{\circ}}; \endxy $ & $0\quad\quad$
				&
				$ \xy {\ar@{-}(9,0)*{\circ};(12,0)*{\circ}};{\ar@{-}(-2,0)*{\circ};(1,0)*{\circ}};{\ar@{-}(1,0)*{\circ};(6,0)*{\circ}};{\ar@{-}(6,0)*{\circ};(9,0)*{\circ}}; {\ar@{-}(1,0)*{\circ};(1,3)*{\circ}}; {\ar@{-}(6,0)*{\circ};(4,3)*{\circ}};{\ar@{-}(6,0)*{\circ};(8,3)*{\circ}}; \endxy $ & $+\frac{1}{140}\quad\quad$
				\\
				
				& $ \xy {\ar@{-}(0,0)*{\circ};(3,0)*{\circ}};{\ar@{-}(3,0)*{\circ};(6,0)*{\circ}};{\ar@{-}(6,0)*{\circ};(9,0)*{\circ}}; {\ar@{-}(9,0)*{\circ};(12,0)*{\circ}}; {\ar@{-}(6,0)*{\circ};(6,3)*{\circ}}; {\ar@{-}(6,3)*{\circ};(6,6)*{\circ}}; {\ar@{-}(9,0)*{\circ};(9,3)*{\circ}}; \endxy $& $0 \quad\quad$
				&
				$ \xy {\ar@{-}(0,0)*{\circ};(3,0)*{\circ}};{\ar@{-}(3,0)*{\circ};(6,0)*{\circ}};{\ar@{-}(6,0)*{\circ};(9,0)*{\circ}}; {\ar@{-}(9,0)*{\circ};(12,0)*{\circ}}; {\ar@{-}(6,0)*{\circ};(4,3)*{\circ}};{\ar@{-}(6,0)*{\circ};(8,3)*{\circ}};{\ar@{-}(8,3)*{\circ};(8,6)*{\circ}}; \endxy $ &$+\frac{1}{210}\quad\quad$
				
				& $ \xy {\ar@{-}(-3,0)*{\circ};(0,0)*{\circ}};{\ar@{-}(0,0)*{\circ};(3,0)*{\circ}};{\ar@{-}(3,0)*{\circ};(6,0)*{\circ}};{\ar@{-}(6,0)*{\circ};(9,0)*{\circ}}; {\ar@{-}(6,0)*{\circ};(3.5,3)*{\circ}}; {\ar@{-}(6,0)*{\circ};(8.5,3)*{\circ}}; {\ar@{-}(6,0)*{\circ};(6,4)*{\circ}}; \endxy $ &$0\quad\quad$ 
				
				\\
				&
				$ \xy {\ar@{-}(9,0)*{\circ};(12,0)*{\circ}};{\ar@{-}(0,0)*{\circ};(3,0)*{\circ}};{\ar@{-}(3,0)*{\circ};(6,0)*{\circ}};{\ar@{-}(6,0)*{\circ};(9,0)*{\circ}}; {\ar@{-}(6,0)*{\circ};(3.5,3)*{\circ}}; {\ar@{-}(6,0)*{\circ};(8.5,3)*{\circ}}; {\ar@{-}(6,0)*{\circ};(6,4)*{\circ}}; \endxy $ &$-\frac{1}{84}\quad\quad$ 
				&
				
				$ \xy {\ar@{-}(-2,0)*{\circ};(1,0)*{\circ}};{\ar@{-}(1,0)*{\circ};(6,0)*{\circ}};{\ar@{-}(6,0)*{\circ};(9,0)*{\circ}}; {\ar@{-}(1,0)*{\circ};(-1,3)*{\circ}};{\ar@{-}(1,0)*{\circ};(2.5,3)*{\circ}}; {\ar@{-}(6,0)*{\circ};(4.5,3)*{\circ}};{\ar@{-}(6,0)*{\circ};(8,3)*{\circ}}; \endxy $ & $+\frac{1}{84}\quad\quad$

				& $ \xy {\ar@{-}(6,0)*{\circ};(6,3)*{\circ}};{\ar@{-}(-3,0)*{\circ};(1,0)*{\circ}};{\ar@{-}(1,0)*{\circ};(6,0)*{\circ}};{\ar@{-}(6,0)*{\circ};(9,0)*{\circ}}; {\ar@{-}(6,0)*{\circ};(3.5,3)*{\circ}}; {\ar@{-}(6,0)*{\circ};(8.5,3)*{\circ}}; {\ar@{-}(1,0)*{\circ};(1,3)*{\circ}}; \endxy $ &$-\frac{1}{84}\quad\quad$

				\\
				
				&
				$ \xy {\ar@{-}(6,0)*{\circ};(2.5,2)*{\circ}};{\ar@{-}(2,0)*{\circ};(6,0)*{\circ}};{\ar@{-}(6,0)*{\circ};(10,0)*{\circ}}; {\ar@{-}(6,0)*{\circ};(4.5,3)*{\circ}}; {\ar@{-}(6,0)*{\circ};(7.5,3)*{\circ}}; {\ar@{-}(6,0)*{\circ};(9.5,2)*{\circ}};{\ar@{-}(-1,0)*{\circ};(2,0)*{\circ}}; \endxy $& $+\frac{1}{84}\quad\quad$ 
				&
				
				$ \xy {\ar@{-}(0,0)*{\circ};(0,4)*{\circ}};{\ar@{-}(0,0)*{\circ};(4,0)*{\circ}};{\ar@{-}(0,0)*{\circ};(-4,0)*{\circ}}; {\ar@{-}(0,0)*{\circ};(3.46,2)*{\circ}}; {\ar@{-}(0,0)*{\circ};(2,3.46)*{\circ}}; {\ar@{-}(-3.46,2)*{\circ};(0,0)*{\circ}};
				{\ar@{-}(-2,3.46)*{\circ};(0,0)*{\circ}}; \endxy $& $0\quad\quad$
				
			\end{tabular}
			\begin{remark} When tabulating the Eulerian coefficients, there are certain tricks one can use to aid the computation. We do not try to make this assertion precise, but rather illustrate it through examples. As a first example, whenever we consider a tree which looks like
				$$ \xy 
				{\ar@{.}(4,0)*{\circ};(-4,4)*{\slab{T_1}}};
				{\ar@{.}(4,0)*{\circ};(-4,-4)*{\slab{T_k}}};
				{\ar@{-}(4,0)*{\circ};(7,3)*{\circ}};
				{\ar@{-}(4,0)*{\circ};(7,-3)*{\circ}};
				{(-4,1)*{\vdots}};
				\endxy
				$$
				the results established in Subsection \ref{subsection: from pbT to pT}, together with the identity
				\[ P\left(\xy {\ar@{-}(0,-1.5)*{\bul};(-3,1.5)*{\circ} };{\ar@{-}(0,-1.5)*{\bul};(3,1.5)*{\circ} };\endxy\right)(t) = t^2 = -2 \frac{t(1-t)}{2} + t= -2P\left(\xy {\ar@{-}(0,-3)*{\bul};(0,0)*{\circ} };{\ar@{-}(0,0)*{\circ};(0,3)*{\circ} };\endxy\right)(t)+P\left(\xy {\ar@{-}(0,-1.5)*{\bul};(0,1.5)*{\circ} };\endxy\right)(t)\] readily imply
				$$
				E\left( \,\, \xy 
				{\ar@{.}(4,0)*{\circ};(-4,4)*{\slab{T_1}}};
				{\ar@{.}(4,0)*{\circ};(-4,-4)*{\slab{T_k}}};
				{\ar@{-}(4,0)*{\circ};(7,3)*{\circ}};
				{\ar@{-}(4,0)*{\circ};(7,-3)*{\circ}};
				{(-4,1)*{\vdots}};
				\endxy \,\,\right) = -2 E\left( \,\,\xy 
				{\ar@{.}(4,0)*{\circ};(-4,4)*{\slab{T_1}}};
				{\ar@{.}(4,0)*{\circ};(-4,-4)*{\slab{T_k}}};
				{(-4,1)*{\vdots}};
				{\ar@{-}(4,0)*{\circ};(8,0)*{\circ}};
				{\ar@{-}(8,0)*{\circ};(12,0)*{\circ}};
				\endxy \,\, \right) + E\left( \,\, \xy
				{\ar@{.}(4,0)*{\circ};(-4,4)*{\slab{T_1}}};
				{\ar@{.}(4,0)*{\circ};(-4,-4)*{\slab{T_k}}};
				{(-4,1)*{\vdots}};
				{\ar@{-}(4,0)*{\circ};(8,0)*{\circ}}; \endxy \,\, \right).
				$$
				For instance, we have
				\begin{eqnarray*}
					E\left(\xy {\ar@{-}(0,-1.5)*{\circ};(3,-1.5)*{\circ}};{\ar@{-}(3,-1.5)*{\circ};(6,-1.5)*{\circ}};{\ar@{-}(6,-1.5)*{\circ};(9,-1.5)*{\circ}}; {\ar@{-}(6,-1.5)*{\circ};(6,1.5)*{\circ}}; \endxy \right) &=& -2 E(\xy {\ar@{-}(0,0)*{\circ};(3,0)*{\circ}};{\ar@{-}(3,0)*{\circ};(6,0)*{\circ}};{\ar@{-}(6,0)*{\circ};(9,0)*{\circ}}; {\ar@{-}(9,0)*{\circ};(12,0)*{\circ}}; \endxy  ) + E(\xy {\ar@{-}(0,0)*{\circ};(3,0)*{\circ}};{\ar@{-}(3,0)*{\circ};(6,0)*{\circ}};{\ar@{-}(6,0)*{\circ};(9,0)*{\circ}}; \endxy  ) = -2 \times \frac{1}{30}+\frac{1}{12} = \frac{1}{60},  \\
					E\left(\xy {\ar@{-}(-3,-1.5)*{\circ};(0,-1.5)*{\circ}};{\ar@{-}(0,-1.5)*{\circ};(3,-1.5)*{\circ}};{\ar@{-}(3,-1.5)*{\circ};(6,-1.5)*{\circ}};{\ar@{-}(3,-1.5)*{\circ};(1,1.5)*{\circ}}; {\ar@{-}(3,-1.5)*{\circ};(6,-1.5)*{\circ}};{\ar@{-}(3,-1.5)*{\circ};(5,1.5)*{\circ}};\endxy\right) &=& -2 E\left(\xy {\ar@{-}(9,-1.5)*{\circ};(12,-1.5)*{\circ}};{\ar@{-}(0,-1.5)*{\circ};(3,-1.5)*{\circ}};{\ar@{-}(3,-1.5)*{\circ};(6,-1.5)*{\circ}};{\ar@{-}(6,-1.5)*{\circ};(9,-1.5)*{\circ}}; {\ar@{-}(6,-1.5)*{\circ};(6,1.5)*{\circ}}; \endxy \right) + E\left(\xy {\ar@{-}(-3,-1.5)*{\circ};(0,-1.5)*{\circ}};{\ar@{-}(0,-1.5)*{\circ};(3,-1.5)*{\circ}};{\ar@{-}(3,-1.5)*{\circ};(6,-1.5)*{\circ}};{\ar@{-}(3,-1.5)*{\circ};(3,1.5)*{\circ}}; \endxy  \right) = -2 \times \frac{1}{60} + \frac{1}{60} = -\frac{1}{60}.
				\end{eqnarray*}
				Similarly, if we consider a tree of the form 
				\[  \xy 
				{\ar@{.}(4,0)*{\circ};(-4,4)*{\slab{T_1}}};
				{\ar@{.}(4,0)*{\circ};(-4,-4)*{\slab{T_k}}};
				{\ar@{-}(4,0)*{\circ};(7,3)*{\circ}};
				{\ar@{-}(4,0)*{\circ};(7,-3)*{\circ}};{\ar@{-}(7,-3)*{\circ};(10,-3)*{\circ}};
				{(-4,1)*{\vdots}};
				\endxy \]
				the identity
				$ P\left(\xy {\ar@{-}(0,-4)*{\bul};(-4,0)*{\circ} };{\ar@{-}(0,-4)*{\bul};(4,0)*{\circ} };{\ar@{-}(4,0)*{\circ};(4,4)*{\circ}}\endxy\right)(t) = \frac{t^2-t^3}{2} = 3 \frac{t-t^3}{6} - \frac{t-t^2}{2}= 3P\left(\xy {\ar@{-}(0,-6)*{\bul};(0,-2)*{\circ} };{\ar@{-}(0,-2)*{\circ};(0,2)*{\circ} };{\ar@{-}(0,2)*{\circ};(0,6)*{\circ} };\endxy\right)(t)-P\left(\xy {\ar@{-}(0,-4)*{\bul};(0,0)*{\circ} };{\ar@{-}(0,0)*{\circ};(0,4)*{\circ} };\endxy\right)(t)$ implies
				$$
				E\left( \xy 
				{\ar@{.}(4,0)*{\circ};(-4,4)*{\slab{T_1}}};
				{\ar@{.}(4,0)*{\circ};(-4,-4)*{\slab{T_k}}};
				{\ar@{-}(4,0)*{\circ};(7,3)*{\circ}};
				{\ar@{-}(4,0)*{\circ};(7,-3)*{\circ}};{\ar@{-}(7,-3)*{\circ};(10,-3)*{\circ}};
				{(-4,1)*{\vdots}};
				\endxy\right) = 3 E\left( \,\,\xy 
				{\ar@{.}(4,0)*{\circ};(-4,4)*{\slab{T_1}}};
				{\ar@{.}(4,0)*{\circ};(-4,-4)*{\slab{T_k}}};
				{(-4,1)*{\vdots}};
				{\ar@{-}(4,0)*{\circ};(8,0)*{\circ}};
				{\ar@{-}(8,0)*{\circ};(12,0)*{\circ}};
				{\ar@{-}(12,0)*{\circ};(16,0)*{\circ}};
				\endxy \,\, \right) - E\left( \,\,\xy 
				{\ar@{.}(4,0)*{\circ};(-4,4)*{\slab{T_1}}};
				{\ar@{.}(4,0)*{\circ};(-4,-4)*{\slab{T_k}}};
				{(-4,1)*{\vdots}};
				{\ar@{-}(4,0)*{\circ};(8,0)*{\circ}};
				{\ar@{-}(8,0)*{\circ};(12,0)*{\circ}};
				\endxy \,\, \right).
				$$
				For instance,
				\[E\left( \xy {\ar@{-}(0,-3)*{\circ};(3,-3)*{\circ}};{\ar@{-}(3,-3)*{\circ};(6,-3)*{\circ}};{\ar@{-}(6,-3)*{\circ};(9,-3)*{\circ}}; {\ar@{-}(9,-3)*{\circ};(12,-3)*{\circ}}; {\ar@{-}(6,-3)*{\circ};(4,0)*{\circ}};{\ar@{-}(6,-3)*{\circ};(8,0)*{\circ}};{\ar@{-}(8,0)*{\circ};(8,3)*{\circ}}; \endxy\right) = 3E\left(\xy {\ar@{-}(-3,-3)*{\circ};(0,-3)*{\circ}};{\ar@{-}(0,-3)*{\circ};(3,-3)*{\circ}};{\ar@{-}(3,-3)*{\circ};(6,-3)*{\circ}};{\ar@{-}(6,-3)*{\circ};(9,-3)*{\circ}}; {\ar@{-}(9,-3)*{\circ};(12,-3)*{\circ}}; {\ar@{-}(6,-3)*{\circ};(6,0)*{\circ}}; {\ar@{-}(6,0)*{\circ};(6,3)*{\circ}}; \endxy \right) - E\left(\xy {\ar@{-}(0,-3)*{\circ};(3,-3)*{\circ}};{\ar@{-}(3,-3)*{\circ};(6,-3)*{\circ}};{\ar@{-}(6,-3)*{\circ};(9,-3)*{\circ}}; {\ar@{-}(9,-3)*{\circ};(12,-3)*{\circ}}; {\ar@{-}(6,-3)*{\circ};(6,0)*{\circ}}; {\ar@{-}(6,0)*{\circ};(6,3)*{\circ}}; \endxy \right) = 3\times \frac{1}{210} - \frac{1}{105}=\frac{1}{210}.\]
				As a final example, using the identity $P\left(\xy {\ar@{-}(0,-3)*{\bul};(0,0)*{\circ} };{\ar@{-}(0,0)*{\circ};(0,3)*{\circ} };\endxy\right)(t) =\frac{t-t^2}{2}=-\frac{1}{2}P\left(\xy {\ar@{-}(0,-1.5)*{\bul};(-3,1.5)*{\circ} };{\ar@{-}(0,-1.5)*{\bul};(3,1.5)*{\circ} };\endxy\right)(t)+\frac{1}{2}P\left(\xy {\ar@{-}(0,-1.5)*{\bul};(0,1.5)*{\circ} };\endxy\right)(t)$, we can compute
				\[ E\left(\xy {\ar@{-}(0,-3)*{\circ};(3,-3)*{\circ}};{\ar@{-}(3,-3)*{\circ};(6,-3)*{\circ}};{\ar@{-}(6,-3)*{\circ};(9,-3)*{\circ}}; {\ar@{-}(9,-3)*{\circ};(12,-3)*{\circ}}; {\ar@{-}(6,-3)*{\circ};(6,0)*{\circ}}; {\ar@{-}(6,0)*{\circ};(6,3)*{\circ}}; \endxy \right) = -\frac{1}{2}E\left(\xy {\ar@{-}(9,-1.5)*{\circ};(12,-1.5)*{\circ}};{\ar@{-}(0,-1.5)*{\circ};(3,-1.5)*{\circ}};{\ar@{-}(3,-1.5)*{\circ};(6,-1.5)*{\circ}};{\ar@{-}(6,-1.5)*{\circ};(9,-1.5)*{\circ}}; {\ar@{-}(6,-1.5)*{\circ};(4,1.5)*{\circ}};{\ar@{-}(6,-1.5)*{\circ};(8,1.5)*{\circ}}; \endxy \right)+\frac{1}{2}E\left(\xy {\ar@{-}(9,-1.5)*{\circ};(12,-1.5)*{\circ}};{\ar@{-}(0,-1.5)*{\circ};(3,-1.5)*{\circ}};{\ar@{-}(3,-1.5)*{\circ};(6,-1.5)*{\circ}};{\ar@{-}(6,-1.5)*{\circ};(9,-1.5)*{\circ}}; {\ar@{-}(6,-1.5)*{\circ};(6,1.5)*{\circ}}; \endxy \right) = - \frac{1}{2} \times\left(-\frac{1}{420}\right) +\frac{1}{2}\times \frac{1}{60}=\frac{1}{105}. \]
				
				Notice that in all of the above examples, we are expressing the Eulerian coefficient of a given tree $T$ as a linear combination of Eulerian coefficients of trees $T',T''$, such that both $T'$ and $T''$ precede $T$ in the table above. The moral is: when tabulating the Eulerian coefficients, after choosing a convenient order on the set of trees (we do not try to make this precise), the Eulerian coefficient of a given tree can be expressed as a linear combination of Eulerian coefficients which have already been computed. The previous table was obtainted using this method.
			\end{remark}

		\end{center}
		
\newpage
		\section{Umbral calculus in magmatic algebras}\label{appendix:magmatic}
		
		The umbral calculus developed in Section \ref{section: pre-Lie} is not specific to pre-Lie algebras, and can be applied to more general situations. In this appendix we consider a simple instance of this statement.
		
		\begin{definition}
			A complete magmatic algebra $(V,\vee)$ is a vector space $V$ equipped with a bilinear operation $\vee:V\otimes V\to V$ and a filtration	
			$$ \cdots \subset F^pL \subset F^{p-1}L \subset \cdots F^2L \subset F^1L = L$$
			which is complete, i.e., $V\to\underleftarrow{\operatorname{lim}}\, V/F^pV$, and compatible with $\vee$, i.e., $F^pV\vee F^qV\subset F^{p+q}V$. 
		\end{definition}		
		
		Given a $\delta$-series $f(t)=\sum_{k\geq1} \frac{c_k}{k!} t^k\in\K[[t]]$, we define a map 
		\[ f_\vee(-)\colon V\to V\colon x\to f_\vee(x) := \sum_{k\geq1} \frac{c_k}{k!}\overbrace{x\vee (\cdots( x\vee x}^k)\cdots).\]
		Following the same reasoning as in Subsection \ref{subsection: Umbral calculus}, one sees that $f_\vee(-)$ is a bijection from $V$ to itself. In fact, the same argument given there shows that the equation $ x = f_\vee(y)$ is equivalent to 
		\[ y = \sum_{k\geq0} \frac{a_k}{k!}  \overbrace{y\vee (\cdots( y}^k\vee x)\cdots), \]
		where the $a_k$ are the coefficients of the formal power series $g(t):= \frac{t}{f(t)}=\sum_{k\ge0}\frac{a_k}{k!}t^k$, and the latter can be solved recursively in $y$. Furthermore, the same proof as the one of Proposition \ref{prop:recvsdiffeq}, shows that $y$ can be computed as $\left<\left. g(D)\right| P\right>$, where $P$ is the solution of the differential equation 
		\begin{equation}\label{diffeqmag}
		\left\{ \begin{array}{l} P ' = \langle g(D)| P \rangle\vee P \\
		P(0) = x
		\end{array}  \right.
		\end{equation}
		in the magmatic algebra $(V[t],\vee)$ of polynomials with coefficient in $V$.
		
		We focus on the universal case. As well-known,  the vector space $\pbT$ (Definition \ref{def:pbT}) equipped with the magmatic product
		\[ \vee\colon\pbT\otimes\pbT \to\pbT,\qquad T_1\vee T_2\,\,=\,\, \xy
		{\ar@{-}(0,-4);(0,-2)};
		{\ar@{-}(0,-2);(-4,2)};{(-4,4)*{\slab{T_1}}};
		{\ar@{-}(0,-2);(4,2)};{(4,4)*{\slab{T_2}}};
		\endxy  \]
		is the free (complete) magmatic algebra generated by $\xy
		{\ar@{-}(0,-2);(0,2)};
		\endxy$ . Given planar, binary, rooted trees $T_1,\ldots,T_k$, we shall depict the tree $T_1\vee(\cdots(T_k\vee\,\xy
		{\ar@{-}(0,-2);(0,2)};
		\endxy\, )\cdots)$ as
		\[ \xy  {\ar@{-}(0,-4);(0,-2)};
		{\ar@{-}(0,-2);(-3,1)};{(-3,3)*{\slab{T_1}}};
		{\ar@{-}(0,-2);(10,8)};
		{\ar@{-}(6,4);(3,7) };   {(3,9)*{\slab{T_k}}};
		{\ar@{.}(-1,1);(3,5)}
		\endxy  \]
		Furthermore, we observe that every planar, binary, rooted tree $T$ has a unique decomposition  as above.
		
		The {right pointing branches} of a planar, binary rooted tree $T$ are those indicated by a dashed line in the following picture
		\[  \xy {\ar@{-}(0,-5);(0,-2)};
		{\ar@{-}(0,-2);(-10,8)};
		{\ar@{--}(-7,5);(-4,8)};
		{\ar@{--}(0,-2);(10,8) };
		{\ar@{-}(4,2);(-2,8)}; 	
		{\ar@{--}(1,5);(4,8)};
		\endxy \]	
		We denote the set of right pointing branches of $T$ by $B_r(T)$. Given a right pointing branch $b\in B_r(T)$, its \emph{length}, denoted by $l(b)$, is the number of edges in it. For instance, in the above picture there are two right pointing branches of length one, and one of length two.
		
		\begin{theorem} The expansion of $f_\vee^{-1}\left(\,\xy
			{\ar@{-}(0,-2);(0,2)};
			\endxy\,\right)$,  with respect to the canonical basis of $\pbT$, reads
			\[  f_\vee^{-1}\left(\,\xy
			{\ar@{-}(0,-2);(0,2)};
			\endxy\,\right) = \sum_T \left(\prod_{b\in B_r(T)}\frac{a_{l(b)}}{l(b)!}\right) T  \]
		\end{theorem}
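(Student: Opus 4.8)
The plan is to mirror the umbral-calculus strategy from Section \ref{section: pre-Lie} (Theorem \ref{th: product rule}), but applied to the free complete magmatic algebra $(\pbT,\vee)$ with generator $\xy{\ar@{-}(0,-2);(0,2)};\endxy$. By the discussion preceding the statement, $y=f_\vee^{-1}\left(\,\xy{\ar@{-}(0,-2);(0,2)};\endxy\,\right)$ is computed as $\left<\left.g(D)\right|P\right>$, where $P\in\pbT[t]$ solves the differential equation \eqref{diffeqmag} with initial condition $P(0)=\xy{\ar@{-}(0,-2);(0,2)};\endxy$. As before, I would denote by $P(T)(t)\in\K[t]$ the coefficient of a tree $T$ in $P$, so that the Eulerian-type coefficient of $T$ in $f_\vee^{-1}$ is $\left<\left.g(D)\right|P(T)(t)\right>$. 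The goal then reduces to proving that $P(T)(t)=\prod_{b\in B_r(T)}\frac{l(b)}{l(b)!}\,t^{\,?}$ of an appropriate form, and that applying $\left<\left.g(D)\right|\#\right>$ converts each right-pointing branch of length $l$ into a factor $\frac{a_l}{l!}$.

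First I would establish the magmatic analog of the key recursion Lemma \ref{prop:coproduct recursion}. Since $(\pbT,\vee)$ is \emph{free} magmatic, the structure is far simpler than the pre-Lie case: every tree $T\neq\xy{\ar@{-}(0,-2);(0,2)};\endxy$ has a \emph{unique} decomposition $T=T_1\vee(\cdots(T_k\vee\,\xy{\ar@{-}(0,-2);(0,2)};\endxy\,)\cdots)$, so the coproduct dual to $\vee$ has no multiplicities and no left/right asymmetry to handle. Reading off \eqref{diffeqmag} in coordinates, I expect the recursion $P'(T)(t)=\sum_j \widetilde a_{T_j}\,P\bigl(T_{j+1}\vee(\cdots(T_k\vee\xy{\ar@{-}(0,-2);(0,2)};\endxy)\cdots)\bigr)(t)$, where $\widetilde a_{T_j}=\left<\left.g(D)\right|P(T_j)(t)\right>$ is the coefficient of $T_j$ in $\langle g(D)|P\rangle$. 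Integrating from $0$ and using the initial condition $P(T)(0)=0$ for $T\neq\xy{\ar@{-}(0,-2);(0,2)};\endxy$ should give the product-and-substitution recursion in the flavor of Theorem \ref{th: product rule}: a product rule splitting off the subtrees $T_j$ grafted along the rightmost branch, together with an integration operator $\int_0^t$ for each edge of that branch.

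Second, I would run an induction on $|T|$ (the number of leaves). The base case $T=\xy{\ar@{-}(0,-2);(0,2)};\endxy$ gives $P(T)(t)=1$, matching an empty product. For the inductive step I would peel off the rightmost branch of $T$: each right-pointing branch of length $l$ contributes, after $l$ successive integrations $\int_0^t$ applied to a constant, a factor $\frac{t^l}{l!}$, and when one finally applies $\left<\left.g(D)\right|\#\right>$ (which sends $t^l\mapsto a_l$, up to the Appell normalization), this factor becomes $\frac{a_l}{l!}$. The subtrees hanging off the branch factor multiplicatively by the product rule, so the total coefficient becomes $\prod_{b\in B_r(T)}\frac{a_{l(b)}}{l(b)!}$ by induction. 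The cleanest formulation is to prove the intermediate identity $P(T)(t)=\prod_{b}\frac{1}{l(b)!}t^{l(b)}\cdot(\text{branch-subtree factors})$ first, and then apply the functional; this separates the purely combinatorial branch-length bookkeeping from the umbral evaluation $t^l\mapsto a_l$.

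The main obstacle I anticipate is purely bookkeeping: correctly tracking how the unique right-branch decomposition $T=T_1\vee(\cdots(T_k\vee\,\xy{\ar@{-}(0,-2);(0,2)};\endxy\,)\cdots)$ organizes the set $B_r(T)$ of \emph{all} right-pointing branches (not just the rightmost one), and verifying that the grafted subtrees $T_j$ each start a new right-pointing branch whose own right branches are captured recursively. One must check that $\left<\left.g(D)\right|t^l\right>=a_l$ exactly, without an Appell-polynomial correction, which holds because the functional $\langle g(D)|\#\rangle$ is defined by $t^n\mapsto a_n$ (as opposed to the operator $g(D)$ acting on polynomials); keeping the functional and the operator distinct, as the paper carefully does, is essential to avoid an off-by-Appell error. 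Once the recursion is set up correctly, the induction itself should be routine, so I would allocate most of the care to stating the branch recursion precisely and to the factorization claim $P(T)(t)=\prod_{b\in B_r(T)}\frac{l(b)}{l(b)!}\,t^{\cdots}$.
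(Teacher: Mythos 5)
Your overall strategy --- pass to the differential equation \eqref{diffeqmag}, read it off in coordinates via the coproduct dual to $\vee$, and induct on the number of leaves --- is exactly the paper's, but two of the formulas you propose in order to carry it out are wrong, and the second one would derail the argument. First, the coordinate recursion cannot be $P'(T)(t)=\sum_j\widetilde a_{T_j}\,P\bigl(T_{j+1}\vee(\cdots(T_k\vee x)\cdots)\bigr)(t)$ (writing $x$ for the one-leaf generator): since the magmatic decomposition $T=T'\vee T''$ is \emph{unique}, the coefficient of $T$ in $\langle g(D)|P\rangle\vee P$ consists of the single term $\widetilde a_{T_1}\,P\bigl(T_2\vee(\cdots(T_k\vee x)\cdots)\bigr)(t)$; your terms with $j\geq2$ drop the subtrees $T_1,\dots,T_{j-1}$ altogether and do not even have the right number of leaves. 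You have imported the shape of the pre-Lie coproduct recursion \eqref{eq:copr}, where any $T_j$ may be split off, into a setting where only the outermost factor detaches. This slip is easy to repair --- the correct recursion is simpler than the one you wrote, and iterating it gives $P(T)(t)=a_{T_1}\cdots a_{T_k}\frac{t^k}{k!}$, where $k$ is the length of the rightmost branch only.

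The second problem is more serious. You propose the intermediate identity $P(T)(t)=\prod_{b\in B_r(T)}\frac{t^{l(b)}}{l(b)!}\cdot(\cdots)$ and plan to apply $\langle g(D)|\#\rangle$ once at the end, ``converting each right-pointing branch of length $l$ into a factor $\frac{a_l}{l!}$''. The functional $\langle g(D)|\#\rangle$ is linear, not multiplicative: it sends the monomial $t^n$ to $a_n$, so $\langle g(D)|t^{l_1}t^{l_2}\rangle=a_{l_1+l_2}\neq a_{l_1}a_{l_2}$, and applying it to a product of branch monomials would produce $a_{\sum_b l(b)}$ rather than $\prod_b a_{l(b)}$. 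In the correct argument only the rightmost branch survives as a power of $t$ in $P(T)(t)$; the factors attached to all the other right-pointing branches arise because the functional has \emph{already} been applied to the subtree polynomials at earlier stages of the recursion, producing the scalars $a_{T_j}=\langle g(D)|P(T_j)(t)\rangle$ that sit as constants in front of $\frac{t^k}{k!}$. The umbral evaluation therefore cannot be deferred to a single final step; it must be interleaved with the induction, one application per node of the branch decomposition, and the product over $B_r(T)$ is then assembled by a second induction using the fact that $B_r(T)$ is the disjoint union of the rightmost branch and the sets $B_r(T_j)$.
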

		\begin{proof}  First of all, we shall denote by $P(T)(t)\in\K[t]$ the coefficient of $T$ in the expansion of the solution $P\in\pbT[t]$ to \eqref{diffeqmag} (with $x =\,\xy
			{\ar@{-}(0,-2);(0,2)};
			\endxy\, $). Moreover, we denote by $a_T$ the coefficient of $T$ in the expansion of $\langle g(D)| P\rangle=f_\vee^{-1}\left(\,\xy
			{\ar@{-}(0,-2);(0,2)};
			\endxy\,\right) $. It is straightforward to check that \eqref{diffeqmag} implies $P\left(\,\xy
			{\ar@{-}(0,-2);(0,2)};
			\endxy\,\right)(t)=1$. 
			
			We observe that the dual magmatic coproduct $\Delta:\pbT\to\pbT\otimes\pbT$ is given by $\Delta\left(\,\xy
			{\ar@{-}(0,-2);(0,2)};
			\endxy\,\right) =0$ and $\Delta\left(\xy
			{\ar@{-}(0,-4);(0,-2)};
			{\ar@{-}(0,-2);(-4,2)};{(-4,4)*{\slab{T'}}};
			{\ar@{-}(0,-2);(4,2)};{(4,4)*{\slab{T''}}};
			\endxy\right) =T'\otimes T''$. Thus, the analog of Lemma \ref{prop:coproduct recursion} tells us that \[ P\left(\xy
			{\ar@{-}(0,-4);(0,-2)};
			{\ar@{-}(0,-2);(-4,2)};{(-4,4)*{\slab{T'}}};
			{\ar@{-}(0,-2);(4,2)};{(4,4)*{\slab{T''}}};
			\endxy\right)(t) = a_{T'}\int_0^t P(T'')(\tau)d\tau. \]
			
			Finally, the above formula and a straightforward induction shows that for the tree 
			\[  T\,\, =\,\, \xy  {\ar@{-}(0,-4);(0,-2)};
			{\ar@{-}(0,-2);(-3,1)};{(-3,3)*{\slab{T_1}}};
			{\ar@{-}(0,-2);(10,8)};
			{\ar@{-}(6,4);(3,7) };   {(3,9)*{\slab{T_k}}};
			{\ar@{.}(-1,1);(3,5)}
			\endxy \]	
			we have $P(T)(t)= a_{T_1}\cdots a_{T_k} \frac{t^k}{k!}$. Then, another straightforward induction, keeping in mind that a right pointing branch of $T$ is either the rightmost branch or a right pointing branch of the subtrees $T_1,\ldots,T_k$, shows that $a_T=\langle g(D)|P(T)(t)\rangle=\prod_{b\in B_r(T)}\frac{a_{l(b)}}{l(b)!}$, as desired.
		\end{proof}
		
		As a consequence, we find the following formula for the pre-Lie logarithm.
		
		\begin{corollary} Given a complete left pre-Lie algebra $(L,\rhd)$ and $x\in L$, the pre-Lie logarithm $\log_\rhd(1+x)$ is given by
			\[\log_\rhd (1+x) = \sum_T \left(\prod_{b\in B_r(T)}\frac{B_{l(b)}}{l(b)!}\right) T_\rhd(x),\]
			where the sum runs over the set of planar, binary, rooted trees, and we denote by $T_\rhd(x)\in L$ the image of $T$ under the unique morphism of magmatic algebras $(\pbT,\vee) \to (L,\rhd)\colon\,\,\xy
			{\ar@{-}(0,-2);(0,2)};
			\endxy\,\,\to x$.\end{corollary}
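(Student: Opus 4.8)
The plan is to deduce the corollary from the Theorem above by a functoriality argument. First I would observe that, with the choice of $\delta$-series $f(t)=e^t-1$, one has $c_k=1$ for all $k$, so that $f_\rhd(x)=\sum_{k\geq1}\frac1{k!}x^{\rhd k}=e_\rhd^x-1$, whence $f_\rhd^{-1}(x)=\log_\rhd(1+x)$ by definition of the pre-Lie logarithm. Correspondingly $g(t)=t/f(t)=t/(e^t-1)=\sum_{k\geq0}\frac{B_k}{k!}t^k$, so the coefficients entering the Theorem are the Bernoulli numbers, $a_k=B_k$. Since a pre-Lie product is in particular a magmatic product, the completeness of $(L,\rhd)$ makes it a complete magmatic algebra, and the universal property of the free complete magmatic algebra $(\pbT,\vee)$ furnishes the unique continuous magmatic morphism $\Psi\colon(\pbT,\vee)\to(L,\rhd)$ with $\Psi(\,\xy{\ar@{-}(0,-2);(0,2)};\endxy\,)=x$ and $\Psi(T)=T_\rhd(x)$.

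The key step is to check that $\Psi$ intertwines the two compositional inverses, i.e.\ that $\Psi\big(f_\vee^{-1}(\,\xy{\ar@{-}(0,-2);(0,2)};\endxy\,)\big)=f_\rhd^{-1}(x)$. For this I would first record the identity $\Psi(f_\vee(y))=f_\rhd(\Psi(y))$, valid for every $y\in\pbT$: because $\Psi$ is a magmatic morphism, $\Psi$ applied to each iterated $\vee$-product of $k$ copies of $y$ equals $\Psi(y)^{\rhd k}$, and because $\Psi$ is linear and filtration-preserving (hence continuous with respect to the complete filtrations) it commutes with the convergent infinite summation defining $f_\vee$. Granting this, set $y_0:=f_\vee^{-1}(\,\xy{\ar@{-}(0,-2);(0,2)};\endxy\,)$, so that $f_\vee(y_0)=\,\xy{\ar@{-}(0,-2);(0,2)};\endxy\,$; applying $\Psi$ gives $f_\rhd(\Psi(y_0))=\Psi(\,\xy{\ar@{-}(0,-2);(0,2)};\endxy\,)=x$, and the invertibility of $f_\rhd(-)$ established in Subsection \ref{subsection: Umbral calculus} forces $\Psi(y_0)=f_\rhd^{-1}(x)=\log_\rhd(1+x)$.

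Finally I would feed the explicit expansion from the Theorem into this identity. Applying the linear, continuous map $\Psi$ term by term to
\[ f_\vee^{-1}\Big(\,\xy{\ar@{-}(0,-2);(0,2)};\endxy\,\Big)=\sum_T\Big(\prod_{b\in B_r(T)}\frac{B_{l(b)}}{l(b)!}\Big)\,T \]
and using $\Psi(T)=T_\rhd(x)$ yields exactly $\log_\rhd(1+x)=\sum_T\big(\prod_{b\in B_r(T)}\frac{B_{l(b)}}{l(b)!}\big)T_\rhd(x)$, as claimed. The only point requiring genuine care --- and the step I expect to be the main (if modest) obstacle --- is the interchange of $\Psi$ with the infinite sums: it must be justified via the compatibility of $\Psi$ with the filtrations on $\pbT$ and $L$ together with the completeness of both, which is what legitimizes both the identity $\Psi\circ f_\vee=f_\rhd\circ\Psi$ and the termwise application of $\Psi$ to the tree expansion.
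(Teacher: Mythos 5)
Your proposal is correct and is essentially the argument the paper intends (the paper states the corollary as an immediate consequence of the preceding theorem without writing out a proof): specialize to $f(t)=e^t-1$ so that $a_k=B_k$, invoke the universal property of the free complete magmatic algebra to get the morphism $T\mapsto T_\rhd(x)$, observe that it intertwines $f_\vee$ with $f_\rhd$ and hence their inverses, and push the tree expansion through termwise. Your attention to continuity of the morphism with respect to the complete filtrations is exactly the right point to flag, and it is handled correctly.
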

		\begin{example} Up to order four, the previous formula for $\log_\rhd(1+x)$ reads
			\begin{eqnarray*} \log_\rhd(1+x) &=& x - \frac{1}{2} x\rhd x + \frac{1}{12} x\rhd(x\rhd x) + \frac{1}{4} (x\rhd x)\rhd x  -\frac{1}{24} x\rhd((x\rhd x)\rhd x)\\ && - \frac{1}{24} (x\rhd x)\rhd(x\rhd x) -\frac{1}{24}(x\rhd(x\rhd x))\rhd x - \frac{1}{8} ((x\rhd x)\rhd x)\rhd x +\cdots
			\end{eqnarray*}
		\end{example}
		
\newpage
		\section{A formula for the Baker-Campbell-Hausdorff product}

		As recalled in the introduction, the Baker-Campbell-Hausdorff product $\bullet$ on a (complete) Lie algebra $\g$ and the Eulerian idempotent $E:\U(\g)\to \g$ are related by the following formula
		\begin{equation}\label{eq:eulervsBCH}
		x\bullet y = \sum_{i,j\geq0}\frac{1}{i!j!} E(x^i y^j),  \end{equation}
		see \cite{Loday}. In this appendix, we use our results on the Eulerian idempotent to deduce a new formula for the Baker-Campbell-Hausdorff product. 
		
		To state the formula, we shall introduce some notations. Given a planar, binary, rooted tree $T\in\pbT$ with $|T|$ leaves, we denote:
		\begin{itemize} \item by $r_T$ (resp.: $l_T$) the number of right (resp.: left) pointing leaves of $T$ (if $T=\,\xy {\ar@{-}(0,-2);(0,2)}\endxy\,$, by convention $r_T=1$, $l_T=0$);
			\item by $E_T$ the Eulerian coefficient of $T$;
			\item by $K_T$ the number of labelings $\ell:\{\mbox{leaves of $T$}\}\to\{1,\ldots,|T|\}$ which are admissible in the sense of Definition \ref{def:admissible}, and satisfy the additional condition: \begin{itemize} %\item[(\#)]\label{admref} 
				\item the label of a left pointing leaf is always smaller than the label of a right pointing leaf;
			\end{itemize}
			\item given $x,y\in\g$, by $T(x,y)\in\g$ the element obtained by labeling every left pointing leaf of $T$ by $x$, every right pointing leaf of $T$ by $y$, and finally by regarding the corresponding labeled tree as an iterated bracket inside $\g$ as usual (by convention, when $T=\,\xy {\ar@{-}(0,-2);(0,2)}\endxy\,$ we put $T(x,y)=x+y$).
			
		\end{itemize}
		
		\begin{example} For instance, for the tree 
			\[  T= \xy {\ar@{-}(0,-4);(0,-2)};
			{\ar@{-}(0,-2);(-8,6)};
			{\ar@{-}(4,2);(0,6)};
			{\ar@{-}(0,-2);(8,6) };
			{\ar@{-}(2,0);(-4,6)}; 
			{\ar@{-}(2,4);(4,6)};	
			\endxy \]
			we have
			\[ r_T = 2,\qquad l_T=3,\qquad E_T =\frac{1}{60},\qquad K_T=1,\qquad T(x,y)=[x,[x,[[x,y],y]]],  \]
			while for the tree 
			\[ T= 
			\xy {\ar@{-}(0,-4);(0,-2)};{\ar@{-}(0,-2);(-8,6)}; 
			{\ar@{-}(-6,4);(-4,6)};
			{\ar@{-}(0,-2);(8,6) };
			{\ar@{-}(4,2);(0,6)}; 
			{\ar@{-}(2,4);(4,6)}; 
			\endxy \]
			we have
			\[ r_T = 3,\qquad l_T=2,\qquad E_T =\frac{1}{30},\qquad K_T=2,\qquad T(x,y)=[[x,y],[[x,y],y]].  \]
		\end{example}
		
		Using these notations, we can state our result as follows 
		\begin{theorem} The Baker-Campbell-Hausdorff product $x\bullet y$ in $\g$ is given by the formula
			\[  x\bullet y = \sum_T \frac{E_T K_T}{l_T! r_T!} T(x,y), \]
			where the sum runs over the set of planar, binary, rooted trees.
		\end{theorem}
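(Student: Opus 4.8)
The plan is to start from formula \eqref{eq:eulervsBCH}, namely $x\bullet y = \sum_{i,j\geq0}\frac{1}{i!j!} E(x^iy^j)$, and to rewrite each term $E(x^iy^j)$ using the expansion of the Eulerian idempotent in the PBW basis that we established in Corollary \ref{corollary:independence on labeling}, i.e., $E(x_1\cdots x_n)=\sum_{(T,\ell)}E_T\,r(T,\ell)$, where the sum runs over planar, binary, rooted trees $T$ with $n$ leaves equipped with an admissible labeling $\ell$ of $(\underline{n},\leq)$. The key point is that $E(x^iy^j)$ is obtained from $E(x_1\cdots x_{i+j})$ by the substitution $x_1=\cdots=x_i=x$, $x_{i+1}=\cdots=x_{i+j}=y$, so I would first organize the double sum over $(i,j)$ together with the inner sum over labeled trees, and then reorganize it as a sum over unlabeled trees $T$, collecting all labelings that produce a given iterated-bracket monomial in $x$ and $y$.

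The main bookkeeping step is to understand, for a fixed tree $T$ with $n=|T|$ leaves, which admissible labelings $\ell$ contribute and what bracket expression they yield after the substitution. First I would fix the number of $x$'s and $y$'s: since $\frac{1}{i!j!}$ is the coefficient in \eqref{eq:eulervsBCH}, the substitution $x_1=\cdots=x_i=x,\ x_{i+1}=\cdots=x_{i+j}=y$ collapses the labeling data. The crucial observation I expect to need is that, for the specific assignment $T(x,y)$ in which every left-pointing leaf carries $x$ and every right-pointing leaf carries $y$, one must have $i=l_T$ and $j=r_T$; and the combinatorial factor $K_T$ counts exactly those admissible labelings (with the extra requirement that every left-pointing-leaf label is smaller than every right-pointing-leaf label) that, under the collapsing substitution $1,\dots,l_T\mapsto x$ and $l_T+1,\dots,l_T+r_T\mapsto y$, realize the bracket $T(x,y)$. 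I would prove that any admissible labeling $\ell$ of $T$ collapses, under the substitution sending the $i$ smallest labels to $x$ and the remaining $j$ to $y$, to a nonzero multiple of $T(x,y)$ only when the $x$-labels sit precisely on the left-pointing leaves and the $y$-labels on the right-pointing leaves; the admissibility constraints at each inner vertex force this leaf-coloring to be compatible, which is where the definition of $K_T$ and the side condition enter.

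After this identification, the factor $\frac{1}{i!j!}=\frac{1}{l_T!\,r_T!}$ in \eqref{eq:eulervsBCH} matches exactly the normalization in the claimed formula, and summing over all admissible $\ell$ realizing $T(x,y)$ produces the multiplicity $K_T$. Collecting terms then gives
\[
x\bullet y = \sum_{i,j\geq0}\frac{1}{i!j!}\sum_{(T,\ell)}E_T\,r(T,\ell)\big|_{x,y}
= \sum_T E_T\,\frac{K_T}{l_T!\,r_T!}\,T(x,y),
\]
where in the middle expression $r(T,\ell)\big|_{x,y}$ denotes the iterated bracket after the substitution, and the outer reorganization runs over unlabeled trees $T$ with $l_T=i$, $r_T=j$. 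The degenerate base case $T=\,\xy{\ar@{-}(0,-2);(0,2)}\endxy\,$ is handled separately by the conventions $r_T=1$, $l_T=0$, $E_T=1$, $K_T=1$, and $T(x,y)=x+y$, which reproduces the linear term coming from $E(x)=x$ and $E(y)=y$ (i.e. the $(i,j)=(1,0)$ and $(0,1)$ contributions).

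The step I expect to be the main obstacle is the precise matching of labelings to the colored tree $T(x,y)$: one must show that the only admissible labelings surviving the collapse are those whose induced two-coloring of the leaves is the left$=x$/right$=y$ coloring, and that the count of such labelings respecting the ordering convention is exactly $K_T$. This requires careful use of the admissibility condition of Definition \ref{def:admissible} (which places extremal labels at extremal leaves of each descendant set) together with an argument that no other leaf-coloring yields the same bracket $T(x,y)$, so that different $T$'s contribute to genuinely distinct monomials and no cancellation or overcounting occurs. Once this combinatorial correspondence is pinned down, the rest is a direct reindexing of the sum, and the theorem follows.
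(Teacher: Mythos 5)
Your proposal follows essentially the same route as the paper's own proof: starting from $x\bullet y=\sum_{i,j}\tfrac{1}{i!j!}E(x^iy^j)$, expanding each $E(x_1\cdots x_{i+j})$ in the PBW basis via Corollary \ref{corollary:independence on labeling}, and then showing via the admissibility condition that a labeled tree survives the substitution only when the $x$-labels occupy exactly the left-pointing leaves and the $y$-labels the right-pointing ones (any other coloring produces a monochromatic sub-bracket, which vanishes), so that $i=l_T$, $j=r_T$ and the surviving labelings are counted by $K_T$. The only superfluous worry is about ``cancellation between different $T$'s'': the identity is obtained by regrouping the terms of the sum by underlying tree, so linear independence of the elements $T(x,y)$ is not needed (and indeed fails, as the paper remarks afterwards).
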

		\begin{proof} We apply Equation \eqref{eq:eulervsBCH}. Thus, we have to understand the element $E(x^iy^j)$ inside $\g$. By naturality, this is the image of the element $E(x_1\cdots x_{i+j})$ inside the free Lie algebra $L(x_1,\ldots, x_{i+j})$, under the morphism of Lie algebras $\psi:L(x_1,\ldots, x_{i+j})\to\g$, $x_1,\ldots,x_i\xrightarrow{\psi}x$, $x_{i+1},\ldots,x_{i+j}\xrightarrow{\psi} y$. Given a tree $T$ with precisely $(i+j)$ leaves, together with an admissible labeling $\ell:\{\mbox{leaves of $T$}\}\mapsto\{1,\ldots,i+j\}$, we denote by $T_\ell(x,y)\in\g$ the element obtained by replacing the labels $1,\ldots,i$ by $x$, the labels $i+1,\ldots,i+j$ by $y$, and finally by regarding this new labeled tree as an iterated bracket inside $\g$ as usual. According to the results from Subsection \ref{subsection: pbT}, we have $E(x^iy^j)=\sum_{(T,\ell)}E_T T_\ell(x,y)$, where the sum runs over the set of planar, binary, rooted trees with $(i+j)$ leaves and an admissible labeling $\ell$. To complete the proof, we claim that $T_\ell(x,y)=0$ unless $(T,\ell)$ has precisely $i$ left pointing leaves, labeled by the set $\{1,\ldots,i\}$, and $j$ right pointing leaves, labeled by the set $\{i+1,\ldots,i+j\}$. In fact, if this isn't the case, there has to be a right pointing leaf $l$ in $T_\ell(x,y)$ labeled by $x$. Denoting by $T'\subset T$ the subtree such that $l$ is the rightmost leaf of $T'$, by the admissiblity requirement on the labeling $\ell$, all leaves of $T'$ have to be labeled by $x$, and then the corresponding iterated bracket inside $\g$ vanishes.
		\end{proof}
		
		\begin{remark} The previous formula for the Baker-Campbell-Hausdorff product is not an optimal one, as the iterated brackets $T(x,y)$ inside $\g$ are not linearly independent among themselves. For instance, when $T=\xy {\ar@{-}(0,-4);(0,-2)};
			{\ar@{-}(0,-2);(-6,4)};
			{\ar@{-}(-4,2);(-2,4)};
			{\ar@{-}(0,-2);(6,4) };
			{\ar@{-}(4,2);(2,4)}; 	
			\endxy$ we have $T(x,y)=[[x,y],[x,y]]=0$. As another example, when $T= \xy {\ar@{-}(0,-4);(0,-2)};
			{\ar@{-}(0,-2);(-6,4)};
			{\ar@{-}(0,2);(2,4)};
			{\ar@{-}(0,-2);(6,4) };
			{\ar@{-}(2,0);(-2,4)}; 	
			\endxy$, $T'=\xy {\ar@{-}(0,-4);(0,-2)};
			{\ar@{-}(0,-2);(-6,4)};
			{\ar@{-}(0,2);(-2,4)};
			{\ar@{-}(0,-2);(6,4) };
			{\ar@{-}(-2,0);(2,4)}; 	
			\endxy$, by the Jacobi identity $T(x,y)=[x, [ [x,y] , y] ]=[ [ x, [x,y] ], y ] + [[x,y],[x,y]]=[ [ x, [x,y] ], y ]=T'(x,y)$.
			In particular, up to order four the previous formula reads 
			\[ x\bullet y= x+y+\frac{1}{2}[x,y] +\frac{1}{12}[[x,y],y] + \frac{1}{12}[x,[x,y]] + \frac{1}{48}[[x,[x,y]],y]+ \frac{1}{48}[x,[[x,y],y]]+\cdots, \]
			while an optimal formula would be 
			\[ x\bullet y= x+y+\frac{1}{2}[x,y] +\frac{1}{12}[[x,y],y] + \frac{1}{12}[x,[x,y]] + \frac{1}{24}[[x,[x,y]],y]+\cdots. \]
			It is an interesting problem to determine a subset $\mathcal{B}_{\operatorname{pb}}\subset\pbT$ of the set of planar, binary, rooted trees, such that the corresponding iterated brackets $T(x,y)\in L(x,y)$, with $T\in\mathcal{B}_{\operatorname{pb}}$, form a basis of the free Lie algebra $L(x,y)$ over $x$ and $y$, and such that in the corresponding expansion $x\bullet y = \sum_{T\in \mathcal{B}_{\operatorname{pb}}} c_T T(x,y)$, the coefficient $c_T$ can be computed in terms of purely combinatorial data associated to the tree $T$. 
			
		\end{remark}
		
		\thebibliography{10}

		\bibitem{Bandiera_nonabelian}
		R. Bandiera, {\em Nonabelian higher derived brackets}, J. Pure Appl. Algebra {\bf 219} (2015), no. 8, 3292--3313; \texttt{arXiv: 1304.4097 [math.QA]}.
		
		\bibitem{Bandiera_Kapranov}
		R. Bandiera, {\em Formality of Kapranov's brackets in K\"ahler geometry via pre-Lie deformation theory}, Int. Math. Res. Notices (2016), no. 21, 6626-6655; \texttt{arXiv: 1307.8066v3 [math.QA]}.
		
		\bibitem{Bandiera-Schaetz}
		R. Bandiera and F. Sch\"atz, {\em How to discretize the differential forms on the interval}, preprint, \texttt{arXiv:1607.03654}.
		
		\bibitem{Barr}
		M. Barr, {\em Harrison homology, Hochschild homology and triples},
		J. Algebra {\bf 8} (1968), 314--323. 		
		
		\bibitem{Brouder} Ch. Brouder, \emph{Trees, renormalization and differential equations}, BIT Numerical Mathematics \textbf{44} (2004), no. 3, 425-438.

		\bibitem{Chapoton-Livernet} F. Chapoton, M. Livernet, {\em Pre-Lie algebras and the rooted trees operad}, Int. Math. Res. Notices, {\bf 8} (8) (2001), 395-408.
		
		\bibitem{chapoton}	 F. Chapoton, \emph{A rooted-trees $q$-series lifting a one-parameter family of Lie idempotents}, Algebra \& Number Theory \textbf{3} (2009), no. 6, 611-636; \texttt{arXiv: 0807.1830 [math.QA]}.

		\bibitem{alg B-series} P. Chartier, E. Hairer, G. Vilmart, \emph{The algebraic structure of $B$-series}, Found. Comput. Math. {\bf 10} (2010), no. 4, 407--427.

		\bibitem{Connes-Kreimer} A. Connes, D. Kreimer, \emph{Hopf algebras, renormalization and noncommutative geometry}, Commun. Math. Phys \textbf{199} (1998), no. 1, 203-242; \texttt{arXiv: hep-th/9808042}.

		\bibitem{pLDF} V. Dotsenko, S. Shadrin, B. Vallette, \emph{Pre-Lie deformation theory}; \texttt{ arXiv: 1502.03280 [math.QA]}. 
		
		\bibitem{Ebrahimi-Fard-Manchon}
		K. Ebrahimi-Fard and D. Manchon, 
		{\em The Magnus expansion, trees and Knuth's rotation correspondence},
		Found. Comput. Math. {\bf 14} (2014), no. 1, 1--25.		
		
		\bibitem{Gerstenhaber-Schack}
		M. Gerstenhaber, and S.D. Schack,
		{\em A Hodge-type decomposition for commutative algebra cohomology},
		J. Pure Appl. Algebra {\bf 48} (1987), no. 3, 229--247. 		
		
		\bibitem{Goulden} I. P. Goulden, \emph{A bijective proof of Stanley's shuffling Theorem}, Trans. of the Amer. Math. Soc. \textbf{288} (1985), no. 1, 147-160.
		
		\bibitem{Hairer} E. Hairer, C. Lubich, G.Wanner, \emph{Geometric Numerical Integration. Structure-Preserving Algorithms for Ordinary Differential Equations}, Springer Series in Computational
		Mathematics 31, Springer-Verlag, Berlin, second edition, 2006.
		
		\bibitem{Iserles-Norsett}
		A. Iserles and S.P. N\o rsett, {\em On the solution of linear differential equations in Lie groups}, Royal Soc. Lond. Philos. Trans. Ser. A Math. Phys. Eng. Sci. {\bf 357} (1999), 983-1020.

		\bibitem{Knuth} D.E. Knuth, {\em The art of computer programming I. Fundamental algorithms}, Addison-Wesley (1968).

		\bibitem{Loday}
		J.L. Loday, {\em S\'erie de Hausdorff, idempotents eul\'eriens et alg\`ebres de Hopf}, Exp. Math. {\bf 12} (1994), 165--178.

		\bibitem{Loday-Vallette}
		J.L. Loday, B. Vallette,
		{\em Algebraic operads},
		Grundlehren der Mathematischen Wissenschaften, 346. Springer, Heidelberg, 2012, xxiv+634 pp.

		\bibitem{Magnus} W. Magnus, {\em On the exponential solution of differential equations for a linear operator},
		Commun. Pure Appl. Math. \textbf{7} (1954), 649--673.
		
		\bibitem{MReut} G. Melan\c con and C. Reutenauer, \emph{Free Lie superalgebras, trees and chains of partitions},
		J. Algebraic Combin. \textbf{5} (1996), no. 4, 337-351; available at \texttt{http://www.lacim.uqam.ca/christo/Publi\'{e}s/1996/Melan\c conSuperLie.pdf}
		
		\bibitem{Mielnik-Plebanski} B. Mielnik and J. Pleba\'nski, \emph{Combinatorial approach to Baker-Campbell-Hausdorff exponents},
		Ann. Inst. Henri Poincare, Sect. A \textbf{XII} (1970), 215-254.
		
		\bibitem{oudom-guin} J.-M. Oudom and  D. Guin, \emph{On the Lie enveloping algebra of a pre-Lie algebra}, Journal of K-theory \textbf{2} (2008), 147--167; \texttt{ 	arXiv:math/0404457 [math.QA]}.
		
		\bibitem{Petersen} T. K. Petersen, \emph{Eulerian Numbers}, Birkh\"{a}user Advanced Texts, Birkh\"{a}user, Springer, New York, 2015.
		
		\bibitem{Roman}
		S. Roman,
		{\em The umbral calculus},
		Pure and Applied Mathematics \textbf{111}, Academic Press Inc., New York, 1984.
		
		\bibitem{Reutenauer}
		C. Reutenauer, {\em Free Lie Algebras}, Oxford University Press, New York, 1993.
		
		\bibitem{operad lie is free}	P. Salvatore, R. Tauraso, \emph{The operad Lie is free}, J. Pure Appl. Algebra \textbf{213} (2009), no. 2, 224-230; \texttt{arXiv: 0802.3010 [math.QA]}.
		
		\bibitem{Solomon} L. Solomon, \emph{On the Poincar\'e-Birkhoff-Witt theorem}, J. Comb. Theory \textbf{4} (1968), 363-375; available at \url{http://www.sciencedirect.com/science/article/pii/S0021980068800626}.	
		
		\bibitem{Stanley} R. P. Stanley, \emph{Ordered structures and partitions}, Mem. Amer. Math. Soc. no. 119 (1972).
		
	\end{document}